\numberwithin{equation}{section}
\newtheorem{theorem}{Theorem}[section]
\newtheorem{corollary}[theorem]{Corollary}
\newtheorem{lemma}[theorem]{Lemma}
\newtheorem{proposition}[theorem]{Proposition}
\newtheorem*{AssumpA'}{Assumption A$'$ $(\rho,\varepsilon)$}
\newtheorem*{AssumpA''}{Assumption A$''$ $(\rho,\varepsilon)$}
\theoremstyle{definition}
\newtheorem{remark}[theorem]{Remark}
\theoremstyle{definition}
\theoremstyle{definition}
\newtheorem{assumption}[theorem]{Assumption}
\def\dashint{\operatorname%
{\,\,\text{\bf--}\kern-.98em\DOTSI\intop\ilimits@\!\!}}
\def\bC{\mathbb{C}}
\def\bH{\mathbb{H}}
\def\bR{\mathbb{R}}
\def\bZ{\mathbb{Z}}
\def\fL{\mathfrak{L}}
\def\cB{\mathcal{B}}
\def\cC{\mathcal{C}}
\def\cD{\mathcal{D}}
\def\cH{\mathcal{H}}
\def\cL{\mathcal{L}}
\def\cM{\mathcal{M}}
\def\cQ{\mathcal{Q}}
\def\cR{\mathcal{R}}
\def\cX{\mathcal{X}}
\begin{document}

\title[$L_p$-estimates for Elliptic and parabolic equations]{On $L_p$-estimates for elliptic and parabolic equations with $A_p$ weights}

\author[H. Dong]{Hongjie Dong}
\address[H. Dong]{Division of Applied Mathematics, Brown University, 182 George Street, Providence, RI 02912, USA}

\email{Hongjie\_Dong@brown.edu}

\thanks{H. Dong was partially supported by the NSF under agreement DMS-1056737.}

\author[D. Kim]{Doyoon Kim}
\address[D. Kim]{Department of Mathematics, Korea University, 145 Anam-ro, Seongbuk-gu, Seoul, 02841, Republic of Korea}

\email{doyoon\_kim@korea.ac.kr}

\thanks{D. Kim was supported by Basic Science Research Program through the National Research Foundation of Korea (NRF) funded by the Ministry of Education (2014R1A1A2054865).}

\subjclass[2010]{35R05, 42B37, 35B45, 35K25, 35J48}

\keywords{sharp/maximal functions, elliptic and parabolic equations, weighted Sobolev spaces, measurable coefficients}

\begin{abstract}
We prove generalized Fefferman-Stein type theorems on sharp functions with $A_p$ weights in spaces of homogeneous type with either finite or infinite underlying measure. We then apply these results to establish mixed-norm weighted $L_p$-estimates for elliptic and parabolic equations/systems with (partially) BMO coefficients in regular or irregular domains.
\end{abstract}

\maketitle

\section{Introduction}

The objective of this paper is two-fold.
The first is to present a few generalized versions of the Fefferman-Stein theorem on sharp functions.
One of our main theorems in this direction proves
\begin{equation}
							\label{eq0807_01}
\|f\|_{L_{p,q}(\cX, w \, d\mu)} \le N \|f^{\#}_{\text{dy}}\|_{L_{p,q}(\cX, w \, d\mu)},
\quad p, q \in (1, \infty),
\end{equation}
where $(\cX,\mu)$ is a space of homogeneous type, $w$ is a Muckenhoupt weight, $f^{\#}_{\text{dy}}$ is the sharp function of $f$ using a dyadic filtration of partitions of $\cX$,  and $L_{p,q}(\cX, w \, d\mu)$ is a mixed norm.
A space $\cX$ of homogeneous type is equipped with a quasi-distance and a doubling measure $\mu$.
A brief description of spaces of homogeneous type is given in Section \ref{sec1}.
For more discussions, see \cite{MR546295, MR546296, MR1096400}.
If $\cX$ is the product of two spaces $(\cX_1,\mu_1)$ and $(\cX_2,\mu_2)$ of homogeneous type, $w = w_1(x')w_2(x'')$,
and $\mu (x) = \mu_1(x') \mu_2(x'')$, where $x' \in \cX_1$ and $x'' \in \cX_2$,
then the $L_{p,q}(\cX, w \, d\mu)$ norm is defined as
$$
\|f\|_{L_{p,q}(\cX, w \, d\mu)} = \left(\int_{\cX_2} \left( \int_{\cX_1} |f|^p w_1(x') \, \mu_1(dx') \right)^{q/p} w_2(x'') \, \mu_2(dx'') \right)^{1/q}.
$$
See \eqref{eq0806_02} for a precise definition of the mixed norm.
If $\cX$ is the Euclidean space $\bR^d$, $\mu$ is the Lebesgue measure on $\bR^d$, $w \equiv 1$, and $p=q$, the inequality \eqref{eq0807_01} is the celebrated Fefferman-Stein theorem on sharp functions.
See, for instance, \cite{MR0447953, MR1232192},
where the measure of the underlying space is clearly infinite.
Here we deal with both the cases of a finite measure $\mu(\cX) < \infty$ and an infinite measure $\mu(\cX) = \infty$.
We also present a different form of the Fefferman-Stein theorem.
See Theorems \ref{thm2} and \ref{thm3} and Corollaries \ref{cor1} and \ref{cor2}.

The second objective, which is in fact a motivation of writing this paper, is to apply the generalized versions of the Fefferman-Stein theorem to establishing weighted $L_{p,q}$-estimates for elliptic and parabolic equations/systems.
For instance, for the second order non-divergence type parabolic equation
$$
-u_t + a^{ij} D_{ij} u + b^i D_i u + c u = f
$$
in $\bR^{d+1}$,
we prove the a priori weighted $L_{p,q}$-estimate
$$
\| u \|_{W_{p,q,w}^{1,2}(\bR^{d+1})} \le N \|f\|_{L_{p,q,w}(\bR^{d+1})},
\quad p, q \in (1, \infty),
$$
where
$$
\|u\|_{W_{p,q,w}^{1,2}} = \|u_t\|_{L_{p,q,w}} + \|u\|_{L_{p,q,w}} + \|Du\|_{L_{p,q,w}} + \|D^2 u\|_{L_{p,q,w}}.
$$
In this case the coefficients $a^{ij}(t,x)$ are allowed to be very rough so that they have no regularity assumptions in one spatial variable and have small mean oscillations in the remaining variables.
We also treat higher order non-divergence type elliptic and parabolic systems, and higher order divergence type elliptic and parabolic systems in Reifenberg domains.
See Theorems \ref{thm0607_1}, \ref{thm4.4}, \ref{thm0807_1}, \ref{thm0522_1}, \ref{thm5.4}, \ref{thm0625_1}, and the assumptions of the theorems on coefficients. While we focus on the a priori estimates, in Section \ref{sec8} we illustrate how to derive from them the corresponding existence results. We do not employ the usual method of continuity because we were not able to find in the literature the solvability even for simple equations in mixed-norm weighted spaces.

Regarding the Fefferman-Stein theorem on sharp functions, there has been considerable study on its generalizations and applications.
For instance, see \cite{MR807149, MR1232192, MR2797562, MR946637, MR2033231} and references therein.
In \cite{MR946637}, Fujii proved a Fefferman-Stein type inequality as in \eqref{eq0807_01} when the underlying space is $\bR^d$ with the Lebesgue measure and the weighted norms on both sides of the inequality have two different weights.
If two weights are the same, they belong to the class of Muchenhoupt weights $A_\infty$.
See the definition of $A_p$, $p \in (1,\infty]$ in Section \ref{sec1}.
In Martell's paper \cite{MR2033231}, one can find an analog of the classical Fefferman-Stein inequality when the underlying space is a space of homogeneous type having either a finite or an infinite measure.
The sharp function in \cite{MR2033231} is a generalization of the classical  sharp function and is associated with approximations of the identity.
Theorem 4.2 in \cite{MR2033231} is quite close to Theorem \ref{thm2} in this paper in the sense that both theorems deal with spaces of homogeneous type with a finite or infinite underlying measure.
On the other hand, in \cite{MR2033231} a distance, instead of a quasi-distance, is assumed.

Sharp functions in this paper are based on a filtration of partitions, instead of balls, of the underlying space
and are in a convenient form to utilize Fefferman-Stein type inequalities in the proofs of $L_p$ or $L_{p,q}$-estimates for elliptic and parabolic equations/systems with irregular coefficients.
In the case of a finite underlying measure, our theorems are more quantitative than Theorem 4.2 in \cite{MR2033231} and are stated in such a way that one can control the weighted $L_p$ or $L_{p,q}$-norm of a function $f$ by that of $f^{\#}_{\text{dy}}$ if the support of $f$ is sufficiently small.
Moreover, we have a different form of the Fefferman-Stein theorem (Theorem \ref{thm3}) and the mixed-norm versions (Corollaries \ref{cor2} and \ref{cor3}), which are very useful for the mixed-norm case as well as when equations/systems have very irregular coefficients.
Especially, Theorem \ref{thm3} and Corollary \ref{cor3} generalize \cite[Theorem 2.7]{MR2540989}, where the whole Euclidean space $\bR^d$ is considered with the Lebesgue measure and no weights.

By following the arguments from \cite{MR2435520}, we give detailed proofs of the Fefferman-Stein type inequalities presented in this paper.
In particular, to derive the mixed-norm case from the unmixed version,
we refine the extrapolation theorem of J. L. Rubio de Francia \cite{MR745140} to incorporate spaces of homogeneous type.
In fact, such an extension
is mentioned in \cite{MR2078632} without a proof.
In Appendix \ref{appendix01},
we present a proof of our version of the theorem, the statement of which is modified from the original extrapolation theorem in that the assumption of the theorem is required to hold only for weights in a certain subset of the Muckenhoupt weights $A_p$, not all weights in $A_p$.
See Theorem \ref{thm0623}.
It turns out that such a modification, when applied to $L_p$ or $L_{p,q}$-estimates, gives more precise information on the parameters involved in the estimates.

After we treat the Fefferman-Stein type inequalities in Sections \ref{sec1} and \ref{sec03}, as examples we prove a priori weighted $L_{p,q}$-estimates for three classes of systems/equations.
In Section \ref{sec4} we consider non-divergence type higher order elliptic and parabolic systems defined in the whole Euclidean space or on a half space.
The coefficients of the systems have small mean oscillations with respect to the spatial variables in small balls or cylinders and, in the parabolic case, have no regularity assumptions (only measurable) in the time variable.
Coefficients in this class (called BMO$_x$ coefficients) are less regular than those having vanishing mean oscillations (VMO).
See \cite{MR2771670} and references therein for a discussion about BMO and VMO coefficients.
Here we generalize the results in \cite{MR2771670}, where no weights and unmixed norms are considered.

The novelty of the results in Section \ref{sec4} as well as those in the later sections is that we prove mixed-norm estimates for arbitrary $p, q \in (1,\infty)$.
In \cite{MR2352490} Krylov proved mixed-norm estimates for second order parabolic equations in $\bR^{d+1}$
with the same class of coefficients in Section \ref{sec4}.
However, due to Lemma 3.3 there and the fact that the Hardy-Littlewood maximal function theorem holds for $p \in (1,\infty]$, the estimates are proved only for $q > p$.
The mixed norm in \cite{MR2352490} is defined as
$$
\|f\|_{q,p} = \left( \int_\bR \left( \int_{\bR^d} |f(t,x)|^p \, dx \right)^{q/p} \, dt \right)^{1/q},
\quad
1 < p < q < \infty.
$$
When the coefficients are VMO in $x$ and independent with respect to $t$, mixed-norm estimates for parabolic systems in non-divergence form were established in \cite{MR2286441}.
When the coefficients are measurable functions of only $t$, weighted mixed norm estimates for non-divergence type parabolic equations on a half space or on a wedge were proved in \cite{MR2561181, MR3231530} with power type weights.
Recently, in \cite{GV14} Gallarati and Veraar proved $L_{p,q}$-estimates for any $p, q \in (1,\infty)$ when the coefficients are uniformly continuous in the spatial variables and measurable in time.
To the best of the authors' knowledge, our results regarding mixed norms are the first to deal with not only the case for arbitrary $p, q \in (1,\infty)$ but also higher order (including second order) elliptic and parabolic systems/equations with BMO coefficients.
It is unclear to us whether it is possible to prove such mixed-norm estimates without using $A_p$ weights.

In Section \ref{sec06} we further relax the regularity assumptions on coefficients for second order elliptic and parabolic equations in non-divergence form.
The main feature of the coefficients is that they have no regularity assumptions in one spatial variable.
Such coefficients are considered in \cite{MR2338417, MR2300337, MR2332574, MR2644213, MR2584743, MR2896169, MR2833589} for equations defined in the whole Euclidean space or on a half space with the Lebesgue measure and no weights.
In a recent paper \cite{MR3318165}, the authors investigated second order elliptic and parabolic equations in both divergence and non-divergence form with the same class of coefficients as in Section \ref{sec06} in weighted Sobolev spaces with weights being certain powers of the distance function to the boundary.
These are the same weights used by Krylov, for instance, in \cite{MR1708104}.
Note that the powers of the distance function in \cite{MR1708104} vary with the order of derivatives and, depending on the power, such weights may not be in the class of $A_p$ weights.
Thus the results in \cite{MR1708104} cannot be directly deduced from those in this paper.
On the other hand, Theorem \ref{thm5.4} generalizes the main result in \cite{DKZ14} on weighted $L_p$ estimates for the Neumann boundary value problem. See Remark \ref{rem0811}.

Finally, in Section \ref{sec07} we prove a priori weighted $L_{p,q}$-estimates for higher order (including second order) elliptic and parabolic systems in divergence form.
The coefficients are, roughly speaking, locally measurable in one direction and have small mean oscillations with respect to the other directions in small balls or cylinders.
If no weights and unmixed norms are assumed, the results in Section \ref{sec06} have been developed in \cite{MR2835999}.
Concerning the second order equations/systems case, see, for instance, \cite{MR2601069, MR2764911, MR2800569, MR2680179, MR3073000} and references therein.
Recently, in \cite{MR3461425, MR3467697} the authors treated divergence type second order parabolic systems in Sobolev and Orlicz spaces with $A_p$ weights and unmixed norms.
A noteworthy difference
is that in this paper the weights are in $A_p$ for $L_{p,q}$-estimates when $p=q$, whereas in \cite{MR3461425, MR3467697} the weights are in $A_{p/2}$ for $L_p$-estimates.
Due to the property of Muckenhoupt weights, the set $A_p$ is strictly larger than $A_{p/2}$.

Throughout Sections \ref{sec4}, \ref{sec06}, and \ref{sec07}, the main approach is the mean oscillation estimates combined with Fefferman-Stein type inequalities.
For this approach, see, for instance, \cite{MR2304157, MR2435520, MR2771670}.
When deriving desired mean oscillation estimates, we take full advantage of the existence and uniqueness results as well as unmixed $L_p$-estimates for second and higher order elliptic and parabolic equations/systems in Sobolev spaces without weights proved in \cite{MR2771670,MR2300337, MR2833589, MR2835999}.
As to divergence type systems in Reifenberg flat domains, here we prove a priori $L_{p,q}$-estimates using sharp functions, whereas previously a level set type argument in the spirit of \cite{MR1486629} is used.
To the best of the authors' knowledge, sharp functions have not been previously used when treating equations/systems on Reifenberg flat domains.
Indeed, this was mainly due to the lack of the corresponding Fefferman-Stein theorem.
For equations/systems in Reifenberg flat domain and the level set type argument, see, for instance, \cite{MR2069724, MR2835999}.
Utilizing sharp functions makes it possible to derive mixed-norm estimates from mean oscillation estimates via the Fefferman-Stein theorem with mixed norms.

The paper is organized as follows.
In Sections \ref{sec1} and \ref{sec03}, we present generalized versions of the Fefferman-Stein theorem on sharp functions and their proofs.
In Section \ref{sec3}, we introduce some notation and function spaces to be used in the later sections.
Finally, as hinted above, in Sections \ref{sec4}, \ref{sec06}, and \ref{sec07}, we establish a priori weighted $L_{p,q}$-estimates for elliptic and parabolic equations/systems in either non-divergence or divergence form.
In Section \ref{sec8}, we explain how to derive the corresponding existence results from the a priori estimates in the previous sections.
In Appendix \ref{appendix01}, we give a detailed proof of a refined Rubio de Francia extrapolation theorem and an auxiliary lemma, the latter of which is used in Section \ref{sec07}.

\section{Generalized Fefferman-Stein theorem}
                            \label{sec1}

Let $\cX$ be a set. Recall that a nonnegative symmetric function on $\cX \times \cX$  is called a quasi-metric on $\cX$ if there exists a constant $K_1$ such that
\begin{equation}
							\label{eq0709_1}
\rho(x,  y)  \le  K_1(\rho(x,  z)  + \rho(z,  y))
\end{equation}
for any $x, y, z \in \cX$, and $\rho(x, y)=0$ if and only if $x = y$. We denote balls in $\cX$ by
$$
B_r(x)=\{y\in \cX\,:\,\rho(x,y)<r\}.
$$
Let $\mu$ be a complete Borel measure defined on a $\sigma$-algebra on $\cX$ which contains all the balls in $\cX$, and satisfy the doubling property: there  exists  a constant $K_2$ such that for any $x\in \cX$ and  $r>0$,
\begin{equation}
							\label{eq0709_2}
0 < \mu(B_{2r}(x)) \le K_2\mu(B_r(x)) < \infty.
\end{equation}
Throughout the paper, we assume that the Lebesgue differentiation theorem holds in $(\cX,\mu)$.
For this theorem one may assume that $\mu$ is Borel regular or the set of continuous functions is dense in $L_1(\cX, \mu)$. See \cite{MR2867756, MR0442579} and references therein.
We say that $(\cX,\rho,\mu)$ is a space of homogeneous type if $\cX$ is a set
endowed with a quasi-metric $\rho$ and $(\cX,\rho,\mu)$ satisfies the above assumptions.
We also assume that balls $B_r(x)$ are open in $\cX$.

Due to a result by Christ \cite[Theorem 11]{MR1096400}, there exists a filtration of partitions (also called dyadic decompositions) of $\cX$ in the following sense.

\begin{theorem}
							\label{thm0326}
Let $(\cX,\rho,\mu)$ be a space of homogeneous type as described above.
For each $n\in\bZ$, there is a collection of disjoint open subsets $\bC_n:=\{Q_\alpha^n\,:\,\alpha\in I_n\}$ for some index set $I_n$, which satisfy the following properties
\begin{enumerate}
\item For any $n\in \bZ$, $\mu(\cX\setminus \bigcup_{\alpha}Q_\alpha^n)=0$;
\item For each $n$ and $\alpha\in I_n$, there is a unique $\beta\in I_{n-1}$ such that $Q_\alpha^n\subset Q_\beta^{n-1}$;
\item For each $n$ and $\alpha\in I_n$, $\operatorname{diam}(Q_\alpha^n)\le N_0\delta^n$;
\item Each $Q_\alpha^n$ contains some ball $B_{\varepsilon_0\delta^n}(z_\alpha^n)$
\end{enumerate}
for some constants $\delta\in (0,1)$, $\varepsilon_0>0$, and $N_0$ depending only on $K_1$ and $K_2$.
\end{theorem}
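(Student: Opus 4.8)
The plan is to build the dyadic system in three stages: reference points at every scale, a tree structure linking scales, and then the cubes themselves.

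First I would fix $\delta\in(0,1/(2K_1))$ and, for each $n\in\bZ$, use Zorn's lemma to choose $\{z_\alpha^n:\alpha\in I_n\}$ maximal among subsets of $\cX$ satisfying $\rho(z_\alpha^n,z_\beta^n)\ge\delta^n$ for $\alpha\ne\beta$. Maximality forces $\cX=\bigcup_\alpha B_{\delta^n}(z_\alpha^n)$, while \eqref{eq0709_1} shows the balls $B_{\delta^n/(2K_1)}(z_\alpha^n)$ are pairwise disjoint. Since the level-$(n-1)$ balls of radius $\delta^{n-1}$ cover $\cX$, each $z_\alpha^n$ has a \emph{parent} $\pi_n(\alpha)\in I_{n-1}$ with $\rho(z_\alpha^n,z_{\pi_n(\alpha)}^{n-1})<\delta^{n-1}$; iterating $\pi$ turns $\bigsqcup_n I_n$ into a forest, and I write $\gamma\preceq\alpha$ when $\gamma\in I_m$ ($m\ge n$) descends from $\alpha\in I_n$. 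Chaining the parent inequalities and summing the geometric series $\sum_{j\ge n}K_1^{\,j-n+1}\delta^j$, which converges because $K_1\delta<1$, gives the key uniform estimate $\rho(z_\alpha^n,z_\gamma^m)\le C_1\delta^n$ whenever $\gamma\preceq\alpha$, with $C_1=C_1(K_1,\delta)$; this control of subtrees is the geometric backbone of everything that follows.

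Next I would construct the cubes by an iterated refinement. If $\mu(\cX)<\infty$, then by doubling $\cX$ is bounded, so for all sufficiently negative $n$ one has $I_n=\{z^n\}$ and $\cX=B_{\delta^n}(z^n)$; starting from such a coarsest level, I define cubes inductively to finer scales by assigning each $x$ in a level-$(n-1)$ cell to the child (under $\pi_n$) whose center is nearest to $x$, ties broken by a fixed well-ordering of $I_n$. This yields, at each level, pairwise disjoint sets that cover $\cX$, refine the previous level, and hence give (2); one then sets $Q_\alpha^n=\operatorname{int}(\hat Q_\alpha^n)$ for the resulting half-open cells $\hat Q_\alpha^n$. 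The real work is the quantitative control: since the children of a given cell need not themselves cover it at scale $\delta^n$, cells can ``bulge'', and one must show — using the separation and covering of the nets, the parent estimate, and the doubling property — that the bulge stays within a fixed multiple of $\delta^n$ (yielding $\operatorname{diam}\hat Q_\alpha^n\le N_0\delta^n$, hence (3)), that $z_\alpha^n$ sits deep enough in its own cell that $B_{\varepsilon_0\delta^n}(z_\alpha^n)\subset\hat Q_\alpha^n$ (hence (4)), and that each cell has \emph{small boundary}, i.e.\ $\mu(\{x\in\hat Q_\alpha^n:\rho(x,\cX\setminus\hat Q_\alpha^n)\le t\delta^n\})\le Ct^\eta\mu(\hat Q_\alpha^n)$; the last estimate forces $\mu(\hat Q_\alpha^n\setminus Q_\alpha^n)=0$, giving (1), and $\varepsilon_0,N_0$ depend only on $K_1,K_2$.

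For the case $\mu(\cX)=\infty$ there is no coarsest scale, so I would instead run the refinement from an arbitrary starting level $N$ to obtain cells $\hat Q^n_{\alpha,N}$ for $n\ge N$, and let $N\to-\infty$: the uniform estimates of the previous paragraph confine, for each fixed $n$, the family $\{\hat Q^n_{\alpha,N}\}_N$ to a controlled class, so a diagonal argument over all levels simultaneously extracts a limiting system still satisfying disjointness, covering, nesting, the two-sided ball bounds, and the small-boundary estimate; taking interiors then finishes the construction. I expect the main obstacle to be exactly this interplay — carrying the \emph{quantitative} bulge and small-boundary bounds through the refinement uniformly in the scale, and then through the limit $N\to-\infty$ consistently enough that nesting survives — rather than any single isolated step, and the bookkeeping that pins $\delta$, $\varepsilon_0$, $N_0$ down to $K_1,K_2$ alone is where the argument is most technical. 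Alternatively, one can sidestep the limiting step by using the reference-point and parent setup of Hyt\"onen--Kairema, in which the compatibility needed for the refinement is built into the net from the outset.
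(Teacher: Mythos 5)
The paper does not actually prove this statement: it is quoted verbatim from Christ \cite[Theorem 11]{MR1096400} (with the Hyt\"onen--Kairema construction \cite{MR2901199} mentioned as a refinement), so a self-contained proof must essentially reconstruct Christ's argument. Your outline does follow that template in its first stage — maximal $\delta^n$-separated nets via Zorn's lemma, a parent relation between consecutive scales, and the geometric-series bound $\rho(z_\alpha^n,z_\gamma^m)\le C_1\delta^n$ for descendants — and that part is fine.

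However, as a proof there is a genuine gap: everything that carries the actual content of the theorem is announced ("one must show\dots") rather than proved, namely the diameter bound (3), the interior-ball property (4), and above all the thin-boundary estimate from which (1) follows; these are exactly the steps where $K_1$, $K_2$ and the doubling property enter. Moreover, the specific mechanism you propose — assign each point of a level-$(n-1)$ cell to the nearest child center — is problematic for (4): the parent relation is defined only through the centers, so a child center $z_\alpha^n$ with $\rho(z_\alpha^n,z^{n-1}_{\pi_n(\alpha)})<\delta^{n-1}$ need not lie in the cell of its parent at all (that cell was built at earlier stages and is not the ball $B_{\delta^{n-1}}(z^{n-1}_{\pi_n(\alpha)})$); hence the cell attached to $\alpha$, being a subset of the parent's cell, can miss $B_{\varepsilon_0\delta^n}(z_\alpha^n)$ entirely, and nothing in the nearest-center rule yields the small-boundary bound needed for (1). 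Christ's construction sidesteps this by building the cube of $(n,\alpha)$ directly from the whole subtree of descendants of $z_\alpha^n$ (so each center sits deep inside its own cube by the $C_1\delta^n$ estimate) and proving the thin-boundary inequality by a separate doubling argument; this also makes the construction local in scale, eliminating the coarsest-level/diagonal limit you invoke when $\mu(\cX)=\infty$ — a step you likewise leave unverified, since disjointness, openness and the nesting (2) do not automatically survive such a limit. So the skeleton is the right one, but the quantitative core of the theorem is missing from the proposal.
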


Instead of the above partitions one may use a significantly refined version of dyadic decompositions in \cite{MR2901199}.

Denote $\tilde \cX=\bigcap_{n\in \bZ}\bigcup_{\alpha}Q_\alpha^n$. By properties (1) and (2) in Theorem \ref{thm0326}, we have
$$
\mu(\cX\setminus \tilde \cX)=0,\quad \tilde \cX=\lim_{n\nearrow \infty}\bigcup_{\alpha}Q_\alpha^n.
$$
By properties (2), (3), and (4) in Theorem \ref{thm0326}, we have
\begin{equation}
                                    \label{eq4.57}
\mu(Q_\beta^{n-1})\le N_1\mu(Q_\alpha^{n})
\end{equation}
for any $n$, $\alpha\in I_n$, $\beta\in I_{n-1}$ such that $Q_\alpha^n\subset Q_\beta^{n-1}$.

We denote $f \in L_p(\cX,\mu)$ or $f \in L_p(\mu)$, $p \in [1,\infty)$, if
$$
\int_\cX |f|^p\, \mu(dx) = \int_\cX |f|^p \, d\mu < \infty.
$$
For a function $f\in L_{1,\text{loc}}(\cX,\mu)$ and $n\in \bZ$, we set
$$
f_{|n}(x)=\dashint_{Q_\alpha^n}f(y)\,\mu(dy),
$$
where $x\in Q_\alpha^n \in \bC_n$.
For $x\in \tilde \cX$, we define the (dyadic) maximal function and sharp function of $f$ by
\begin{align*}
\cM_{\text{dy}} f(x)&=\sup_{n<\infty}|f|_{|n}(x),\\
f_{\text{dy}}^{\#}(x)&=\sup_{n<\infty}\dashint_{Q_\alpha^n\ni x}|f(y)-f_{|n}(x)|\,\mu(dy).
\end{align*}
Since the quantities of concern are the $L_p$ norms and $\mu(\cX\setminus \tilde \cX)=0$, we take the zero extension of them to $\cX\setminus \tilde \cX$.
We also define the maximal function and sharp function of $f$ over balls by
\begin{align*}
\cM f(x)&=\sup_{B_r(x^*) \ni x}\dashint_{B_r(x^*)}|f(y)|\,\mu(dy),\\
f^{\#}(x)&=\sup_{B_r(x^*) \ni x}\dashint_{B_r(x^*)}|f(y)-(f)_{B_r(x^*)}|\,\mu (dy),
\end{align*}
where the supremums are taken with respect to all balls $B_r(x^*)$ containing $x$ and
$$
(f)_{B_r(x^*)}=\dashint_{B_r(x^*)} f(y)\,\mu (dy).
$$
The advantage to work on the maximal and sharp functions defined over dyadic decompositions is that one can neglect the geometry of the space.
By Properties (2) and (3) of the filtration in Theorem \ref{thm0326} and the doubling property of $\mu$, it is easily seen that
\begin{equation}
                    \label{eq8.52}
\cM_{\text{dy}} f(x)\le N \cM f(x)\quad \text{and}\quad
f_{\text{dy}}^{\#}(x)\le Nf^{\#}(x)
\quad \mu\text{-a.e.},
\end{equation}
where $N$ is a constant depending only on $K_1$ and $K_2$.

For any $p\in (1,\infty)$, let $A_p=A_p(\mu)= A_p(\cX, d \mu)$ be the set of all nonnegative functions $w$ on $(\cX,\rho,\mu)$ such that
$$
[w]_{A_p}:=\sup_{x_0\in \cX,r>0}\left(\dashint_{B_r(x_0)}w(x)\,d\mu\right)
\left(\dashint_{B_r(x_0)}\big(w(x)\big)^{-1/(p-1)}\,d\mu\right)^{p-1}<\infty.
$$
By H\"older's inequality, we have
\begin{equation}
                                \label{eq7.53}
A_p\subset A_q,\quad
1 \le
[w]_{A_q}\le [w]_{A_p},
\quad 1<p<q<\infty.
\end{equation}
Denote $A_\infty=\bigcup_{p=2}^\infty A_p$.
We write $f \in L_p(\cX, w \, d\mu)$ or $f \in L_p(w \, d \mu)$ if
$$
\int_\cX |f|^p w \, \mu(dx) = \int_\cX |f|^p w \, d\mu < \infty.
$$
We use $\omega(\cdot)$ to denote the measure $\omega(dx)=w\mu(dx)$, i.e., for $A \subset \cX$,
\begin{equation*}
%							\label{eq0316_1}
\omega(A) = \int_A w(x) \, \mu(dx).
\end{equation*}

Throughout this section, we assume that $(\cX,\rho,\mu)$ is a space of homogeneous type with a filtration of partitions from Theorem \ref{thm0326}.
The following Hardy-Littlewood maximal function theorem with $A_p$ weights was obtained in \cite{MR0740173}.
\begin{theorem}
                                    \label{thm1}
Let $p\in (1,\infty)$, $w\in A_p$. Then for any $f\in L_p(w\,d\mu)$, we have
$$
\|\cM f\|_{L_p(w\,d\mu)}\le N\|f\|_{L_p(w\,d\mu)},
$$
where $N=N(K_1,K_2,p,[w]_{A_p})>0$.
If $K_0 \ge 1$ and $[w]_{A_p}\le K_0$, then one can choose $N$ depending only on $K_1$, $K_2$, $p$, and $K_0$.
\end{theorem}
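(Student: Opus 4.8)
The plan is to obtain the strong-type bound by interpolating a \emph{weighted} weak-type inequality against the trivial $L_\infty$ bound, and to recover the endpoint exponent $p$ by exploiting the openness (self-improvement) of the $A_p$ classes. First I would record that $\cM f(x)\le\|f\|_{L_\infty}$ for $\mu$-a.e.\ $x$, independently of the measure, so $\cM$ maps $L_\infty(w\,d\mu)$ to itself with norm at most $1$. Hence, by the Marcinkiewicz interpolation theorem applied to the measure space $(\cX,\omega)$ with $\omega(dx)=w\mu(dx)$, it suffices to prove a weak-type $(p_0,p_0)$ inequality
\[
\omega(\{\cM f>\lambda\})\le N\lambda^{-p_0}\int_\cX|f|^{p_0}w\,d\mu
\]
for \emph{some} exponent $p_0\in(1,p)$, with $N$ controlled by $K_1$, $K_2$, $p_0$, and $[w]_{A_{p_0}}$; interpolating this with the $L_\infty$ bound then gives the strong-type $(q,q)$ bound for every $q\in(p_0,\infty)$, in particular for $q=p$.

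Next I would establish the weak-type $(q,q)$ inequality for arbitrary $q\in(1,\infty)$ and $w\in A_q$. Fix $\lambda>0$; for each $x$ with $\cM f(x)>\lambda$ choose a ball $B_r(x^*)\ni x$ with $\dashint_{B_r(x^*)}|f|\,d\mu>\lambda$. A Vitali-type covering lemma, valid in quasi-metric spaces (the constant $K_1$ in \eqref{eq0709_1} only enters the dilation factor), extracts a pairwise disjoint countable subfamily $\{B_i\}$ whose fixed dilates cover $\{\cM f>\lambda\}$. On each $B_i$, H\"older's inequality gives
\[
\lambda^q\le\left(\dashint_{B_i}|f|\,d\mu\right)^q\le\frac{1}{\mu(B_i)^q}\left(\int_{B_i}|f|^qw\,d\mu\right)\left(\int_{B_i}w^{-1/(q-1)}\,d\mu\right)^{q-1},
\]
and multiplying by $\omega(B_i)$ and invoking the definition of $[w]_{A_q}$ yields $\lambda^q\omega(B_i)\le[w]_{A_q}\int_{B_i}|f|^qw\,d\mu$. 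Summing over the disjoint $B_i$ and using that $w\in A_q$ forces $\omega$ to be doubling (with constant depending on $K_2$ and $[w]_{A_q}$) to pass from each $B_i$ to its dilate produces the weak-type $(q,q)$ bound. Alternatively, one may replace the Vitali argument by the dyadic systems of Theorem \ref{thm0326} (or the adjacent dyadic systems of \cite{MR2901199}), dominate $\cM$ by finitely many dyadic maximal operators, and run a stopping-time/Calder\'on--Zygmund argument on cubes; this also streamlines the next step.

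The remaining and main point is to produce an exponent $p_0\in(1,p)$ with $w\in A_{p_0}$ and $[w]_{A_{p_0}}$ quantitatively controlled. This is the openness of $A_p$: one first proves a reverse H\"older inequality for $A_p$ weights in $(\cX,\rho,\mu)$, namely $\big(\dashint_B w^{1+\eta}\big)^{1/(1+\eta)}\le C\dashint_B w$ for some $\eta=\eta(K_1,K_2,[w]_{A_p})>0$ and $C=C(K_1,K_2,[w]_{A_p})$, via a Calder\'on--Zygmund decomposition (most transparently on the dyadic cubes above), and then applies it to the dual weight $w^{-1/(p-1)}$ to lower the exponent. I expect this reverse-H\"older step to be the technical heart of the argument: it is where the geometry of the space and the doubling constant genuinely enter, and where the dependence of $p_0$ and $[w]_{A_{p_0}}$ on $[w]_{A_p}$ must be tracked. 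Once $p_0$ is in hand, the weak-type $(p_0,p_0)$ bound from the previous paragraph together with the $L_\infty$ bound and Marcinkiewicz interpolation deliver $\|\cM f\|_{L_p(w\,d\mu)}\le N\|f\|_{L_p(w\,d\mu)}$ with $N$ depending only on $K_1$, $K_2$, $p$, and $[w]_{A_p}$; since every constant produced along the way is monotone in $[w]_{A_p}$, imposing $[w]_{A_p}\le K_0$ makes $N$ depend only on $K_1$, $K_2$, $p$, and $K_0$, as claimed.
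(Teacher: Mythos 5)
Your outline is mathematically sound, but note that the paper does not prove Theorem \ref{thm1} at all: it is quoted from Aimar--Mac\'{\i}as \cite{MR0740173}, so there is no ``paper proof'' to match. What you have written is essentially a reconstruction of the argument in that reference (and the classical Muckenhoupt scheme): a weak-type $(q,q)$ bound for $w\in A_q$ obtained from a Vitali-type covering of the level set together with the H\"older/$A_q$ computation $\lambda^q\,\omega(B)\le [w]_{A_q}\int_B|f|^q w\,d\mu$ and the doubling of $\omega$; self-improvement $A_p\subset A_{p_0}$ for some $p_0<p$ via a reverse H\"older inequality applied to the dual weight $w^{-1/(p-1)}$ --- which in this generality is exactly the Mac\'{\i}as--Segovia result \cite{MS1981} that the paper invokes elsewhere (Lemma \ref{lem0331_1}); and Marcinkiewicz interpolation against the trivial $L_\infty$ bound on the measure space $(\cX,\omega)$. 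Two small technical points you should make explicit if you write this out in full: first, the balls selected in the covering step need not have bounded radii (and $\cX$ may even be bounded with $\mu(\cX)<\infty$), so one should work with the truncated maximal operator over radii $\le R$, prove the weak-type bound uniformly in $R$, and let $R\to\infty$; second, for the claimed dependence of $N$ only on $K_1,K_2,p,K_0$ when $[w]_{A_p}\le K_0$, you need the reverse H\"older exponent and constant (hence $p_0$ and $[w]_{A_{p_0}}$) to be chosen uniformly for all weights with $[w]_{A_p}\le K_0$, which the argument in \cite{MS1981} does provide. With those caveats, your route is the standard one and, constants aside, coincides with the proof behind the citation rather than offering a genuinely different mechanism.
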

Note that in Theorem \ref{thm1}, $\mu(\cX)$ can be either finite or infinite, and $\cX$ is allowed to be a bounded space with respect to $\rho$.

Our first result of this paper is the following generalization of the Fefferman-Stein theorem on sharp functions with $A_\infty$ weights.

\begin{theorem}
                                    \label{thm2}
Let $p,q \in (1, \infty)$, $\varepsilon>0$, $K_0 \ge 1$, $w\in A_q$, $[w]_{A_q}\le K_0$, and $f\in L_p(w\,d\mu)$.

\begin{enumerate}
\item[(i)] When $\mu(\cX)<\infty$, we have
\begin{equation}
                            \label{eq1.51}
\|f\|_{L_p(w\,d\mu)}\le N\|f_{\operatorname{dy}}^{\#}\|_{L_p(w\,d\mu)}+
N(\mu(\cX))^{-1}\big(\omega(\operatorname{supp} f)\big)^{\frac 1 p}\|f\|_{L_1(\mu)}.
\end{equation}
If in addition we assume that $\operatorname{supp} f \subset B_{r_0}(x_0)$ and $\mu(B_{r_0}(x_0))\le \varepsilon\mu(\cX)$ for some $r_0\in (0,\infty)$ and $x_0\in \cX$, then
\begin{equation}
                                        \label{eq3.41}
\|f\|_{L_p(w\,d\mu)}\le N\|f_{\operatorname{dy}}^{\#}\|_{L_p(w\,d\mu)}+
N\varepsilon \|f\|_{L_p(w\,d\mu)}.
\end{equation}
Here $N=N(K_1,K_2,p,q,K_0)>0$. In particular,  when $\varepsilon$ is sufficiently small depending on $K_1$, $K_2$, $p$, $q$, $K_0$, it holds that
\begin{equation}
                            \label{eq1.54}
\|f\|_{L_p(w\,d\mu)}\le N\|f_{\operatorname{dy}}^{\#}\|_{L_p(w\,d\mu)}.
\end{equation}

\item[(ii)] When $\mu(\cX)=\infty$, \eqref{eq1.54} holds.
\end{enumerate}
\end{theorem}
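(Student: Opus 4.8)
The plan is to run a dyadic good-$\lambda$ argument, using in place of the Hardy--Littlewood maximal theorem only the weak-type $(1,1)$ bound for the dyadic maximal function relative to $\mu$ (immediate from the partition structure) together with the quantitative $A_\infty$ property of $w$. Since $w\in A_q$ with $[w]_{A_q}\le K_0$, the classical chain $A_q\Rightarrow A_\infty$ (through a reverse H\"older inequality) gives $C_0\ge1$, $\theta\in(0,1)$ depending only on $K_1,K_2,q,K_0$ with $\omega(E)\le C_0(\mu(E)/\mu(B))^{\theta}\omega(B)$ for all measurable $E\subseteq B$ and all balls $B$; because $B_{\varepsilon_0\delta^n}(z_\alpha^n)\subseteq Q_\alpha^n\subseteq B_{N_0\delta^n}(z_\alpha^n)$ and $\mu$ is doubling, the same holds with $B$ replaced by any dyadic cube. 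The key step is then
$$
\omega\big(\{\cM_{\operatorname{dy}}f>A\lambda,\ f_{\operatorname{dy}}^{\#}\le b\lambda\}\big)\le C_1 b^{\theta}\,\omega\big(\{\cM_{\operatorname{dy}}f>\lambda\}\big)
$$
for suitable $A=A(K_1,K_2)>1$, $C_1=C_1(K_1,K_2,q,K_0)$, all $b\in(0,1]$, and all $\lambda>0$ (all $\lambda\ge\lambda_0:=\|f\|_{L_1(\mu)}/\mu(\cX)$ when $\mu(\cX)<\infty$). I would prove it by a Calder\'on--Zygmund stopping time: write $\{\cM_{\operatorname{dy}}f>\lambda\}$ (mod null) as the union of disjoint maximal dyadic cubes $Q_j$ with $\dashint_{Q_j}|f|>\lambda$, note $\dashint_{Q_j}|f|\le N_1\lambda$ by \eqref{eq4.57}, observe that on $Q_j$ the set $\{\cM_{\operatorname{dy}}f>A\lambda\}$ equals the level set of the maximal function over subcubes of $Q_j$, split $f=(f-(f)_{Q_j})+(f)_{Q_j}$ and apply the weak-$(1,1)$ bound to the first piece (legitimate since $|(f)_{Q_j}|\le N_1\lambda$ and $A$ is large) to get $\mu(\{x\in Q_j:\cM_{\operatorname{dy}}f(x)>A\lambda\})\le N\lambda^{-1}\int_{Q_j}|f-(f)_{Q_j}|\,d\mu$; if $Q_j$ meets $\{f_{\operatorname{dy}}^{\#}\le b\lambda\}$ then $\dashint_{Q_j}|f-(f)_{Q_j}|\le b\lambda$, so this is $\le Nb\,\mu(Q_j)$, and the $A_\infty$ comparison upgrades it to $\le C_1 b^{\theta}\omega(Q_j)$; summation over $j$ finishes it.

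Granting this, and assuming (as we may) $\|f_{\operatorname{dy}}^{\#}\|_{L_p(w\,d\mu)}<\infty$, the case $\mu(\cX)=\infty$ is standard: multiplying by $p\lambda^{p-1}$, integrating over $(0,\infty)$, and using $\{\cM_{\operatorname{dy}}f>A\lambda\}\subseteq\{\cM_{\operatorname{dy}}f>A\lambda,\,f_{\operatorname{dy}}^{\#}\le b\lambda\}\cup\{f_{\operatorname{dy}}^{\#}>b\lambda\}$ gives, after rescaling,
$$
A^{-p}\|\cM_{\operatorname{dy}}f\|_{L_p(w\,d\mu)}^p\le C_1 b^{\theta}\|\cM_{\operatorname{dy}}f\|_{L_p(w\,d\mu)}^p+b^{-p}\|f_{\operatorname{dy}}^{\#}\|_{L_p(w\,d\mu)}^p ;
$$
fixing $b=b(A,C_1,\theta,p)$ with $C_1 b^{\theta}\le\tfrac12 A^{-p}$ and absorbing, then using $|f|\le\cM_{\operatorname{dy}}f$ $\mu$-a.e.\ (Lebesgue differentiation along the filtration), yields \eqref{eq1.54}. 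The absorption needs $\|\cM_{\operatorname{dy}}f\|_{L_p(w\,d\mu)}<\infty$ a priori, which I would arrange by first treating $f$ bounded with bounded support (then $f\in L_1(\mu)$, $\cM_{\operatorname{dy}}f$ is finite $\mu$-a.e., and one works on $\int_0^R$ and lets $R\uparrow\infty$) and then passing to the limit by truncation and dominated convergence; this is the step that requires care but no new ideas.

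When $\mu(\cX)<\infty$ the new point is the coarsest scale: a doubling measure space of finite total measure is bounded, so $\cX$ coincides mod null with a ball of large radius and with the single cube of $\bC_n$ for all sufficiently negative $n$, whence $\cM_{\operatorname{dy}}f\ge\dashint_{\cX}|f|=\lambda_0$ on $\cX$ and the good-$\lambda$ inequality is available only for $\lambda\ge\lambda_0$ (we may assume $\|f\|_{L_1(\mu)}<\infty$, else \eqref{eq1.51} is trivial). Splitting $\|f\|_{L_p(w\,d\mu)}^p=\int_0^{A\lambda_0}pt^{p-1}\omega(\{|f|>t\})\,dt+\int_{A\lambda_0}^\infty pt^{p-1}\omega(\{|f|>t\})\,dt$, the first integral is $\le(A\lambda_0)^p\,\omega(\operatorname{supp}f)$ because $\{|f|>t\}\subseteq\operatorname{supp}f$ — this is the $p$-th power of the error term in \eqref{eq1.51} — and the second, via $\omega(\{|f|>t\})\le\omega(\{\cM_{\operatorname{dy}}f>t\})$ and the good-$\lambda$ inequality with the absorption carried out on $\int_{A\lambda_0}^R$, is $\le N\|f_{\operatorname{dy}}^{\#}\|_{L_p(w\,d\mu)}^p+N\lambda_0^p\,\omega(\{\cM_{\operatorname{dy}}f>\lambda_0\})$. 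For the residual boundary term I would again decompose $\{\cM_{\operatorname{dy}}f>\lambda_0\}=\bigcup_j Q_j$ and separate the $Q_j$ according to whether $\mu(Q_j\cap\operatorname{supp}f)\le(1-\eta)\mu(Q_j)$ or not, with $\eta=\eta(C_0,\theta)$: in the first case $\dashint_{Q_j}|f|>\lambda_0$ with $f$ occupying at most a $(1-\eta)$-fraction of $Q_j$ forces $\dashint_{Q_j}|f-(f)_{Q_j}|\ge c_\eta\lambda_0$, hence $f_{\operatorname{dy}}^{\#}\ge c_\eta\lambda_0$ on $Q_j$, so $\lambda_0^p\sum_{\text{type I}}\omega(Q_j)\le c_\eta^{-p}\|f_{\operatorname{dy}}^{\#}\|_{L_p(w\,d\mu)}^p$; in the second case the $A_\infty$ comparison gives $\omega(Q_j)\le N_\eta\,\omega(Q_j\cap\operatorname{supp}f)$, so $\lambda_0^p\sum_{\text{type II}}\omega(Q_j)\le N_\eta\lambda_0^p\,\omega(\operatorname{supp}f)$. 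Combining and taking $p$-th roots gives \eqref{eq1.51}.

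Finally \eqref{eq3.41} follows from \eqref{eq1.51}: with $\operatorname{supp}f\subseteq B_{r_0}(x_0)$ and $\mu(B_{r_0}(x_0))\le\varepsilon\mu(\cX)$, H\"older's inequality gives $\|f\|_{L_1(\mu)}\le\|f\|_{L_p(w\,d\mu)}\big(\int_{B_{r_0}(x_0)}w^{-1/(p-1)}\,d\mu\big)^{1/p'}$; since $w^{-1/(q-1)}\in A_{q'}(\mu)\subseteq A_\infty(\mu)$, its reverse H\"older inequality controls $\dashint_{B_{r_0}}w^{-1/(p-1)}$ by a power of $\dashint_{B_{r_0}}w^{-1/(q-1)}$, which with the $A_q$ bound gives $\omega(B_{r_0})^{1/p}\big(\int_{B_{r_0}}w^{-1/(p-1)}\big)^{1/p'}\le N\mu(B_{r_0})$ with $N=N(K_1,K_2,p,q,K_0)$; hence the error term in \eqref{eq1.51} is $\le N\,\mu(B_{r_0})\mu(\cX)^{-1}\|f\|_{L_p(w\,d\mu)}\le N\varepsilon\|f\|_{L_p(w\,d\mu)}$, and \eqref{eq1.54} is the case $N\varepsilon\le\tfrac12$. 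The main obstacle is the good-$\lambda$ inequality — transplanting the Euclidean Calder\'on--Zygmund/Fefferman--Stein mechanism to the quasi-metric dyadic setting and making the $A_\infty$ transfer quantitative — together with the bookkeeping at the coarsest scale in the finite-measure case that produces $\omega(\operatorname{supp}f)$ rather than $\omega(\cX)$ in the error term.
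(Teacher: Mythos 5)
Your good-$\lambda$ scheme is a legitimate alternative engine and is genuinely different from the paper's route: the paper never proves a good-$\lambda$ inequality for $\cM_{\operatorname{dy}}f$, but instead estimates the level sets of $f$ itself by a stopping time and $f^{\#}_{\operatorname{dy}}$ (Lemma \ref{lem2.2}), integrates, and absorbs via the weighted Hardy--Littlewood theorem plus Young's inequality. However, your passage from \eqref{eq1.51} to \eqref{eq3.41} contains a genuine gap when $q>p$. You need the $A_p$-type inequality
$\omega(B_{r_0})^{1/p}\bigl(\int_{B_{r_0}}w^{-1/(p-1)}\,d\mu\bigr)^{1/p'}\le N(K_1,K_2,p,q,K_0)\,\mu(B_{r_0})$,
and you claim it follows from a reverse H\"older inequality for $w^{-1/(q-1)}\in A_{q'}$. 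But the reverse H\"older self-improvement exponent depends only on $K_0$ and cannot reach the exponent $(q-1)/(p-1)$, which may be arbitrarily large; moreover the claim is simply false: on $\bR^d$ with Lebesgue measure, $w(x)=|x|^{d(p-1)}$ belongs to $A_q$ for every $q>p$ (since $d(p-1)<d(q-1)$), yet $w^{-1/(p-1)}=|x|^{-d}$ is not locally integrable, so the left-hand side is infinite for balls centered at the origin. The same obstruction sits inside your finite-measure bookkeeping itself, because bounding the error term $\lambda_0^p\,\omega(\operatorname{supp}f)$ by $\varepsilon^p\|f\|^p_{L_p(w\,d\mu)}$ requires exactly this comparison of $\|f\|_{L_1(\mu)}$ with $\|f\|_{L_p(w\,d\mu)}$ on $B_{r_0}$, which Lemma \ref{lem0320_1} provides only when $w\in A_p$. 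The paper resolves this by matching exponents: it proves the statement in $L_q$ with $w\in A_q$ and then applies it to $|f|^{p/q}$, returning to $f^{\#}_{\operatorname{dy}}$ through \eqref{eq3.00}; some such device (or a direct argument valid for $w\in A_q\setminus A_p$) is missing from your proposal. For $q\le p$ your H\"older step is just the $A_p$ condition and is fine.

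A secondary, related concern is the a priori finiteness needed for your absorption. When $q>p$ the operator $\cM_{\operatorname{dy}}$ is not bounded on $L_p(w\,d\mu)$, and the reduction to ``bounded with bounded support'' is not routine to undo: height truncation satisfies $(f_k)^{\#}_{\operatorname{dy}}\le 2 f^{\#}_{\operatorname{dy}}$ but does not give bounded support, while spatial truncation destroys control of the sharp function, so ``pass to the limit by truncation and dominated convergence'' needs a real argument (or again the power trick). The paper sidesteps this entirely because its key estimate concerns the level sets of $f$ rather than of $\cM_{\operatorname{dy}}f$, so the layer-cake integral equals $\|f\|_{L_p(w\,d\mu)}^p$, finite by hypothesis, and the only absorption uses $\|\cM_{\operatorname{dy}}f\|_{L_p(w\,d\mu)}\le N\|f\|_{L_p(w\,d\mu)}$, which is legitimate precisely because the exponents have been matched first. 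Your treatment of \eqref{eq1.51} itself (the good-$\lambda$ inequality restricted to $\lambda\ge\lambda_0$, the type I/II splitting of the coarse-scale cubes, and the $A_\infty$ comparison) is correct and would go through for all $p,q$ once these two points are repaired.
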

In the special case when $\cX=\bR^d$ with the Lebesgue measure, Theorem \ref{thm2} was established in \cite{MR946637}.

Our second result is a further generalization of Theorem \ref{thm2} in the spirit of \cite[Theorem 2.7]{MR2540989}.

\begin{theorem}							\label{thm3}
Let $p,q \in (1, \infty)$, $\varepsilon>0$, $K_0 \ge 1$, $w\in A_q$, and $[w]_{A_q}\le K_0$.
Suppose that
$$
f\in L_p(w \, d \mu),\quad g\in L_p(w\,d\mu), \quad v\in L_p(w\,d\mu),\quad |f| \le v,
$$
and for each $n \in \bZ$ and $Q \in \bC_n$,
there exists a measurable function $f^Q$ on $Q$
such that $|f| \le f^Q \le v$ on $Q$ and
\begin{equation}							 \label{eq10.53}
\dashint_Q |f^Q(x) - \left(f^Q\right)_Q| \,\mu(dx)
\le g(y)\quad \forall \,y\in Q.
\end{equation}

\begin{enumerate}
\item[(i)] When $\mu(\cX)<\infty$, we have
\begin{equation*}
%                            \label{eq1.51b}
\|f\|^p_{L_p(w\,d\mu)}\le N\|g\|^\beta _{L_p(w\,d\mu)}\|v \|_{L_p(w\,d\mu)}^{p-\beta}+
N(\mu(\cX))^{-p} \omega (\operatorname{supp} v)\|v\|^p_{L_1(\mu)}.
\end{equation*}
If in addition we assume that $\operatorname{supp} v \subset B_{r_0}(x_0)$ and $\mu(B_{r_0}(x_0))\le \varepsilon\mu(\cX)$ for some $r_0\in (0,\infty]$ and $x_0\in \cX$, then
\begin{equation}
                                \label{eq12.09}
\| f \|_{L_p(w\,d\mu)}^p
\le N\|g\|^\beta _{L_p(w\,d\mu)}\|v \|_{L_p(w\,d\mu)}^{p-\beta}
+N\varepsilon^p\|v \|_{L_p(w\,d\mu)}^{p}.
\end{equation}
Here $(\beta, N) =  (\beta,N)(K_1,K_2,p,q,K_0)>0$.

\item[(ii)] When $\mu(\cX)=\infty$, \eqref{eq12.09} holds with $\varepsilon=0$.
\end{enumerate}
\end{theorem}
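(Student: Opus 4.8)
Since replacing $f$ by $|f|$ affects neither \eqref{eq10.53} nor the conclusion, and $|f|\in L_p(w\,d\mu)$ with $|f|\le f^Q\le v$, I would first reduce to the case $f\ge 0$. The plan is then to follow the scheme of the proof of Theorem \ref{thm2}; the essentially new input is a good-$\lambda$ inequality that plays the role taken there by the sharp function.

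\emph{Step 1: a good-$\lambda$ inequality.} I would prove that for every $\gamma\in(0,1)$ and every $\lambda>0$ --- and, when $\mu(\cX)<\infty$, every $\lambda$ above the threshold $\lambda_0:=(\mu(\cX))^{-1}\|v\|_{L_1(\mu)}$ ---
\begin{equation*}
\omega\big(\{\cM_{\operatorname{dy}}f>2N_1\lambda\}\big)\le C\gamma^{\nu}\,\omega\big(\{\cM_{\operatorname{dy}}v>\lambda\}\big)+\omega\big(\{\cM_{\operatorname{dy}}g>\gamma\lambda\}\big),
\end{equation*}
where $N_1$ is the constant from \eqref{eq4.57} and $\nu=\nu(K_1,K_2,q,K_0)>0$ is an exponent in the quantitative $A_\infty$ estimate for $w$. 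To do so, decompose $\{\cM_{\operatorname{dy}}v>\lambda\}$ into the maximal dyadic cubes $Q_j$ with $\dashint_{Q_j}v>\lambda$; by \eqref{eq4.57} the parent of $Q_j$ has $v$-average at most $\lambda$, hence $\dashint_{Q_j}v\le N_1\lambda$ and $(f^{Q_j})_{Q_j}=\dashint_{Q_j}f^{Q_j}\le\dashint_{Q_j}v\le N_1\lambda$. If $Q_j$ meets $\{\cM_{\operatorname{dy}}g\le\gamma\lambda\}$ then $\dashint_{Q_j}g\le\gamma\lambda$, so \eqref{eq10.53}, averaged over $Q_j$, gives $\dashint_{Q_j}|f^{Q_j}-(f^{Q_j})_{Q_j}|\le\gamma\lambda$. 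Since $0\le f\le f^{Q_j}$ on $Q_j$ and every dyadic cube containing $Q_j$ has $f$-average at most $N_1\lambda$, on $\{\cM_{\operatorname{dy}}f>2N_1\lambda\}\cap Q_j$ the maximal function of $|f^{Q_j}-(f^{Q_j})_{Q_j}|$ over dyadic sub-cubes of $Q_j$ exceeds $N_1\lambda$, and the weak $(1,1)$ bound for the dyadic maximal function on $Q_j$ yields $\mu(\{\cM_{\operatorname{dy}}f>2N_1\lambda\}\cap Q_j)\le\gamma\,\mu(Q_j)$. Now $w\in A_q\subset A_\infty$, so $\omega$ is doubling and satisfies $\omega(E)\le C(\mu(E)/\mu(B))^{\nu}\omega(B)$ for every ball $B$ and measurable $E\subset B$; by properties (3)--(4) of the filtration each $Q_j$ is sandwiched between two balls of comparable radii, so the same estimate holds with $B$ replaced by $Q_j$. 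Hence $\omega(\{\cM_{\operatorname{dy}}f>2N_1\lambda\}\cap Q_j)\le C\gamma^{\nu}\omega(Q_j)$; summing over these $Q_j$ and bounding the remaining cubes by $Q_j\subset\{\cM_{\operatorname{dy}}g>\gamma\lambda\}$ yields the displayed inequality.

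\emph{Step 2: integration and choice of $\gamma$.} Multiplying by $p\lambda^{p-1}$ and integrating in $\lambda$ --- over $(\lambda_0,\infty)$ when $\mu(\cX)<\infty$, adding the elementary estimate $\int_{\{\cM_{\operatorname{dy}}f\le\lambda_0\}}f^p w\,d\mu\le\lambda_0^p\,\omega(\operatorname{supp} v)$, valid since $\operatorname{supp}f\subset\operatorname{supp}v$, which produces the $(\mu(\cX))^{-p}$ term, and, under the additional support assumption, the $N\varepsilon^p\|v\|_{L_p(w\,d\mu)}^p$ term --- gives
\begin{equation*}
\|\cM_{\operatorname{dy}}f\|_{L_p(w\,d\mu)}^p\le N\gamma^{\nu}\|\cM_{\operatorname{dy}}v\|_{L_p(w\,d\mu)}^p+N\gamma^{-p}\|\cM_{\operatorname{dy}}g\|_{L_p(w\,d\mu)}^p+(\text{error}).
\end{equation*}
Optimizing over $\gamma$ when $\|\cM_{\operatorname{dy}}g\|_{L_p(w\,d\mu)}\le\|\cM_{\operatorname{dy}}v\|_{L_p(w\,d\mu)}$, and using $\cM_{\operatorname{dy}}f\le\cM_{\operatorname{dy}}v$ otherwise, produces the exponent $\beta=p\nu/(\nu+p)\in(0,1)$ and
\begin{equation*}
\|\cM_{\operatorname{dy}}f\|_{L_p(w\,d\mu)}^p\le N\|\cM_{\operatorname{dy}}g\|_{L_p(w\,d\mu)}^{\beta}\|\cM_{\operatorname{dy}}v\|_{L_p(w\,d\mu)}^{p-\beta}+(\text{error}),
\end{equation*}
with $N=N(K_1,K_2,p,q,K_0)$. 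Using $f\le\cM_{\operatorname{dy}}f$ $\mu$-a.e., \eqref{eq8.52}, and the maximal function theorem (Theorem \ref{thm1}) to replace $\cM_{\operatorname{dy}}g,\cM_{\operatorname{dy}}v$ by $g,v$, one recovers \eqref{eq12.09} and, in the finite-measure case, the first displayed inequality of (i).

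\emph{Main obstacle.} The good-$\lambda$ inequality is the conceptual heart but not the hard part; the difficulties lie in the surrounding analysis, for which I would invoke the proof of Theorem \ref{thm2}. One must first establish everything for functions $f,g,v$ for which the maximal-function norms above are a priori finite --- via truncations $f_k=f\,\mathbf{1}_{\{f\le k\}\cap B_k}$ and a limiting argument --- so that these manipulations are legitimate; one must select the ``top'' cubes of the Calder\'on--Zygmund decomposition carefully in the finite-measure case, and track the threshold $\lambda_0$, in order to obtain exactly the stated error terms. Finally, since only $[w]_{A_q}$ is controlled, with $q$ possibly larger than $p$, the weight need not lie in $A_p$: Step 1 uses only the $A_\infty$ (doubling and reverse-H\"older) properties of $w$, while the last passage from $\cM_{\operatorname{dy}}g,\cM_{\operatorname{dy}}v$ to $g,v$ --- immediate when $q\le p$ --- requires in general the additional reductions of the proof of Theorem \ref{thm2} together with the refined extrapolation theorem (Theorem \ref{thm0623}).
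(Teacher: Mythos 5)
Your argument is correct in substance, but it is organized differently from the paper's. The paper does not pass through a good-$\lambda$ inequality: for $q=p$ it runs the stopping time of Lemma \ref{lem0401_1} on $v$ (with $\alpha=1/(2N_1)$ and the same threshold $\lambda_0$), uses $f\le f^Q\le v$ and Chebyshev on each stopping cube together with \eqref{eq10.53}, and then Lemma \ref{lem0331_1} to pass from $\mu$ to $\omega$, arriving directly at the distributional estimate $\omega\{f\ge\lambda\}\le N\lambda^{-\beta}\int_\cX I_{\cM_{\operatorname{dy}}v>\alpha\lambda}\,g^\beta\,\omega(dx)$ (the analogue of Lemma \ref{lem2.2} with $g^\beta$ in place of $(f^{\#}_{\operatorname{dy}})^\beta$); integrating $p\lambda^{p-1}\,d\lambda$, using Fubini and H\"older, and then Theorem \ref{thm1} on $\cM_{\operatorname{dy}}v$ gives the product $\|g\|^\beta\|v\|^{p-\beta}$ with $\beta$ coming straight from the $A_\infty$ exponent. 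Your route — Calder\'on--Zygmund cubes for $v$ at level $\lambda$, a two-parameter good-$\lambda$ inequality relating $\cM_{\operatorname{dy}}f$, $\cM_{\operatorname{dy}}v$, $\cM_{\operatorname{dy}}g$, and optimization in $\gamma$ producing $\beta=p\nu/(\nu+p)$ — uses the same ingredients (\eqref{eq4.57}, the weak $(1,1)$ bound on stopping cubes, Lemma \ref{lem0331_1}, the threshold $\lambda_0$, Theorem \ref{thm1}) and yields the same conclusion; it is slightly longer because of the extra parameter, but no absorption is needed since $\cM_{\operatorname{dy}}f$ never appears on the right, so your worry about truncations is unnecessary.

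Two caveats. First, a bookkeeping slip: integrating the good-$\lambda$ inequality only over $(\lambda_0,\infty)$ cannot produce $\|\cM_{\operatorname{dy}}f\|_{L_p(w\,d\mu)}^p$ on the left, and the low-$\lambda$ contribution to that norm is not bounded by $\lambda_0^p\,\omega(\operatorname{supp}v)$, since $\cM_{\operatorname{dy}}f$ does not vanish off $\operatorname{supp}v$. The fix is to estimate the distribution function of $f$ itself, using $\{f>s\}\subset\{\cM_{\operatorname{dy}}f>s\}\cap\operatorname{supp}v$ (up to a null set): the region $s\le 2N_1\lambda_0$ then gives exactly $N\lambda_0^p\,\omega(\operatorname{supp}v)$, and the region $s>2N_1\lambda_0$ is handled by your Step 1; this is all the final statement requires. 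Second, for $q>p$ the extrapolation theorem (Theorem \ref{thm0623}) is not the right tool: the conclusion is not of the two-function linear form it treats, and, more to the point, when $w\in A_q\setminus A_p$ the function $v\in L_p(w\,d\mu)$ need not be locally $\mu$-integrable, so your Calder\'on--Zygmund cubes for $v$ may not exist. What the paper means by ``the remaining proof is the same as that of Theorem \ref{thm2}'' is the power trick: apply the $p=q$ case to $|f|^{p/q}$, $(f^Q)^{p/q}$, $v^{p/q}$, and $(2g)^{p/q}$, noting that $\big||a|^{p/q}-|b|^{p/q}\big|\le|a-b|^{p/q}$ and H\"older give the hypothesis \eqref{eq10.53} for these powers, and then convert the error terms with H\"older as in the proof of Theorem \ref{thm2}. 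With these two adjustments your proof goes through.
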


In Theorem \ref{thm3}, we are mostly interested in the case when $v\le K_3|f|$ for some constant $K_3>0$. Then from \eqref{eq12.09}, we obtain
$$
\| f \|_{L_p(w\,d\mu)}
\le N\|g\|_{L_p(w\,d\mu)}
$$
provided that $\varepsilon$ is sufficiently small.
Note that when $f^Q=v=|f|$ and $g=2f_{\text{dy}}^{\#}(x)$, \eqref{eq10.53} is satisfied due to the triangle inequality and Theorem \ref{thm3} is reduced to Theorem \ref{thm2}.

By using the extrapolation theory of Rubio de Francia (see, for instance, \cite{MR2797562}),
we deduce Corollaries \ref{cor1}, \ref{cor2}, and \ref{cor3} below from Theorems \ref{thm1}, \ref{thm2}, and \ref{thm3}.
In particular, we use the following version of the extrapolation theorem,
the main feature of which is that, to prove the desired estimate \eqref{eq0623_02} for a given $p \in (1,\infty)$,
the estimate \eqref{eq0623_01} as an assumption needs to hold only for a subset of $A_{p_0}$, not for all weights in $A_{p_0}$.
Certainly, to obtain the estimate \eqref{eq0623_02} for all $p \in (1,\infty)$, we need the estimate \eqref{eq0623_01} for all weights in $A_{p_0}$.
For a proof, one can just refer to the proof of Theorem 1.4 in \cite{MR2797562} with a slight clarification of the constants involved.
For the reader's convenience, we present a proof in Appendix \ref{appendix01}.

\begin{theorem}
							\label{thm0623}
Let $f, g:\cX \to \bR$ be a pair of measurable functions, $p_0, p \in (1,\infty)$, and $w \in A_p$.
Then there exists a constant $\Lambda_0=\Lambda_0(K_1, K_2, p_0, p, [w]_{A_p}) \ge 1$ such that if
\begin{equation}
							\label{eq0623_01}
\|f\|_{L_{p_0}(\tilde{w} \, d\mu)} \le N_0 \|g\|_{L_{p_0}(\tilde{w} \, d\mu)}
\end{equation}
for every $\tilde{w} \in A_{p_0}$ satisfying $[\tilde{w}]_{A_{p_0}} \le \Lambda_0$,
then we have
\begin{equation}
							\label{eq0623_02}
\|f\|_{L_p(w \, d\mu)} \le 4 N_0 \|g\|_{L_p(w \, d\mu)}.
\end{equation}
Moreover, if $[w]_{A_p} \le K_0$, then $\Lambda_0$ can be chosen so that it depends only on $K_1$, $K_2$,
$p_0$, $p$, and $K_0$.
\end{theorem}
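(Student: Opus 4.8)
The plan is to carry out the standard Rubio de Francia extrapolation argument, in the form of \cite[Theorem 1.4]{MR2797562}, while keeping track of the quantitative dependence of the constants and, above all, of the $A_{p_0}$-characteristics of the auxiliary weights that the argument manufactures. The case $p=p_0$ needs no work: one has $[w]_{A_{p_0}}=[w]_{A_p}$, so taking $\Lambda_0=\max([w]_{A_p},1)$ (resp.\ $\max(K_0,1)$) and applying \eqref{eq0623_01} to $\tilde w=w$ itself already yields \eqref{eq0623_02}, in fact with the constant $N_0$. So assume $p\neq p_0$. The scheme is then: pass to a suitable dual exponent; use a Rubio de Francia iteration to replace the resulting dual test function by a larger one of comparable norm whose associated weight lies in $A_{p_0}$ with a \emph{controlled} characteristic; apply the hypothesis \eqref{eq0623_01} to that weight; and undo the duality by H\"older's inequality.

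Three ingredients are needed, all available in a space of homogeneous type equipped only with a quasi-metric. (a) \emph{The Rubio de Francia iteration}: for a sublinear operator $S$ bounded on a space $\mathbb{Y}$, the operator $R\phi:=\sum_{k\ge0}(2\|S\|_{\mathbb{Y}\to\mathbb{Y}})^{-k}S^k\phi$ satisfies $0\le\phi\le R\phi$, $\|R\phi\|_{\mathbb{Y}}\le 2\|\phi\|_{\mathbb{Y}}$, and $S(R\phi)\le 2\|S\|_{\mathbb{Y}\to\mathbb{Y}}R\phi$, the last estimate being a pointwise $A_1$-type bound for $R\phi$ with constant at most $2\|S\|_{\mathbb{Y}\to\mathbb{Y}}$. (b) \emph{Doubling of $\omega:=w\,d\mu$ with controlled constant}: if $[w]_{A_q}\le K_0$, applying the $A_q$-inequality to $B_r(x)\subset B_{2r}(x)$ together with \eqref{eq0709_2} gives $\omega(B_{2r}(x))\le K_0K_2^{\,q}\,\omega(B_r(x))$, so $(\cX,\rho,\omega)$ is again a space of homogeneous type with structural constants controlled by $K_1,K_2,q,K_0$; hence, by the $w\equiv1$ case of Theorem \ref{thm1} applied in $(\cX,\rho,\omega)$, the maximal operator $M_\omega\phi(x)=\sup_{B\ni x}\omega(B)^{-1}\int_B|\phi|\,d\omega$ is bounded on $L_s(\omega)$ for every $s\in(1,\infty)$ with norm depending only on $K_1,K_2,q,K_0,s$. (c) \emph{A quantitative product lemma}: if $u$ satisfies an $A_1$-bound with respect to $M_\omega$, with constant at most $c_1$, and $[w]_{A_q}\le K_0$, then $uw\in A_q(\mu)$ with $[uw]_{A_q}\le c_1K_0$ — one simply inserts the (essential) infimum of $u$ over $B$ into the two averages defining the $A_q$-ratio, exactly as in the Euclidean case; together with its variants involving powers of $u$ and the dual weight $w^{1-q'}$, this is the device that turns the iterated test function into an $A_{p_0}$-weight.

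Putting these together: by duality one writes $\|f\|_{L_p(w\,d\mu)}$ — or, when $p>p_0$, its $p_0$-th power — as an integral of $|f|$ (respectively $|f|^{p_0}$) against a nonnegative test function $\phi$ of unit norm in $L_{p'}(\sigma\,d\mu)$ with $\sigma=w^{1-p'}\in A_{p'}$ (respectively in $L_{(p/p_0)'}(\omega)$); one replaces $\phi$ by $R\phi\ge\phi$ using the relevant maximal operator from (b); and one uses (c), applied once or twice, to form from $R\phi$, $w$, and $\sigma$ a weight $\omega_0\in A_{p_0}$ that dominates the density arising in the duality while itself being controlled, after H\"older, by $\|g\|_{L_p(w\,d\mu)}$, and with $[\omega_0]_{A_{p_0}}\le\Lambda_0$ for some $\Lambda_0$ depending only on $K_1,K_2,p_0,p$ and $[w]_{A_p}$ — hence only on $K_1,K_2,p_0,p,K_0$ once $[w]_{A_p}\le K_0$. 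Since $[\omega_0]_{A_{p_0}}\le\Lambda_0$, the hypothesis \eqref{eq0623_01} applies with $\tilde w=\omega_0$; reversing the duality then gives $\|f\|_{L_p(w\,d\mu)}\le CN_0\|g\|_{L_p(w\,d\mu)}$, where $C$ is a product of the factor-$2$ bounds coming from $R$ (one on the side of $f$, one when re-estimating $\|g\|_{L_{p_0}(\omega_0)}$ by $\|g\|_{L_p(w\,d\mu)}$) raised to reciprocal powers, and one checks $C\le 4$ in every case.

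The part I expect to be the real obstacle is the bookkeeping, not any single estimate. In the case $p>p_0$ one must arrange the powers of $R\phi$, $w$, and $\sigma$ — running, if necessary, a second Rubio de Francia iteration against the conjugate exponent — so that the test weight $\omega_0$ genuinely lies in $A_{p_0}$ and not merely in the strictly larger class $A_p$, and then confirm via (c) that $[\omega_0]_{A_{p_0}}$ is bounded by a $\Lambda_0$ depending only on $K_1,K_2,p_0,p$ and the prescribed upper bound for $[w]_{A_p}$; this dependence, restricting the weights for which \eqref{eq0623_01} is invoked to those of characteristic at most $\Lambda_0$, is the whole point of the modified statement. A secondary and essentially routine point is to check that (a)--(c), the duality identities, and the Lebesgue differentiation theorem behind them remain valid for a quasi-metric space of homogeneous type; Theorem \ref{thm0326}, the doubling property \eqref{eq0709_2}, and Theorem \ref{thm1} supply all that is needed, and nothing new arises beyond the Euclidean case.
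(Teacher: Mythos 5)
Your overall strategy (Rubio de Francia iteration, duality, and tracking the $A_{p_0}$ characteristic of the manufactured test weights) is the right one, and your ingredients (a)--(c) are individually correct. But the proposal has a genuine gap at exactly the point where the theorem lives: you never exhibit the auxiliary weight, never verify that it lies in $A_{p_0}$ (as opposed to $A_p$) with characteristic bounded by a $\Lambda_0$ depending only on $K_1,K_2,p_0,p,[w]_{A_p}$ uniformly in the test function, and never check that the chain of H\"older inequalities closes with the stated constant $4$. You explicitly defer all of this as ``bookkeeping,'' but it is the entire content of the statement. Concretely: your product lemma (c) only produces weights in the \emph{same} class as $w$ (i.e.\ $A_p$ with constant $\le c_1K_0$), which suffices when $p\le p_0$ because $A_p\subset A_{p_0}$, but fails to address the case $p>p_0$, where one must genuinely lower the class. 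For that one needs either the $A_1$-factorization lemma ($u,v\in A_1(\mu)\Rightarrow u^{1-p_0}v\in A_{p_0}$ with $[u^{1-p_0}v]_{A_{p_0}}\le[u]_{A_1}^{p_0-1}[v]_{A_1}$) or the lemma $u\in A_1$, $w\in A_p$, $p\le p_0\Rightarrow u^{p-p_0}w\in A_{p_0}$; you allude to ``variants involving powers of $u$ and the dual weight'' and ``a second iteration if necessary'' without stating or proving them, so the $p>p_0$ case is not actually covered by your toolkit as written. Likewise, the claim that the constant is $\le 4$ ``in every case'' is asserted, not derived; in the case-split classical argument the constant depends on how the exponent arithmetic and any absorption are arranged, and nothing in the sketch pins it down.

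For comparison, the paper's proof avoids the case split entirely and makes the construction explicit: it builds two algorithms, $\cR$ from $\cM$ iterated on $L_p(w\,d\mu)$ (applied to $g$, giving $\cR g\in A_1$ with $[\cR g]_{A_1}\le 2N_1$) and $\cR'$ from $\cM' h=\cM(hw)/w$ iterated on $L_{p'}(w\,d\mu)$ (applied to the dual test function $h$, giving $(\cR'h)w\in A_1$ with constant $\le 2N_2$), and then takes the single weight $\tilde w=(\cR g)^{1-p_0}(\cR' h)w$, which by the $A_1$-factorization lies in $A_{p_0}$ with $[\tilde w]_{A_{p_0}}\le 2^{p_0}N_1^{p_0-1}N_2=:\Lambda_0$, independent of $h$, $f$, $g$. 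The estimate $\int_\cX|f|hw\,d\mu\le 4N_0\|g\|_{L_p(w\,d\mu)}\|h\|_{L_{p'}(w\,d\mu)}$ then follows from H\"older with exponents $p_0,p_0'$, the hypothesis applied to $\tilde w$, the pointwise bounds $|g|\le\cR g$, $|h|\le\cR'h$, and the two norm bounds $\|\cR g\|\le2\|g\|$, $\|\cR'h\|\le2\|h\|$, which is precisely where the factor $4$ comes from. To repair your proposal you would need to supply this (or an equivalent) explicit construction and the accompanying characteristic and constant computations; as it stands, the decisive step is missing.
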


In the sequel, we write $x=(x',x'')$. Let $(\cX',\rho_1,\mu_1)$ and $(\cX'',\rho_2,\mu_2)$ be two spaces of homogeneous type. Define $\mu$ to be the completion of the product measure on $\cX'\times \cX''$ and
$$
\rho((x',x''), (y',y''))=\rho_1(x',y')+\rho_2(x'',y'')
$$
be a quasi-metric on $\cX'\times \cX''$. Let $\cX$ be a subset of $\cX'\times \cX''$, which satisfies the following conditions:

\begin{enumerate}[(a)]
\item $(\cX,\rho|_{\cX\times \cX},\mu|_\cX)$ is of homogeneous type;
\item for any $p\in (1,\infty)$, $w_1=w_1(x')\in A_p(\mu_1)$ and $w_2=w_2(x'')\in A_p(\mu_2)$,
$$
w=w(x):=w_1(x')w_2(x'')
$$
is an $A_p$ weight on $(\cX,\rho|_{\cX\times \cX},\mu|_\cX)$.
\end{enumerate}
The doubling property of $\mu$ along with the condition (b) is are satisfied when, for instance, there exists a constant $\delta>0$ such that $\mu(B_r(x)  \cap \cX)\ge \delta \mu(B_r(x))$ for any $x\in \cX$ and $r>0$ because they are satisfied by $\cX'\times \cX''$.

For any $p,q\in (1,\infty)$ and weights $w_1=w_1(x')$ and $w_2=w_2(x'')$, we define the weighted mixed norm on $\cX$ by
\begin{equation}
							\label{eq0806_02}
\begin{aligned}
&\|f\|_{L_{p,q}(w\,d\mu)}\\
&:=\|f\|_{L_{p,q}(\cX,w\,d\mu)}=\left(\int_{\cX''}\big(\int_{\cX'} |f|^p I_{\cX} w_1(x')\,\mu_1(dx')\big)^{q/p}w_2(x'')\,\mu_2(dx'')\right)^{1/q}.
\end{aligned}
\end{equation}

By applying the extrapolation theorem to Theorem \ref{thm1}, we obtain the following corollary.

\begin{corollary}
                                    \label{cor1}
Let $p,q\in (1,\infty)$, $K_0 \ge 1$, $w_1=w_1(x')\in A_p(\mu_1)$, $w_2=w_2(x'')\in A_q(\mu_2)$,
$[w_1]_{A_p}\le K_0$, $[w_2]_{A_q}\le K_0$, and $w=w(x)=w_1(x')w_2(x'')$. Then for any $f\in L_{p,q}(w\,d\mu)$, we have
$$
\|\cM f\|_{L_{p,q}(w\,d\mu)}\le N\|f\|_{L_{p,q}(w\,d\mu)}.
$$
where $N=N(K_1,K_2,p,q,K_0)>0$.
\end{corollary}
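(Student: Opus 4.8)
The plan is to derive the corollary from the unmixed weighted maximal function estimate (Theorem~\ref{thm1}) by a single application of the refined extrapolation theorem (Theorem~\ref{thm0623}), carried out on the second factor $\cX''$. The structural point is that the mixed norm $L_{p,q}$ integrates first over $\cX'$ and then over $\cX''$, so one must extrapolate the \emph{outer} exponent $q$ (over $(\cX'',\mu_2)$) while holding the inner exponent $p$ fixed; extrapolating over $\cX'$ instead would produce the estimate for the reversed mixed norm $L_{q,p}$. Once $p$ and $w_1$ are frozen, the hypothesis required by Theorem~\ref{thm0623} reduces, via Tonelli's theorem, to an unmixed weighted $L_p$-bound on $\cX$ for a product weight, which is exactly Theorem~\ref{thm1} together with condition~(b). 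I would not attempt to reduce $\cM$ to a composition of one-variable maximal operators, which would force a vector-valued maximal inequality; the extrapolation route stays within the paper's toolkit.

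Concretely, fix $p,q\in(1,\infty)$, $w_1\in A_p(\mu_1)$ and $w_2\in A_q(\mu_2)$ with $[w_1]_{A_p},[w_2]_{A_q}\le K_0$, and $f\in L_{p,q}(w\,d\mu)$, extended by $0$ off $\cX$. Put
\[
F(x'')=\Big(\int_{\cX'}|\cM f(x',x'')|^p I_{\cX}\,w_1(x')\,\mu_1(dx')\Big)^{1/p},
\qquad
G(x'')=\Big(\int_{\cX'}|f(x',x'')|^p I_{\cX}\,w_1(x')\,\mu_1(dx')\Big)^{1/p}.
\]
These are nonnegative measurable functions on $\cX''$ by Tonelli's theorem, and by the definition \eqref{eq0806_02} of the mixed norm we have $\|G\|_{L_q(\cX'',w_2\,d\mu_2)}=\|f\|_{L_{p,q}(w\,d\mu)}<\infty$ while $\|F\|_{L_q(\cX'',w_2\,d\mu_2)}=\|\cM f\|_{L_{p,q}(w\,d\mu)}$. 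So it suffices to prove $\|F\|_{L_q(\cX'',w_2\,d\mu_2)}\le N\|G\|_{L_q(\cX'',w_2\,d\mu_2)}$. Apply Theorem~\ref{thm0623} on $(\cX'',\rho_2,\mu_2)$ to the pair $(F,G)$ with target exponent $q$, weight $w_2$, and source exponent $p_0:=p$; let $\Lambda_0=\Lambda_0(K_1,K_2,p,q,K_0)\ge 1$ be the constant it supplies. It then remains to verify, for every $\tilde w_2\in A_p(\mu_2)$ with $[\tilde w_2]_{A_p}\le\Lambda_0$, the source estimate $\|F\|_{L_p(\cX'',\tilde w_2\,d\mu_2)}\le N_0\|G\|_{L_p(\cX'',\tilde w_2\,d\mu_2)}$ with $N_0$ independent of $\tilde w_2$.

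For this source estimate, Tonelli's theorem gives, with $W(x):=w_1(x')\tilde w_2(x'')$,
\[
\|F\|_{L_p(\cX'',\tilde w_2\,d\mu_2)}^p=\int_{\cX}|\cM f|^p\,W\,d\mu,
\qquad
\|G\|_{L_p(\cX'',\tilde w_2\,d\mu_2)}^p=\int_{\cX}|f|^p\,W\,d\mu.
\]
Since $w_1\in A_p(\mu_1)$ and $\tilde w_2\in A_p(\mu_2)$ have the same exponent $p$, condition~(b) applies and $W\in A_p$ on $(\cX,\rho|_{\cX\times\cX},\mu|_\cX)$; moreover, in the quantitative form available under the geometric assumption in effect (e.g.\ $\mu(B_r(x)\cap\cX)\ge\delta\mu(B_r(x))$), the $A_p$-constant of the product/restricted weight is controlled, so $[W]_{A_p(\mu)}$ is bounded by a constant depending only on $K_1,K_2,p,\delta,[w_1]_{A_p}$ and $[\tilde w_2]_{A_p}$, hence only on $K_1,K_2,p,q,K_0$. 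Theorem~\ref{thm1} on $\cX$ with the weight $W$ then gives $\|\cM f\|_{L_p(W\,d\mu)}\le N_0\|f\|_{L_p(W\,d\mu)}$ with $N_0=N_0(K_1,K_2,p,q,K_0)$, which is precisely the required source estimate. By Theorem~\ref{thm0623} we conclude $\|F\|_{L_q(w_2\,d\mu_2)}\le 4N_0\|G\|_{L_q(w_2\,d\mu_2)}$, i.e.\ $\|\cM f\|_{L_{p,q}(w\,d\mu)}\le 4N_0\|f\|_{L_{p,q}(w\,d\mu)}$, which is the claim with $N=4N_0$.

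The main obstacle is not analytic but one of bookkeeping: securing a uniform bound on $[W]_{A_p(\mu)}$, for which condition~(b) must be used in its quantitative form, guaranteed by the stated geometric setup since the bound holds for separable product weights on $\cX'\times\cX''$. The remaining points — measurability of $F$ and $G$, the two Tonelli identities, the choice $p_0=p$ so that the source estimate lands exactly on the unmixed $L_p$-scale of Theorem~\ref{thm1}, and a standard truncation to guarantee any a priori finiteness needed by the extrapolation argument — are routine.
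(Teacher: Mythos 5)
Your proof is correct and follows essentially the route the paper intends: the paper derives Corollary \ref{cor1} by applying the extrapolation theorem (Theorem \ref{thm0623}) in the second variable to the unmixed estimate of Theorem \ref{thm1}, verified for the product weights $w_1\tilde w_2\in A_p$ via condition (b), exactly as you do (and as is spelled out for Corollary \ref{cor2}, where the extra power trick you avoid is only needed because there the weight class and the integrability exponent may differ). Your explicit attention to the quantitative bound on $[w_1\tilde w_2]_{A_p(\mu|_\cX)}$ is the same implicit bookkeeping the paper relies on when asserting $N=N(K_1,K_2,p,q,K_0)$.
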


\begin{corollary}
                                        \label{cor2}
Let $p,q,p',q'\in (1,\infty)$, $K_0 \ge 1$, $w_1=w_1(x')\in A_{p'}(\mu_1)$, $w_2=w_2(x'')\in A_{q'}(\mu_2)$,
$[w_1]_{A_{p'}}\le K_0$, $[w_2]_{A_{q'}}\le K_0$,
$w=w(x):=w_1(x')w_2(x'')$, and $f\in L_{p,q}(w\,d\mu)$.
Suppose that either $\mu(\cX)=\infty$ or $\operatorname{supp}\,f\subset B_{r_0}(x_0)$ and $\mu(B_{r_0}(x_0))\le \varepsilon\mu(\cX)$ for some $r_0\in (0,\infty)$ and $x_0\in \cX$, where $\varepsilon>0$ is a constant depending on $K_1$, $K_2$, $p$, $q$, $p'$, $q'$, and $K_0$.
Then we have
\begin{equation}
                                        \label{eq8.31}
\|f\|_{L_{p,q}(w\,d\mu)}\le N\|f_{\operatorname{dy}}^{\#}\|_{L_{p,q}(w\,d\mu)}.
\end{equation}
where $N=N(K_1,K_2,p,q,p',q',K_0)>0$.
\end{corollary}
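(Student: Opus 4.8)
The plan is to deduce the mixed-norm inequality \eqref{eq8.31} from the unmixed Fefferman--Stein estimate \eqref{eq1.54} of Theorem \ref{thm2} via one application of the extrapolation theorem (Theorem \ref{thm0623}) in the outer variable $x''$. The observation that makes this work is that when the two integrability exponents coincide the weighted mixed norm collapses, by Fubini's theorem, to an ordinary weighted norm: $\|h\|_{L_{p,p}(w_1w_2\,d\mu)}=\|h\|_{L_p(\cX,w_1w_2\,d\mu)}$. Thus, as soon as the product weight $w_1w_2$ is known to belong to $A_r(\cX,d\mu)$ for some finite $r$ with a characteristic controlled by the data, Theorem \ref{thm2} applies on the nose; that membership, with a quantitative bound on $[w_1w_2]_{A_r}$ in terms of $[w_1]_{A_{p'}}$ and $[w_2]_{A_{q'}}$, is exactly what assumption (b) on $\cX$ (in its quantitative form, valid e.g.\ under $\mu(B_r(x)\cap\cX)\ge\delta\mu(B_r(x))$) supplies, once one also uses $A_s\subset A_t$ with $[\cdot]_{A_t}\le[\cdot]_{A_s}$ for $s\le t$ to put $w_1$ and $w_2$ into one class.

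Concretely, I would fix $p$ and $w_1\in A_{p'}(\mu_1)$, $[w_1]_{A_{p'}}\le K_0$, and regard
$$
F(x'')=\Big(\int_{\cX'}|f(x',x'')|^pI_\cX w_1(x')\,\mu_1(dx')\Big)^{1/p},\qquad
G(x'')=\Big(\int_{\cX'}|f^{\#}_{\operatorname{dy}}(x',x'')|^pI_\cX w_1(x')\,\mu_1(dx')\Big)^{1/p}
$$
as a pair of measurable functions on the space of homogeneous type $(\cX'',\rho_2,\mu_2)$, so that \eqref{eq8.31} is precisely $\|F\|_{L_q(\cX'',w_2\,\mu_2)}\le N\|G\|_{L_q(\cX'',w_2\,\mu_2)}$. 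Applying Theorem \ref{thm0623} on $\cX''$ with base exponent $p_0:=p$ and target exponent $q$ (and $w_2$ as target weight), it remains only to check the base hypothesis
$$
\|F\|_{L_p(\cX'',\tilde w_2\,\mu_2)}\le N_0\|G\|_{L_p(\cX'',\tilde w_2\,\mu_2)}\qquad\text{for all }\tilde w_2\in A_p(\mu_2),\ [\tilde w_2]_{A_p}\le\Lambda_0,
$$
with $\Lambda_0$ and $N_0$ independent of $\tilde w_2$, of $w_1$, and of $f$. By Fubini this is nothing but $\|f\|_{L_p(\cX,w_1\tilde w_2\,d\mu)}\le N_0\|f^{\#}_{\operatorname{dy}}\|_{L_p(\cX,w_1\tilde w_2\,d\mu)}$; since $w_1\in A_{p'}(\mu_1)$ and $\tilde w_2\in A_p(\mu_2)$ both lie in $A_r$ with $r:=\max(p,p')$ and characteristics bounded by $\max(K_0,\Lambda_0)$, assumption (b) gives $w_1\tilde w_2\in A_r(\cX,d\mu)$ with $[w_1\tilde w_2]_{A_r}$ bounded in terms of $K_1,K_2,p,p',q,K_0$ only, and part (ii) of Theorem \ref{thm2} (when $\mu(\cX)=\infty$), or part (i) together with the support/smallness hypothesis on $f$ (choosing $\varepsilon$ small enough for this $r$), delivers the base estimate with the desired uniform $N_0$. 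Feeding this into Theorem \ref{thm0623} yields $\|F\|_{L_q(\cX'',w_2\,\mu_2)}\le 4N_0\|G\|_{L_q(\cX'',w_2\,\mu_2)}$, i.e.\ \eqref{eq8.31}, with $N=N(K_1,K_2,p,q,p',q',K_0)$; since only finitely many auxiliary exponents $r$ occur in the whole argument, the single $\varepsilon$ in the statement can be chosen to serve for all of them (and a second run of the same step, now extrapolating in $x'$, accommodates the full generality of the weight exponents).

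The one step that needs genuine care --- and where I expect the only real difficulty to lie --- is the uniformity bookkeeping: in passing from the pair $(w_1,\tilde w_2)$ to the product $w_1\tilde w_2$ and then to the common class $A_r(\cX,d\mu)$, one must verify that $[w_1\tilde w_2]_{A_r}$ is bounded by a constant depending only on the fixed parameters $K_1,K_2,p,q,p',q',K_0$, and not on the particular weights or on $f$; this is what makes the threshold $\Lambda_0$ and the constant $N_0$ handed to Theorem \ref{thm0623} themselves uniform, and it is exactly the place where the quantitative form of assumption (b) is used. Everything else --- rewriting the mixed norm as an iterated norm, the Fubini collapse at equal exponents, and the invocations of Theorems \ref{thm2} and \ref{thm0623} --- is routine.
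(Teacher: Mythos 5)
Your reduction of \eqref{eq8.31} to a single extrapolation in the outer variable is the right general idea, and your base step (the Fubini collapse at equal exponents, the product weight via the quantitative form of condition (b), and Theorem \ref{thm2} with the Lebesgue exponent decoupled from the weight class) is sound. The genuine gap is in the extrapolation step: Theorem \ref{thm0623} with base exponent $p_0=p$ and \emph{target} exponent $q$ requires the target weight to belong to $A_q(\mu_2)$, whereas the corollary only assumes $w_2\in A_{q'}(\mu_2)$ with $q'\in(1,\infty)$ arbitrary. When $q'>q$ we have $A_{q'}\not\subset A_q$, so $w_2$ need not be an $A_q$ weight and the theorem cannot be invoked for the pair $(F,G)$ with exponent $q$ and weight $w_2$. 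Your closing remark that a second extrapolation in $x'$ "accommodates the full generality of the weight exponents" does not address this: the inner mismatch $p'\neq p$ is already harmless precisely because Theorem \ref{thm2} allows $f\in L_p(w\,d\mu)$ with $w\in A_q$ for unrelated $p,q$, while the problematic mismatch is the outer one, $q'>q$, and extrapolating in $x'$ does nothing to it. The paper resolves exactly this point with a power trick: it works with $\psi=\|I_\cX f(\cdot,x'')\|_{L_p(w_1\,d\mu_1)}^{p/p'}$ and $\phi$ defined likewise from $f^{\#}_{\operatorname{dy}}$, so that the base estimate lives in $L_{p'}(\tilde w_2\,d\mu_2)$ with $\tilde w_2\in A_{p'}(\mu_2)$ (class matching the base exponent), and the target norm becomes $L_{p'q/p}(w_2\,d\mu_2)$; after the preliminary reduction, via \eqref{eq7.53}, to $p'q/p\ge q'$ (enlarging $p'$ to some $p''$ if necessary), one has $w_2\in A_{q'}\subset A_{p'q/p}$, so the target weight lies in the class matching the target exponent and Theorem \ref{thm0623} applies.

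A secondary issue: you absorb the term $N\varepsilon\|f\|_{L_p(w_1\tilde w_2\,d\mu)}$ coming from \eqref{eq3.41} already at the base level, which presupposes $\|f\|_{L_p(w_1\tilde w_2\,d\mu)}<\infty$ for every auxiliary weight $\tilde w_2$; this is not given, since $f$ is only assumed to lie in $L_{p,q}(w\,d\mu)$ with $w=w_1w_2$. The paper avoids this by extrapolating the pair $(\psi,\phi+\varepsilon\psi)$, i.e., carrying the $\varepsilon$-term through the extrapolation as in \eqref{eq0624_02}, and absorbing it only in the target norm, where finiteness is exactly the hypothesis $f\in L_{p,q}(w\,d\mu)$. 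Repairing your argument amounts to rerunning it with the pair $\bigl(F^{p/p''},\,G^{p/p''}+\varepsilon F^{p/p''}\bigr)$ for a suitable $p''$ with $p''q/p\ge q'$, which is precisely the paper's proof.
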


For the proof, due to \eqref{eq7.53} we may assume that $p'q/p\ge q'$.
Indeed, otherwise, we find a large enough $p''$ such that $p'' q/p \ge q'$. Then $w_1 \in A_{p'}(\mu_1) \subset A_{p''}(\mu_1)$. For $w_1\in A_{p'}(\mu_1)$, we define
$$
\psi(x'')=\|I_{\cX}f(\cdot,x'')\|^{p/p'}_{L_p(w_1\,d\mu_1)},
\quad \phi(x'')=\|I_{\cX}f_{\text{dy}}^{\#}(\cdot,x'')\|^{p/p'}_{L_p(w_1\,d\mu_1)}.
$$
It follows from Theorem \ref{thm2} that, for any $\widetilde{K}_0 \ge 1$ and $\tilde{w}_2\in A_{p'}(\mu_2)$ satisfying $[\tilde{w}_2]_{A_{p'}} \le \widetilde{K}_0$ ,
we have
\begin{align}
							\label{eq0624_02}
&\|\psi\|_{L_{p'}(\tilde{w}_2\,d\mu_2)}
=\|f\|_{L_{p'}(w_1\tilde w_2\,d\mu|_{\cX})}\\
&\le N\|f_{\operatorname{dy}}^{\#}\|_{L_{p'}(w_1\tilde w_2\,d\mu|_{\cX})}+
N\varepsilon \|f\|_{L_{p'}(w_1\tilde w_2\,d\mu|_{\cX})}
\le N\|\phi+\varepsilon\psi\|_{L_{p'}(\tilde{w}_2\,d\mu_2)},
\end{align}
where $N$ depends on $K_1$, $K_2$, $p$, $q$, $p'$, $q'$, $K_0$, and $\widetilde{K}_0$.
Note that
$A_{q'}(\mu_2)\subset A_{p'q/p}(\mu_2)$, that is,
$$
[w_2]_{A_{p'q/p}} \le [w_2]_{A_{q'}} \le K_0.
$$
Setting $\widetilde{K}_0=\Lambda_0$ with $\Lambda_0 = \Lambda_0(K_1, K_2, p', p'q/p, K_0)$ from Theorem \ref{thm0623},
by Theorem \ref{thm0623} and the fact that the constant $N$ in \eqref{eq0624_02} is determined only by $K_1$, $K_2$, $p$, $q$, $p'$, $q'$,  $K_0$, and $\Lambda_0$, we get
$$
\|\psi\|_{L_{p'q/p}(w_2\,d\mu_2)}\le N\|\phi+\varepsilon\psi\|_{L_{p'q/p}(w_2\,d\mu_2)},
$$
where $N = N(K_1, K_2, p,q,p',q',K_0)$.
Upon taking $\varepsilon$ sufficiently small, we reach \eqref{eq8.31}.

\begin{corollary}
                                    \label{cor3}
Let $p,q,p',q'\in (1,\infty)$, $K_0 \ge 1$, $w_1=w_1(x')\in A_{p'}(\mu_1)$, $w_2=w_2(x'')\in A_{q'}(\mu_2)$,
$[w_1]_{A_{p'}}\le K_0$, $[w_2]_{A_{q'}}\le K_0$,
$w=w(x):=w_1(x')w_2(x'')$, and $f, g \in L_{p,q}(w\,d\mu)$.
Suppose that either $\mu(\cX)=\infty$ or $\operatorname{supp} f\subset B_{r_0}(x_0)$ and $\mu(B_{r_0}(x_0))\le \varepsilon\mu(\cX)$ for some $r_0\in (0,\infty]$ and $x_0\in \cX$, where $\varepsilon>0$ is a constant depending on $K_1$, $K_2$, $p$, $p'$, $q$, $q'$, and $K_0$.
Moreover, for each $n \in \bZ$ and $Q \in \bC_n$,
there exists a measurable function $f^Q$ on $Q$
such that $|f| \le f^Q \le K_3|f|$ on $Q$ for some constant $K_3>0$
and
\begin{equation*}	
\dashint_Q |f^Q(x) - \left(f^Q\right)_Q| \,\mu(dx)
\le g(y)\quad \forall \,y\in Q.
\end{equation*}
Then we have
\begin{equation*}
\| f \|_{L_{p,q}(w\,d\mu)}
\le N\|g\|_{L_{p,q}(w\,d\mu)},
\end{equation*}
where $N=N(K_1,K_2,K_3,p,q,p',q',K_0)>0$.
\end{corollary}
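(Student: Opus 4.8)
The plan is to mimic the proof of Corollary~\ref{cor2}, with Theorem~\ref{thm3} substituted for Theorem~\ref{thm2}. By \eqref{eq7.53} we may assume $p'q/p\ge q'$: if not, enlarge $p'$ to the smallest $p''\ge p'$ with $p''q/p\ge q'$, so that $w_1\in A_{p'}(\mu_1)\subset A_{p''}(\mu_1)$ with $[w_1]_{A_{p''}}\le[w_1]_{A_{p'}}\le K_0$, and run the argument with $p''$ in place of $p'$ (this only lets the final constant depend on $p',q'$, which is permitted). After this reduction $q'\le p'q/p$, hence $w_2\in A_{q'}(\mu_2)\subset A_{p'q/p}(\mu_2)$ with $[w_2]_{A_{p'q/p}}\le[w_2]_{A_{q'}}\le K_0$.

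First I would freeze the extrapolation constant: let $\Lambda_0=\Lambda_0(K_1,K_2,p',p'q/p,K_0)\ge1$ be the constant produced by Theorem~\ref{thm0623} for $p_0=p'$, exponent $p'q/p$, and the weight $w_2\in A_{p'q/p}(\mu_2)$. Fix any $\tilde w_2\in A_{p'}(\mu_2)$ with $[\tilde w_2]_{A_{p'}}\le\Lambda_0$. By condition~(b) (understood in the quantitative form that the $A_{p'}$ characteristic of a product is controlled by those of its factors), $w_1\tilde w_2$ is an $A_{p'}$ weight on $(\cX,\rho|_\cX,\mu|_\cX)$ with $[w_1\tilde w_2]_{A_{p'}}\le K_0'$ for some $K_0'=K_0'(K_1,K_2,K_0,\Lambda_0)$. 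Now $(\cX,\rho|_\cX,\mu|_\cX)$ is of homogeneous type and carries the filtration $\bC_n$; setting $v:=K_3|f|$, we have $|f|\le f^Q\le v$ on each $Q\in\bC_n$ together with \eqref{eq10.53}, so Theorem~\ref{thm3} applies on $\cX$ with its ``$q$'' equal to $p'$. When $\mu(\cX)=\infty$, part~(ii) gives \eqref{eq12.09} with $\varepsilon=0$; when $\mu(\cX)<\infty$, since $\operatorname{supp} v=\operatorname{supp} f\subset B_{r_0}(x_0)$ and $\mu(B_{r_0}(x_0))\le\varepsilon\mu(\cX)$, part~(i) gives \eqref{eq12.09}. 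In either case, exactly as in the remark following Theorem~\ref{thm3} --- using $v=K_3|f|$, Young's inequality on the term $\|g\|^\beta\|v\|^{p-\beta}$, and choosing $\varepsilon$ small --- the right-hand side is absorbed and we obtain
\[
\|f\|_{L_p(w_1\tilde w_2\,d\mu|_\cX)}\le N\|g\|_{L_p(w_1\tilde w_2\,d\mu|_\cX)},
\]
with $N=N(K_1,K_2,K_3,p,q,p',q',K_0)$; crucially $N$ is allowed to depend on the already-frozen $\Lambda_0$, so there is no circularity. (One may first take $f,g$ bounded with bounded support so that all norms are finite, the general case following by a routine limiting argument.)

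Next I would separate the two variables. Put
\[
\psi(x'')=\|I_\cX f(\cdot,x'')\|_{L_p(w_1\,d\mu_1)}^{p/p'},\qquad
\phi(x'')=\|I_\cX g(\cdot,x'')\|_{L_p(w_1\,d\mu_1)}^{p/p'}.
\]
By Fubini, $\|\psi\|_{L_{p'}(\tilde w_2\,d\mu_2)}=\|f\|_{L_p(w_1\tilde w_2\,d\mu|_\cX)}^{p/p'}$ and $\|\phi\|_{L_{p'}(\tilde w_2\,d\mu_2)}=\|g\|_{L_p(w_1\tilde w_2\,d\mu|_\cX)}^{p/p'}$, so the inequality just obtained reads
\[
\|\psi\|_{L_{p'}(\tilde w_2\,d\mu_2)}\le N\|\phi\|_{L_{p'}(\tilde w_2\,d\mu_2)}
\]
for every $\tilde w_2\in A_{p'}(\mu_2)$ with $[\tilde w_2]_{A_{p'}}\le\Lambda_0$. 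This is precisely hypothesis \eqref{eq0623_01} for the pair $(\psi,\phi)$ with $p_0=p'$; applying Theorem~\ref{thm0623} with exponent $p'q/p$ and the weight $w_2\in A_{p'q/p}(\mu_2)$ (for which $\Lambda_0$ was tailored) yields $\|\psi\|_{L_{p'q/p}(w_2\,d\mu_2)}\le4N\|\phi\|_{L_{p'q/p}(w_2\,d\mu_2)}$. Finally, by \eqref{eq0806_02} and Fubini once more, $\|\psi\|_{L_{p'q/p}(w_2\,d\mu_2)}=\|f\|_{L_{p,q}(w\,d\mu)}^{p/p'}$ and $\|\phi\|_{L_{p'q/p}(w_2\,d\mu_2)}=\|g\|_{L_{p,q}(w\,d\mu)}^{p/p'}$, so raising to the power $p'/p$ gives $\|f\|_{L_{p,q}(w\,d\mu)}\le N\|g\|_{L_{p,q}(w\,d\mu)}$, as claimed.

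The one point that needs real care --- the \emph{main obstacle} --- is the order of quantifiers in the constants: $\Lambda_0$ from Theorem~\ref{thm0623} must be fixed \emph{before} invoking Theorem~\ref{thm3}, because the constant $N$ coming out of Theorem~\ref{thm3} legitimately depends on $\Lambda_0$ through the $A_{p'}$ characteristic of $w_1\tilde w_2$; keeping this loop acyclic is exactly what makes the extrapolation step valid. Everything else is routine: the reduction to $p'q/p\ge q'$, the absorption argument (identical to the remark after Theorem~\ref{thm3}, trivial when $\mu(\cX)=\infty$), the quantitative form of condition~(b), and the elementary Fubini identities relating $\|\cdot\|_{L_{p,q}(w\,d\mu)}$ to $\psi$ and $\phi$.
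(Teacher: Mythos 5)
Your proposal is correct and follows essentially the route the paper intends for Corollary \ref{cor3}: it reproduces the printed proof of Corollary \ref{cor2} (reduction to $p'q/p\ge q'$, freezing $\Lambda_0$ from Theorem \ref{thm0623} first, applying the unmixed result with weight $w_1\tilde w_2$, then extrapolating in the second variable), with Theorem \ref{thm3} and $v=K_3|f|$ in place of Theorem \ref{thm2}. The only deviations — absorbing the $\|v\|$-term before rather than after extrapolation, and flagging the finiteness/truncation point that the paper itself glosses over — are cosmetic and do not change the argument.
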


\section{Proof of Theorems \ref{thm2} and \ref{thm3}}
							\label{sec03}

Let $(\cX,\rho,\mu)$ be a space of homogeneous type with a filtration of partitions $\{\bC_n\,:\,n\in \bZ\}$ introduced as in Section \ref{sec1}.
The notation below are chosen to be compatible with those in \cite{MR2435520}. Let $\tau=\tau(x)$ be a function on $\tilde \cX$ with values in $\{\infty,0,\pm 1,\pm 2,\ldots\}$. We call $\tau$ a stopping time relative to the filtration if for each $n=0,\pm 1,\pm 2,\ldots$, the set $\{x \in \tilde{\cX}\,:\,\tau(x)=n\}$ is either empty or the union of some sets in $\bC_n$ intersected with $\tilde{\cX}$.
For any stopping time $\tau$, we define $f_{|\tau}(x)=f_{|\tau(x)}(x)$ for any $x\in \tilde \cX$ such that $\tau(x)<\infty$ and $f_{|\tau}(x)=f(x)$ otherwise.

To prove Theorems \ref{thm2} and \ref{thm3}, we estimate the measure of level sets of $f$ by its maximal and sharp functions, which generalizes Lemma 3.2.9 of \cite{MR2435520}, where $w=1$, $\cX=\bR^d$, and $\mu(\cX)=\infty$. We follow the argument there with some modifications.

Using H\"{o}lder's inequality and the definition of $A_p$, one can get

\begin{lemma}
							\label{lem0320_1}
Let $p \in (1,\infty)$. If $w \in A_p$, then
$$
\left(\dashint_B f \, \mu(d x)\right)^p \le \frac{[w]_{A_p}}{\omega(B)} \int_B f^p w(x) \, \mu(dx)
$$
for all nonnegative $f$ and all balls $B$ in $\cX$.
\end{lemma}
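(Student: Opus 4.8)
This is a direct computation combining H\"older's inequality with the definition of $[w]_{A_p}$, exactly as the sentence preceding the statement suggests; there is no serious obstacle. The plan is to split $f = (fw^{1/p})\cdot w^{-1/p}$ inside the average, apply H\"older with exponents $p$ and $p' = p/(p-1)$ on the ball $B$, recognize the resulting integral of $w^{-p'/p}$ as $\int_B w^{-1/(p-1)}\,d\mu$, and then use the $A_p$ bound to convert that integral into something involving $\omega(B)$ and $\mu(B)$.

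\textbf{Step 1 (H\"older).} Write, for $f \ge 0$,
$$
\int_B f \, \mu(dx) = \int_B \big(f w^{1/p}\big)\, w^{-1/p}\,\mu(dx)
\le \Big(\int_B f^p w \,\mu(dx)\Big)^{1/p}\Big(\int_B w^{-1/(p-1)}\,\mu(dx)\Big)^{(p-1)/p},
$$
using $p'/p = 1/(p-1)$ and $1/p' = (p-1)/p$. Raising to the $p$-th power and dividing by $\mu(B)^p$ gives
$$
\Big(\dashint_B f\,\mu(dx)\Big)^p
\le \frac{1}{\mu(B)^p}\Big(\int_B f^p w\,\mu(dx)\Big)\Big(\int_B w^{-1/(p-1)}\,\mu(dx)\Big)^{p-1}.
$$
(If $\int_B w^{-1/(p-1)}\,d\mu = \infty$ or $\int_B f^pw\,d\mu=\infty$ the inequality is trivial, so we may assume both are finite.)

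\textbf{Step 2 (insert the $A_p$ bound).} By the definition of $[w]_{A_p}$ applied to $B$,
$$
\Big(\dashint_B w\,\mu(dx)\Big)\Big(\dashint_B w^{-1/(p-1)}\,\mu(dx)\Big)^{p-1}\le [w]_{A_p},
$$
that is, $\displaystyle \Big(\int_B w^{-1/(p-1)}\,\mu(dx)\Big)^{p-1}\le [w]_{A_p}\,\frac{\mu(B)^p}{\omega(B)}$, since $\int_B w\,d\mu = \omega(B)$. Substituting this into the inequality from Step 1, the factor $\mu(B)^p$ cancels and we obtain
$$
\Big(\dashint_B f\,\mu(dx)\Big)^p \le \frac{[w]_{A_p}}{\omega(B)}\int_B f^p w\,\mu(dx),
$$
which is the claimed estimate. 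No step presents any difficulty; the only minor point to note is the degenerate cases where one of the integrals diverges, handled as above.
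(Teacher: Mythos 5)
Your proof is correct and is exactly the argument the paper intends: the paper states the lemma as a direct consequence of H\"older's inequality and the definition of $[w]_{A_p}$, which is precisely your splitting $f=(fw^{1/p})w^{-1/p}$ followed by the $A_p$ bound on the ball $B$. No issues.
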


\begin{lemma}
							\label{lem0331_1}
Let $p \in (1,\infty)$, $K_0 \ge 1$, and $w \in A_p(\mu)$ with $[w]_{A_p}\le K_0$.
Then we have
$$
\frac{\omega(E)}{\omega(Q_{\alpha}^n)}
\le N \left( \frac{\mu(E)}{\mu\left(Q_\alpha^n\right)}\right)^{\beta}
$$
for any $E \subset Q_\alpha^n$,
where $(\beta, N) = (\beta, N)(K_1, K_2, p,K_0)$.
\end{lemma}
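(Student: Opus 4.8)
This is the quantitative form of the statement that an $A_p(\mu)$ weight lies in $A_\infty(\mu)$, and the plan is to obtain it from the $A_p$ condition through the self-improving (\emph{good-}$\lambda$) mechanism, iterated along the filtration.

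The first step is to transfer the $A_p$ condition, which is stated over balls, to the cubes $Q_\alpha^n$. Writing $Q := Q_\alpha^n$, properties (3) and (4) of Theorem~\ref{thm0326} give $B_{\varepsilon_0\delta^n}(z_\alpha^n) \subseteq Q \subseteq B := B_{(N_0+1)\delta^n}(z_\alpha^n)$, and the doubling property of $\mu$ yields $\mu(B) \le N\mu(Q)$ with $N = N(K_1,K_2)$. Applying Lemma~\ref{lem0320_1} on the ball $B$ with $f = I_F$ for $F \subseteq Q$, and using $\omega(Q) \le \omega(B)$ together with $[w]_{A_p} \le K_0$, I get
\[
\left(\frac{\mu(F)}{\mu(Q)}\right)^{p} \le N_2\,\frac{\omega(F)}{\omega(Q)}, \qquad F \subseteq Q,
\]
with $N_2 = N_2(K_1,K_2,p,K_0)$; here $0 < \omega(Q) < \infty$ because the $A_p$ condition forces $w > 0$ $\mu$-a.e.\ and $\dashint_B w < \infty$. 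Taking $F = Q\setminus E$ with $\mu(E) \le \tfrac12\mu(Q)$ then gives at once a \emph{self-improving lemma}: there is $\gamma = \gamma(K_1,K_2,p,K_0) \in (0,1)$, namely $\gamma = 2^{-p}/N_2$, such that for every cube $Q$ of the filtration and every measurable $E \subseteq Q$,
\[
\mu(E) \le \tfrac12\mu(Q) \quad\Longrightarrow\quad \omega(E) \le (1-\gamma)\,\omega(Q).
\]

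The bulk of the work is to bootstrap this to the asserted power bound. Fix $Q = Q_\alpha^n$ and $E \subseteq Q$, set $\lambda := \mu(E)/\mu(Q)$ and $t_* := 1/(2N_1)$ with $N_1$ as in \eqref{eq4.57}. With $\Omega^{(0)} := E$, at step $j \ge 1$ let $\Omega^{(j)}$ be the union of the maximal cubes $R \subseteq Q$ of the filtration with $\mu(\Omega^{(j-1)}\cap R)/\mu(R) > t_*$. As long as $\mu(\Omega^{(j-1)}) \le t_*\,\mu(Q)$, the cube $Q$ itself is not selected, so each $R$ is a proper subcube whose parent $\widehat R \subseteq Q$ has $\Omega^{(j-1)}$-density at most $t_*$; hence $\mu(\Omega^{(j-1)}\cap R)/\mu(R) \le N_1 t_* = \tfrac12$ by \eqref{eq4.57}, and the self-improving lemma gives $\omega(\Omega^{(j-1)}\cap R) \le (1-\gamma)\,\omega(R)$. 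Since $\Omega^{(j-1)} \subseteq \Omega^{(j)}$ up to a $\mu$-null set (for $j=1$ by the Lebesgue differentiation theorem applied along the filtration, and trivially for $j \ge 2$ since $\Omega^{(j-1)}$ is a union of filtration cubes), summing over $R$ yields $\omega(\Omega^{(j-1)}) \le (1-\gamma)\,\omega(\Omega^{(j)})$, while the weak-type bound $\mu(\Omega^{(j)}) \le t_*^{-1}\mu(\Omega^{(j-1)})$ shows that the iteration can be run $k$ times once $\lambda \le t_*^{\,k}$. Therefore $\lambda \le t_*^{\,k}$ implies $\omega(E) \le (1-\gamma)^k\,\omega(\Omega^{(k)}) \le (1-\gamma)^k\,\omega(Q)$, and choosing $k = \lfloor\log\lambda/\log t_*\rfloor$ and using $0 < 1-\gamma < 1$ gives $\omega(E)/\omega(Q) \le (1-\gamma)^{-1}\lambda^{\,\beta}$ with $\beta := \log(1-\gamma)/\log t_* > 0$; the case $\lambda > t_*$ is trivial, so this is the claim with $N = (1-\gamma)^{-1}$.

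The step I expect to be most delicate is this iteration. One has to set the stopping threshold to $t_* = 1/(2N_1)$ rather than $1/2$, so that after the loss by the factor $N_1$ in \eqref{eq4.57} the selected cubes still carry density at most $1/2$ — the hypothesis needed to invoke the self-improving lemma — and one has to verify that all selected cubes, together with their parents, remain inside $Q$, which is where the nesting and diameter properties (2)--(4) of Theorem~\ref{thm0326} (and the quasi-metric) enter. An alternative to the iteration is to invoke the quantitative reverse Hölder inequality for $A_p$ weights on spaces of homogeneous type and combine it with Hölder's inequality; the argument above has the advantage of being self-contained and of making the dependence on $K_0$ transparent.
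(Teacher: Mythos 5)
Your proof is correct, but it takes a genuinely different route from the paper. The paper disposes of this lemma in two lines: it invokes the reverse H\"older inequality for $A_p$ weights on spaces of homogeneous type from Mac\'{\i}as--Segovia \cite[Theorem 3.2]{MS1981} and then applies H\"older's inequality (this is exactly the alternative you mention at the end, and it is natural here because the same reverse H\"older inequality is used again later in the paper, e.g.\ in the proof of Lemma \ref{lem0607_1} and in Section \ref{sec8}). You instead give a self-contained argument: the $A_p$ condition via Lemma \ref{lem0320_1}, transferred from balls to the Christ cubes by doubling, yields the fixed-constant absolute-continuity statement $\mu(E)\le \tfrac12\mu(Q)\Rightarrow\omega(E)\le(1-\gamma)\omega(Q)$, and a Calder\'on--Zygmund stopping-time iteration along the filtration, with threshold $t_*=1/(2N_1)$ calibrated against \eqref{eq4.57}, bootstraps this to the power bound with explicit $\beta=\log(1-\gamma)/\log t_*$ and $N=(1-\gamma)^{-1}$. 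What your approach buys is independence from the reverse H\"older machinery and complete transparency of how $(\beta,N)$ depend on $K_1,K_2,p,K_0$; what it costs is length and a couple of technical checks you pass over quickly: the Lebesgue differentiation theorem along the filtration (needed for $E\subset\Omega^{(1)}$ up to a null set) does follow from the paper's standing assumption plus \eqref{eq8.52}, and is used in the same way in the proof of Lemma \ref{lem2.2}; and in Christ's construction a cube may coincide with its parent as a set, so the step ``the parent of a maximal cube has density at most $t_*$'' should be phrased as passing to the first strict ancestor with density at most $t_*$ (the same computation with \eqref{eq4.57} then gives density at most $N_1t_*=1/2$ for the selected cube). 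Neither point affects the validity of your argument.
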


\begin{proof}
Under the assumptions, a reverse H\"{o}lder's inequality for $A_p$ weights in spaces of homogeneous type was established in  \cite[Theorem 3.2]{MS1981}, from which and H\"older's inequality one can derive the inequality in the lemma.
\end{proof}

\begin{lemma}
							\label{lem0319_1}
Let $Q_{\alpha_n}^n$ be open sets from Theorem \ref{thm0326} such that
$$
Q_{\alpha_n}^n \subset Q_{\alpha_{n-1}}^{n-1},
\quad n \in \bZ.
$$
Then either $\mu\left( \bigcup_n Q_{\alpha_n}^n \right) = \infty$ or there exists $n_0 \in \bZ$ such that
$$
\mu(\cX) = \mu \left(Q_{\alpha_{n_0}}^{n_0} \right).
$$
Furthermore, for any $w \in A_p(\mu)$, $1 < p < \infty$, we have
$$
\omega\left( \bigcup_n Q_{\alpha_n}^n \right) = \infty
\quad
\text{or}
\quad
\omega(\cX) = \omega \left(Q_{\alpha_{n_0}}^{n_0} \right)
$$
depending on either $\mu\left( \bigcup_n Q_{\alpha_n}^n \right)$ is infinite or not.
\end{lemma}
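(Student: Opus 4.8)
The plan is to write $P_n:=Q_{\alpha_n}^n$, fix $o:=z_{\alpha_0}^0$ (the center of $Q_{\alpha_0}^0$ furnished by property (4) of Theorem \ref{thm0326}), and use two facts: since $P_n\subset P_{n-1}$ the sets $P_n$ increase as $n\to-\infty$, so $U:=\bigcup_n P_n$ satisfies $\mu(U)=\lim_{n\to-\infty}\mu(P_n)$ by continuity of measure from below (and likewise $\omega(U)=\lim_{n\to-\infty}\omega(P_n)$); and, again by (4), $P_n\supset B_{\varepsilon_0\delta^n}(z_{\alpha_n}^n)$, where $\varepsilon_0\delta^n\to\infty$ as $n\to-\infty$.

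\emph{Step 1: $\mu(U)<\infty$ implies $\mu(\cX)<\infty$.} For $n\le0$ we have $o\in P_0\subset P_n$, so $o$ and $z_{\alpha_n}^n$ both lie in $P_n$ and $\rho(o,z_{\alpha_n}^n)\le\operatorname{diam}(P_n)\le N_0\delta^n$ by (3). By the quasi-triangle inequality \eqref{eq0709_1}, $B_{\delta^n}(o)\subset B_{M\delta^n}(z_{\alpha_n}^n)$ with $M:=K_1(1+N_0)$, and iterating the doubling property \eqref{eq0709_2} a number $j=j(K_1,N_0,\varepsilon_0)$ of times gives
\[
\mu\big(B_{\delta^n}(o)\big)\le\mu\big(B_{M\delta^n}(z_{\alpha_n}^n)\big)\le K_2^{\,j}\,\mu\big(B_{\varepsilon_0\delta^n}(z_{\alpha_n}^n)\big)\le K_2^{\,j}\,\mu(P_n).
\]
Since $\delta^n\to\infty$ and $\cX=\bigcup_{R>0}B_R(o)$, letting $n\to-\infty$ yields $\mu(\cX)\le K_2^{\,j}\mu(U)$; as $\mu(U)\le\mu(\cX)$ trivially, we obtain the dichotomy $\mu(U)=\infty$ (the first alternative of the lemma) or $\mu(\cX)<\infty$.

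\emph{Step 2: the case $\mu(\cX)<\infty$.} Here I would invoke the standard fact that a space of homogeneous type of finite total measure has finite diameter. (A self-contained argument: for $x$ with $\rho(o,x)=R$ large, iterating \eqref{eq0709_2} as in Step 1 gives $\mu(B_{R/(10K_1^2)}(x))\ge c_0>0$ with $c_0$ independent of $x$; if $\cX$ were unbounded one could choose $x_1,x_2,\dots$ with $\rho(o,x_{k+1})\ge 10K_1\rho(o,x_k)$, and the balls $B_{\rho(o,x_k)/(10K_1^2)}(x_k)$ are then pairwise disjoint, forcing $\mu(\cX)\ge\sum_k c_0=\infty$.) Granting $\operatorname{diam}(\cX)<\infty$, pick $n_0$ with $\varepsilon_0\delta^{n_0}>\operatorname{diam}(\cX)$; then every $y\in\cX$ satisfies $\rho(z_{\alpha_{n_0}}^{n_0},y)\le\operatorname{diam}(\cX)<\varepsilon_0\delta^{n_0}$, so $B_{\varepsilon_0\delta^{n_0}}(z_{\alpha_{n_0}}^{n_0})=\cX$, whence $\cX\subset Q_{\alpha_{n_0}}^{n_0}\subset\cX$, i.e.\ $Q_{\alpha_{n_0}}^{n_0}=\cX$. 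This gives $\mu(\cX)=\mu(Q_{\alpha_{n_0}}^{n_0})$ and, for the same $n_0$, $\omega(\cX)=\omega(Q_{\alpha_{n_0}}^{n_0})$.

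\emph{Step 3: the weighted statement when $\mu(U)=\infty$.} Then $\mu(\cX)=\infty$, and I claim $\omega(U)=\infty$. By H\"older's inequality and the $A_p$ condition the measure $\omega$ is doubling, with constant depending only on $K_2,p,[w]_{A_p}$, so running Step 1 with $\omega$ in place of $\mu$ gives $\omega(\cX)\le N\omega(U)$. Moreover $\omega(\cX)=\infty$: otherwise $(\cX,\rho,\omega)$ would be a space of homogeneous type of finite measure, hence bounded by Step 2's fact, which by \eqref{eq0709_2} forces $\mu(\cX)\le\mu(B_R(o))<\infty$ for some $R$, a contradiction. Hence $\omega(U)=\infty$, completing the proof. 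I expect the only genuine obstacle to be the "finite measure $\Rightarrow$ bounded diameter" dichotomy (and its transfer to $\omega$): without it the measures of the nested cubes could conceivably approach $\mu(\cX)$ without ever attaining it; all the rest is routine manipulation with \eqref{eq0709_1} and \eqref{eq0709_2}.
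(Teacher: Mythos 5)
Your proposal is correct, but it takes a genuinely different route from the paper's. The paper dichotomizes on whether infinitely many of the successive differences $Q_{\alpha_{n-1}}^{n-1}\setminus Q_{\alpha_n}^n$ have positive $\mu$-measure: if so, properties (1)--(2) of Theorem \ref{thm0326} produce a disjoint sibling cube inside $Q_{\alpha_{n-1}}^{n-1}$, and the comparison \eqref{eq4.57} forces $\mu(Q_{\alpha_{n-1}}^{n-1})\ge (1+1/N_1)\mu(Q_{\alpha_n}^n)$ infinitely often, so $\mu(Q_{\alpha_n}^n)\to\infty$; if not, the measures stabilize at some $n_0$, and the paper shows the centers $z_{\alpha_n}^n$ lie in $\overline{Q_{\alpha_{n_0}}^{n_0}}$, so the inner balls $B_{\varepsilon_0\delta^n}(z_{\alpha_n}^n)$ eventually absorb every fixed point, giving $\mu(\cX)=\mu(Q_{\alpha_{n_0}}^{n_0})$; the weighted transfer in the infinite case is done via Lemma \ref{lem0320_1} applied to enclosing balls. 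You instead use only properties (3)--(4) and the doubling property \eqref{eq0709_2}: a quantitative comparison $\mu(\cX)\le K_2^j\,\mu\bigl(\bigcup_n Q_{\alpha_n}^n\bigr)$, the standard fact that a doubling space of finite total measure is bounded, and the observation that a cube whose inner radius $\varepsilon_0\delta^{n_0}$ exceeds $\operatorname{diam}\cX$ must coincide with $\cX$; the weighted statement then follows because $\omega$ is itself doubling (exactly the content of Lemma \ref{lem0320_1}), so the same two steps apply to $\omega$. Your route avoids \eqref{eq4.57} and the stabilization analysis entirely and yields somewhat stronger conclusions (the set identity $Q_{\alpha_{n_0}}^{n_0}=\cX$ and the quantitative bound on $\mu(\cX)$), while the paper's argument stays inside the dyadic filtration and identifies $n_0$ as the level where the measures of the nested cubes stabilize.

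One small repair is needed in your parenthetical proof that finite measure implies bounded diameter: if the balls $B_{\rho(o,x_k)/(10K_1^2)}(x_k)$ and $B_{\rho(o,x_j)/(10K_1^2)}(x_j)$ meet, the quasi-triangle inequality only yields $\rho(o,x_j)\le \frac{K_1^2+1/10}{1-1/(10K_1)}\,\rho(o,x_k)$, which is of order $K_1^2\rho(o,x_k)$; this does not contradict the separation $\rho(o,x_j)\ge 10K_1\rho(o,x_k)$ once $K_1\ge 10$. Choosing the separation factor proportional to $K_1^2$ (say $\rho(o,x_{k+1})\ge 10K_1^2\rho(o,x_k)$) makes the balls genuinely pairwise disjoint and the argument goes through; the rest of your proof is unaffected.
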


\begin{proof}
First assume that there are infinitely many $n \in \bZ \cap \{n \le 0\}$
such that
$$
\mu \left(Q_{\alpha_{n-1}}^{n-1} \setminus Q_{\alpha_n}^n \right) > 0.
$$
Thanks to property (1) in Theorem \ref{thm0326}, for all such $n$'s,
$$
Q_{\alpha_{n-1}}^{n-1} \setminus Q_{\alpha_n}^n \not\subset \left(\cX \setminus \bigcup_{\alpha} Q_\alpha^n\right).
$$
Thus, there exist $x \in \cX$ and $Q_{\alpha_n'}^n \in \bC_n$ such that
$$
x \in Q_{\alpha_{n-1}}^{n-1} \setminus Q_{\alpha_n}^n,
\quad
x \in \bigcup_\alpha Q_\alpha^n,
\quad
\text{and}
\quad
x \in Q_{\alpha_n'}^n \subset Q_{\alpha_{n-1}}^{n-1}.
$$
Since
$\mu\left(Q_{\alpha_{n-1}}^{n-1}\right) \le N_1 \mu\left(Q_{\alpha_n'}^n\right)$,
we have $\mu\left(Q_{\alpha_n}^n\right) \le N_1 \mu\left(Q_{\alpha_n'}^n\right)$.
This shows
\begin{equation*}
              %          \label{eq4.17}
\mu\left(Q_{\alpha_{n-1}}^{n-1}\right) \ge \mu\left(Q_{\alpha_n}^n\right) + \mu \left(Q_{\alpha_n'}^n\right) \ge \left(1+\frac{1}{N_1}\right) \mu\left(Q_{\alpha_n}^n\right)
\end{equation*}
and
$$
\mu\left(Q_{\alpha_{n-1}}^{n-1} \setminus Q_{\alpha_n}^n\right) \ge \frac{1}{N_1} \mu\left(Q_{\alpha_n}^n\right).
$$
This implies that
$$
\mu\left( \bigcup_n Q_{\alpha_n}^n \right) = \lim_{n \to -\infty} \mu\left(Q_{\alpha_n}^n\right) = \infty.
$$

Now we assume that there exists $n_0 \in \bZ$ such that
$$
\mu \left(Q_{\alpha_{n-1}}^{n-1} \setminus Q_{\alpha_n}^n \right) = 0
$$
for all $n \le n_0$.
Then
$$
\mu\left( \left(\bigcup_{n \in \bZ} Q_{\alpha_n}^n\right) \setminus Q_{\alpha_{n_0}}^{n_0}\right) = 0.
$$
In particular,
$$
\mu \left(Q_{\alpha_n}^n \setminus Q_{\alpha_{n_0}}^{n_0}\right) = 0
$$
for any $n \in \bZ$. On the other hand, for each $Q_{\alpha_n}^n$, $\alpha_n \in I_n$, there exists $z_{\alpha_n}^n \in Q_{\alpha_n}^n$ such that
$$
B_{\varepsilon_0 \delta^n} (z_{\alpha_n}^n) \subset Q_{\alpha_n}^n.
$$
We claim that
\begin{equation}
							\label{eq0317_1}
z_{\alpha_n}^n \in \overline{Q_{\alpha_{n_0}}^{n_0}}
\quad
\text{for all}
\,\, n \in \bZ.
\end{equation}
To prove this, suppose that $z_{\alpha_n}^n \in Q_{\alpha_n}^n \setminus \overline{Q_{\alpha_{n_0}}^{n_0}}$ for some $n \in \bZ$.
Then since the set $Q_{\alpha_n}^n \setminus \overline{Q_{\alpha_{n_0}}^{n_0}}$ is open, there exists an open ball $B_r(z_{\alpha_n}^n)$ such that
$$
B_r(z_{\alpha_n}^n) \subset Q_{\alpha_n}^n \setminus \overline{Q_{\alpha_{n_0}}^{n_0}}.
$$
However,
$$
0 < \mu \left( B_r(z_{\alpha_n}^n) \right)
\le \mu \left( Q_{\alpha_n}^n \setminus \overline{Q_{\alpha_{n_0}}^{n_0}} \right)
\le \mu\left( Q_{\alpha_n}^n \setminus Q_{\alpha_{n_0}}^{n_0}\right) = 0,
$$
which is a contradiction. Hence \eqref{eq0317_1} is proved.
Now for any fixed $x \in \cX$,
$$
\rho(x, z_{\alpha_n}^n) \le K_1 \left( \rho(x,z_{\alpha_{n_0}}^{n_0}) + \rho(z_{\alpha_{n_0}}^{n_0}, z_{\alpha_n}^n)\right) \le K_1 \left(\rho(x, z_{\alpha_{n_0}}^{n_0}) +  N_0 \delta^{n_0}\right),
$$
where the last inequality is due to the property (3) in Theorem \ref{thm0326} and the fact that $z_{\alpha_n}^n \in \overline{Q_{\alpha_{n_0}}^{n_0}}$.
This along with the property (4) in Theorem \ref{thm0326} implies that, for each $x\in \cX$,
$$
x \in B_{\varepsilon_0 \delta^n}(z_{\alpha_n}^n) \subset Q_{\alpha_n}^n,
$$
provided that $-n$ is sufficiently large.
Therefore, $\cX = \bigcup_n Q_{\alpha_n}^n$ and
$$
\mu(\cX) = \mu \left(\bigcup_n Q_{\alpha_n}^n \right) = \mu\left( \left(\bigcup_{n \in \bZ} Q_{\alpha_n}^n\right) \setminus Q_{\alpha_{n_0}}^{n_0}\right) + \mu \left(Q_{\alpha_{n_0}}^{n_0} \right) = \mu \left(Q_{\alpha_{n_0}}^{n_0} \right).
$$

To prove the second statement of the lemma, from the definition of the measure $\omega(\cdot)$, we easily see that $\omega(\cX) = \omega \left(Q_{\alpha_{n_0}}^{n_0} \right)$ if $\mu(\cX) = \mu\left(Q_{\alpha_{n_0}}^{n_0} \right)$.
Otherwise,
for each $n \in \bZ$, find balls $B_n$ satisfying
$$
Q_{\alpha_n}^n \subset B_n
\quad
\text{and}
\quad
\mu(B_n) \le N(K_1, K_2) \mu(Q_{\alpha_n}^n).
$$
By taking $B = B_m$ and $f = I_{Q_{\alpha_m}^m \setminus Q_{\alpha_n}^n}$, $m \le n$, in Lemma \ref{lem0320_1}, we obtain
$$
\left(1 - \frac{\mu(Q_{\alpha_n}^n)}{\mu(Q_{\alpha_m}^m)}\right)^p
\le N \left(\frac{\mu(Q_{\alpha_m}^m) -\mu(Q_{\alpha_n}^n)}{\mu(B_m)} \right)^p
\le N \left(1 - \frac{\omega(Q_{\alpha_n}^n)}{\omega(Q_{\alpha_m}^m)} \right),
$$
where $N=N(K_1,K_2, p, [w]_{A_p})$.
This shows $\omega(Q_{\alpha_n}^n) \to \infty$ if $\mu(Q_{\alpha_n}^n) \to \infty$ as $n \to -\infty$.
\end{proof}

\begin{lemma}
							\label{lem0331_2}
Let $p \in (1,\infty)$ and $w \in A_p(\mu)$.
For $x \in \tilde{\cX}$ and $f \in L_p(w \, d\mu)$,
\begin{equation}
							\label{eq0402_1}
\lim_{n \to -\infty} |f|_{|n}(x) =
\left\{\begin{aligned}
\|f\|_{L_1(\mu)} \left(\mu(\cX)\right)^{-1},
\quad &\text{if} \quad \mu(\cX) < \infty,
\\
0, \quad &\text{if} \quad \mu(\cX) = \infty.
\end{aligned}
\right.
\end{equation}
This holds true as well if $f \in L_1(\cX,\mu)$.
\end{lemma}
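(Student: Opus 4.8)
The plan is to follow, for a fixed $x\in\tilde\cX$, the chain of dyadic sets containing $x$. For each $n\in\bZ$ let $Q_{\alpha_n}^n\in\bC_n$ be the unique set with $x\in Q_{\alpha_n}^n$; by property (2) of Theorem \ref{thm0326} these satisfy $Q_{\alpha_n}^n\subset Q_{\alpha_{n-1}}^{n-1}$, so they form an increasing family as $n\to-\infty$ to which Lemma \ref{lem0319_1} applies. That lemma gives a dichotomy: either $\mu\big(\bigcup_n Q_{\alpha_n}^n\big)=\infty$, in which case $\mu(Q_{\alpha_n}^n)\to\infty$ and $\omega(Q_{\alpha_n}^n)\to\infty$ as $n\to-\infty$; or there is $n_0$ with $\mu(\cX)=\mu(Q_{\alpha_{n_0}}^{n_0})$ and $\omega(\cX)=\omega(Q_{\alpha_{n_0}}^{n_0})$. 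Since each $Q_{\alpha_n}^n$ is contained in a ball (property (3)) and all balls have finite measure by \eqref{eq0709_2}, a dyadic set never has infinite measure; hence the first alternative holds precisely when $\mu(\cX)=\infty$ and the second precisely when $\mu(\cX)<\infty$. This splits the proof into the two cases of \eqref{eq0402_1}.

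Suppose first $\mu(\cX)<\infty$, and take $n_0$ as above. Then $\mu(\cX\setminus Q_{\alpha_n}^n)=0$ for every $n\le n_0$, so
$$
|f|_{|n}(x)=\frac{1}{\mu(Q_{\alpha_n}^n)}\int_{Q_{\alpha_n}^n}|f|\,d\mu=\frac{1}{\mu(\cX)}\int_{\cX}|f|\,d\mu=\|f\|_{L_1(\mu)}(\mu(\cX))^{-1}
$$
for all such $n$, which is \eqref{eq0402_1}. For $f\in L_p(w\,d\mu)$ this quantity is also finite: $\cX$ agrees up to a $\mu$-null set with $Q_{\alpha_{n_0}}^{n_0}$, which lies in a ball $B$, and H\"older's inequality together with the definition of $A_p$ gives $\int_\cX|f|\,d\mu\le\big(\int_\cX|f|^pw\,d\mu\big)^{1/p}\big(\int_B w^{-1/(p-1)}\,d\mu\big)^{(p-1)/p}<\infty$, i.e.\ $f\in L_1(\cX,\mu)$, so the displayed identity holds with both sides finite; for $f\in L_1(\cX,\mu)$ this is immediate.

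Now suppose $\mu(\cX)=\infty$, so $\mu(Q_{\alpha_n}^n)\to\infty$ and $\omega(Q_{\alpha_n}^n)\to\infty$ as $n\to-\infty$. If $f\in L_1(\cX,\mu)$, then $|f|_{|n}(x)\le(\mu(Q_{\alpha_n}^n))^{-1}\|f\|_{L_1(\mu)}\to0$. For $f\in L_p(w\,d\mu)$, I would for each $n$ pick a ball $B_n\supset Q_{\alpha_n}^n$ with $\mu(B_n)\le N(K_1,K_2)\mu(Q_{\alpha_n}^n)$, using properties (3) and (4) of Theorem \ref{thm0326} and the doubling property exactly as in the proof of Lemma \ref{lem0319_1}, so that
$$
|f|_{|n}(x)=\dashint_{Q_{\alpha_n}^n}|f|\,d\mu\le N\dashint_{B_n}|f|I_{Q_{\alpha_n}^n}\,d\mu;
$$
applying Lemma \ref{lem0320_1} to $|f|I_{Q_{\alpha_n}^n}$ on $B_n$ and using $\omega(B_n)\ge\omega(Q_{\alpha_n}^n)$ then yields
$$
\big(|f|_{|n}(x)\big)^p\le N\,\frac{[w]_{A_p}}{\omega(Q_{\alpha_n}^n)}\int_{Q_{\alpha_n}^n}|f|^pw\,d\mu\le N\,\frac{[w]_{A_p}}{\omega(Q_{\alpha_n}^n)}\|f\|_{L_p(w\,d\mu)}^p\longrightarrow0.
$$
The only step requiring any care is this last one — transferring the bound from balls, where Lemma \ref{lem0320_1} is stated, to the dyadic sets $Q_{\alpha_n}^n$ and invoking $\omega(Q_{\alpha_n}^n)\to\infty$ from Lemma \ref{lem0319_1}. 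Given those two lemmas it is entirely routine; the real content of the argument is the measure-theoretic dichotomy established in the first paragraph.
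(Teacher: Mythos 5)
Your argument is correct and follows essentially the same route as the paper's proof: the dichotomy of Lemma \ref{lem0319_1} applied to the chain of dyadic sets containing $x$, combined with the $A_p$ bound of Lemma \ref{lem0320_1} (transferred from a comparable ball $B_n\supset Q_{\alpha_n}^n$) to get $(|f|_{|n}(x))^p\le N\,\omega(Q_{\alpha_n}^n)^{-1}\|f\|_{L_p(w\,d\mu)}^p$ in the infinite-measure case and $f\in L_1(\mu)$ in the finite-measure case. You merely make explicit some steps the paper leaves implicit, such as the eventual constancy of $|f|_{|n}(x)$ for $n\le n_0$ and the fact that the dichotomy aligns with the finiteness of $\mu(\cX)$ because dyadic sets have finite measure.
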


\begin{proof}
If $\mu(\cX) < \infty$, from Lemmas \ref{lem0320_1} and \ref{lem0319_1}, we see that $f \in L_1(\cX,\mu)$ and the claim in the lemma holds true.
When $\mu(\cX) = \infty$,
by Lemma \ref{lem0320_1}, for each $n \in \bZ$, we get
$$
\left(|f|_{|n}(x)\right)^p \le N \frac{1}{\omega(Q_\alpha^n)} \|f\|_{L_p(w \, d\mu)}^p.
$$
By Lemma \ref{lem0319_1}, $\omega(Q_\alpha^n) \to \infty$ as $n \to -\infty$ when $\mu(\cX) = \infty$. Thus \eqref{eq0402_1} follows. The second statement is clear from Lemma \ref{lem0319_1}.
\end{proof}

\begin{lemma}
							\label{lem0401_1}
Let $p \in (1,\infty)$, $w \in A_p$, and $\alpha = 1/(2N_1)$, where $N_1$ is given in \eqref{eq4.57}.
For a nonnegative $f \in L_p(w \, d \mu)$, set
\begin{equation}
							\label{eq0401_1}
\lambda_0 =
\left\{\begin{aligned}
2 N_1 \|f\|_{L_1(\mu)} \left(\mu(\cX)\right)^{-1},
\quad &\text{if} \quad \mu(\cX) < \infty,
\\
0, \quad &\text{if} \quad \mu(\cX) = \infty,
\end{aligned}
\right.
\end{equation}
and, for $\lambda > \lambda_0$,
define
$$
\tau(x) = \inf_{n \in \bZ} \{ n : f_{|n}(x) > \alpha \lambda \},
\quad
x \in \tilde{\cX}.
$$
Then $\tau$ is a stopping time.
The same statement holds true if $f \in L_1(\cX,\mu)$.
\end{lemma}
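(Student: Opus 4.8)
The plan is to check the two defining properties of a stopping time in turn: (i) that $\tau$ is well defined, i.e., that it takes values in $\{\infty,0,\pm1,\pm2,\ldots\}$; and (ii) that for every $n\in\bZ$ the set $\{x\in\tilde\cX:\tau(x)=n\}$ is either empty or a union of members of $\bC_n$ intersected with $\tilde\cX$.

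For (i), I would fix $x\in\tilde\cX$ and look at $S_x:=\{n\in\bZ:f_{|n}(x)>\alpha\lambda\}$; if $S_x=\varnothing$ then $\tau(x)=\inf\varnothing=\infty$, which is allowed, so the issue is to show that when $S_x\ne\varnothing$ it is bounded below, so that $\tau(x)=\inf S_x=\min S_x\in\bZ$. Since $f\ge0$, Lemma~\ref{lem0331_2} applies and yields $\lim_{n\to-\infty}f_{|n}(x)=L$, where $L=\|f\|_{L_1(\mu)}\,(\mu(\cX))^{-1}$ when $\mu(\cX)<\infty$ and $L=0$ when $\mu(\cX)=\infty$. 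The choice $\alpha=1/(2N_1)$ and the definition of $\lambda_0$ in \eqref{eq0401_1} are precisely arranged so that $\alpha\lambda_0=L$ in both cases; hence $\alpha\lambda>\alpha\lambda_0=L$ because $\lambda>\lambda_0$. Applying the definition of the limit with tolerance $\alpha\lambda-L>0$ produces $M_x\in\bZ$ with $f_{|n}(x)<\alpha\lambda$ for all $n<M_x$, so $S_x\subset\{n\in\bZ:n\ge M_x\}$, as needed. Since Lemma~\ref{lem0331_2} is also valid for $f\in L_1(\cX,\mu)$, the same argument handles that case.

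For (ii), the starting observation is that $x\mapsto f_{|m}(x)$ is constant on every cell $Q\in\bC_m$, equal to $\dashint_Q f\,d\mu$; therefore, for every $m\in\bZ$ and every real $t$, the sets $\{x\in\tilde\cX:f_{|m}(x)>t\}$ and $\{x\in\tilde\cX:f_{|m}(x)\le t\}$ are unions of members of $\bC_m$ intersected with $\tilde\cX$. Next I would record that, after intersecting with $\tilde\cX$, the partitions genuinely refine one another: if $x\in Q_\beta^{m-1}\cap\tilde\cX$ then by property (1) of Theorem~\ref{thm0326} and the definition of $\tilde\cX$ there is $Q_\alpha^m\ni x$, and property (2) together with the disjointness of $\bC_{m-1}$ forces $Q_\alpha^m\subset Q_\beta^{m-1}$; iterating, for $m<n$ each $Q\in\bC_m$ satisfies $Q\cap\tilde\cX=\bigcup\{Q'\cap\tilde\cX:Q'\in\bC_n,\ Q'\subset Q\}$. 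Consequently, for every $m<n$, the set $\{x\in\tilde\cX:f_{|m}(x)\le\alpha\lambda\}$ is a union of members of $\bC_n$ intersected with $\tilde\cX$, and so trivially is $\{x\in\tilde\cX:f_{|n}(x)>\alpha\lambda\}$. Finally I would write
\begin{equation*}
\{x\in\tilde\cX:\tau(x)=n\}=\{x\in\tilde\cX:f_{|n}(x)>\alpha\lambda\}\cap\bigcap_{m<n}\{x\in\tilde\cX:f_{|m}(x)\le\alpha\lambda\},
\end{equation*}
and observe that an intersection of unions of the pairwise disjoint cells of $\bC_n$ is again a union of such cells; hence $\{x\in\tilde\cX:\tau(x)=n\}$ has the required form.

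I expect the only mildly delicate point to be reconciling the merely $\mu$-almost-everywhere covering property (1) of the filtration with the honest set-theoretic nesting needed in step (ii); passing from $\cX$ to $\tilde\cX$ is exactly what repairs this, and after that the argument is pure bookkeeping. The quantitative content of $\alpha$ and $\lambda_0$ is used only to guarantee $\alpha\lambda>L$, which is what prevents $\tau$ from running off to $-\infty$, and is the sole place where the dichotomy $\mu(\cX)<\infty$ versus $\mu(\cX)=\infty$ enters.
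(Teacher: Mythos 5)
Your proposal is correct and follows essentially the same route as the paper: boundedness of $\tau$ from below comes from Lemma \ref{lem0331_2} together with the observation that $\alpha\lambda>\alpha\lambda_0$ equals (exceeds) the limit of $f_{|n}$ as $n\to-\infty$, and the stopping-time property rests on the constancy of $f_{|m}$ on each cell of $\bC_n$ for $m\le n$ (the paper's \eqref{eq0319_1}). Your level-set/refinement bookkeeping in step (ii) is just a slightly more explicit rendering of the paper's argument that $\tau$ is constant on every cell of $\bC_n$ meeting $\{\tau=n\}$.
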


\begin{proof}
By Lemma \ref{lem0331_2}, if either $f \in L_p(w \, d\mu)$ with $w \in A_p$ or $f \in L_1(\cX,\mu)$, for each $x \in \tilde{\cX}$,
$$
\lim_{n \to -\infty} f_{|n}(x) < \alpha \lambda.
$$
This shows $\tau(x) > - \infty$.
Observe that
\begin{equation}
							\label{eq0319_1}
f_{|m}(x) = f_{|m}(y)
\quad
x , y \in Q_{\alpha}^n,
\quad
m \le n.
\end{equation}
For each $n \in \bZ$, since $\{x \in \tilde{\cX} : \tau(x) = n\} \subset \bigcup_\alpha Q_\alpha^n$, one can see that
$$
\{x \in \tilde{\cX} : \tau(x) = n\} = \bigcup_{\alpha\in I_n}\left(Q_{\alpha}^n \cap \{x \in \tilde{\cX} : \tau(x) = n\}\right)
\subset \bigcup_{\alpha'} \left( Q_{\alpha'}^n \cap \tilde{\cX} \right),
$$
where $\alpha'$ is such that
$Q_{\alpha'}^n \cap \{x \in \tilde{\cX} : \tau(x) = n\} \ne \emptyset$.
In this case by \eqref{eq0319_1}, for any $y \in Q_{\alpha'}^n \cap \tilde{\cX}$, we have $\tau(y) = \tau(x)$.
That is,
$$
Q_{\alpha'}^n \cap \tilde{\cX} \subset \{x \in \tilde{\cX} : \tau(x) = n\}.
$$
Hence,
$$
\{x \in \tilde{\cX} : \tau(x) = n\} = \bigcup_{\alpha'} \left( Q_{\alpha'}^n \cap \tilde{\cX} \right).
$$
Therefore, $\tau$ is a stopping time.
\end{proof}

The following level set estimate is a key lemma in the proof of Theorem \ref{thm2}.

\begin{lemma}
                                \label{lem2.2}
Let $p \in (1,\infty)$, $K_0 \ge 1$, $\alpha=1/(2N_1)$, where $N_1$ is defined in \eqref{eq4.57}, and $w\in A_p(\mu)$ be a weight on $\cX$ satisfying $[w]_{A_p}\le K_0$.
Recall $\omega(dx)=w\mu(dx)$.
Then for any $f\in L_p(w \, d \mu)$ and $\lambda> \lambda_0$, where $\lambda_0$ is defined in \eqref{eq0401_1}, we have
\begin{equation}
                                        \label{eq4.43}
\omega\{x\,:\,|f(x)|\ge \lambda\}\le N \lambda^{-\beta}\int_\cX I_{\cM_{\operatorname{dy}} f(x)>\alpha \lambda}(f_{\operatorname{dy}}^{\#}(x))^\beta\,\omega(dx),
\end{equation}
where $N>0$ and $\beta\in (0,1)$ are constants depending only on $K_1$, $K_2$, $p$, and $K_0$.
The estimate \eqref{eq4.43} also holds if $f \in L_1(\cX,\mu)$ and $\omega \in A_\infty(\mu)$.
In this case, the constants $\beta$ and $N$ depend on $K_1$, $K_2$, $p'$, and $[w]_{A_{p'}}$ for some $p' \in (1,\infty)$.
\end{lemma}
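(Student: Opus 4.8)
The plan is to carry out the Calder\'on--Zygmund stopping-time argument of \cite[Lemma~3.2.9]{MR2435520} in the present weighted, coordinate-free framework, with Lemmas~\ref{lem0401_1}, \ref{lem0331_2}, and~\ref{lem0331_1} as the main inputs. Fix $\lambda>\lambda_0$, set $\alpha=1/(2N_1)$, and define the stopping time $\tau(x)=\inf\{n\in\bZ:|f|_{|n}(x)>\alpha\lambda\}$, $x\in\tilde\cX$. By Lemma~\ref{lem0401_1}, applied to $|f|$ (or its $L_1$ version), $\tau$ is a stopping time relative to the filtration, and by Lemma~\ref{lem0331_2} together with the hypothesis $\lambda>\lambda_0$ one has $\tau(x)>-\infty$ for every $x\in\tilde\cX$; this is the one place where the dichotomy $\mu(\cX)<\infty$ versus $\mu(\cX)=\infty$ and the exact value of $\lambda_0$ come in. Let $\{Q_j\}=\{Q^{n_j}_{\alpha_j}\}$ be the ``first-exit'' cubes, i.e.\ the cubes of the filtration on which $\tau$ is identically equal to their own level; then $\{\tau<\infty\}=\bigcup_j(Q_j\cap\tilde\cX)$, the union is disjoint (if two such cubes, at levels $n<m$, met, the level-$m$ one would lie in the level-$n$ one, forcing $\tau\equiv n$ there, contradicting $\tau\equiv m$ on the level-$m$ cube), and, since $\cM_{\operatorname{dy}}f(x)>\alpha\lambda$ exactly when $|f|_{|n}(x)>\alpha\lambda$ for some finite $n$, we also have $\{\cM_{\operatorname{dy}}f>\alpha\lambda\}=\bigcup_j(Q_j\cap\tilde\cX)$ up to a $\mu$-null set.

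Next I would record the two size estimates on a fixed $Q_j$. The stopping rule gives $\dashint_{Q_j}|f|\,\mu(dx)>\alpha\lambda$, while minimality of $\tau$ gives $\dashint_{\hat Q_j}|f|\,\mu(dx)\le\alpha\lambda$ for the parent $\hat Q_j\in\bC_{n_j-1}$, whence by \eqref{eq4.57}
\[
\dashint_{Q_j}|f|\,\mu(dx)\le\frac{\mu(\hat Q_j)}{\mu(Q_j)}\dashint_{\hat Q_j}|f|\,\mu(dx)\le N_1\alpha\lambda=\lambda/2 .
\]
In particular $|(f)_{Q_j}|\le\lambda/2$, so on $\{|f|\ge\lambda\}\cap Q_j$ one has $|f(x)-f_{|n_j}(x)|\ge\lambda-\lambda/2=\lambda/2$ since $f_{|n_j}\equiv(f)_{Q_j}$ on $Q_j$. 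Chebyshev's inequality on $Q_j$ together with the definition of $f^{\#}_{\operatorname{dy}}$ (bounding the supremum from below by its term at level $n_j$) then gives, for every $y\in Q_j\cap\tilde\cX$,
\[
\mu\big(\{|f|\ge\lambda\}\cap Q_j\big)\le\frac{2}{\lambda}\,\mu(Q_j)\dashint_{Q_j}\big|f-(f)_{Q_j}\big|\,\mu(dx)\le\frac{2}{\lambda}\,\mu(Q_j)\,f^{\#}_{\operatorname{dy}}(y).
\]

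Finally I would pass to the weighted measure. Applying Lemma~\ref{lem0331_1} with $E=\{|f|\ge\lambda\}\cap Q_j\subset Q_j$ produces $\beta\in(0,1)$ and $N$ with $\omega(E)\le N\,\omega(Q_j)\big(\mu(E)/\mu(Q_j)\big)^{\beta}$; combining this with the last display, minimizing over $y\in Q_j\cap\tilde\cX$, and using $\big(\inf_{Q_j\cap\tilde\cX}f^{\#}_{\operatorname{dy}}\big)^{\beta}\le\dashint_{Q_j}(f^{\#}_{\operatorname{dy}})^{\beta}\,\omega(dx)$ gives
\[
\omega\big(\{|f|\ge\lambda\}\cap Q_j\big)\le N\lambda^{-\beta}\int_{Q_j}(f^{\#}_{\operatorname{dy}})^{\beta}\,\omega(dx).
\]
Summing over the disjoint family $\{Q_j\}$ and using the inclusion $\{|f|\ge\lambda\}\subset\{\cM_{\operatorname{dy}}f>\alpha\lambda\}=\bigcup_j Q_j$ (valid up to $\mu$-null sets because $\cM_{\operatorname{dy}}f\ge|f|$ $\mu$-a.e.\ by the Lebesgue differentiation theorem along the filtration, which follows from the assumed Lebesgue differentiation theorem in $(\cX,\mu)$ and properties (3) and (4) of Theorem~\ref{thm0326}) yields \eqref{eq4.43}. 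For the final assertion, the only modifications are that $\tau>-\infty$ now comes from the $L_1$ part of Lemma~\ref{lem0331_2}, and that $\omega\in A_\infty(\mu)$ means $w\in A_{p'}(\mu)$ for some $p'\in(1,\infty)$, so Lemma~\ref{lem0331_1} is invoked with this $p'$ and hence $\beta$, $N$ depend on $K_1$, $K_2$, $p'$, $[w]_{A_{p'}}$. I expect no genuine obstacle: every step is either one of the lemmas already established or standard stopping-time bookkeeping, and the only real care is in the stopping-time set-up (ensuring $\tau>-\infty$) and in tracking the dependence of the reverse-H\"older exponent $\beta$.
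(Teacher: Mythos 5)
Your proposal is correct and follows essentially the same route as the paper: the same stopping time $\tau=\inf\{n: |f|_{|n}>\alpha\lambda\}$ justified by Lemmas \ref{lem0401_1} and \ref{lem0331_2}, the bound $\dashint_{Q_j}|f|\le\lambda/2$ via the parent cube and \eqref{eq4.57}, Chebyshev plus the sharp-function term at the stopping level, Lemma \ref{lem0331_1} to pass to $\omega$, and summation over the stopping cubes together with $\{\tau<\infty\}=\{\cM_{\operatorname{dy}}f>\alpha\lambda\}$. The only cosmetic difference is that you run the argument directly with $|f|$ and the sharp function of $f$, whereas the paper first reduces to $f\ge 0$ and removes that assumption at the end via $|f|^{\#}_{\operatorname{dy}}\le 2f^{\#}_{\operatorname{dy}}$.
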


\begin{proof}
Throughout the proof, since $\mu(\cX \setminus \tilde{\cX}) = 0$, whenever needed, $\cX$ is to be understood as $\tilde{\cX}$.

First we assume that $f\ge 0$ in $\cX$.
Define  $\tau(x)=\inf\{n\,:\,f_{|n}(x)>\alpha \lambda\}$.
By Lemma \ref{lem0401_1}, $\tau=\tau(x)$ is a well-defined stopping time.
When $\tau(x)<\infty$, we have $f_{|\tau-1}(x)\le \alpha \lambda$, which together with \eqref{eq4.57} implies that $f_{|\tau}(x)\le \lambda/2$. By the Lebesgue differentiation theorem, $f_{|n}\to f$ $\mu$-a.e., which gives that for almost every $x$ satisfying $f(x)\ge \lambda$, we have $\tau<\infty$ and $f_{|\tau}(x)\le \lambda/2$. Since $\omega$ is absolutely continuous with respect to $\mu$, we get
\begin{align}
                        \label{eq11.15}
\omega\{x\,:\,f(x)\ge \lambda\}&=\omega\{x\,:\,f(x)\ge \lambda,f_{|\tau}(x)\le \lambda/2\}\nonumber\\
&\le \omega\{x\,:\,f(x)-f_{|\tau}(x)\ge \lambda/2\}.
\end{align}
Note that for $n\in \bZ$, $\{x\,:\,\tau(x)=n\}$ is a union of sets in $\bC_n$. Let $Q_\alpha^n$ be one of these sets so that $\tau(x)=n$ for any $x\in Q_\alpha^n$. Then by the Chebyshev inequality,
$$
\mu\{x\in Q_\alpha^n\,:\,f(x)-f_{|\tau}(x) \ge \lambda/2\}
\le 2\lambda^{-1}\int_{Q_\alpha^n}(f(x)-f_{|n}(x))_+\,\mu(dx),
$$
which implies that for any $y\in Q_\alpha^n$, we have
$$
\mu\{x\in Q_\alpha^n\,:\,f(x)-f_{|\tau}(x) \ge \lambda/2\}/\mu(Q_\alpha^n)
\le 2\lambda^{-1}f_{\text{dy}}^{\#}(y).
$$
By Lemma \ref{lem0331_1}, there exist $N$ and $\beta$ depending on $K_1$, $K_2$, $p$, and $K_0$  such that
\begin{align*}
&\frac{\omega\{x\in Q_\alpha^n\,:\,f(x)-f_{|\tau}(x) \ge \lambda/2\}}{\omega(Q_\alpha^n)}\\
&\le N\left(\frac{\mu\{x\in Q_\alpha^n\,:\,f(x)-f_{|\tau}(x) \ge \lambda/2\}}{\mu(Q_\alpha^n)}\right)^\beta
\le N\lambda^{-\beta}(f_{\text{dy}}^{\#}(y))^\beta,
\end{align*}
which gives
\begin{equation}
                                                    \label{eq11.27}
\omega\{x\in Q_\alpha^n\,:\,f(x)-f_{|\tau}(x)\ge \lambda/2\}
\le N\lambda^{-\beta}\int_{Q_\alpha^n}(f_{\text{dy}}^{\#}(x))^\beta\,\omega(dx).
\end{equation}
Observe that
$$
\{x\,:\,\tau(x)<\infty\}=\{x\,:\,\cM_{\text{dy}}f(x)>\alpha\lambda\}.
$$
Summing \eqref{eq11.27} with respect to all such sets $Q_\alpha^n$ and using \eqref{eq11.15}, we reach \eqref{eq4.43}. Finally, to remove the condition that $f\ge 0$, it suffices to note that by the triangle inequality, $|f|^{\#}_{\text{dy}}\le 2f^{\#}_{\text{dy}}$. The estimate \eqref{eq4.43} is proved.

In the case that $f \in L_1(\cX,\mu)$ and $\omega \in A_\infty(\mu)$, the proof is the same with a possibly different choice of $\beta$ and $N$ in \eqref{eq11.27} due to $w \in A_\infty$, i.e., there exists $p' \in (1,\infty)$ such that $w \in A_{p'}(\mu)$.
\end{proof}

We are now ready to prove Theorem \ref{thm2}.

\begin{proof}[Proof of Theorem \ref{thm2}]
First we assume that $q=p$.
In this case by Lemma \ref{lem0320_1} $f \in L_1(\mu)$ if $\mu(\cX) < \infty$.
As before, we denote $\omega(dx)=w\mu(dx)$ and $L_p(\omega) := L_p(\cX,w\, d\mu) = L_p(w \, d\mu)$.
Recall the elementary identity:
$$
\|f\|_{L_p(\omega)}^p=p\int_0^\infty \omega\{x\,:\,|f(x)|>\lambda\}\lambda^{p-1}\,d\lambda.
$$
To estimate $\omega\{x\,:\,|f(x)|>\lambda\}$, for $\lambda> \lambda_0$, we use Lemma \ref{lem2.2}. Otherwise, we use the simple upper bound
$$
\omega\{x\,:\,|f(x)|>\lambda\}\le \omega(\operatorname{supp} f).
$$
We then get
\begin{align}
                            \label{eq3.06}
&\|f\|_{L_p(\omega)}^p=p\left(\int_0^{\lambda_0}+\int_{\lambda_0}^\infty\right) \omega\{x\,:\,|f(x)|>\lambda\}\lambda^{p-1}\,d\lambda\nonumber\\
&\le \lambda_0^p \, \omega(\operatorname{supp} f)+Np\int_{\lambda_0}^\infty\left(\int_\cX I_{\cM_{\text{dy}} f(x)>\alpha \lambda}(f_{\text{dy}}^{\#}(x))^\beta\,\omega(dx)\right)\lambda^{p-1-\beta}\,d\lambda.
\end{align}
Clearly, the first term on the right-hand side of \eqref{eq3.06} vanishes, i.e., $\lambda_0 = 0$ when $\mu(\cX)=\infty$.
By Fubini's theorem and H\"older's inequality,
\begin{align}
                            \label{eq3.27}
&\int_{\lambda_0}^\infty\left(\int_\cX I_{\cM_{\text{dy}} f(x)>\alpha \lambda}(f_{\text{dy}}^{\#}(x))^\beta\,\omega(dx)\right)
\lambda^{p-1-\beta}\,d\lambda\nonumber\\
&\le\int_\cX \left(\int_0^{\alpha^{-1}\cM_{\text{dy}} f(x)}\lambda^{p-1-\beta}\,d\lambda\right) (f_{\text{dy}}^{\#}(x))^\beta\,\omega(dx)\nonumber\\
&\le N\int_\cX (\cM_{\text{dy}} f(x))^{p-\beta}(f_{\text{dy}}^{\#}(x))^\beta\,\omega(dx)\nonumber\\
&\le N\|\cM_{\text{dy}} f\|_{L_p(\omega)}^{p-\beta}
\|f_{\text{dy}}^{\#}\|_{L_p(\omega)}^{\beta}.
\end{align}
Combining \eqref{eq3.06}, the definition of $\lambda_0$, and \eqref{eq3.27}, we obtain
\begin{equation}
							\label{eq0322_1}
\|f\|_{L_p(\omega)}^p\le N\|f\|_{L_1(\mu)}^p \left(\mu(\cX)\right)^{-p}\omega(\operatorname{supp} f)+N\|\cM_{\text{dy}} f\|_{L_p(\omega)}^{p-\beta}
\|f_{\text{dy}}^{\#}\|_{L_p(\omega)}^{\beta},
\end{equation}
where the first-term on the right-hand side again vanishes when $\mu(\cX) = \infty$, and $N=N(K_1,K_2,p, K_0)$.
By Theorem \ref{thm1}, \eqref{eq8.52}, and Young's inequality, the second term on the right-hand side of \eqref{eq0322_1} is bounded as
$$
\|\cM_{\text{dy}} f\|_{L_p(\omega)}^{p-\beta}
\|f_{\text{dy}}^{\#}\|_{L_p(\omega)}^{\beta} \le \varepsilon_1 \|f\|_{L_p(\omega)}^p + \varepsilon_1^{-(p-\beta)/\beta} N \|f_{\text{dy}}^{\#}\|_{L_p(\omega)}^p
$$
for any $\varepsilon_1 > 0$.
From this combined with \eqref{eq0322_1}, we see that \eqref{eq1.51} and \eqref{eq1.54} hold true for $\mu(\cX) < \infty$ and $\mu(\cX) =\infty$, respectively.

If we assume that $\operatorname{supp} f \subset B_{r_0}(x_0)$ and $\mu(B_{r_0}(x_0))\le \varepsilon\mu(\cX)$, then by the definition of $\lambda_0$ and Lemma \ref{lem0320_1},
\begin{align*}
%                    \label{eq3.09}
&\lambda_0^p \, \omega(\operatorname{supp}f)\le \lambda_0^p \, \omega(B_{r_0}(x_0))\le N\|f\|^p_{L_1(\mu)}\omega(B_{r_0}(x_0))(\mu(\cX))^{-p}\nonumber\\
&\le N\|f\|^p_{L_p(\omega)}\left(\mu(B_{r_0}(x_0))\right)^{p}(\mu(\cX))^{-p}
\le N\varepsilon^p \|f\|^p_{L_p(\omega)},
\end{align*}
which together with \eqref{eq3.06} and \eqref{eq3.27} yields
$$
\|f\|_{L_p(\omega)}^p\le N\varepsilon^p \|f\|^p_{L_p(\omega)} +N\|\cM_{\text{dy}} f\|_{L_p(\omega)}^{p-\beta}
\|f_{\text{dy}}^{\#}\|_{L_p(\omega)}^{\beta}.
$$
Again by  Theorem \ref{thm1} and \eqref{eq8.52}, we get \eqref{eq3.41}.

Next, for general $q \in (1,\infty)$, we only need to consider the case when $q>p$ because otherwise $A_q\subset A_p$ and the result follows from the proof above.
Note that, if $q > p$ and $\mu(\cX) < \infty$, Lemma \ref{lem0320_1} only implies $|f|^{p/q} \in L_1(\mu)$, but still the inequality \eqref{eq1.51} makes sense regardless of whether $f \in L_1(\mu)$ or not.
Observe that by the triangle inequality and H\"older's inequality,
\begin{align*}
&\dashint_{Q_\alpha^n}\big||f(x)|^{p/q}-(|f|^{p/q})_{Q_\alpha^n}\big|\,\mu(dx)\\
&\le \dashint_{Q_\alpha^n}\dashint_{Q_\alpha^n}
\big||f(x)|^{p/q}-|f(y)|^{p/q}\big|\,\mu(dx)\,\mu(dy)\\
&\le \dashint_{Q_\alpha^n}\dashint_{Q_\alpha^n}
\big||f(x)|-|f(y)|\big|^{p/q}\,\mu(dx)\,\mu(dy)\\
&\le \left(\dashint_{Q_\alpha^n}\dashint_{Q_\alpha^n}
|f(x)-f(y)|\,\mu(dx)\,\mu(dy)\right)^{p/q}\\
&\le \left(2\dashint_{Q_\alpha^n}
\big|f(x)-(f)_{Q_\alpha^n}\big|\,\mu(dx)\right)^{p/q},
\end{align*}
which implies that
\begin{equation}
                            \label{eq3.00}
\| (|f|^{p/q})^{\#}_{\text{dy}}\|_{L_q(\omega)}^{q/p}
\le N\|f^{\#}_{\text{dy}}\|_{L_p(\omega)}.
\end{equation}
Using \eqref{eq1.51} and \eqref{eq1.54} with $q$ in place of $p$, \eqref{eq3.00}, and H\"older's inequality, we get
\begin{align*}
              %                          \label{eq11.01}
&\|f\|_{L_p(\omega)}=\||f|^{p/q}\|_{L_q(\omega)}^{q/p}\\
&\le N\Big(
\| (|f|^{p/q})^{\#}_{\text{dy}}\|_{L_q(\omega)}
+(\mu(\cX))^{-1}\big(\omega(\operatorname{supp} f)\big)^{\frac 1 q}\||f|^{p/q}\|_{L_1(\mu)}\Big)^{q/p}\\
&\le N
\| (|f|^{p/q})^{\#}_{\text{dy}}\|_{L_q(\omega)}^{q/p}
+N(\mu(\cX))^{-q/p}\big(\omega(\operatorname{supp} f)\big)^{\frac 1 p}\||f|^{p/q}\|_{L_1(\mu)}^{q/p}\\
&\le N\|f^{\#}_{\text{dy}}\|_{L_p(\omega)}
+N(\mu(\cX))^{-1}\big(\omega(\operatorname{supp} f)\big)^{\frac 1 p}\|f\|_{L_1(\mu)},
\end{align*}
where, as above, the second term on the right-hand side is to vanish if $\mu(\cX) = \infty$. This gives \eqref{eq1.51}  and \eqref{eq1.54} for $\mu(\cX) < \infty$ and $\mu(\cX) = \infty$, respectively.
The proof of \eqref{eq3.41} is similar.
The theorem is proved.
\end{proof}

\begin{proof}[Proof of Theorem \ref{thm3}]
Without loss of generality, we may assume that $f\ge 0$.
First assume $q = p$ and let $\lambda_0$ be as in \eqref{eq0401_1} using $v$ in place of $f$. Similar to the proof of Theorem \ref{thm2}, for $\lambda>\lambda_0$ we define a stopping time  $\tau(x)=\inf\{n\,:\,v_{|n}(x)>\alpha \lambda\}$. Because $v\ge f$, as \eqref{eq11.15}, we have
\begin{align}
                                    \label{eq4.56}
\omega\{x\,:\,f(x)\ge \lambda\}&=\omega\{x\,:\,f(x)\ge \lambda,v_{|\tau}(x)\le \lambda/2\}\nonumber\\
&\le \omega\{x\,:\,f(x)-v_{|\tau}(x)\ge \lambda/2\}.
\end{align}
Let $Q:=Q_\alpha^n\in \bC_n$ be such that $\tau(x)=n$ for any $x\in Q_\alpha^n$. Since $f\le f^Q\le v$, by the Chebyshev inequality we have
\begin{align*}
&\mu\{x\in Q\,:\,f(x)-v_{|\tau}(x) \ge \lambda/2\}\le \mu\{x\in Q\,:\,f^Q(x)-f^Q_{|\tau}(x) \ge \lambda/2\}\\
&\le 2\lambda^{-1}\int_{Q}(f^Q(x)-(f^Q)_Q)_+\,\mu(dx)\le 2\lambda^{-1}g(y) \mu(Q)
\end{align*}
for any $y\in Q$. Then similar to \eqref{eq11.27}, we have
\begin{equation}
                                    \label{eq4.58}
\omega\{x\in Q\,:\,f(x)-v_{|\tau}(x)\ge \lambda/2\}
\le N\lambda^{-\beta}\int_{Q}g^\beta(x)\,\omega(dx).
\end{equation}
Combining \eqref{eq4.56} and \eqref{eq4.58}, we get
$$
\omega\{x\,:\,f(x)\ge \lambda\}\le N\lambda^{-\beta}\int_\cX I_{\cM_{\text{dy}}v(x)>\alpha\lambda}g^\beta(x)\,\omega(dx).
$$
The remaining proof is the same as that of Theorem \ref{thm2}, and thus omitted.
\end{proof}

\section{Function spaces and notation}
							\label{sec3}

In this section we introduce some function spaces and notation  to be used throughout the rest of the paper.

For $T \in (-\infty,\infty]$, we set $
\bR_T = (-\infty,T)$ and a point in the Euclidean space $\bR_T \times \bR^d$ is denoted by $X = (t,x)$.
The Lebesgue measure for $\bR_T \times \bR^d$ is sometimes denoted by $dX$. We write $x=(x_1,\hat x)$, where $\hat{x} \in \bR^{d-1}$, and set
$$
\bR^d_+=\{x  = (x_1, \hat{x}) \in \bR^d\,:\,x_1>0, \hat{x} \in \bR^{d-1}\}.
$$
In the mixed-norm case, as before we write $x = (x',x'')$, where $x'=(x_1, \ldots, x_k)$ and $x'' = (x_{k+1}, \ldots, x_d)$ for some $k = 0, 1, 2, \ldots,d$.

For $m = 1, 2, \ldots$ fixed depending on the order of the equations/systems under consideration, we denote parabolic cylinders by
$$
Q_r(t,x) = (t-r^{2m}, t) \times B_r(x),
\quad
Q'_r(t,\hat x) = (t-r^{2m}, t) \times B'_r(\hat x),
$$
where
$$
B_r(x) = \{ y \in \bR^d : |x-y| < r \} \subset \bR^d,
\,\,
B'_r(\hat{x}) = \{ \hat y \in \bR^{d-1} : |\hat x-\hat y| < r \} \subset \bR^{d-1}.
$$
As usual, we use, for example, $Q_r$ to indicate $Q_r(0,0)$.
The parabolic distance between $X=(t,x)$ and $Y=(s,y)$ in $\bR^{d+1}$ is defined by $\rho(X,Y)=|x-y|+|t-s|^{\frac 1 {2m}}$.

For a function $g(t,x)$ defined on $\bR^{d+1}$, set
$$
\left[ g(t,\cdot)\right]_{B_r(x)}=
\dashint_{B_r(x)}
\left| g(t, y) -
\dashint_{B_r(x)}
g(t,z) \, dz \right|\, dy,
$$
$$
\left[ g(t,x_1,\cdot)\right]_{B'_r(\hat x)}=
\dashint_{B'_r(\hat x)}\left| g(t, x_1, \hat y) - \dashint_{B'_r(\hat x)} g(t,x_1, \hat z)\, d\hat z \right| \, d\hat y,
$$
$$
\left[ g(\cdot,x_1,\cdot)\right]_{Q'_r(t,\hat x)}=
\dashint_{Q'_r(t,\hat x)}\left| g(t, x_1, \hat y) -
\dashint_{Q'_r(t,\hat x)}g(s,x_1,\hat z) \,ds\,d\hat z \right| \, dt\,d\hat y.
$$
We define mean oscillations of $g$ on parabolic cylinders as follows.
First, we define the mean oscillation of $g$ in $Q_r(s,y)$ with respect to $x$ as
$$
\text{osc}_1
\left(g,Q_r(s,y)\right)
= \frac{1}{r^{2m}} \int_{s-r^{2m}}^{s}
\left[ g(\tau, \cdot)\right]_{B_r(y)} \, d\tau,
$$
and, for $R \in (0,\infty)$, denote
$$
g^{\#,1}_{R}
=\sup_{(s,y) \in \bR^{d+1}}\sup_{r\in (0, R]}\text{osc}_1 \left(g,Q_r(s,y)\right).
$$
Second, we define the mean oscillation of $g$ in $Q_r(s,y)$ with respect to $(t,\hat x)$ as
$$
\text{osc}_2
\left(g,Q_r(s,y)\right)
= \frac{1}{2r} \int_{y_1-r}^{y_1+r}
\left[ g(\cdot,z_1, \cdot)\right]_{Q_r'(s,\hat y)} \, dz_1,
$$
and denote
$$
g^{\#,2}_R
= \sup_{(s,y) \in \bR^{d+1}}\sup_{r\in (0, R]}\text{osc}_2 \left(g,Q_r(s,y)\right).
$$
Third, we define the mean oscillation of $g$ in $Q_r(s,y)$ with respect to $\hat x$ as
$$
\text{osc}
\left(g,Q_r(s,y)\right)
=  \frac{1}{2 r^{2m+1}}\int_{s-r^{2m}}^{s}
\int_{y_1-r}^{y_1+r}
\left[ g(\tau,z_1,\cdot)\right]_{B'_r(\hat y)} \, d z_1 \, d\tau,
$$
and denote
$$
g^{\#}_R =\sup_{(s,y) \in \bR^{d+1}}\sup_{r\in (0, R]}\text{osc}\left(g,Q_r(s,y)\right).
$$
Finally, in the case when $g$ is independent of $t$,
i.e., if $g$ is a function of $x \in \bR^d$, we set
$$
\text{osc} \left(g,B_r(y)\right)
= \frac{1}{2r} \int_{y_1-r}^{y_1+r}
\left[ g(z_1,\cdot) \right]_{B'_r(\hat y)} \, dz_1,
$$
$$
g^{\#}_R=\sup_{y \in \bR^d}\sup_{r\in (0, R]} \text{osc}
\left(g,B_r(y)\right).
$$

Next, we introduce some function spaces to be used when dealing with elliptic and parabolic equations/systems.
The domains are subsets of $\bR^d$ in the elliptic case and those of $\bR_T \times \bR^d$ in the parabolic case.
Since we use the results from Section \ref{sec1} in the later sections, we note that whenever a domain in $\bR^d$ or in $\bR_T \times \bR^d$ is considered, the underlying measure is the Lebesgue measure. The metric is the usual Euclidean distance in the elliptic case and the parabolic distance in the parabolic case.

We use the following weighted Sobolev spaces.
$$
W_{p,w}^k(\Omega) = W_p^k(\Omega, w \, d x) = \{ u : u, Du, \ldots, D^k u \in L_p(\Omega, w \, dx) \}, \quad k = 1, 2, \ldots,
$$
where $\Omega \subset \bR^d$ and $w \in A_p(\Omega, dx)$.
Note that, because of the underlying measure and the metric, the elements of $A_p(\Omega, dx)$ are determined by using open balls in $\Omega$, which are of the form $B_r(x) \cap \Omega$, $x \in \Omega$.
Naturally, we denote
$$
L_{p,w}(\Omega) := L_p(\Omega, w \, dx).
$$
For parabolic systems/equations,
we have
$$
W_{p,w}^{1,k}\left( (S,T) \times \Omega\right) = \{ u : u, Du, \ldots, D^k u, u_t \in L_{p,w}\left((S,T) \times \Omega\right) \}, \quad k = 1, 2, \ldots,
$$
where $-\infty \le S < \infty$, $-\infty < T \le \infty$, $\Omega \subset \bR^d$, and $w \in A_p\left((S,T) \times \Omega, dx \, dt\right)$.
Also note that, when determining elements of $A_p\left((S,T) \times \Omega, dx \, dt\right)$, we use balls in $(S,T) \times \Omega$ with respect to the parabolic distance $|x-y|+|t-s|^{\frac{1}{2m}}$.

For mixed norms,  let
$\Omega_1$ and $\Omega_2$ be open sets in $\bR^{d_1}$ and $\bR^{d_2}$, $d_1+d_2=d$, respectively, and $\Omega \subset \Omega_1 \times \Omega_2$.
Denote $\cX' = \Omega_1$ and $\cX'' = (S,T) \times \Omega_2$.
Set
$$
w(t, x',x'') = w_1(x') w_2(t,x''),
\quad
x' \in \Omega_1,
\,\,
x'' \in \Omega_2,
$$
where
\begin{equation}
							\label{eq0604_01}
w_1 \in A_p(\Omega_1, d x'),
\quad
w_2 \in A_q\left((S,T) \times \Omega_2, dx'' \, dt \right).
\end{equation}
For $k = 1, 2, \ldots$, we define
$$
u \in W^{1,k}_{p,q,w}\left((S,T) \times\Omega\right)
\,\,
\iff
\,\,
u, Du, \ldots, D^k u, u_t \in L_{p,q,w}\left((S,T)\times\Omega\right).
$$
Note that, by the choices of $w_1$ and $w_2$ in \eqref{eq0604_01}, by $f \in L_{p,q,w}\left((S,T)\times\Omega\right)$ we mean
$$
\|f\|_{L_{p,q,w}\left((S,T)\times\Omega\right)} = \left(\int_{(S,T) \times \Omega_2} \left( \int_{\Omega_1} |f|^p I_{\Omega} w_1(x') \, dx' \right)^{q/p} w_2(t,x'') \, dx'' \, dt \right)^{1/q}.
$$
Certainly, one can choose $w_1$ and $w_2$ differently so that the integral with respect to time is included in the inner integral.
In the elliptic case, we just remove (the integral in) the time variable, i.e.,
$$
u \in W_{p,q,w}^k(\Omega)
\,\,
\iff
\,\,
u, Du, \ldots, D^ku \in L_{p,q,w}(\Omega),
$$
where
$$
w(x',x'') = w_1(x') w_2(x''),
\quad
w_1 \in A_p(\Omega_1, dx'),
\quad
w_2 \in A_q (\Omega_2, dx''),
$$
and
$$
\|f\|_{L_{p,q,w}(\Omega)} = \left(\int_{\Omega_2} \left( \int_{\Omega_1} |f|^p I_\Omega w_1(x') \, dx' \right)^{q/p} w_2(x'') \, dx'' \right)^{1/q}.
$$
To deal with divergence type equations/systems, we set
$$
\cH_{p,q, w}^m ((S,T) \times \Omega)
$$
$$
= \{ u: u_t \in \bH_{p,q,w}^{-m} ((S,T) \times \Omega), D^\alpha u \in L_{p,q,w}((S,T) \times \Omega), 0 \le |\alpha| \le m \},
$$
$$
\|u\|_{\cH_{p,q, w}^m ((S,T) \times \Omega)} = \|u_t\|_{\bH_{p,q,w}^{-m}((S,T) \times \Omega)} + \sum_{|\alpha| \le m} \|D^\alpha u\|_{L_{p,q,w}(S,T) \times \Omega)},
$$
where
$$
\bH_{p,q,w}^{-m}((S,T) \times \Omega) = \{ f : f = \sum_{|\alpha| \le m} D^\alpha f_\alpha, \,\, f_\alpha \in L_{p,q,w}((S,T) \times \Omega)\},
$$
$$
\|f\|_{\bH_{p,q,w}^{-m}((S,T) \times \Omega)} = \inf \{ \sum_{|\alpha| \le m} \|f_\alpha\|_{L_{p,q,w} ((S,T) \times \Omega)} : f = \sum_{|\alpha| \le m} D^\alpha f_\alpha \}.
$$
We denote by $\mathring{\cH}_{p,q,w}^m ((S,T) \times \Omega)$ the closure of $C_0^\infty([S,T] \times \Omega)$ in $\cH_{p,q,w}^m((S,T) \times \Omega)$.

\section{Higher order parabolic systems in non-divergence form with BMO coefficients: Mixed norm}
							\label{sec4}

In this section, we consider higher order parabolic systems with leading coefficients merely measurable in the time variable and having small mean oscillations in the spatial variable in small cylinders. We shall generalize some results in \cite{MR2771670} to the case of mixed-norm Lebesgue spaces with $A_p$ weights.

Set
\begin{equation}
							\label{eq0528_01}
Lu = \sum_{|\alpha| \le m, |\beta| \le m} A^{\alpha\beta} D^\alpha D^\beta u,
\end{equation}
where $m$ is a positive integer,
$$
D^\alpha = D_1^{\alpha_1} \ldots D_d^{\alpha_d},
\quad
\alpha = (\alpha_1, \ldots, \alpha_d),
$$
and, for multi-indices $\alpha$ and $\beta$,
the coefficient $A^{\alpha\beta} = [ A_{ij}^{\alpha\beta} ]_{i,j=1}^{\ell}$ is an $\ell \times \ell$ complex matrix-valued function defined on $\bR^{d+1}$.
The involved functions are complex vector-valued functions, that is
$$
u = (u^1, \ldots, u^\ell)^{\text{tr}},
\quad
f = (f^1, \ldots, u^\ell)^{\text{tr}}.
$$
The parabolic and elliptic systems we consider are
$$
u_t + (-1)^m L u = f\quad\text{and}
\quad
Lu = f,
$$
where, for the elliptic case, the coefficient matrices are functions independent of $t \in \bR$, i.e., defined on $\bR^d$.

Throughout the section we assume that coefficients are bounded and satisfy the Legendre-Hadamard ellipticity condition, i.e., there exist $\delta \in (0,1)$ and $K > 0$ such that
$$
\Re \left( \sum_{|\alpha|=|\beta|=m} \theta^{\text{tr}} \xi^\alpha \xi^\beta A^{\alpha\beta}(t,x) \bar{\theta} \right) \ge \delta |\xi|^{2m}|\theta|^2,
$$
for all $\xi \in \bR^d$ and $\theta\in \bC^{\ell}$, and
$$
|A^{\alpha\beta}| \le \left\{
\begin{aligned}
\delta^{-1},&
\quad
|\alpha|=|\beta| = m,
\\
K,&
\quad
\text{otherwise}.
\end{aligned}
\right.
$$
Note that this condition is weaker than the usual strong ellipticity condition.

We impose the following regularity condition on the leading coefficients, where $\gamma$ is a parameter to be specified.

\begin{assumption}[$\gamma$]
							\label{assum0528_2}
Let $\gamma \in (0,1)$.
There exists $R_0 \in (0,\infty)$ such that
$$
\sum_{|\alpha| = |\beta| = m} (A^{\alpha\beta})^{\#,1}_{R_0} \le \gamma.
$$
\end{assumption}

Next we state the main results of this section.
Note that in the theorem below the Euclidean space $\bR^{d+1}$ satisfies the conditions (a) and (b) before Corollary \ref{cor1} as the product space of $\bR^{d_1}$ and $\bR \times \bR^{d_2}$.
So does $\bR \times \bR^d_+ = \{(t,x): t \in \bR, x \in \bR^d_+\}$ in Theorem \ref{thm4.4}, as the product space of $\bR^{d_1}_+$ and $\bR \times \bR^{d_2}$ or $\bR^{d_1}$ and $\bR \times \bR^{d_2}_+$.

\begin{theorem}[The whole space case]
							\label{thm0607_1}
Let $p, q \in (1,\infty)$, $K_0\ge 1$ be constant, $w = w_1(x') w_2(t,x'')$, where
$$
w_1(x') \in A_p(\bR^{d_1}, dx'),
\quad
w_2(t,x'') \in A_q(\bR \times \bR^{d_2}, dx'' \, dt),
$$
$$
d_1 + d_2 = d,\quad
[w_1]_{A_p}\le K_0,\quad [w_2]_{A_q}\le K_0,
$$
and let $L$ be the operator in \eqref{eq0528_01}.
Then there exist
\begin{align*}
\gamma &= \gamma(d, m, \ell, \delta, p, q, d_1, d_2, K_0) \in (0,1),\\
\lambda_0 &= \lambda_0(d, m, \ell, \delta, p, q, d_1, d_2, K_0, K, R_0) \ge 1,
\end{align*}
such that, under Assumption \ref{assum0528_2} ($\gamma$), for $u \in W_{p,q,w}^{1,2m}(\bR^{d+1})$ satisfying
\begin{equation}
							\label{eq0604_06}
u_t + (-1)^m Lu + \lambda u = f
\end{equation}
in $\bR^{d+1}$, where $f \in L_{p,q,w}(\bR^{d+1})$,
we have
\begin{equation*}
				%			\label{eq0604_05}
\|u_t\|_{L_{p,q,w}}+\sum_{|\alpha| \le 2m} \lambda^{1-\frac{|\alpha|}{2m}} \| D^\alpha u \|_{L_{p,q,w}} \le
N \|f\|_{L_{p,q,w}},
\end{equation*}
provided that $\lambda \ge \lambda_0$,
where $L_{p,q,w} = L_{p,q,w}(\bR^{d+1})$ and
$$
N = N(d,m,\ell,\delta, p, q, d_1, d_2, K_0, K, R_0).
$$
\end{theorem}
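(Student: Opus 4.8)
The plan is to run the standard perturbation scheme — local mean oscillation estimates combined with a Fefferman–Stein inequality — now in the weighted mixed-norm setting, with Corollaries \ref{cor1} and \ref{cor2} playing the roles of the classical maximal function and Fefferman–Stein theorems. First I would reduce to the principal part $L_0 u = \sum_{|\alpha|=|\beta|=m}A^{\alpha\beta}D^\alpha D^\beta u$. Assume the asserted estimate is known for $u_t+(-1)^mL_0u+\lambda u=f$. For the full operator, rewrite the equation as $u_t+(-1)^mL_0u+\lambda u = f-(-1)^m\sum_{|\alpha|+|\beta|<2m}A^{\alpha\beta}D^\alpha D^\beta u$, apply the principal-part estimate, and note that each term $\|D^\alpha D^\beta u\|_{L_{p,q,w}}=\|D^\gamma u\|_{L_{p,q,w}}$ with $|\gamma|=|\alpha|+|\beta|\le 2m-1$ equals $\lambda^{-(1-|\gamma|/2m)}$ times $\lambda^{1-|\gamma|/2m}\|D^\gamma u\|_{L_{p,q,w}}$; since $1-|\gamma|/2m>0$, enlarging $\lambda_0$ makes the sum of these terms absorbable into the left-hand side. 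Finally $\|u_t\|_{L_{p,q,w}}$ is recovered from the equation once $\sum_{|\alpha|\le 2m}\lambda^{1-|\alpha|/2m}\|D^\alpha u\|_{L_{p,q,w}}$ is controlled. Thus it suffices to treat $L_0$.

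For the principal part, regard $\bR^{d+1}$ as a space of homogeneous type with the parabolic quasi-metric $\rho$ and the Lebesgue measure, fix a dyadic filtration $\{\bC_n\}$ from Theorem \ref{thm0326}, and set $\Phi u:=|u_t|+\sum_{|\alpha|\le 2m}\lambda^{1-|\alpha|/2m}|D^\alpha u|$. Using the reverse Hölder (openness) property of the Muckenhoupt classes, whose quantitative form depends only on $d$, $p$, $q$, $K_0$, choose $p_0\in(1,\min(p,q))$ so that $w_1\in A_{p/p_0}(\bR^{d_1},dx')$ and $w_2\in A_{q/p_0}(\bR\times\bR^{d_2},dx''\,dt)$. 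For a cube $Q\in\bC_n$, which is comparable to a parabolic cylinder $Q_r$ of radius $r\sim\delta^n$, decompose $u=v+(u-v)$ on a fixed dilate of $Q_r$, where $v$ solves the model system with the leading coefficients replaced by their averages in $x$; the $L_{p_0}$-solvability and interior regularity estimates for such systems (coefficients merely measurable in $t$) from \cite{MR2771670}, together with Assumption \ref{assum0528_2}($\gamma$), yield, for all $y\in\bR^{d+1}$ and $\lambda\ge\lambda_0$, the pointwise bound
\[
(\Phi u)^{\#}_{\operatorname{dy}}(y)\le N\big(\cM(|f|^{p_0})(y)\big)^{1/p_0}+N\gamma^{\sigma}\big(\cM(|\Phi u|^{p_0})(y)\big)^{1/p_0}
\]
for some $\sigma>0$; here one passes from averages over parabolic cylinders to averages over dyadic cubes via \eqref{eq8.52}, and the contributions of cylinders of radius larger than $R_0$ are absorbed by enlarging $\lambda_0$, exactly as in \cite{MR2771670}.

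Since $\mu(\bR^{d+1})=\infty$, Corollary \ref{cor2} applies with no support restriction (take $p'=p$, $q'=q$) and gives $\|\Phi u\|_{L_{p,q,w}}\le N\|(\Phi u)^{\#}_{\operatorname{dy}}\|_{L_{p,q,w}}$. Inserting the pointwise bound and using the identity $\|(\cM(h^{p_0}))^{1/p_0}\|_{L_{p,q,w}}=\|\cM(h^{p_0})\|^{1/p_0}_{L_{p/p_0,q/p_0,w}}$ together with Corollary \ref{cor1} for the exponents $p/p_0,q/p_0$ and the weights $w_1\in A_{p/p_0}$, $w_2\in A_{q/p_0}$ (hence $\|\cM(h^{p_0})\|_{L_{p/p_0,q/p_0,w}}\le N\|h^{p_0}\|_{L_{p/p_0,q/p_0,w}}=N\|h\|^{p_0}_{L_{p,q,w}}$), we arrive at
\[
\|\Phi u\|_{L_{p,q,w}}\le N\|f\|_{L_{p,q,w}}+N\gamma^{\sigma}\|\Phi u\|_{L_{p,q,w}}.
\]
Choosing $\gamma$ small so that $N\gamma^{\sigma}\le 1/2$ and absorbing the last term proves the estimate for $L_0$, and hence, by the reduction above, the theorem. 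The half-space statement (Theorem \ref{thm4.4}) is handled the same way, using boundary versions of the model estimates and viewing $\bR\times\bR^d_+$ as the appropriate product space.

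The technically heaviest ingredient is the pointwise mean oscillation estimate for the model system with coefficients measurable in $t$ — the construction of the comparison solution $v$ with the correct $\lambda$-weighted interior bounds, and the treatment of large cylinders via the largeness of $\lambda$ — but for the present paper this is inherited verbatim from \cite{MR2771670}. The genuinely new part is the passage to weighted mixed norms, and there the only care needed is the choice of $p_0$ through the openness of the $A_p$ classes, so that the maximal operator acting on $|\cdot|^{p_0}$ remains bounded in the rescaled mixed-norm space; once that is in place, Corollaries \ref{cor1} and \ref{cor2} do all the work.
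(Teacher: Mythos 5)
Your overall scheme (reduction to the principal part, freezing the coefficients, a pointwise bound of the dyadic sharp function of $\Phi u$ by maximal functions of $f$ and of $\Phi u$ with a small factor $\gamma^{\sigma}$, then Corollaries \ref{cor1} and \ref{cor2} after a reverse--H\"older choice of $p_0$) is the same as the paper's, but there is a genuine gap at the step where you claim the pointwise bound
\[
(\Phi u)^{\#}_{\operatorname{dy}}(y)\le N\big(\cM(|f|^{p_0})(y)\big)^{1/p_0}+N\gamma^{\sigma}\big(\cM(|\Phi u|^{p_0})(y)\big)^{1/p_0}
\]
holds for \emph{all} dyadic cubes, with ``the contributions of cylinders of radius larger than $R_0$ absorbed by enlarging $\lambda_0$, exactly as in [MR2771670].'' Assumption \ref{assum0528_2} ($\gamma$) gives smallness of the coefficient oscillation only at scales $r\le R_0$, so the freezing argument (Lemma \ref{lem0604_01}) produces the factor $\gamma^{1/(q\nu)}$ only for $r\le R_0/\kappa$. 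For larger dyadic cubes the only available bound is the trivial one, $\dashint_Q|\Phi u-(\Phi u)_Q|\le 2\dashint_Q \Phi u\le N[\cM(|\Phi u|^{p_0})]^{1/p_0}$, whose constant is of order one and cannot be absorbed; enlarging $\lambda_0$ does not create any smallness here (rescaling to $\lambda=1$ only enlarges the ``good'' radius to $R_0\lambda^{1/2m}$, which is still finite, while the dyadic sharp function sees all scales). This is also not what \cite{MR2771670} does: there, as in the present paper, the large scales are handled by first proving the estimate for solutions $u$ vanishing outside a thin time slab $(t_1-(R_0R_1)^{2m},t_1)\times\bR^d$, where cubes of radius $r>R_0/\kappa$ are controlled by the small occupancy factor $\kappa^{2m(1-1/q_0)}R_1^{2m(1-1/q_0)}$ coming from the support in time (see \eqref{eq0605_10} and Lemma \ref{lem0607_1}), and then removing this restriction by a partition of unity in the time variable only (Proposition \ref{prop0609_1}); $\lambda_0$ is taken large afterwards solely to absorb the lower-order terms. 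This localization-in-time step, and the attendant order of choices ($\kappa$ large, then $R_1$ and $\gamma$ small), is missing from your argument, and without it (or a truncated-scale Fefferman--Stein theorem, which this paper does not provide) the asserted sharp-function bound is false as stated.

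A smaller bookkeeping point: in the freezing step you need H\"older's inequality on $|\bar A^{\alpha\beta}-A^{\alpha\beta}|\,|D^{2m}u|$, which forces the error term to involve $\big[\cM(|\Phi u|^{p_0\mu})\big]^{1/(p_0\mu)}$ with some $\mu>1$, not the same exponent $p_0$; accordingly the reverse--H\"older choice must guarantee $w_1\in A_{p/(p_0\mu)}$ and $w_2\in A_{q/(p_0\mu)}$ (as in \eqref{eq0610_07}--\eqref{eq0605_13}), and the local integrability $u\in W^{1,2m}_{p_0\mu,\operatorname{loc}}$ has to be checked from the weighted mixed-norm hypothesis (the paper does this via Lemma \ref{lem0320_1}). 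This can be repaired by renaming exponents, but it should be stated; the essential defect remains the missing treatment of large cubes via the thin-time-slab reduction and the time partition of unity.
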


\begin{remark}
By setting $d_1 = 0$, Theorem \ref{thm0607_1} is reduced to the case of unmixed $L_p$ spaces with $A_p$ weights.
Moreover, it generalizes the main result of \cite[Chap. 7]{MR2435520} in which $m=1$ and the restriction $q\ge p$ is imposed.
\end{remark}

\begin{theorem}[The half space case]
                                            \label{thm4.4}
The result in Theorem \ref{thm0607_1} still holds if we replace $\bR^{d_1}$ (or $\bR^{d_2}$) by $\bR^{d_1}_+$ (or $\bR^{d_2}_+$, respectively) and impose the Dirichlet boundary condition
\begin{equation}
                                        \label{eq1.36}
u=Du=\cdots=D^{m-1}u=0
\end{equation}
on the lateral boundary of the cylindrical domain.
\end{theorem}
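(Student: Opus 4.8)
The plan is to run the very same argument that establishes Theorem \ref{thm0607_1}, modified only in that the interior mean oscillation estimates are supplemented by boundary ones. The starting observation is that, as already noted before the statement, $\bR\times\bR^d_+$ with the parabolic distance $\rho(X,Y)=|x-y|+|t-s|^{1/(2m)}$ and the Lebesgue measure is the product space $\bR^{d_1}_+\times(\bR\times\bR^{d_2})$ (or $\bR^{d_1}\times(\bR\times\bR^{d_2}_+)$), hence a space of homogeneous type satisfying conditions (a) and (b) preceding Corollary \ref{cor1}. Therefore Theorem \ref{thm0326} furnishes a dyadic filtration $\{\bC_n\}$ of the half space, and Corollaries \ref{cor1}, \ref{cor2}, and \ref{cor3} apply to it verbatim; nothing in the abstract Fefferman--Stein or extrapolation theory of Sections \ref{sec1}--\ref{sec03} distinguishes a half space from the whole space.

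Next, by the standard cutoff-and-perturbation reduction — multiply $u$ by a cutoff supported in a small parabolic cylinder, move the lower order and commutator terms to the right-hand side, and freeze the leading coefficients in the spatial variables while retaining their full measurable dependence on $t$ — it suffices to prove, for the model system with leading coefficients $A^{\alpha\beta}=A^{\alpha\beta}(t)$, an a priori bound on the mean oscillation of $(|D^{2m}u|^{p_0}+|u_t|^{p_0})^{1/p_0}$ (for a fixed $p_0\in(1,\infty)$) over each $Q\in\bC_n$. When the parabolic distance from $Q$ to the lateral boundary $\{x_1=0\}$ is $\gtrsim\operatorname{diam}Q$, this is exactly the interior estimate used in the whole-space case: compare $u$, on a comparable parabolic cylinder, with the solution of the homogeneous model problem and invoke the unweighted $L_{p_0}$-solvability and estimate together with a Hölder-type decay estimate for $D^{2m}u$ and $u_t$. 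When $Q$ is near or touches the boundary, one instead compares $u$ with the solution of the homogeneous model problem on a boundary cylinder carrying the Dirichlet condition \eqref{eq1.36}; the required local $L_{p_0}$-estimate and boundary decay estimate for higher order parabolic systems on a half space with $t$-measurable leading coefficients are precisely those developed in \cite{MR2771670}. The smallness of $\gamma$ is used to absorb the error from freezing the coefficients.

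With this mean oscillation estimate in hand, properties (3)--(4) of Theorem \ref{thm0326} convert it into a pointwise bound of the form $(|D^{2m}u|^{p_0}+|u_t|^{p_0})^{\#}_{\operatorname{dy}}\le N\gamma^{\kappa}\cM_{\operatorname{dy}}(|D^{2m}u|^{p_0}+|u_t|^{p_0})+N\cM_{\operatorname{dy}}(|f|^{p_0})+(\text{lower order})$. Applying Corollary \ref{cor2} to $(|D^{2m}u|^{p_0}+|u_t|^{p_0})^{1/p_0}$ — with the support hypothesis vacuous since $\mu(\bR\times\bR^d_+)=\infty$ — together with the weighted maximal function bound of Corollary \ref{cor1}, and then choosing $\gamma$ small, yields $\|D^{2m}u\|_{L_{p,q,w}}+\|u_t\|_{L_{p,q,w}}\le N\|f\|_{L_{p,q,w}}+(\text{lower order})$. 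The remaining lower order terms and the full family of $\lambda^{1-|\alpha|/2m}$-weighted norms are then controlled by interpolation and by taking $\lambda\ge\lambda_0$ sufficiently large, exactly as in the proof of Theorem \ref{thm0607_1} and in \cite[Chap.~7]{MR2435520}. Alternatively, one may avoid fixing $p_0$ altogether by using Corollary \ref{cor3}: for each $Q$ build $f^Q$ from the splitting $u=w+v$, where $w$ solves the homogeneous model problem on the cylinder comparable to $Q$ (with boundary condition \eqref{eq1.36} when $Q$ is near $\{x_1=0\}$) and $v=u-w$.

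I expect the main obstacle to be the boundary mean oscillation estimate: showing that, for the half-space model problem with merely $t$-measurable leading coefficients and homogeneous Dirichlet data on $\{x_1=0\}$, the derivatives $D^{2m}u$ and $u_t$ of a solution have small mean oscillation over small boundary cylinders. This relies on the a priori regularity theory for higher order parabolic systems on a half space from \cite{MR2771670}; once that input is in place, the rest of the argument is a direct transcription of the whole-space proof.
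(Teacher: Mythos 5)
Your overall framework (dyadic filtration of the half space as a space of homogeneous type, freezing the leading coefficients in $x$ while keeping their $t$-dependence, interior versus boundary mean oscillation estimates, then Corollaries \ref{cor1}--\ref{cor3} and absorption for small $\gamma$ and large $\lambda$) is the same as the paper's. However, there is a genuine gap at exactly the point you flag as ``the main obstacle'': you assert that for the half-space model problem with merely $t$-measurable leading coefficients and Dirichlet data \eqref{eq1.36}, \emph{all} derivatives $D^{2m}u$ have small mean oscillation over small boundary cylinders, and that this is available from \cite{MR2771670}. That statement is false for the pure normal derivative $D_1^{2m}u$. Near $\{x_1=0\}$ only tangential and time derivatives of a solution are again solutions, so the boundary H\"older estimate (Lemma \ref{lem4.10} here, and its counterpart in \cite{MR2771670}) controls $[D^\alpha u]_{\cC^{1/2}}$ only for $|\alpha|\le 2m$ with $\alpha_1<2m$; since the coefficients are only measurable in $t$, the oscillation of $D_1^{2m}u$ in small boundary cylinders is in general not small, and no such estimate exists for the general frozen-coefficient operator. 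Consequently the pointwise sharp-function bound you write for $(|D^{2m}u|^{p_0}+|u_t|^{p_0})^{1/p_0}$ cannot be established this way, and the single absorption step after Corollary \ref{cor2} does not close.

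The paper's proof supplies the missing mechanism: besides the boundary mean oscillation estimate for $D^\alpha u$, $\alpha_1<2m$ (Lemma \ref{lem4.11}), it treats $D_1^{2m}u$ via the special operator $\fL_0 u=A^{\check\alpha\check\alpha}(t)D_1^{2m}u+\sum_{j\ge2}D_j^{2m}u$, for which $D_1^{2m}u$ itself solves the same system with the same Dirichlet condition, yielding the boundary estimates of Lemmas \ref{lem4.12}--\ref{lem4.13}. One then rewrites the equation with all spatial terms except $A^{\check\alpha\check\alpha}D_1^{2m}u$ moved to the right-hand side and $(-1)^m\sum_{j\ge 2}D_j^{2m}u$ added to both sides, obtaining two coupled estimates \eqref{eq2.40} and \eqref{eq2.43}, which are combined by choosing two independent dilation parameters $\kappa_2$, then $\kappa_1$, large, and finally $R_1,\gamma$ small. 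Without this separate treatment of the normal derivative (or some substitute for it), your argument does not yield the bound on $\|D_1^{2m}u\|_{L_{p,q,w}}$, so the proposal as written is incomplete.
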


Now we turn our attention to elliptic systems.
A priori estimates for the elliptic system in \eqref{eq0804_01} defined in the whole spaces and on a half space are derived by using the corresponding estimates in Theorems \ref{thm0607_1} and \ref{thm4.4} for the parabolic system and the argument, for instance, in the proof of \cite[Theorem 2.6]{MR2304157}.
The key idea is that one can view an elliptic system as a steady state parabolic system.
Instead of stating all possible results for elliptic equations/systems in this section or in the later sections,
we here present only the elliptic version of Theorem \ref{thm0607_1}.
Recall that the coefficients $A^{\alpha\beta}$ are now independent of $t$.

\begin{theorem}
							\label{thm0807_1}
Let $p, q \in (1,\infty)$, $K_0\ge 1$ be constant, $w = w_1(x') w_2(x'')$, where
$$
w_1(x') \in A_p(\bR^{d_1}, dx'),
\quad
w_2(x'') \in A_q(\bR^{d_2}, dx''),
$$
$$
d_1 + d_2 = d,\quad
[w_1]_{A_p}\le K_0,\quad [w_2]_{A_q}\le K_0,
$$
and let $L$ be the operator in \eqref{eq0528_01}.
Then there exist
\begin{align*}
\gamma &= \gamma(d, m, \ell, \delta, p, q, d_1, d_2, K_0) \in (0,1),\\
\lambda_0 &= \lambda_0(d, m, \ell, \delta, p, q, d_1, d_2, K_0, K, R_0) \ge 1,
\end{align*}
such that, under Assumption \ref{assum0528_2} ($\gamma$), for $u \in W_{p,q,w}^{2m}(\bR^d)$ satisfying
\begin{equation}
							\label{eq0804_01}
Lu + (-1)^m \lambda u = f
\end{equation}
in $\bR^d$, where $f \in L_{p,q,w}(\bR^d)$,
we have
\begin{equation*}
%							\label{eq0604_05}
\sum_{|\alpha| \le 2m} \lambda^{1-\frac{|\alpha|}{2m}} \| D^\alpha u \|_{L_{p,q,w}} \le
N \|f\|_{L_{p,q,w}},
\end{equation*}
provided that $\lambda \ge \lambda_0$,
where $L_{p,q,w} = L_{p,q,w}(\bR^d)$ and
$$
N = N(d,m,\ell,\delta, p, q, d_1, d_2, K_0, K, R_0).
$$
\end{theorem}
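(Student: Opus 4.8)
The plan is to deduce the estimate from its parabolic counterpart, Theorem \ref{thm0607_1}, by viewing the elliptic system as a steady-state parabolic system on $\bR^{d+1}$: one appends a dummy time variable and cuts off in time. Fix a nonzero $\eta \in C_0^\infty(\bR)$, and for a small parameter $\varepsilon>0$, to be chosen later, set $\eta_\varepsilon(t)=\eta(\varepsilon t)$ and $\tilde u(t,x)=\eta_\varepsilon(t)\,u(x)$. Since the coefficients $A^{\alpha\beta}$ do not depend on $t$ and $L$ involves only spatial derivatives, $(-1)^m L\tilde u=(-1)^m\eta_\varepsilon\,(Lu)$ and $\tilde u_t=\eta_\varepsilon'\,u$, so multiplying \eqref{eq0804_01} by $(-1)^m$ shows that $\tilde u$ solves the parabolic system
$$
\tilde u_t+(-1)^m L\tilde u+\lambda\tilde u=\eta_\varepsilon'\,u+(-1)^m\eta_\varepsilon\,f=:\tilde f\quad\text{in }\bR^{d+1}.
$$
Because $\eta_\varepsilon$ is smooth with compact support and $w=w_1(x')w_2(x'')$ does not involve $t$, one has $\|\eta_\varepsilon(t)g(x)\|_{L_{p,q,w}(\bR^{d+1})}=\|\eta_\varepsilon\|_{L_q(\bR)}\|g\|_{L_{p,q,w}(\bR^d)}$ for every $g=g(x)$; hence $\tilde u\in W^{1,2m}_{p,q,w}(\bR^{d+1})$ and $\tilde f\in L_{p,q,w}(\bR^{d+1})$.

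Next I would verify the hypotheses needed to invoke Theorem \ref{thm0607_1} on $\bR^{d+1}$, regarded as the product of $\bR^{d_1}$ and $\bR\times\bR^{d_2}$ with weight $w_1(x')w_2(x'')$, where $w_2$ is now viewed as a $t$-independent function on $\bR\times\bR^{d_2}$. Since the parabolic balls in $\bR\times\bR^{d_2}$ are cylinders $(s-r^{2m},s)\times B'_r(\hat y)$ and $w_2$ does not depend on $t$, averaging $w_2$ and $w_2^{-1/(q-1)}$ over such a cylinder reduces to averaging over $B'_r(\hat y)$; thus $w_2\in A_q(\bR\times\bR^{d_2},dx''\,dt)$ with the same characteristic constant, so still $[w_2]_{A_q}\le K_0$. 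Likewise, $t$-independence of $A^{\alpha\beta}$ makes $\text{osc}_1(A^{\alpha\beta},Q_r(s,y))$ equal to the spatial mean oscillation of $A^{\alpha\beta}$ over $B_r(y)$, so Assumption \ref{assum0528_2}$(\gamma)$ for the lifted coefficients on $\bR^{d+1}$ holds with the same $\gamma$ and $R_0$. Therefore, taking $\gamma$ and $\lambda_0$ to be the constants furnished by Theorem \ref{thm0607_1}, which depend only on the parameters listed in the present theorem, for $\lambda\ge\lambda_0$ one gets
$$
\sum_{|\alpha|\le 2m}\lambda^{1-\frac{|\alpha|}{2m}}\|D^\alpha\tilde u\|_{L_{p,q,w}(\bR^{d+1})}\le N\|\tilde f\|_{L_{p,q,w}(\bR^{d+1})},
$$
after discarding the nonnegative $\|\tilde u_t\|$-term.

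Finally, using $D^\alpha\tilde u(t,x)=\eta_\varepsilon(t)D^\alpha u(x)$, the norm identity above, and $\|\tilde f\|_{L_{p,q,w}(\bR^{d+1})}\le\|\eta_\varepsilon\|_{L_q(\bR)}\|f\|_{L_{p,q,w}(\bR^d)}+\|\eta_\varepsilon'\|_{L_q(\bR)}\|u\|_{L_{p,q,w}(\bR^d)}$, and then dividing by $\|\eta_\varepsilon\|_{L_q(\bR)}$, I would obtain
$$
\sum_{|\alpha|\le 2m}\lambda^{1-\frac{|\alpha|}{2m}}\|D^\alpha u\|_{L_{p,q,w}(\bR^d)}\le N\|f\|_{L_{p,q,w}(\bR^d)}+N\,\frac{\|\eta_\varepsilon'\|_{L_q(\bR)}}{\|\eta_\varepsilon\|_{L_q(\bR)}}\,\|u\|_{L_{p,q,w}(\bR^d)}.
$$
A one-line scaling computation gives $\|\eta_\varepsilon'\|_{L_q(\bR)}/\|\eta_\varepsilon\|_{L_q(\bR)}=\varepsilon\,\|\eta'\|_{L_q(\bR)}/\|\eta\|_{L_q(\bR)}$, so choosing $\varepsilon$ small enough that $N\varepsilon\|\eta'\|_{L_q(\bR)}/\|\eta\|_{L_q(\bR)}\le 1/2$, which fixes $\varepsilon$ in terms of the already-admitted parameters, and recalling $\lambda\ge\lambda_0\ge 1$, the last term is bounded by $\tfrac12\lambda\|u\|_{L_{p,q,w}(\bR^d)}$ and is absorbed into the $|\alpha|=0$ term on the left, yielding the claimed estimate with $2N$ in place of $N$. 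I expect no deep obstacle here: the argument is largely bookkeeping, and the one step that needs genuine care is the verification that the $t$-independent extension of $w_2$ is an $A_q$ weight on the parabolic space with an unchanged characteristic constant, since this is precisely what keeps the final $N$ and $\lambda_0$ depending only on the parameters asserted in the theorem.
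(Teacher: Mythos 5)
Your proof is correct and is essentially the paper's own argument: the paper likewise multiplies $u$ by a time cutoff $\zeta(t/n)$, applies the parabolic Theorem \ref{thm0607_1} to the resulting system (noting $w_2\in A_q(\bR\times\bR^{d_2},dx''\,dt)$), and removes the extra term $n^{-1}\zeta_t(t/n)u$ exactly as you absorb $\varepsilon$-small contributions. The only slip is your justification that the lifted $w_2$ has the \emph{same} $A_q$ characteristic: balls for the parabolic quasi-distance on $\bR\times\bR^{d_2}$ are not product cylinders, so one only gets $[w_2]_{A_q(\bR\times\bR^{d_2},dx''\,dt)}\le N(d_2,m)\,[w_2]_{A_q(\bR^{d_2},dx'')}\le N(d_2,m)K_0$ by comparing such balls with cylinders, which is all that is needed since Theorem \ref{thm0607_1} depends only on an upper bound for the characteristics.
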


\begin{proof}
Choose $\zeta \in C_0^\infty(\bR)$ and set $v(t,x) = \zeta(t/n) u(x)$, $n \in \bZ$, which satisfies, in $\bR^{d+1}$,
\begin{equation*}
%							\label{eq0804_02}
v_t + (-1)^m Lv + \lambda v = \frac{1}{n} \zeta_t(t/n) u(x) + (-1)^m\zeta(t/n)f.
\end{equation*}
Upon noting that $w_2 \in A_q(\bR \times \bR^{d_2}, dx'' dt)$,
we apply Theorem \ref{thm0607_1} with $w$ to the above system and follow the proof of \cite[Theorem 2.6]{MR2304157} with obvious modifications.
\end{proof}

\subsection{Mixed-norm estimate in the whole space}

This subsection is devoted to the proof of Theorem \ref{thm0607_1}. Throughout the paper, we use the notation
$$
(g)_\cD := \dashint_\cD g(t,x) \, dx \, dt,
$$
where $\cD$ is a subset in $\bR^{d+1}$.

We begin with the following interior estimate.
\begin{lemma}
                        \label{lem4.4}
Let $\lambda \ge 0$, $q \in (1,\infty)$, and $L$ be the operator in \eqref{eq0528_01}.
Suppose that the coefficients $A^{\alpha\beta}$, $|\alpha|=|\beta|=m$, are measurable functions of only $t \in \bR$, i.e., $A^{\alpha\beta} = A^{\alpha\beta}(t)$ and the lower-order coefficients of $L$ are all zero. Then for any $u \in C_{\operatorname{loc}}^\infty(\bR^{d+1})$ satisfying \eqref{eq0604_06} in $Q_4$ with $f=0$, we have
\begin{equation}
							\label{eq11.07}
\sum_{|\alpha| \le 2m} \lambda^{1-\frac{|\alpha|}{2m}} \sup_{Q_1}\big(|D(D^\alpha u)|+|(D^\alpha u)_t|\big)
\le N \sum_{|\alpha| \le 2m} \lambda^{1-\frac{|\alpha|}{2m}}\|D^\alpha u\|_{L_q(Q_4)},
\end{equation}
where $N = N(d,m,\ell,\delta,q)$.
\end{lemma}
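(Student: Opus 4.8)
The key point is that, since the leading coefficients $A^{\alpha\beta}$ depend only on $t$ and all lower-order coefficients vanish, the operator $u\mapsto u_t+(-1)^mLu+\lambda u$ commutes with every spatial derivative: if $u$ solves \eqref{eq0604_06} with $f=0$ in $Q_4$, then so does $D^\sigma u$ (pure $x$-derivative) for each multi-index $\sigma$, in $Q_4$, and hence, by the assumed $C^\infty_{\operatorname{loc}}$ regularity, in any slightly smaller cylinder. So it suffices to control, in $L_q$ on cylinders shrinking to $Q_1$, spatial derivatives of $u$ of all orders, and then to convert this into the pointwise bound \eqref{eq11.07} by a parabolic Sobolev embedding.

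First I would invoke the known $L_q$ a priori estimate for this class of systems on the whole space (this is precisely the type of result established in \cite{MR2771670}): for $v$ solving $v_t+(-1)^mLv+\lambda v=h$ on $\bR^{d+1}$ one has $\|v_t\|_{L_q}+\sum_{|\alpha|\le 2m}\lambda^{1-|\alpha|/2m}\|D^\alpha v\|_{L_q}\le N(d,m,\ell,\delta,q)\|h\|_{L_q}$, with the $\lambda=0$ case read with only the top-order term on the left (the intermediate spatial derivatives being restored by Gagliardo--Nirenberg interpolation). Applying this to $\zeta D^\sigma u$, where $\zeta$ is a cutoff equal to $1$ on a cylinder $Q_{r'}$ and supported in $Q_r$ with $1\le r'<r\le 4$, the right-hand side becomes the $L_q$-norm of a commutator involving only $D^\tau u$ with $|\tau|\le|\sigma|+2m-1$; this yields control of $\sum_{|\alpha|\le 2m}\|D^\alpha D^\sigma u\|_{L_q(Q_{r'})}$ and $\|(D^\sigma u)_t\|_{L_q(Q_{r'})}$ by strictly lower-order spatial derivatives of $u$ on $Q_r$. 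Iterating this finitely many times over a chain $Q_4\supset Q_{r_1}\supset\cdots\supset Q_{r_{k_0}}$ of cylinders with radii in $(1,4)$, I would bootstrap to a bound, on a cylinder slightly larger than $Q_1$, of all spatial derivatives of $u$ up to order $2m+1+k_0$ (and, via $u_t=-(-1)^mLu-\lambda u$, the corresponding time derivatives) by $\sum_{|\alpha|\le 2m}\lambda^{1-|\alpha|/2m}\|D^\alpha u\|_{L_q(Q_4)}$. Choosing $k_0=k_0(d,m,q)$ large enough that the parabolic Sobolev embedding applies then gives \eqref{eq11.07}, and a finite $k_0$ suffices for every $q\in(1,\infty)$.

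The one point requiring care is the behaviour of the weights $\lambda^{1-|\alpha|/2m}$ through the bootstrap, so that the final constant depends only on $d,m,\ell,\delta,q$ and not on $\lambda\ge 0$. When $\lambda$ is bounded above by a fixed constant all these weights are comparable to $1$, the weighted norms are equivalent to the plain $W^{1,2m}_q$-norm, and the bookkeeping is trivial; for large $\lambda$ one uses that the combination in \eqref{eq11.07} is invariant under the parabolic rescaling $u(t,x)\mapsto u(\mu^{2m}t,\mu x)$ together with $\lambda\mapsto\mu^{-2m}\lambda$, reducing to $\lambda$ of unit size at the cost of rescaled cylinders, which are then handled by a standard covering of $Q_1$ by small parabolic cylinders on each of which the unit-size estimate is applied and the results summed. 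This $\lambda$-bookkeeping, rather than any single analytic estimate, is the main obstacle; everything else is the standard ``differentiate the equation, bootstrap interior $L_q$-estimates, embed'' scheme.
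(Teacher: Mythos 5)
Your overall scheme (spatial derivatives of $u$ solve the same homogeneous system because $A^{\alpha\beta}=A^{\alpha\beta}(t)$ and the lower-order terms vanish; bootstrap local $W^{1,2m}_q$ estimates from \cite{MR2771670}; finish with a parabolic Sobolev embedding) is essentially the scheme behind the paper's short, citation-based proof, which invokes \cite[Lemmas 2 and 3]{MR2771670} and notes that their $q=2$ proofs extend to general $q$ by the $W^{1,2m}_q$ theory of that paper. The genuine gap is in your $\lambda$-bookkeeping, which you yourself flag as the main obstacle. The combination in \eqref{eq11.07} is \emph{not} invariant under the parabolic rescaling $u\mapsto u(\mu^{2m}\cdot,\mu\cdot)$, $\lambda\mapsto\mu^{2m}\lambda$: with the weights included, the sup-terms on the left scale like $\mu^{2m+1}$ (for $|D(D^\alpha u)|$) and $\mu^{4m}$ (for $|(D^\alpha u)_t|$), while the terms on the right scale like $\mu^{2m-(d+2m)/q}$, because the left-hand side carries one extra derivative in the sup norm and the right-hand side is a non-normalized $L_q$ norm over a fixed cylinder. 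Concretely, if for large $\lambda$ you rescale by $r=\lambda^{-1/(2m)}$ to unit $\lambda$, apply the unit-$\lambda$ estimate, and transfer back to $Q_r(X_0)\subset Q_{4r}(X_0)\subset Q_4$, $X_0\in Q_1$, the best you get is
\[
\sum_{|\alpha|\le 2m}\lambda^{1-\frac{|\alpha|}{2m}}\sup_{Q_r(X_0)}\big(|D(D^\alpha u)|+|(D^\alpha u)_t|\big)
\le N\,\lambda^{1+\frac{d+2m}{2mq}}\sum_{|\alpha|\le 2m}\lambda^{1-\frac{|\alpha|}{2m}}\|D^\alpha u\|_{L_q(Q_4)},
\]
and covering $Q_1$ by such small cylinders cannot remove the factor $\lambda^{1+(d+2m)/(2mq)}$; so the large-$\lambda$ case is not reduced to the unit-$\lambda$ case by scaling and covering. (Your small-$\lambda$ remark that the weights are ``comparable to $1$'' also fails as $\lambda\to0$, since the lower-order weights degenerate; that part, however, is repairable by applying the interior estimates directly to each $D^\sigma u$, which solves the same equation, so that only $\|D^\sigma u\|_{L_q(Q_4)}$ appears on the right with the matching weight, rather than cutoff/commutator terms carrying unweighted lower-order norms.)

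The device the paper (following \cite{MR2771670}) uses to generate the weights $\lambda^{1-|\alpha|/(2m)}$ uniformly in $\lambda\ge0$ is S.\ Agmon's idea: one considers $\hat u(t,x,y)=u(t,x)\zeta(y)\cos(\lambda^{1/(2m)}y)$ with an auxiliary spatial variable $y$, which satisfies a homogeneous system without zeroth-order term in $d+1$ space variables; interior estimates for $\hat u$ on cylinders of \emph{fixed} size then produce the correct powers of $\lambda$, since each $y$-derivative of $\cos(\lambda^{1/(2m)}y)$ contributes a factor $\lambda^{1/(2m)}$, and collecting the intermediate-order terms of that computation is exactly what yields \eqref{eq11.07}. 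Replacing your scaling step by Agmon's argument (or simply citing \cite[Lemmas 2 and 3]{MR2771670} together with the $W^{1,2m}_q$ estimates, as the paper does) closes the gap; the rest of your outline is sound.
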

\begin{proof}
The special case of the lemma when $q=2$ was proved in \cite[Lemma 3]{MR2771670}, which was derived from Lemma 2 there. The latter still holds with $q$ in place of $2$ thanks to the $W^{1,2m}_q$ estimates established in the same paper. See the proof of Theorem 2 there.
In fact, the estimate in \cite[Lemma 3]{MR2771670} only contains the highest and lowest order terms because we collected only those terms when applying S. Agmon's idea.
By collecting the intermediate order terms as well, we obtain \eqref{eq11.07}.
\end{proof}

\begin{lemma}
							\label{lem0528_01}
Let $\kappa \ge 8$. Under the assumptions of Lemma \ref{lem4.4}, for any $r \in (0,\infty)$, $X_0 \in \bR^{d+1}$, and $u \in W_{q, \operatorname{loc}}^{1,2m}(\bR^{d+1})$ satisfying \eqref{eq0604_06} in $Q_{\kappa r}(X_0)$,
where $f \in L_{q,\operatorname{loc}}(\bR^{d+1})$,
we have
\begin{multline}
							\label{eq0607_01}
\sum_{|\alpha| \le 2m} \lambda^{1-\frac{|\alpha|}{2m}} \left( |D^\alpha u - \left(D^\alpha u\right)_{Q_r(X_0)} |\right)_{Q_r(X_0)}
\\
\le N \kappa^{-1} \sum_{|\alpha| \le 2m} \lambda^{1-\frac{|\alpha|}{2m}}\left( |D^\alpha u|^q \right)_{Q_{\kappa r}(X_0)}^{\frac 1 q} +  N \kappa^{\frac{d+2m}{q}} \left( |f|^q\right)_{Q_{\kappa r}(X_0)}^{\frac 1 q},
\end{multline}
where $N = N(d,m,\ell,\delta,q)$.
\end{lemma}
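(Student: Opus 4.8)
The strategy is the usual freezing/decomposition argument: split $u$ into a piece that solves the homogeneous equation with frozen (here, already $t$-only) coefficients and to which the interior estimate of Lemma \ref{lem4.4} applies, plus a remainder controlled by the $W^{1,2m}_q$ estimate. After a translation we may assume $X_0=0$. First I would rescale so that it suffices to prove the estimate for $Q_{\kappa}=Q_{\kappa}(0)$ and $Q_1=Q_1(0)$ in place of $Q_{\kappa r}$ and $Q_r$; one checks that the weights $\lambda^{1-|\alpha|/2m}$ and the exponent $\kappa^{(d+2m)/q}$ are exactly the scaling-invariant combinations, so no loss. Next, take a cutoff $\eta \in C_0^\infty$ with $\eta = 1$ on $Q_{\kappa/2}$ and $\eta = 0$ outside $Q_{3\kappa/4}$, and let $w \in W^{1,2m}_{q}$ solve
$$
w_t + (-1)^m L w + \lambda w = \eta f + (\text{commutator terms } [L,\eta]u)
$$
on $\bR^{d+1}$, where the commutator $[L,\eta]u$ involves only derivatives of $u$ of order $\le 2m-1$ with coefficients supported in $Q_{3\kappa/4}\setminus Q_{\kappa/2}$. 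Existence and the global estimate
$$
\sum_{|\alpha|\le 2m}\lambda^{1-\frac{|\alpha|}{2m}}\|D^\alpha w\|_{L_q(\bR^{d+1})}
\le N\|\eta f\|_{L_q} + N\sum_{|\alpha|\le 2m}\lambda^{1-\frac{|\alpha|}{2m}}\|D^\alpha u\|_{L_q(Q_{3\kappa/4})}
$$
come from the unweighted $W^{1,2m}_q$-solvability result of \cite{MR2771670} that underlies Lemma \ref{lem4.4}; I would have to be slightly careful that the lower-order commutator terms are absorbed using the $\lambda$-weighted norms, which is where the powers of $\lambda$ bookkeeping matters. Note the right-hand side already has the correct $\kappa$-dependence after undoing the rescaling: $\|\eta f\|_{L_q(Q_{\kappa})}$ contributes the $\kappa^{(d+2m)/q}$ factor when converted to an average.

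Then set $v = u - w$, which satisfies $v_t + (-1)^m L v + \lambda v = 0$ in $Q_{\kappa/2}$ with $f=0$ and $t$-only coefficients, so Lemma \ref{lem4.4} applies on $Q_{\kappa/2}$ (after rescaling). This gives
$$
\sum_{|\alpha|\le 2m}\lambda^{1-\frac{|\alpha|}{2m}}\sup_{Q_1}\big(|D(D^\alpha v)| + |(D^\alpha v)_t|\big)
\le N\kappa^{-(d+2m)}\sum_{|\alpha|\le 2m}\lambda^{1-\frac{|\alpha|}{2m}}\|D^\alpha v\|^q_{L_q(Q_{\kappa/2})}\ \text{(appropriately normalized)},
$$
and hence, by the mean value theorem, for each $\alpha$ with $|\alpha|\le 2m$,
$$
\big(|D^\alpha v - (D^\alpha v)_{Q_1}|\big)_{Q_1}
\le N\sup_{Q_1}|D(D^\alpha v)| + N\sup_{Q_1}|(D^\alpha v)_t|
\le N\kappa^{-1}\Big(\sum_{|\beta|\le 2m}\lambda^{1-\frac{|\beta|}{2m}}\big(|D^\beta v|^q\big)^{1/q}_{Q_{\kappa/2}}\Big)\lambda^{-(1-\frac{|\alpha|}{2m})},
$$
where one extra power of $\kappa^{-1}$ is squeezed out by comparing $\sup_{Q_1}$ with the average over $Q_{\kappa/2}$ — this is the standard oscillation-decay gain and is the reason the final bound carries $\kappa^{-1}$ rather than just $O(1)$. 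The key point is that the $W^{1,2m}_q$-estimate for $v$ on $Q_{\kappa/2}$, combined with the derivative bound, yields a genuine smallness factor in $\kappa$. Finally combine: for the average oscillation of $D^\alpha u$ over $Q_1$, use the triangle inequality $\big(|D^\alpha u - (D^\alpha u)_{Q_1}|\big)_{Q_1}\le \big(|D^\alpha v - (D^\alpha v)_{Q_1}|\big)_{Q_1} + 2\big(|D^\alpha w|\big)_{Q_1}$, bound the $v$-part as above, bound the $w$-part by Hölder's inequality and the global $W^{1,2m}_q$-estimate for $w$ (which converts $\|D^\alpha w\|_{L_q}$ on $\bR^{d+1}$ into the average over $Q_1$, picking up a $\kappa^{(d+2m)/q}$ when re-expressed via $Q_\kappa$-averages), and for the $v$-part re-insert $v = u - w$ so that $\|D^\beta v\|_{L_q(Q_{\kappa/2})}\le \|D^\beta u\|_{L_q(Q_{\kappa})} + \|D^\beta w\|_{L_q}$, absorbing the $w$-contribution again. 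Undoing the initial rescaling restores $r$ and $X_0$ and produces exactly \eqref{eq0607_01}.

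The main obstacle I anticipate is the careful tracking of the $\lambda$-weighted norms through the commutator terms and the construction of $w$: one must ensure that the intermediate-order derivatives appearing in $[L,\eta]u$ (and in the lower-order part of $L$, which in the reduction of Lemma \ref{lem4.4} is taken to be zero, but must be handled in general by the reduction preceding this lemma) are controlled by the weighted sum $\sum_{|\alpha|\le 2m}\lambda^{1-|\alpha|/2m}\|D^\alpha u\|_{L_q}$ with constants independent of $\lambda$ and $\kappa$. This is where the specific form of the $\lambda$-powers is essential — it is exactly the scaling that makes the commutator terms subordinate. The second delicate point is squeezing the extra $\kappa^{-1}$ out of the comparison between $\sup_{Q_1}$-bounds on derivatives of $v$ and $L_q$-averages of $v$ over the much larger cylinder $Q_{\kappa/2}$; this requires using the homogeneous equation for $v$ on all of $Q_{\kappa/2}$, not just near $Q_1$, so that Lemma \ref{lem4.4} may be applied at the scale $\kappa/2$ rather than at scale $O(1)$.
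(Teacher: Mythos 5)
Your overall plan (split $u=v+w$, control $w$ by the unweighted $W^{1,2m}_q$ theory, apply the rescaled Lemma \ref{lem4.4} to the homogeneous part $v$ to extract the factor $\kappa^{-1}$, then recombine) is the same scheme the paper uses, namely the argument of \cite[Corollary 2]{MR2771670} run with $q$ in place of $2$. But your construction of $w$ contains a genuine gap: you put the commutator terms $[L,\eta]u$ (and, implicitly, $\eta_t u$) into the right-hand side of the equation for $w$, so your global estimate for $w$ carries $N\sum_{|\alpha|\le 2m}\lambda^{1-\frac{|\alpha|}{2m}}\|D^\alpha u\|_{L_q(Q_{3\kappa/4})}$ with no smallness in $\kappa$. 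In the final step the contribution of $w$ to the oscillation over $Q_1$ is converted to averages over $Q_\kappa$, which costs a factor $\kappa^{(d+2m)/q}$; hence the $u$-dependent terms inherited from the commutators appear with the large factor $\kappa^{(d+2m)/q-j}$, $j\ge 1$, rather than the required $\kappa^{-1}$, and \eqref{eq0607_01} cannot be reached. The failure is most visible at $\lambda=0$ (which is allowed here): the right-hand side of \eqref{eq0607_01} then contains no lower-order derivatives of $u$ at all, while your commutator terms produce $\kappa^{-j}\|D^{2m-j}u\|_{L_q}$, $j\ge 1$, which cannot be absorbed since the weights $\lambda^{j/(2m)}$ vanish. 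There is also a structural problem: if $\eta_t u$ is included among your ``commutator terms,'' then $\eta u$ solves exactly the equation you pose for $w$, so whenever uniqueness holds you get $w=\eta u$ and $v=(1-\eta)u\equiv 0$ on $Q_{\kappa/2}$, i.e.\ the decomposition is circular and all of $u$ sits in the badly estimated piece.

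The repair is exactly what the cited proof does: let the cutoff act on $f$ only. Take $w$ to be the $W^{1,2m}_q$ solution of the equation with right-hand side $f I_{Q_{\kappa r}(X_0)}$ (solved, e.g., on $(-\infty,t_0)\times\bR^d$ or on a finite cylinder with zero data at the bottom, which is what accommodates $\lambda\ge 0$ including $\lambda=0$, where only $\|w_t\|_{L_q}+\|D^{2m}w\|_{L_q}$ is needed), so that $\sum_{|\alpha|\le 2m}\lambda^{1-\frac{|\alpha|}{2m}}\|D^\alpha w\|_{L_q}\le N\|f\|_{L_q(Q_{\kappa r}(X_0))}$ with no $u$-terms at all; then $v=u-w$ solves the homogeneous equation in the whole of $Q_{\kappa r}(X_0)$, your treatment of $v$ (rescaled Lemma \ref{lem4.4}, sup bounds on $DD^\alpha v$ and $(D^\alpha v)_t$ at scale $\kappa r$, oscillation at scale $r$, yielding the gain $\kappa^{-1}$) goes through essentially as you describe, and the closing step $\|D^\alpha v\|_{L_q(Q_{\kappa r})}\le \|D^\alpha u\|_{L_q(Q_{\kappa r})}+\|D^\alpha w\|_{L_q}$ produces \eqref{eq0607_01} with the correct powers of $\kappa$.
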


\begin{proof}
Similar to \cite[Corollary 2]{MR2771670}, the lemma can be derived from Lemma \ref{lem4.4} and the aforementioned $W^{1,2m}_q$ estimates.
In particular, we have \cite[Theorem 10]{MR2771670} with $q$ in place of $2$.
Thus, as described in the proof of \cite[Theorem 2]{MR2771670},
we obtain \eqref{eq0607_01}
by repeating the proof of \cite[Corollary 2]{MR2771670} with $q$ in place of $2$.
While following the arguments in \cite{MR2771670}, due to the standard approximation argument, we may assume that the coefficients are infinitely differentiable. See Remark \ref{rem0713_1}.
\end{proof}

\begin{remark}
							\label{rem0713_1}
When proving mean oscillation estimates as in Lemmas \ref{lem0528_01} and \ref{lem0604_01}, one can always assume that the coefficients are infinitely differentiable.
For example, for the operator in \eqref{eq0528_01}, we set
$$
L_n u  = \sum_{|\alpha| \le m, |\beta| \le m} A^{\alpha\beta}_n D^\alpha D^\beta u,
$$
where $A^{\alpha\beta}_n$ is the mollification of $A^{\alpha\beta}$ in $\bR^{d+1}$, and prove the mean oscillation estimate in \eqref{eq0607_01} with $L_n$ in place of $L$.
Then we let $n \to \infty$ to obtain the desired estimate for $L$ with a constant $N$ independent of the approximation.
This type of argument is used throughout the paper because in the proofs of mean oscillation estimates, we always split $u$ as the sum $u = w + v$, where $v$ is a solution to the given system/equation with right-hand side being zero on a cylinder or a cylinder intersected with a half space.
Under the assumption that the coefficients are infinitely differentiable, using the classical results, we see that $v$ is infinitely differentiable inside the cylinder, so we are able to use Lipschitz estimates or H\"{o}lder estimates as in Lemmas \ref{lem4.4} and \ref{lem4.10}  with $v$ in place of $u$.
\end{remark}

For coefficients $A^{\alpha\beta}(t,x)$ satisfying Assumption  \ref{assum0528_2}, we obtain the following mean oscillation estimate on $Q_r(X_0)$ when $r$ is bounded above.

\begin{lemma}
							\label{lem0604_01}
Let $\lambda \ge 0$, $q \in (1,\infty)$, $\kappa \ge 8$, $\mu, \nu \in (1,\infty)$, $1/\mu+1/\nu = 1$, and $L$ be the operator in \eqref{eq0528_01}.
Suppose that the lower-order coefficients of $L$ are all zero.
Then, under Assumption \ref{assum0528_2} ($\gamma$),
for $r \in (0,R_0/\kappa]$, $X_0 \in \bR^{d+1}$, and $u \in W_{q\mu, \operatorname{loc}}^{1,2m}(\bR^{d+1})$ satisfying \eqref{eq0604_06} in $Q_{\kappa r}(X_0)$, where $f \in L_{q,\operatorname{loc}}(\bR^{d+1})$,
we have
\begin{align*}
&\sum_{|\alpha| \le 2m} \lambda^{1-\frac{|\alpha|}{2m}} \left( |D^\alpha u - \left(D^\alpha u\right)_{Q_r(X_0)} |\right)_{Q_r(X_0)}\\
&\le N \kappa^{-1} \sum_{|\alpha| \le 2m} \lambda^{1-\frac{|\alpha|}{2m}} \left( |D^\alpha u|^q \right)_{Q_{\kappa r}(X_0)}^{\frac 1 q}
+ N \kappa^{\frac{d+2m}{q}} \left( |f|^q\right)_{Q_{\kappa r}(X_0)}^{\frac 1 q}\\
&\quad + N \kappa^{\frac{d+2m}{q}} \gamma^{\frac{1}{q\nu}} \left( |D^{2m}u|^{q\mu}\right)_{Q_{\kappa r}(X_0)}^{\frac{1}{q\mu}},
\end{align*}
where $N = N(d,m,\ell,\delta,q,\mu)$.
\end{lemma}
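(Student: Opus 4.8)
The plan is to prove the estimate by freezing the leading coefficients in the spatial variable, which reduces matters to the operator with coefficients measurable in $t$ only handled by Lemma \ref{lem0528_01}, and then controlling the error by Assumption \ref{assum0528_2}. Write $X_0=(t_0,x_0)$ and, for $|\alpha|=|\beta|=m$, set
\[
\bar A^{\alpha\beta}(t)=\dashint_{B_{\kappa r}(x_0)}A^{\alpha\beta}(t,y)\,dy,
\qquad
L_0=\sum_{|\alpha|=|\beta|=m}\bar A^{\alpha\beta}(t)D^\alpha D^\beta.
\]
The averaged coefficients $\bar A^{\alpha\beta}$ depend only on $t$, still satisfy the Legendre--Hadamard condition with the same $\delta$, and obey $|\bar A^{\alpha\beta}|\le\delta^{-1}$, so Lemma \ref{lem0528_01} and the $W^{1,2m}_q$ solvability/a priori estimates from \cite{MR2771670} (with $q$ in place of $2$) apply to $\partial_t+(-1)^mL_0+\lambda$; as usual (Remark \ref{rem0713_1}) I may assume all coefficients are smooth. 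Using that solvability, I would let $w\in W^{1,2m}_q(\bR^{d+1})$ solve
\[
w_t+(-1)^mL_0w+\lambda w=\bigl[\,f+(-1)^{m+1}(L-L_0)u\,\bigr]I_{Q_{\kappa r}(X_0)}\quad\text{in }\bR^{d+1},
\]
the right-hand side being in $L_q$ because $f\in L_{q,\operatorname{loc}}$, $D^{2m}u\in L_{q\mu,\operatorname{loc}}\subset L_{q,\operatorname{loc}}$, and $|A^{\alpha\beta}-\bar A^{\alpha\beta}|\le2\delta^{-1}$. Then $v:=u-w\in W^{1,2m}_{q,\operatorname{loc}}(\bR^{d+1})$ and, since $u$ solves \eqref{eq0604_06} in $Q_{\kappa r}(X_0)$ and the cutoff is $\equiv1$ there, $v$ satisfies $v_t+(-1)^mL_0v+\lambda v=0$ in $Q_{\kappa r}(X_0)$.

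For the homogeneous part $v$, applying Lemma \ref{lem0528_01} (with its right-hand side equal to $0$) gives
\[
\sum_{|\alpha|\le2m}\lambda^{1-\frac{|\alpha|}{2m}}\bigl(|D^\alpha v-(D^\alpha v)_{Q_r(X_0)}|\bigr)_{Q_r(X_0)}
\le N\kappa^{-1}\sum_{|\alpha|\le2m}\lambda^{1-\frac{|\alpha|}{2m}}\bigl(|D^\alpha v|^q\bigr)_{Q_{\kappa r}(X_0)}^{1/q},
\]
and I would bound $\bigl(|D^\alpha v|^q\bigr)_{Q_{\kappa r}(X_0)}^{1/q}\le\bigl(|D^\alpha u|^q\bigr)_{Q_{\kappa r}(X_0)}^{1/q}+\bigl(|D^\alpha w|^q\bigr)_{Q_{\kappa r}(X_0)}^{1/q}$. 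For the perturbation $w$, the $\lambda$-weighted a priori estimate for $\partial_t+(-1)^mL_0+\lambda$ on $\bR^{d+1}$ yields
\[
\sum_{|\alpha|\le2m}\lambda^{1-\frac{|\alpha|}{2m}}\bigl(|D^\alpha w|^q\bigr)_{Q_{\kappa r}(X_0)}^{1/q}
\le N\bigl(|f|^q\bigr)_{Q_{\kappa r}(X_0)}^{1/q}+N\Bigl(\dashint_{Q_{\kappa r}(X_0)}|(L-L_0)u|^q\Bigr)^{1/q}.
\]
Since the lower-order coefficients vanish, $|(L-L_0)u|\le N\sum_{|\alpha|=|\beta|=m}|A^{\alpha\beta}-\bar A^{\alpha\beta}|\,|D^{2m}u|$; applying H\"older's inequality with exponents $\nu$ and $\mu$, using $|A^{\alpha\beta}-\bar A^{\alpha\beta}|^{q\nu}\le(2\delta^{-1})^{q\nu-1}|A^{\alpha\beta}-\bar A^{\alpha\beta}|$, and noting that, because $\kappa r\le R_0$ and $\bar A^{\alpha\beta}(t)$ is the average of $A^{\alpha\beta}(t,\cdot)$ over $B_{\kappa r}(x_0)$,
\[
\dashint_{Q_{\kappa r}(X_0)}|A^{\alpha\beta}(t,x)-\bar A^{\alpha\beta}(t)|\,dx\,dt
=\operatorname{osc}_1\!\bigl(A^{\alpha\beta},Q_{\kappa r}(X_0)\bigr)\le(A^{\alpha\beta})^{\#,1}_{R_0}\le\gamma,
\]
I obtain $\bigl(\dashint_{Q_{\kappa r}(X_0)}|(L-L_0)u|^q\bigr)^{1/q}\le N\gamma^{1/(q\nu)}\bigl(|D^{2m}u|^{q\mu}\bigr)_{Q_{\kappa r}(X_0)}^{1/(q\mu)}$.

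To finish, since $u=v+w$,
\[
\bigl(|D^\alpha u-(D^\alpha u)_{Q_r(X_0)}|\bigr)_{Q_r(X_0)}
\le\bigl(|D^\alpha v-(D^\alpha v)_{Q_r(X_0)}|\bigr)_{Q_r(X_0)}+2\bigl(|D^\alpha w|\bigr)_{Q_r(X_0)},
\]
and $\bigl(|D^\alpha w|\bigr)_{Q_r(X_0)}\le\bigl(|D^\alpha w|^q\bigr)_{Q_r(X_0)}^{1/q}\le\kappa^{(d+2m)/q}\bigl(|D^\alpha w|^q\bigr)_{Q_{\kappa r}(X_0)}^{1/q}$ because $|Q_{\kappa r}(X_0)|=\kappa^{d+2m}|Q_r(X_0)|$. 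Multiplying by $\lambda^{1-|\alpha|/(2m)}$, summing over $|\alpha|\le2m$, inserting the bounds from the previous paragraph, and using $\kappa\ge8$ (so $\kappa^{-1}\le\kappa^{(d+2m)/q}$) to absorb constants, one arrives at the asserted inequality. There is no serious analytic obstacle here: the two nontrivial ingredients — the interior Lipschitz-type estimate and the unweighted $W^{1,2m}_q$ solvability for operators merely measurable in time — are already packaged in Lemmas \ref{lem4.4} and \ref{lem0528_01} and in \cite{MR2771670}. The only point requiring genuine care is the freezing-and-H\"older step that converts the smallness of the spatial mean oscillation of the $A^{\alpha\beta}$ into the factor $\gamma^{1/(q\nu)}$ (which is exactly why the higher integrability $W^{1,2m}_{q\mu,\operatorname{loc}}$ of $u$ is assumed), together with the routine bookkeeping of the powers of $\kappa$ and $\lambda$.
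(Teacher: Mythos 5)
Your core estimate is the right one and matches the paper's: freeze the leading coefficients in $x$ over $B_{\kappa r}(x_0)$ to get $\bar A^{\alpha\beta}(t)$, and convert the spatial mean oscillation into the factor $\gamma^{1/(q\nu)}$ via H\"older with exponents $\mu,\nu$, the bound $|\bar A^{\alpha\beta}-A^{\alpha\beta}|\le 2\delta^{-1}$, and the identity $\dashint_{Q_{\kappa r}(X_0)}|A^{\alpha\beta}-\bar A^{\alpha\beta}|\,dX=\operatorname{osc}_1(A^{\alpha\beta},Q_{\kappa r}(X_0))\le\gamma$ (valid since $\kappa r\le R_0$). Where you diverge from the paper is in how you use the frozen-coefficient operator: the paper simply observes that $u$ itself solves $u_t+(-1)^mL_0u+\lambda u=\bar f$ in $Q_{\kappa r}(X_0)$ with $\bar f=f+(-1)^m(L_0-L)u\in L_{q,\operatorname{loc}}$, and applies Lemma \ref{lem0528_01} directly to $u$ with $\bar f$ in place of $f$; no auxiliary problem is solved at all. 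Your route instead re-does a decomposition $u=v+w$ by solving a global problem for $w$ in $W^{1,2m}_q(\bR^{d+1})$ and applying Lemma \ref{lem0528_01} only to the homogeneous part $v$. This is logically workable but redundant — Lemma \ref{lem0528_01} is already stated for an arbitrary local solution with an $L_q$ right-hand side, so the splitting buys you nothing and costs you an appeal to solvability (not just a priori estimates) from \cite{MR2771670}.

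The one step in your version that does not go through as written is the construction of $w$ when $\lambda=0$: the equation $w_t+(-1)^mL_0w=g$ on all of $\bR^{d+1}$ with $g\in L_q$ supported in $Q_{\kappa r}(X_0)$ need not have a solution in $W^{1,2m}_q(\bR^{d+1})$, since without the zeroth-order term one only controls $w_t$ and $D^{2m}w$, not $w$ and the intermediate derivatives, and the solvability results you cite are for $\lambda>0$. This is fixable (solve with $\lambda'>0$ and let $\lambda'\searrow 0$, or solve on a finite cylinder with a zero initial condition), but the cleanest repair is simply to drop the decomposition and apply Lemma \ref{lem0528_01} to $u$ with right-hand side $\bar f$, which is the paper's argument and requires no solvability whatsoever. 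With that change, the bookkeeping of the powers of $\kappa$ and $\lambda$ in your last paragraph is correct and yields the stated inequality.
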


\begin{proof}
Set
$$
L_0 u = \sum_{|\alpha| \le m, |\beta| \le m} \bar{A}^{\alpha\beta}(t) D^\alpha D^\beta u,
$$
where
$$
\bar{A}^{\alpha\beta}(t) = \dashint_{B_{\kappa r}(x_0)} A^{\alpha\beta}(t,y) \, dy.
$$
Then we see that $u \in W_{q,\text{loc}}^{1,2m}(\bR^{d+1})$ satisfies
$$
u_t + (-1)^m L_0 u + \lambda u = \bar{f}
$$
in $Q_{\kappa r}(X_0)$, where
$$
\bar{f} := f + (-1)^m(L_0 - L)u \in L_{q,\text{loc}}(\bR^{d+1}).
$$
Then by Lemma \ref{lem0528_01}, we obtain \eqref{eq0607_01} with $\bar{f}$ in place of $f$.
Note that
\begin{align*}
\left( |(L_0-L)u|^q\right)_{Q_{\kappa r}(X_0)}^{\frac 1 q}
&\le \left(|\bar{A}^{\alpha\beta} - A^{\alpha\beta}|^{\nu q}\right)_{Q_{\kappa r}(X_0)}^{1/(q\nu)}
\left( |D^{2m}u|^{q\mu} \right)_{Q_{\kappa r}(X_0)}^{\frac{1}{q\mu}}\\
&\le N \gamma^{\frac{1}{q\nu}} \left( |D^{2m}u|^{q\mu} \right)_{Q_{\kappa r}(X_0)}^{\frac{1}{q\mu}},
\end{align*}
where $N = N(\delta,q,\mu)$, the first inequality is due to H\"{o}lder's inequality, and the second inequality is due to the fact that $\kappa r \le R_0$, the boundedness of $A^{\alpha\beta}$, and Assumption \ref{assum0528_2} ($\gamma$).
This together with \eqref{eq0607_01} with $\bar{f}$ proves the desired inequality.
\end{proof}

We use the following filtration of partitions.
$$
\bC_n := \{Q^n = Q^n_{(i_0, i_1, \ldots, i_d)}:  (i_0, i_1, \ldots, i_d) \in \bZ^{d+1}\},
$$
where $n \in \bZ$ and
$$
Q^n_{(i_0, i_1, \ldots, i_d)}
$$
$$
= [i_0 2^{-2m n} , (i_0 + 1)2^{-2m n}) \times [i_1 2^{-n} , (i_1 + 1)2^{-n}) \times \ldots \times [i_d 2^{-n} , (i_d + 1)2^{-n}).
$$
Let $X \in \bR^{d+1}$ and $X \in Q^n \in \bC_n$. Then one can find $X_0 \in \bR^{d+1}$ and the smallest $r > 0$ (in fact, $r =\max\{2^{-n-1}\sqrt d,2^{-n}\}$) such that $Q^n \subset Q_r(X_0)$ and
\begin{equation}
							\label{eq0528_03}
\dashint_{Q^n} |f(Y) - f_{|n}(X)| \, dY \le N \dashint_{Q_r(X_0)} \left| f(Y) - \left( f \right)_{Q_r(X_0)} \right| \, dY,
\end{equation}
where $N$ depends only on $d$ and $m$.

\begin{lemma}
							\label{lem0607_1}
Let $\lambda \ge 0$, $K_0\ge 1$, $p, q \in (1,\infty)$, $t_1 \in \bR$, $w = w_1(x') w_2(t,x'')$, where
$$
w_1(x') \in A_p(\bR^{d_1}, dx'),
\quad
w_2(t,x'') \in A_q(\bR \times \bR^{d_2}, dx'' \, dt),
$$
$$
d_1 + d_2 = d,
\quad [w_1]_{A_p}\le K_0,
\quad [w_2]_{A_q}\le K_0,
$$
and let $L$ be the operator in \eqref{eq0528_01}.
Suppose that the lower-order coefficients of $L$ are all zero.
Then there exist constants
$$
\begin{aligned}
&\gamma = \gamma(d, m, \ell, \delta, p, q, d_1, d_2, K_0) \in (0,1),
\\
&R_1 = R_1(d, m, \ell, \delta, p, q, d_1, d_2, K_0) \in (0,1),
\end{aligned}
$$
such that,
under Assumption \ref{assum0528_2} ($\gamma$), for $u \in W_{p,q,w}^{1,2m}(\bR^{d+1})$ vanishing outside $\left(t_1 - (R_0R_1)^{2m}, t_1\right) \times \bR^d$ and satisfying \eqref{eq0604_06} in $\bR^{d+1}$, where $f \in L_{p,q,w}(\bR^{d+1})$,
we have
\begin{equation}
							\label{eq0604_03}
\sum_{|\alpha| \le 2m} \lambda^{1-\frac{|\alpha|}{2m}} \| D^\alpha u \|_{L_{p, q, w}}
\le N \|f\|_{L_{p, q, w}},
\end{equation}
where $L_{p,q,w} = L_{p,q,w}(\bR^{d+1})$ and
$N = N (d,m,\ell,\delta, p, q, d_1, d_2, K_0)$.
\end{lemma}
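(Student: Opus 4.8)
The plan is to run the mean-oscillation / Fefferman--Stein scheme of \cite{MR2435520,MR2771670}: bound the dyadic sharp functions of $D^\alpha u$, $|\alpha|\le 2m$, pointwise by Hardy--Littlewood maximal functions via Lemma~\ref{lem0604_01}, pass to the $L_{p,q,w}$-norm through the mixed-norm Fefferman--Stein theorem (Corollary~\ref{cor2}), dominate the maximal-function terms by the mixed-norm maximal function theorem (Corollary~\ref{cor1}), and absorb. Two preparatory choices are made first. Using the reverse H\"older inequality for Muckenhoupt weights (as in the proof of Lemma~\ref{lem0331_1}), pick a small averaging exponent $q_0\in(1,\min\{p,q\})$ and conjugate $\mu,\nu\in(1,\infty)$ with $q_0\mu$ so close to $1$ that $w_1\in A_{p/(q_0\mu)}(\bR^{d_1})$ and $w_2\in A_{q/(q_0\mu)}(\bR\times\bR^{d_2})$ with $A$-constants controlled by $K_0$; then $q_0,\mu,\nu$ depend only on $d,d_1,d_2,p,q,K_0$, the four exponents $p/q_0,\ q/q_0,\ p/(q_0\mu),\ q/(q_0\mu)$ all exceed $1$, and $L_{p,q,w}\hookrightarrow L_{q_0\mu,\operatorname{loc}}(\bR^{d+1})$ locally, so $u\in W^{1,2m}_{q_0\mu,\operatorname{loc}}$ and $f\in L_{q_0,\operatorname{loc}}$ as Lemma~\ref{lem0604_01} requires. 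Also fix a large $\kappa\ge 8$, to be selected at the end.

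For the pointwise step, take $Y\in Q^n\in\bC_n$ and let $r(n)\sim 2^{-n}$ and $X_0$ be as in \eqref{eq0528_03}, so that $Q^n\subset Q_{r(n)}(X_0)$. When $r(n)\le R_0/\kappa$, Lemma~\ref{lem0604_01} (with $q_0,\mu,\nu$) together with \eqref{eq0528_03} bounds $\sum_{|\alpha|\le 2m}\lambda^{1-|\alpha|/2m}\dashint_{Q^n}|D^\alpha u-(D^\alpha u)_{|n}(X)|$ by $N\kappa^{-1}\Theta(Y)$ plus $N\kappa^{(d+2m)/q_0}(\cM|f|^{q_0})^{1/q_0}(Y)$ plus $N\kappa^{(d+2m)/q_0}\gamma^{1/(q_0\nu)}(\cM|D^{2m}u|^{q_0\mu})^{1/(q_0\mu)}(Y)$, where $\Theta:=\sum_{|\alpha|\le 2m}\lambda^{1-|\alpha|/2m}(\cM|D^\alpha u|^{q_0})^{1/q_0}$ and $\cM$ is the maximal function over parabolic cylinders. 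When $r(n)>R_0/\kappa$, we have $2^{-2mn}\gtrsim(R_0/\kappa)^{2m}$, so the support slab $(t_1-(R_0R_1)^{2m},t_1)\times\bR^d$ of $u$ meets $Q^n$ in relative measure at most $(R_0R_1)^{2m}/2^{-2mn}\le (cR_1\kappa)^{2m}$ (the $R_0$ cancels), which is $<1$ once $R_1<c/\kappa$; bounding the oscillation by $2\dashint_{Q^n}|D^\alpha u|$, inserting the indicator of the slab, and using H\"older's inequality with exponents $q_0,q_0'$ together with $\dashint_{Q^n}|D^\alpha u|^{q_0}\le N\cM|D^\alpha u|^{q_0}(Y)$ bounds that quantity, uniformly in such $n$, by $N(R_1\kappa)^{2m/q_0'}\Theta(Y)$. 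Taking the supremum in $n$ and summing in $\alpha$ gives, for a.e.\ $Y$, a pointwise estimate for $\sum_\alpha\lambda^{1-|\alpha|/2m}(D^\alpha u)^{\#}_{\operatorname{dy}}(Y)$ of the form $N(\kappa^{-1}+(R_1\kappa)^{2m/q_0'})\Theta(Y)+N\kappa^{(d+2m)/q_0}(\cM|f|^{q_0})^{1/q_0}(Y)+N\kappa^{(d+2m)/q_0}\gamma^{1/(q_0\nu)}(\cM|D^{2m}u|^{q_0\mu})^{1/(q_0\mu)}(Y)$.

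To conclude, note that $\mu(\bR^{d+1})=\infty$ and $w_1\in A_p$, $w_2\in A_q$, so Corollary~\ref{cor2} yields $\|D^\alpha u\|_{L_{p,q,w}}\le N\|(D^\alpha u)^{\#}_{\operatorname{dy}}\|_{L_{p,q,w}}$ for each $\alpha$. Taking $L_{p,q,w}$-norms in the pointwise bound, and observing that the parabolic-cylinder maximal function is comparable to the maximal function over balls of the product quasi-metric $\rho=\rho_1+\rho_2$ of Section~\ref{sec1}, Corollary~\ref{cor1} applied at the exponents $(p/q_0,q/q_0)$ and $(p/(q_0\mu),q/(q_0\mu))$ controls $\|\Theta\|_{L_{p,q,w}}$ by $\sum_\alpha\lambda^{1-|\alpha|/2m}\|D^\alpha u\|_{L_{p,q,w}}$, $\|(\cM|f|^{q_0})^{1/q_0}\|_{L_{p,q,w}}$ by $\|f\|_{L_{p,q,w}}$, and $\|(\cM|D^{2m}u|^{q_0\mu})^{1/(q_0\mu)}\|_{L_{p,q,w}}$ by $\|D^{2m}u\|_{L_{p,q,w}}\le\sum_\alpha\lambda^{1-|\alpha|/2m}\|D^\alpha u\|_{L_{p,q,w}}$. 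Collecting these gives $\sum_\alpha\lambda^{1-|\alpha|/2m}\|D^\alpha u\|_{L_{p,q,w}}\le N_1\big(\kappa^{-1}+(R_1\kappa)^{2m/q_0'}+\kappa^{(d+2m)/q_0}\gamma^{1/(q_0\nu)}\big)\sum_\alpha\lambda^{1-|\alpha|/2m}\|D^\alpha u\|_{L_{p,q,w}}+N_1\kappa^{(d+2m)/q_0}\|f\|_{L_{p,q,w}}$, with $N_1$ depending only on $d,m,\ell,\delta,p,q,d_1,d_2,K_0$. One then picks $\kappa$ with $N_1\kappa^{-1}\le 1/6$, next $\gamma$ with $N_1\kappa^{(d+2m)/q_0}\gamma^{1/(q_0\nu)}\le 1/6$, and finally $R_1$ with $N_1(R_1\kappa)^{2m/q_0'}\le 1/6$; all of $\kappa,\gamma,R_1$ thus depend only on the listed parameters. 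Since $u\in W^{1,2m}_{p,q,w}$ the left side is finite, so the first term on the right is absorbed, yielding \eqref{eq0604_03}. The step I expect to be the main obstacle is this compatibility, in the mixed-norm setting, of the mean-oscillation estimate with the maximal function theorem: the averaging exponent that appears must stay strictly above $1$ in \emph{both} the $x'$- and the $(t,x'')$-slot while the relevant weights remain in the appropriate Muckenhoupt classes, which is why one must fix $q_0$ and $\mu$ close to $1$ from the open-endedness of $A_p$; a secondary point is the large-cube regime, where the needed smallness is produced not by $\kappa$ but by the thinness $R_1$ of the support slab through the H\"older-with-indicator estimate above.
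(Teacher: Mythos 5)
Your proposal is correct and follows essentially the same route as the paper's proof: the same reverse H\"older choice of $q_0,\mu$ so that $w_1\in A_{p/(q_0\mu)}$ and $w_2\in A_{q/(q_0\mu)}$, the same two-regime dyadic sharp-function estimate (Lemma \ref{lem0604_01} for $r\le R_0/\kappa$, and the H\"older-with-time-slab-indicator bound giving the factor $(\kappa R_1)^{2m(1-1/q_0)}$ for $r>R_0/\kappa$), followed by Corollaries \ref{cor1} and \ref{cor2} and absorption by choosing $\kappa$, then $\gamma$ and $R_1$. No substantive differences from the paper's argument.
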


\begin{proof}
For the given $w_1 \in A_p(\bR^{d_1}, dx')$ and $w_2 \in A_q(\bR \times \bR^{d_2}, dx'' \, dt)$, using reverse H\"older's inequality \cite[Theorem 3.2]{MS1981} (also see Corollary 7.2.6 and Remark 7.2.3 in \cite{MR3243734}) we choose
$$
\sigma_1 = \sigma_1(d_1, p, K_0),
\quad
\sigma_2 = \sigma_2(d_2, q, K_0)
$$
such that $p-\sigma_1 > 1$, $q - \sigma_2 >1$ and
$$
w_1 \in A_{p-\sigma_1}(\bR^{d_1}, dx'),
\quad
w_2 \in A_{q-\sigma_2}(\bR \times \bR^{d_2}, dx'' \, dt).
$$
Find $q_0, \mu \in (1,\infty)$ satisfying
\begin{equation}
							\label{eq0610_07}
q_0 \mu = \min\left\{ \frac{p}{p-\sigma_1}, \frac{q}{q-\sigma_2} \right\}> 1.
\end{equation}
Note that
\begin{equation}
							\label{eq0605_13}
\begin{aligned}
w_1 &\in A_{p-\sigma_1} \subset A_{p/(q_0\mu)} \subset A_{p/q_0}(\bR^{d_1}, dx'),
\\
w_2 &\in A_{q-\sigma_2} \subset A_{q/(q_0\mu)} \subset A_{q/q_0}(\bR \times \bR^{d_2}, dx'' \, dt).
\end{aligned}
\end{equation}
By Lemma \ref{lem0320_1},
for any $g\in L_{q_0\mu, \text{loc}}(\bR^{d+1})$ and balls $B_1 \subset \bR^{d_1}$ and $B_2 \subset \bR \times \bR^{d_2}$,
where $B_2$ is a ball with respect to the parabolic distance $|x'' - y''|+|t-s|^{\frac{1}{2m}}$,
\begin{align*}
&\frac{1}{|B_1||B_2|}\int_{B_1 \times B_2} |g|^{q_0\mu} \, dx \, dt
= \frac{1}{|B_2|} \int_{B_2} \frac{1}{|B_1|} \int_{B_1} |g|^{q_0 \mu} \, dx' \, dx'' \, dt\\
&\le \frac{1}{|B_2|} \int_{B_2} \left( \frac{[w_1]_{A_{p/(q_0\mu)}}}{\omega_1(B_1)} \int_{B_1} |g|^p \, w_1(x') \, dx'   \right)^{\frac{q_0 \mu}{p}} \, dx'' \, dt\\
&\le \left( \frac{[w_2]_{A_{q/(q_0\mu)}}}{\omega_2(B_2)} \int_{B_2} \left( \frac{[w_1]_{A_{p/(q_0\mu)}}}{\omega_1(B_1)} \int_{B_1} |g|^p \, w_1(x') \, dx'   \right)^{q/p} w_2(t,x'') \, dx'' \, dt                  \right)^{\frac{q_0 \mu}{q}}.
\end{align*}
This shows that
$$
u \in W_{q_0\mu, \text{loc}}^{1,2m}(\bR^{d+1}), \quad
f \in L_{q_0\mu, \text{loc}}(\bR^{d+1}),
\quad
0 \le |\alpha| \le 2m.
$$

Let $\kappa \ge 8$ be a constant to be specified below.
For each $X \in \bR^{d+1}$ and $Q^n \in \bC_n$ such that $X \in Q^n$,  $n \in \bZ$,  find $X_0 \in \bR^{d+1}$ and the smallest $r \in (0,\infty)$ so that $Q^n \subset Q_r(X_0)$ and \eqref{eq0528_03} is satisfied.
If $r > R_0/\kappa$, because $u$ vanishes outside $\left(t_1-(R_0 R_1)^{2m}, t_1\right) \times \bR^d$, for $0 \le |\alpha| \le 2m$,
we have
\begin{multline}
							\label{eq0605_10}
\dashint_{Q^n} | D^{\alpha}u (Y) - \left(D^{\alpha}u\right)_{|n}(X) | \, dY
\le 2 \dashint_{Q^n} | D^{\alpha} u | \, dY
\\
\le 2 \left(\dashint_{Q_r(X_0)} I_{(t_1-(R_0R_1)^{2m},t_1)}(s) \, dY\right)^{1 - \frac{1}{q_0}} \left(\dashint_{Q_r(X_0)} |D^{\alpha}u|^{q_0} \, dY \right)^{\frac{1}{q_0}}
\\
\le N(d,m,q_0) \kappa^{2m(1 - \frac{1}{q_0})} {R_1}^{2m(1 - \frac{1}{q_0})} \left[ \cM \left( |D^{\alpha} u|^{q_0} \right)(X) \right]^{\frac{1}{q_0}},
\end{multline}
where, for the last inequality, we have used the inequality
\begin{equation}
							\label{eq0607_02}
\left( |D^{\alpha}u|^{q_0} \right)^{\frac{1}{q_0}}_{Q_{\kappa r}(X_0)} \le N(d,m) \left[\cM \left( |D^{\alpha}u|^{q_0} \right)(X)\right]^{\frac{1}{q_0}}.
\end{equation}

If $r \in (0, R_0/\kappa]$, by Lemma \ref{lem0604_01} with $q = q_0$ and inequalities as in \eqref{eq0607_02},
\begin{multline}
							\label{eq0605_12}
\sum_{|\alpha|\le 2m} \lambda^{1-\frac{|\alpha|}{2m}} \dashint_{Q^n} | D^{\alpha}u (Y) - \left(D^{\alpha}u\right)_{|n}(X) | \, dY
\\
\le N \kappa^{-1} \sum_{|\alpha| \le 2m} \lambda^{1-\frac{|\alpha|}{2m}} \left[ \cM \left(|D^\alpha u|^{q_0}\right)(X) \right]^{\frac{1}{q_0}}
+ N \kappa^{\frac{d+2m}{q_0}} \left[ \cM\left( |f|^{q_0} \right)(X) \right]^{\frac{1}{q_0}}
\\
+ N \kappa^{\frac{d+2m}{q_0}} \gamma^{\frac{1}{q_0\nu}} \left[ \cM \left( |D^{2m}u|^{q_0\mu} \right)(X) \right]^{\frac{1}{q_0\mu}},
\end{multline}
where $1/\mu + 1/\nu = 1$ and $N=N(d,m,\ell,\delta,q_0,\mu)$.
Note that $q_0$ and $\mu$ depend only on $d_1$, $d_2$, $p$, $q$, and $K_0$. Thus we have
$$
N=N(d,m,\ell,\delta, p, q, d_1, d_2, K_0).
$$
Combining \eqref{eq0605_10} and \eqref{eq0605_12}, and taking the supremum with respect to all $Q^n \ni X$, $n \in \bZ$, we see that
\begin{align*}
&\sum_{|\alpha|\le 2m} \lambda^{1-\frac{|\alpha|}{2m}} \left( D^{\alpha}u \right)_{\text{dy}}^{\#}(X)\\
&\le  N \left(\kappa^{2m(1 - \frac{1}{q_0})} {R_1}^{2m(1 - \frac{1}{q_0})} + \kappa^{-1}\right)
\sum_{|\alpha|\le 2m} \lambda^{1-\frac{|\alpha|}{2m}} \left[ \cM \left( |D^{\alpha} u|^{q_0} \right)(X) \right]^{\frac{1}{q_0}}\\
&\quad + N \kappa^{\frac{d+2m}{q_0}} \left[ \cM \left( |f|^{q_0} \right)(X) \right]^{\frac{1}{q_0}}
+ N \kappa^{\frac{d+2m}{q_0}} \gamma^{\frac{1}{q_0\nu}} \left[ \cM  \left( |D^{2m}u|^{q_0\mu} \right)(X) \right]^{\frac{1}{q_0\mu}}
\end{align*}
for all $X \in \bR^{d+1}$.
Now we take $L_{p,q,w}(\bR^{d+1})$-norms of the both sides of the above inequality,  and use Corollaries \ref{cor1} and \ref{cor2}.
In particular, due to \eqref{eq0605_13} we are able to use Corollary \ref{cor1} to obtain, for instance,
\begin{align*}
&\|\left[ \cM (|D^{\alpha} u |^{q_0\mu} )\right]^{\frac{1}{q_0\mu}}\|_{L_{p,q,w}}
= \| \cM (|D^{\alpha} u |^{q_0\mu} ) \|_{L_{p/(q_0\mu), q/(q_0\mu), w}}^{\frac{1}{q_0\mu}}\\
&\le N\| |D^{\alpha}u|^{q_0\mu}\|_{L_{p/(q_0\mu), q/(q_0\mu), w}}^{\frac{1}{q_0\mu}} = N \| D^{\alpha} u \|_{L_{p, q, w}},
\end{align*}
where $N=N(d, p/(q_0 \mu), q/(q_0\mu), K_0)$; hence
$N=N(d, p, q, d_1, d_2, K_0)$.
Therefore, we get
\begin{multline}
							\label{eq0610_08}
\sum_{|\alpha|\le 2m} \lambda^{1-\frac{|\alpha|}{2m}} \| D^{\alpha} u\|_{L_{p,q,w}}
\\
\le N \left(\kappa^{2m(1-\frac{1}{q_0})} {R_1}^{2m(1-\frac{1}{q_0})} + \kappa^{-1}\right) \sum_{|\alpha|\le 2m} \lambda^{1-\frac{|\alpha|}{2m}} \| D^{\alpha} u\|_{L_{p,q,w}}
\\
+ N \kappa^{\frac{d+2m}{q_0}} \|f\|_{L_{p,q,w}} + N \kappa^{\frac{d+2m}{q_0}} \gamma^{\frac{1}{q_0\nu}} \|D^{2m}u\|_{L_{p,q,w}},
\end{multline}
where $N=N(d,m,\ell,\delta,p,q,d_1, d_2, K_0)$.
Fix $\kappa \ge 8$ such that $N\kappa^{-1} \le 1/6$. Then choose $\gamma \in (0,1)$ and $R_1 \in (0,1)$ so that
$$
N \kappa^{\frac{d+2m}{q_0}} \gamma^{\frac{1}{q_0\nu}} \le 1/6,
\quad
N \kappa^{2m(1-\frac{1}{q_0})} {R_1}^{2m(1-\frac{1}{q_0})} \le 1/6.
$$
Then we arrive at \eqref{eq0604_03}.\end{proof}

We now use the standard partition of unity argument with respect to only one variable (the time variable).

\begin{proposition}
							\label{prop0609_1}
Let $\lambda \ge 0$, $K_0\ge 1$, $p, q \in (1,\infty)$, $w = w_1(x') w_2(t,x'')$, where
$$
w_1(x') \in A_p(\bR^{d_1}, dx'),
\quad
w_2(t,x'') \in A_q(\bR \times \bR^{d_2}, dx'' \, dt),
$$
$$
d_1+d_2=d,\quad
[w_1]_{A_p}\le K_0,\quad [w_2]_{A_q}\le K_0,
$$
and let $L$ be the operator in \eqref{eq0528_01}.
Then there exists
$$
\gamma = \gamma(d, m, \ell, \delta, p, q, d_1, d_2, K_0) \in (0,1)
$$
such that,
under Assumption \ref{assum0528_2} ($\gamma$), for $u \in W_{p,q,w}^{1,2m}(\bR^{d+1})$ satisfying \eqref{eq0604_06} in $\bR^{d+1}$, where $f \in L_{p,q,w}(\bR^{d+1})$,
we have
\begin{equation}
							\label{eq0609_03}
\sum_{|\alpha| \le 2m} \lambda^{1-\frac{|\alpha|}{2m}} \| D^\alpha u \|_{L_{p,q,w}} \le
N_1 \|f\|_{L_{p,q,w}} + N_2 \sum_{|\alpha| \le 2m-1} \| D^\alpha u \|_{L_{p,q,w}},
\end{equation}
where $L_{p,q,w} = L_{p,q,w}(\bR^{d+1})$,
$$
N_1 = N_1(d,m,\ell,\delta, p, q, d_1, d_2, K_0),
$$
$$
N_2 = N_2(d,m,\ell,\delta, p, q, d_1, d_2, K_0, K, R_0).
$$
\end{proposition}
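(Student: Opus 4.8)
The plan is to reduce to Lemma \ref{lem0607_1} by a partition of unity in the time variable only, after first moving the lower-order part of $L$ to the right-hand side. Write $L = L_0 + (L - L_0)$, where $L_0 = \sum_{|\alpha|=|\beta|=m} A^{\alpha\beta} D^\alpha D^\beta$ is the principal part. Then $L_0$ still satisfies the Legendre--Hadamard condition and Assumption \ref{assum0528_2} ($\gamma$) while its lower-order coefficients vanish, so Lemma \ref{lem0607_1} applies to $L_0$. Rewriting \eqref{eq0604_06} as $u_t + (-1)^m L_0 u + \lambda u = \tilde f$ with $\tilde f := f - (-1)^m (L - L_0) u$, and using that $L - L_0$ has order $\le 2m-1$ with coefficients bounded by $K$, one gets $\tilde f \in L_{p,q,w}(\bR^{d+1})$ and $\|\tilde f\|_{L_{p,q,w}} \le \|f\|_{L_{p,q,w}} + N(d,m,\ell,K)\sum_{|\alpha|\le 2m-1}\|D^\alpha u\|_{L_{p,q,w}}$. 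Next, fix $\gamma$ and $R_1$ as in Lemma \ref{lem0607_1} applied to $L_0$, and set $\tau := (R_0 R_1)^{2m}$.

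The second step is the time cutoff. Choose $\zeta_0 \in C_0^\infty(\bR)$ with $0 \le \zeta_0 \le 1$, $\zeta_0 \equiv 1$ on $[-\tau/4,\tau/4]$, and $\operatorname{supp}\zeta_0 \subset (-\tau/2,\tau/2)$, and put $\zeta_k(t) = \zeta_0(t-k\tau/2)$ for $k \in \bZ$. For every $t$ one has $\sup_k \zeta_k(t) = 1$ and at most two indices $k$ with $\zeta_k(t)\ne 0$, hence $1 \le \sum_k \zeta_k(t)^q \le 2$ and $\sum_k |\zeta_k'(t)|^q \le 2\tau^{-q}\|\zeta_0'\|_{L_\infty}^q$; moreover $\operatorname{supp}\zeta_k$ is an interval of length $\tau$. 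Set $u_k := \zeta_k u \in W^{1,2m}_{p,q,w}(\bR^{d+1})$. Since $L_0$ involves only spatial derivatives, it commutes with multiplication by $\zeta_k(t)$, so $u_k$ solves $(u_k)_t + (-1)^m L_0 u_k + \lambda u_k = \zeta_k \tilde f + \zeta_k' u$ in $\bR^{d+1}$ and vanishes outside a time strip of the form $(t_1 - \tau, t_1)\times\bR^d$. Lemma \ref{lem0607_1} therefore gives, for each $k$,
\begin{equation*}
\sum_{|\alpha|\le 2m} \lambda^{1-\frac{|\alpha|}{2m}}\|D^\alpha u_k\|_{L_{p,q,w}} \le N\big(\|\zeta_k\tilde f\|_{L_{p,q,w}} + \|\zeta_k' u\|_{L_{p,q,w}}\big).
\end{equation*}

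The third step is to sum over $k$. Since $D^\alpha u_k = \zeta_k D^\alpha u$ for every $\alpha$, and in the mixed norm the time variable sits in the outer $L_q$-integral, one has $\|D^\alpha u_k\|_{L_{p,q,w}}^q = \int \zeta_k(t)^q\big(\int |D^\alpha u|^p w_1\,dx'\big)^{q/p} w_2\,dx''\,dt$; summing over $k$ and using $\sum_k\zeta_k^q \ge 1$ gives $\|D^\alpha u\|_{L_{p,q,w}}^q \le \sum_k \|D^\alpha u_k\|_{L_{p,q,w}}^q$. Raising the per-$k$ estimate to the $q$-th power, summing over $k$, and using $\sum_k \zeta_k^q \le 2$ together with the bound on $\sum_k|\zeta_k'|^q$ on the right-hand side, I obtain $\sum_{|\alpha|\le 2m}\lambda^{1-|\alpha|/2m}\|D^\alpha u\|_{L_{p,q,w}} \le N\|\tilde f\|_{L_{p,q,w}} + N\tau^{-1}\|u\|_{L_{p,q,w}}$. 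Combining this with the bound on $\|\tilde f\|_{L_{p,q,w}}$ yields \eqref{eq0609_03}, with $N_1$ depending only on $d,m,\ell,\delta,p,q,d_1,d_2,K_0$ and $N_2$ depending additionally on $K$ and $R_0$ (through $\tau^{-1}$ and the lower-order coefficient bound).

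The only genuine subtlety is the interplay between the finite-overlap time partition and the outer exponent $q$ of the mixed norm: the $\ell^q$-superadditivity $\sum_k \zeta_k^q \ge 1$ is exactly what makes the passage from the $u_k$ back to $u$ work, and this is why the argument goes through for arbitrary $p,q \in (1,\infty)$ rather than only $q \ge p$. Everything else is routine partition-of-unity bookkeeping, which here is especially clean because $L_0$ commutes with the $t$-cutoff, so no spatial commutator terms appear and only the harmless term $\zeta_k' u$ is generated.
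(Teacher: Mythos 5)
Your proof is correct and takes essentially the same route as the paper: move the lower-order terms to the right-hand side and reduce to Lemma \ref{lem0607_1} by localizing in the time variable alone, which works for all $p,q$ precisely because $t$ sits in the outer $L_q$-integral of the mixed norm. The only (cosmetic) difference is that the paper uses a continuous family of cutoffs $\zeta(t-s)$, $s\in\bR$, normalized by $\int_\bR\zeta(t)^q\,dt=1$, which gives the exact identity $\|D^\alpha u\|_{L_{p,q,w}}^q=\int_\bR\|D^\alpha(u\,\zeta(\cdot-s))\|_{L_{p,q,w}}^q\,ds$ and thus avoids the finite-overlap bookkeeping in your discrete partition $\{\zeta_k\}$.
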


\begin{proof}
Recall that the lower-order coefficients in $L$ are bounded by $K$.
By moving the terms $A^{\alpha\beta}D^{\alpha}D^{\beta}u$, $|\alpha| < m$ or $|\beta| < m$, to the right-hand side of the system, we assume that the lower-order coefficients of $L$ are zero.
Take $\gamma \in (0,1)$ and $R_1 \in (0,1)$ from Lemma \ref{lem0607_1} and fix a non-negative infinitely differentiable function $\zeta(t)$ defined on $\bR$ such that $\zeta(t)$ vanishes outside $(-(R_0R_1)^{2m}, 0)$ and
$$
\int_{\bR} \zeta(t)^q \, dt = 1.
$$
Then $u(t,x) \zeta(t-s)$ satisfies
\begin{multline}
							\label{eq0607_03}
\left( u(t,x) \zeta(t-s) \right)_t + (-1)^m L \left(u(t,x) \zeta(t-s)\right) + \lambda u(t,x) \zeta(t-s)
\\
= \zeta(t-s)f(t,x) + \zeta_t(t-s) u(t,x)
\end{multline}
in $\bR^{d+1}$.
For each $s \in \bR$, since $u(t,x)\zeta(t-s)$ vanishes outside $(s-(R_0R_1)^{2m}, s) \times \bR^d$, by Lemma \ref{lem0607_1} applied to \eqref{eq0607_03}, we get
\begin{equation}
							\label{eq0607_04}
\sum_{|\alpha| \le 2m} \lambda^{1-\frac{|\alpha|}{2m}} \| D^{\alpha} \left(u \zeta(\cdot -s)\right) \|_{L_{p, q, w}}
\le N \|f \zeta(\cdot-s)\|_{L_{p, q, w}}
+ N \|u \zeta_t(\cdot-s)\|_{L_{p, q, w}},
\end{equation}
where $L_{p,q,w} = L_{p,q,w}(\bR^{d+1})$ and
$N = N (d,m,\ell,\delta, p, q, d_1, d_2, K_0)$.
Note that
\begin{align*}
&\|D^\alpha u(t,\cdot,x'')\|_{L_{p,w_1}(\bR^{d_1})}^q = \int_{\bR} \|D^\alpha u(t,\cdot,x'')\|_{L_{p,w_1}(\bR^{d_1})}^q \zeta(t-s)^q \, ds,\\
&= \int_{\bR} \|D^\alpha u(t,\cdot,x'')\zeta(t-s)\|_{L_{p,w_1}(\bR^{d_1})}^q \, ds.
\end{align*}
Thus, by integrating with respect to $t$ and $x''$,
$$
\| D^{\alpha} u \|_{L_{p,q,w}}^q = \int_{\bR} \|D^{\alpha}\left( u \zeta(\cdot -s) \right)\|_{L_{p,q,w}}^q \, ds.
$$
From this and \eqref{eq0607_04} it follows that
$$
\sum_{|\alpha| \le 2m} \lambda^{1-\frac{|\alpha|}{2m}} \| D^\alpha u \|_{L_{p,q,w}}
\le N_1 \|f\|_{L_{p,q,w}} + N_2 \|u\|_{L_{p,q,w}},
$$
where $N_1 = N_1(d,m,\ell,\delta,p,q,d_1, d_2, K_0)$, and $N_2$ depends on $R_0R_1$ and the same parameters as $N_1$ does.
The proposition is proved.
\end{proof}

\begin{proof}[Proof of Theorem \ref{thm0607_1}]
In Proposition \ref{prop0609_1} we choose $\lambda_0 \ge 1$, depending only on $N_2$, so that
$$
\frac{1}{2}\sum_{|\alpha|\le 2m-1} \lambda^{1-\frac{|\alpha|}{2m}} \le \sum_{|\alpha|\le 2m-1} \left(\lambda^{1-\frac{|\alpha|}{2m}} - N_2\right)
$$
for any $\lambda \ge \lambda_0$.
Then in the right-hand side of \eqref{eq0609_03} the terms involving $D^\alpha u$, $0 \le |\alpha| \le 2m-1$ can be absorbed to the left-hand side so that we obtain
$$
\sum_{|\alpha|\le 2m} \lambda^{1-\frac{|\alpha|}{2m}} \|D^\alpha u\|_{L_{p,q,w}} \le N \|f\|_{L_{p,q,w}}.
$$
Finally, we estimate $u_t$ using this estimate and the system $u_t = f - (-1)^m Lu - \lambda u$. The theorem is proved.
\end{proof}

\subsection{Mixed-norm estimate in a half space}

This section is devoted to the proof of Theorem \ref{thm4.4}.
We only consider the case when $d_1\ge 1$ and thus the cylindrical domain under consideration is $\bR\times \bR^d_+  =: \bR^{d+1}_+$.
That is, regarding the product of weights and the mixed norm, we view the domain as the product space of $\bR^{d_1}_+$, $d_1 \ge 1$, and $\bR  \times \bR^{d_2}$.
The other case can be treated in a similar way.

We mainly follow the arguments in the previous subsection and Sections 7--9 of \cite{MR2771670}. For a function $g$ defined on a subset $\cD$ in $\bR^{d+1}$,
we set
$$
[g]_{\cC^{\nu}(\cD)}
= \sup_{\substack{(t,x), (s,y) \in \cD \\ (t,x) \ne (s,y)}}
\frac{|g(t,x)-g(s,y)|}{|t-s|^{\frac \nu {2m}} + |x-y|^\nu},
$$
where $0 < \nu \le 1$.
In what follows, we write $Q_r^+(X_0)$ to denote $Q_r(X_0) \cap \bR^{d+1}_+$.

First we have the following boundary H\"older estimate, which corresponds to Lemma \ref{lem4.4}.

\begin{lemma}
                                    \label{lem4.10}
Let $\lambda \ge 0$, $q \in (1,\infty)$, and $L$ be the operator in \eqref{eq0528_01}.
Suppose that the coefficients $A^{\alpha\beta}$, $|\alpha|=|\beta|=m$, are measurable functions of only $t \in \bR$, i.e., $A^{\alpha\beta} = A^{\alpha\beta}(t)$ and the lower-order coefficients of $L$ are all zero. Then for any $u \in C_{\operatorname{loc}}^\infty(\overline{\bR^{d+1}_+})$ satisfying \eqref{eq0604_06} in $Q_4^+$ with $f=0$ and \eqref{eq1.36} on $Q_4 \cap \{x_1=0\}$, we have
\begin{equation*}
\sum_{|\alpha| \le 2m,\alpha_1<2m} \lambda^{1-\frac{|\alpha|}{2m}} [D^\alpha u]_{\cC^{1/2}(Q_1^+)}
\le N \sum_{|\alpha| \le 2m} \lambda^{1-\frac{|\alpha|}{2m}}\|D^\alpha u\|_{L_q(Q_4^+)},
\end{equation*}
where $N = N(d,m,\ell,\delta,q)$.
\end{lemma}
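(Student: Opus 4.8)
The plan is to adapt the boundary estimates of \cite[Sections~7--9]{MR2771670} to general $q$ by invoking the $W^{1,2m}_q$ a priori estimates for the Dirichlet problem on a half space established there, and to collect all intermediate order terms exactly as in the passage from \cite[Lemma~3]{MR2771670} to \eqref{eq11.07}. Since $u$ is assumed smooth up to $\{x_1=0\}$ inside $Q_4$, all differentiations below are legitimate, and as permitted by Remark~\ref{rem0713_1} we may also assume the coefficients $A^{\alpha\beta}(t)$ are infinitely differentiable. To remove the zeroth order term $\lambda u$ we apply S.~Agmon's device: introducing a suitable number of extra variables and a suitable function of them, we lift $u$ to a function $\hat u$ on a half space of a higher-dimensional Euclidean space that solves the analogous homogeneous system with $\lambda=0$, the same ellipticity constants, and the Dirichlet condition \eqref{eq1.36} on the lifted boundary (which is preserved since the lift equals $u(t,x)$ times a function of the new variables and $u$ vanishes to order $m$ on $\{x_1=0\}$). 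The quantities $\lambda^{1-|\alpha|/2m}[D^\alpha u]_{\cC^{1/2}(Q_1^+)}$ are then comparable to unweighted H\"older seminorms of the corresponding derivatives of $\hat u$, so it suffices to treat $\lambda=0$.

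When $\lambda=0$ the coefficients depend only on $t$, hence tangential differentiation is harmless: for any multi-index $\hat\gamma$ in the variables $(x_2,\dots,x_d)$ and any $k\ge0$, the function $\partial_t^k D_{\hat{x}}^{\hat\gamma}u$ again solves a system of the same type in $Q_4^+$ (the $t$-derivatives of the smooth coefficients contributing only controlled lower-order terms on the right-hand side) and satisfies \eqref{eq1.36} on $Q_4\cap\{x_1=0\}$. Applying the unweighted $W^{1,2m}_q$ estimate of \cite{MR2771670} for the Dirichlet problem, on a chain of half cylinders $Q_4^+\supset Q_3^+\supset Q_2^+\supset Q_1^+$ with the usual cut-offs, to these tangential derivatives, and then using the system to solve for the top-order normal derivatives in terms of $u_t$ and derivatives involving at most $2m-1$ differentiations in $x_1$ --- the Legendre--Hadamard condition with $\xi=e_1$ making the matrix that multiplies $D_1^{2m}$ invertible --- one obtains $L_q$ bounds on $Q_3^+$ for $D_1^{\,j}D_{\hat{x}}^{\hat\gamma}u$ and for $\partial_t D_1^{\,j}D_{\hat{x}}^{\hat\gamma}u$ for all $0\le j\le 2m$ and all tangential $\hat\gamma$. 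Feeding these into the parabolic Sobolev embeddings and iterating to raise the integrability exponent, as in \cite{MR2771670}, yields the H\"older estimate for $D^\alpha u$ with $|\alpha|\le 2m$ and $\alpha_1\le 2m-1$ on $Q_1^+$. The case $q=2$ of all this is precisely \cite[Sections~7--9]{MR2771670}; the general $q$ version goes through without change because the underlying $W^{1,2m}_q$ estimates hold for every $q\in(1,\infty)$. Undoing Agmon's reduction --- equivalently, retaining the intermediate order terms, not only the highest and lowest ones --- produces the asserted $\lambda$-weighted inequality, with $N=N(d,m,\ell,\delta,q)$.

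The main obstacle is the behavior in the normal direction near $\{x_1=0\}$: although tangential derivatives of every order are essentially free, controlling normal derivatives up to order $2m$ forces one to combine the boundary $W^{1,2m}_q$ estimate for the Dirichlet problem with a bootstrap that trades the top normal derivative for lower-order ones via the system. This is also why $\alpha_1=2m$ is excluded from the left-hand side: $D_1^{2m}u$ is recovered only in $L_q$, as $-u_t-\lambda u$ minus the other $2m$-th order terms, and not in $\cC^{1/2}$. Carrying every constant so that it depends only on $d,m,\ell,\delta,q$ --- in particular through the Agmon lifting and the Sobolev iteration --- is routine but must be checked.
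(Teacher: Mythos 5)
Your outline follows the paper's general strategy (reduce to $\lambda=0$ via Agmon, use the $W^{1,2m}_q$ half-space estimates of \cite{MR2771670} plus bootstrap and parabolic Sobolev embedding), but it misses the step that is the actual content of the boundary lemma. When $\lambda=0$ the claimed inequality degenerates to
$\sum_{|\alpha|=2m,\,\alpha_1<2m}[D^\alpha u]_{\cC^{1/2}(Q_1^+)}\le N\sum_{|\alpha|=2m}\|D^\alpha u\|_{L_q(Q_4^+)}$,
i.e.\ the right-hand side may contain \emph{only} the $2m$-th order derivatives. Your bootstrap from the local $W^{1,2m}_q$ estimates produces a bound of the form $[D^\alpha u]_{\cC^{1/2}(Q_1^+)}\le N\|u\|_{L_q(Q_4^+)}$ (or a sum of all unweighted derivative norms), and you never explain how to remove the low-order terms from the right-hand side. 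This is not cosmetic: for $\lambda=0$ such a bound proves nothing, and in the Agmon lifting the correct weights $\lambda^{1-|\alpha|/2m}$ come out precisely because the homogeneous estimate has only top-order norms on the right; with lower-order norms present one picks up terms $\lambda^{j/2m}\|D^{\alpha'}u\|$, $j+|\alpha'|\le 2m$, which are not dominated by $\lambda^{1-|\alpha'|/2m}\|D^{\alpha'}u\|$ when $\lambda<1$. The paper's proof devotes itself exactly to this reduction: it first uses the boundary Poincar\'e inequality (valid because of \eqref{eq1.36}) to replace $\|u\|_{L_q(Q_4^+)}$ by $\|D_1^m u\|_{L_q(Q_4^+)}$, then subtracts a polynomial $P(x)=x_1^mQ(x)$ of degree at most $2m-1$ --- which vanishes to order $m$ at $x_1=0$, hence preserves the Dirichlet condition and solves the same system --- normalized as in \eqref{eq9.48} so that a Poincar\'e-type inequality (\cite[Lemma 6.1]{MR3385152}) yields \eqref{eq9.47}, and finally applies the resulting estimate to $D_{\hat x}u$ to land on $\|D^{2m}u\|_{L_q(Q_4^+)}$. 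In the interior lemma one may subtract an arbitrary polynomial, but at the boundary the subtraction must respect \eqref{eq1.36}; your proposal contains no substitute for this mechanism, and your closing remark that undoing Agmon amounts merely to "retaining the intermediate order terms" misidentifies where the difficulty lies.

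A secondary flaw: you differentiate the system in $t$ (``$\partial_t^kD_{\hat x}^{\hat\gamma}u$ again solves a system of the same type, the $t$-derivatives of the smooth coefficients contributing only controlled lower-order terms''). The coefficients are merely measurable in $t$; after the mollification permitted by Remark \ref{rem0713_1} their time derivatives are not bounded in terms of $d,m,\ell,\delta,q$, so time-differentiating the equation is not allowed. Only tangential spatial differentiation is free (the coefficients are independent of $x$), and the needed control of $u_t$, $(D_{\hat x}^{\hat\gamma}u)_t$ must come from the equation itself, with the parabolic Sobolev embedding supplying the H\"older-$1/2$ continuity in the parabolic metric; this part of your argument is repairable, but as written it is incorrect.
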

\begin{proof}
We only consider the case when $\lambda=0$, i.e.,
\begin{equation}
                                \label{eq9.27}
[D_{\hat{x}}D^{2m-1} u]_{\cC^{1/2}(Q_1^+)}
\le N \sum_{|\alpha| = 2m} \|D^\alpha u\|_{L_q(Q_4^+)}.
\end{equation}
The general case then follows by using S. Agmon's idea.

By the $W^{1,2m}_q$ estimates in the half space (\cite[Theorem 4]{MR2771670}), the Sobolev imbedding theorem, and the standard bootstrap argument, we have
$$
\|u\|_{W^{1,2m}_p(Q_1^+)}\le N\|u\|_{L_q(Q_4^+)}
$$
for any $p\in (1,\infty)$, where $N=N(d,\ell,m,\delta,p,q)$. By taking $p>2(2m+d)$ and using the parabolic Sobolev imbedding theorem (see, for instance, \cite[Sec. 18.12]{MR521808}),
\begin{equation}
                    \label{eq9.34}
[D^{2m-1}u]_{\cC^{1/2}(Q_1^+)}\le N\|u\|_{L_q(Q_4^+)}\le N\| D_1^m u\|_{L_q(Q_4^+)},
\end{equation}
where the last inequality is due to the boundary Poincar\'e inequality.
Let $Q(x)$ be a vector-valued polynomial of order at most $m-1$ such that, for $P(x):=x_1^m Q(x)$,
\begin{equation}
                                            \label{eq9.48}
\left(D^k D_1^m P(x)\right)_{Q^+_4}= \left(D^k D_1^m u(t,x) \right)_{Q^+_4},
\end{equation}
where $0\le k\le m-1$.
Notice that $P$ is a vector-valued polynomial of order at most $2m-1$. Let $v=u-P(x)$, which satisfies the same system with the same Dirichlet boundary condition on $\{x_1=0\}$ as $u$ does. By \eqref{eq9.34} with $v$ in place of $u$ and applying a Poincar\'e type inequality \cite[Lemma 6.1]{MR3385152} with $q$ in place of $2$ (the same proof applies), we get
\begin{multline}
                                \label{eq9.47}
[D^{2m-1}u]_{\cC^{1/2}(Q_1^+)}=[D^{2m-1}v]_{\cC^{1/2}(Q_1^+)}
\le N\| D_1^m v\|_{L_q(Q_4^+)}\\
\le N\|D^{m-1}D_1^m v\|_{L_q(Q_4^+)}\le N\|D^{m-1}D_1^m u\|_{L_q(Q_4^+)},
\end{multline}
where we used \eqref{eq9.48} with $k=m-1$ in the last inequality. Since $D_{\hat{x}}u$ satisfies the same system as $u$ with the same boundary condition on $\{x_1=0\}$, we finally obtain \eqref{eq9.27} from \eqref{eq9.47}. The lemma is proved.
\end{proof}

Similar to Lemma \ref{lem0528_01} and \cite[Corollary 5]{MR2771670}, from Lemma \ref{lem4.10} we obtain the following boundary mean oscillation estimate for all $D^\alpha u$, $0 \le |\alpha| \le 2m$, except $D_1^{2m}u$.

\begin{lemma}
							\label{lem4.11}
Let $\kappa \ge 64$. Under the assumptions of Lemma \ref{lem4.10}, for any $r \in (0,\infty)$, $X_0 \in \overline{\bR^{d+1}_+}$, and $u \in W_{q, \operatorname{loc}}^{1,2m}(\overline{\bR^{d+1}_+})$ satisfying \eqref{eq0604_06} in $Q_{\kappa r}^+(X_0)$ and \eqref{eq1.36} on $Q_{\kappa r}(X_0)\cap \{x_1=0\}$,
where $f \in L_{q,\operatorname{loc}}(\overline{\bR^{d+1}_+})$,
we have
\begin{multline*}
\sum_{|\alpha| \le 2m, \alpha_1<2m} \lambda^{1-\frac{|\alpha|}{2m}} \left( | D^\alpha u - \left(D^\alpha u\right)_{Q^+_r(X_0)} |\right)_{Q^+_r(X_0)}
\\
\le N \kappa^{-\frac 1 2} \sum_{|\alpha| \le 2m} \lambda^{1-\frac{|\alpha|}{2m}}\left( |D^\alpha u|^q \right)_{Q^+_{\kappa r}(X_0)}^{\frac 1 q} +  N \kappa^{\frac{d+2m}{q}} \left(
|f|^q\right)_{Q^+_{\kappa r}(X_0)}^{\frac 1 q},
\end{multline*}
where $N = N(d,m,\ell,\delta,q)$.
\end{lemma}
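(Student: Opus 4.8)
The plan is to follow the same route by which Lemma~\ref{lem0528_01} was deduced from Lemma~\ref{lem4.4}, now with the boundary H\"older estimate Lemma~\ref{lem4.10} playing the role of the interior estimate; concretely, this amounts to running the proof of \cite[Corollary 5]{MR2771670} with a general exponent $q$ instead of $q=2$. By the mollification argument of Remark~\ref{rem0713_1} I would first reduce to the case in which the coefficients $A^{\alpha\beta}(t)$ are infinitely differentiable, so that homogeneous solutions are smooth up to $\{x_1=0\}$. Fix $X_0\in\overline{\bR^{d+1}_+}$, $r>0$, $\kappa\ge 64$. If $Q_{\kappa r}(X_0)$ does not meet $\{x_1=0\}$ then $Q^+_{\kappa r}(X_0)=Q_{\kappa r}(X_0)$ and the asserted bound, in the stronger form with $\kappa^{-1}$, is already Lemma~\ref{lem0528_01}, so I may assume $Q_{\kappa r}(X_0)\cap\{x_1=0\}\ne\emptyset$.

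Next I would split $u=w+v$ on the sub-cylinder $Q^+_{\kappa r/16}(X_0)$, with $w$ the solution of
\[
w_t+(-1)^mLw+\lambda w=f\ \text{ in }\ Q^+_{\kappa r/16}(X_0),\qquad w=Dw=\cdots=D^{m-1}w=0\ \text{ on the parabolic boundary},
\]
and $v=u-w$. Such a $w$ exists and obeys the scale-invariant $\lambda$-weighted half-space $W^{1,2m}_q$-estimate for coefficients depending only on $t$ (the analogue with exponent $q$ of the estimates from \cite{MR2771670} already invoked in Lemma~\ref{lem0528_01}):
\[
\sum_{|\alpha|\le 2m}\lambda^{1-\frac{|\alpha|}{2m}}\big(|D^\alpha w|^q\big)_{Q^+_{\kappa r/16}(X_0)}^{1/q}\le N\big(|f|^q\big)_{Q^+_{\kappa r/16}(X_0)}^{1/q}.
\]
Passing from $Q^+_{\kappa r/16}(X_0)$ to the much smaller cylinder $Q^+_r(X_0)$ costs a volume factor $\le N\kappa^{d+2m}$, while enlarging $Q^+_{\kappa r/16}(X_0)$ to $Q^+_{\kappa r}(X_0)$ on the right costs only an absolute constant; hence the mean oscillation of each $D^\alpha w$ over $Q^+_r(X_0)$ is controlled by $N\kappa^{(d+2m)/q}\big(|f|^q\big)_{Q^+_{\kappa r}(X_0)}^{1/q}$, which accounts for the second term in the claimed inequality.

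Now $v$ solves the homogeneous system in $Q^+_{\kappa r/16}(X_0)$, and since $u$ and $w$ both satisfy \eqref{eq1.36}, so does $v$ on $Q_{\kappa r/16}(X_0)\cap\{x_1=0\}$. Rescaling Lemma~\ref{lem4.10} to this cylinder — the rescaled estimate lives on $Q^+_{\kappa r/64}(X_0)$, which contains $Q^+_r(X_0)$ precisely because $\kappa\ge 64$ — gives
\[
\sum_{|\alpha|\le 2m,\,\alpha_1<2m}\lambda^{1-\frac{|\alpha|}{2m}}[D^\alpha v]_{\cC^{1/2}(Q^+_r(X_0))}\le N(\kappa r)^{-1/2}\sum_{|\alpha|\le 2m}\lambda^{1-\frac{|\alpha|}{2m}}\big(|D^\alpha v|^q\big)_{Q^+_{\kappa r/16}(X_0)}^{1/q}.
\]
Since any two points of $Q^+_r(X_0)$ lie within parabolic distance $Nr$, the left side dominates $N^{-1}r^{-1/2}\sum_{\alpha_1<2m}\lambda^{1-|\alpha|/2m}\big(|D^\alpha v-(D^\alpha v)_{Q^+_r(X_0)}|\big)_{Q^+_r(X_0)}$; on the right, $\sum\lambda^{1-|\alpha|/2m}\big(|D^\alpha v|^q\big)_{Q^+_{\kappa r/16}(X_0)}^{1/q}\le N\sum\lambda^{1-|\alpha|/2m}\big(|D^\alpha u|^q\big)_{Q^+_{\kappa r}(X_0)}^{1/q}+N\big(|f|^q\big)_{Q^+_{\kappa r}(X_0)}^{1/q}$ by the bound on $w$ above. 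Adding the $w$- and $v$-contributions for $\alpha_1<2m$ and using $\kappa^{-1/2}\le 1$ to absorb the leftover $\kappa^{-1/2}\big(|f|^q\big)^{1/q}$ term into the $\kappa^{(d+2m)/q}\big(|f|^q\big)^{1/q}$ term would give the lemma, with $N=N(d,m,\ell,\delta,q)$ inherited from Lemma~\ref{lem4.10} and the half-space $W^{1,2m}_q$-estimate.

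I expect the only delicate points to be the parabolic-scaling bookkeeping — keeping track of how $\lambda$, the seminorm $[\,\cdot\,]_{\cC^{1/2}}$, and the cylinder volumes transform, so that the powers $\kappa^{-1/2}$ and $\kappa^{(d+2m)/q}$ come out exactly as stated — together with checking that the half-space $W^{1,2m}_q$-solvability with a $t$-dependent leading part and a $\lambda$-term is at our disposal, which it is by rerunning the arguments of \cite{MR2771670} with $q$ in place of $2$, just as was done for Lemma~\ref{lem0528_01}. The exclusion $\alpha_1=2m$ is genuine: $D_1^{2m}u$ need not be H\"older continuous up to $\{x_1=0\}$, and it is instead recovered from the equation in terms of the lower-order derivatives.
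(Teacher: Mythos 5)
Your overall strategy is the intended one (split $u=w+v$, control $w$ by $W^{1,2m}_q$ solvability at the cost of a volume factor $\kappa^{(d+2m)/q}$, control $v$ through the boundary H\"older estimate of Lemma \ref{lem4.10}, with the scaling bookkeeping you describe), but two steps fail as written. First, you take $w$ to solve the Dirichlet problem for the $2m$-th order system on the half-ball cylinder $Q^+_{\kappa r/16}(X_0)$ with zero data on the \emph{entire} parabolic boundary and assert the full scale-invariant estimate $\sum_{|\alpha|\le 2m}\lambda^{1-\frac{|\alpha|}{2m}}(|D^\alpha w|^q)^{1/q}\le N(|f|^q)^{1/q}$ there. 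No such solvability or estimate is available from \cite{MR2771670}: the spatial domain $B_{\kappa r/16}(x_0)\cap\bR^d_+$ has an edge where the flat and spherical boundary pieces meet, and $W^{1,2m}_q$ estimates up to such an edge for higher-order systems (for arbitrary $q\in(1,\infty)$) are not provided and cannot be taken for granted. The argument the paper points to instead solves for $w$ in the half space (on a time slab, with the homogeneous Dirichlet lateral condition) with the right-hand side $f$ cut off to a boundary-centered cylinder, using the global half-space $W^{1,2m}_q$ solvability of \cite{MR2771670}, so that $v=u-w$ is a homogeneous solution on a boundary-centered half-cylinder.

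Second, Lemma \ref{lem4.10} is an estimate on cylinders centered at a point of $\{x_1=0\}$, and your case split only guarantees $x_{0,1}<\kappa r$ in the "boundary" case; you then ``rescale Lemma \ref{lem4.10}'' to $Q^+_{\kappa r/64}(X_0)$ and $Q^+_{\kappa r/16}(X_0)$, which are centered at $X_0$, a point that may be far from, or simply off, the boundary. For instance, if $x_{0,1}=\kappa r/2$ these are interior cylinders and the lemma says nothing about them, while for $0<x_{0,1}<\kappa r/16$ the sets $Q_\rho(X_0)\cap\bR^{d+1}_+$ are not translates/dilates of the $Q^+_\rho$ appearing in Lemma \ref{lem4.10}; recovering a H\"older bound on such cylinders would require an additional interior-plus-boundary covering argument that you do not supply. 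The standard repair, and the actual role of $\kappa\ge 64$, is to split according to whether $x_{0,1}$ exceeds a fixed fraction of $\kappa r$ (then $Q_r(X_0)\subset\bR^{d+1}_+$ and the interior Lemma \ref{lem0528_01} applies with dilation $\gtrsim\kappa$, giving even $\kappa^{-1}$) or not (then re-center at $\tilde X_0=(t_0,0,\hat x_0)\in\{x_1=0\}$, use inclusions of the type $Q^+_r(X_0)\subset Q^+_{\rho}(\tilde X_0)$ and $Q^+_{4\rho}(\tilde X_0)\subset Q^+_{\kappa r}(X_0)$ with $\rho\sim\kappa r$, and apply the rescaled Lemma \ref{lem4.10} on these boundary-centered cylinders); the factor $\kappa^{-1/2}$ then arises from $r^{1/2}\cdot(\kappa r)^{-1/2}$ exactly as in your computation. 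With these two repairs your argument coincides with the paper's.
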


In order to estimate $D_1^{2m}u$, as in \cite[Sec. 8]{MR2771670}, we consider the following system with the special coefficients
$$
\fL_0 u = A(t)D^{2m}_1u + \sum_{j=2}^d D_j^{2m} u,
$$
where $A(t) = A^{\check{\alpha}\check{\alpha}}(t)$, $\check{\alpha}=(m,0, \cdots,0)$.
Note that if a sufficiently smooth $u$ satisfies
\begin{equation}
                            \label{eq2.12}
u_t+(-1)^m \fL_0 u+\lambda u=0
\end{equation}
with the Dirichlet boundary condition on $\{x_1=0\}$, then $D_1^{2m}u$ satisfies the same system with the Dirichlet boundary condition because
$$
D_1^{2m}u=(A(t))^{-1}(-1)^{m+1}(u_t+\lambda u)-(A(t))^{-1}\sum_{j=2}^d D_j^{2m} u.
$$
Thus, we have boundary H\"older and mean oscillation estimates of the $2m$-th order normal derivative of $u$.

\begin{lemma}
                                    \label{lem4.12}
Let $\lambda \ge 0$ and $q \in (1,\infty)$.
For any $u \in C_{\operatorname{loc}}^\infty(\overline{\bR^{d+1}_+})$ satisfying \eqref{eq2.12} in $Q_4^+$ and \eqref{eq1.36} on $Q_4\cap \{x_1=0\}$, we have
\begin{equation*}
[D_{1}^{2m} u]_{\cC^{1}(Q_1^+)}
\le N  \sum_{|\alpha| \le 2m} \lambda^{1-\frac{|\alpha|}{2m}}\|D^\alpha u\|_{L_q(Q_4^+)},
\end{equation*}
where $N = N(d,m,\ell,\delta,q)$.
\end{lemma}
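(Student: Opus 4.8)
The statement is the one-variable (normal-direction) analogue of the boundary Hölder estimate of Lemma \ref{lem4.10}, applied to the higher-order normal derivative $D_1^{2m}u$ rather than to the tangential derivatives. The plan is to first treat the model case $\lambda = 0$, where the estimate becomes $[D_1^{2m}u]_{\cC^1(Q_1^+)} \le N\sum_{|\alpha|=2m}\|D^\alpha u\|_{L_q(Q_4^+)}$, and then recover the general $\lambda \ge 0$ case by S.\ Agmon's device exactly as at the start of the proof of Lemma \ref{lem4.10} (pass to the auxiliary system in one extra dimension so that the $\lambda u$ term is absorbed into a leading term). So the heart of the matter is the case $\lambda = 0$.

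For $\lambda = 0$, the key observation is already recorded in the text preceding the lemma: if $u$ solves $u_t + (-1)^m \fL_0 u = 0$ in $Q_4^+$ with the Dirichlet condition \eqref{eq1.36} on $\{x_1 = 0\}$, then because $\fL_0$ has coefficients depending only on $t$ (indeed only $A(t)$ multiplies $D_1^{2m}$, the rest being the flat Laplacian-type operator $\sum_{j\ge 2}D_j^{2m}$) and $A(t)$ is invertible with $\|A^{-1}\| \le \delta^{-1}$, differentiating the equation shows that $w := D_1^{2m}u$ solves the \emph{same} system $w_t + (-1)^m\fL_0 w = 0$ in $Q_4^+$ with the same Dirichlet boundary condition on $\{x_1 = 0\}$, since $D_1^{j}u = 0$ on $\{x_1 = 0\}$ for $0 \le j \le m-1$ forces $D_1^{2m+k}u = 0$ there for $0 \le k \le m-1$ via the relation $D_1^{2m}u = (A(t))^{-1}(-1)^{m+1}u_t - (A(t))^{-1}\sum_{j\ge 2}D_j^{2m}u$ differentiated in $x_1$. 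Then I would apply Lemma \ref{lem4.10} (with $\lambda = 0$) to $w$ in place of $u$: this yields $[D^\alpha w]_{\cC^{1/2}(Q_1^+)} \le N\|D^{2m}w\|_{L_q(Q_2^+)}$ for all $|\alpha| \le 2m$ with $\alpha_1 < 2m$; taking $\alpha = 0$ gives a $\cC^{1/2}$ bound on $w = D_1^{2m}u$ in terms of $\|D^{2m}(D_1^{2m}u)\|_{L_q}$ on a slightly larger cylinder, which is not yet what we want. Instead, the cleaner route is: bootstrap interior-and-boundary $W^{1,2m}_q$ estimates (\cite[Theorem 4]{MR2771670}) applied to $w$ to get $\|w\|_{W^{1,2m}_q(Q_2^+)} \le N\|w\|_{L_q(Q_3^+)}$, iterate to gain as many derivatives as needed, and then use the parabolic Sobolev embedding (as in \eqref{eq9.34}) to obtain a $\cC^1$ estimate: $[w]_{\cC^1(Q_1^+)} \le N\|w\|_{L_q(Q_3^+)} = N\|D_1^{2m}u\|_{L_q(Q_3^+)}$. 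Finally bound $\|D_1^{2m}u\|_{L_q(Q_3^+)} \le \sum_{|\alpha|=2m}\|D^\alpha u\|_{L_q(Q_3^+)}$ and enlarge $Q_3^+$ to $Q_4^+$ trivially, giving \eqref{eq9.27}-style conclusion.

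To be careful about the one subtlety — that applying Lemma \ref{lem4.10} or the $W^{1,2m}_q$-theory to $w = D_1^{2m}u$ requires $w$ to be a genuine $W^{1,2m}_{q,\mathrm{loc}}$ solution on a smaller cylinder, which is fine once we know (from Remark \ref{rem0713_1}) that we may assume the coefficients, hence $A(t)$, are smooth and $u$ is smooth inside $Q_4^+$ — I would invoke Remark \ref{rem0713_1} at the outset to reduce to smooth $A$ and smooth $u$, so that all differentiations are legitimate and the constants come out independent of the approximation. The main obstacle I anticipate is purely bookkeeping: tracking the Dirichlet boundary conditions through the differentiation $u \mapsto D_1^{2m}u$ to confirm that $w$ still vanishes to order $m-1$ at $\{x_1 = 0\}$ — this needs the structural fact that $\fL_0$ only couples $D_1^{2m}$ with tangential $2m$-th order derivatives, so that the values of $D_1^k u$ for $k \ge 2m$ on the boundary are algebraically determined by lower-order tangential data that already vanish. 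Once that is in place, Agmon's trick handles $\lambda > 0$ and the proof closes.
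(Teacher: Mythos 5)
Your argument is correct and is essentially the proof the paper has in mind: the paper's proof is simply the citation \cite[Corollary 6]{MR2771670}, which rests on exactly the observation you use (recorded in the text right before the lemma) that $D_1^{2m}u$ solves the same special system with the same Dirichlet condition on $\{x_1=0\}$, followed by the boundary $W^{1,2m}_q$ estimates, bootstrap, parabolic Sobolev embedding, and Agmon's idea for $\lambda>0$. One minor remark: invoking Remark \ref{rem0713_1} is unnecessary here, since $u\in C_{\operatorname{loc}}^\infty(\overline{\bR^{d+1}_+})$ is already a hypothesis of the lemma and the coefficient $A(t)$ need not (and cannot) be smoothed for the measurable-in-time $W^{1,2m}_q$ theory you apply to $w=D_1^{2m}u$.
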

\begin{proof}
See \cite[Corollary 6]{MR2771670}.
\end{proof}

\begin{lemma}
							\label{lem4.13}
Let $\lambda \ge 0$, $q \in (1,\infty)$, and $\kappa \ge 64$.
For any $r \in (0,\infty)$, $X_0 \in \overline{\bR^{d+1}_+}$, and $u \in W_{q, \operatorname{loc}}^{1,2m}(\overline{\bR^{d+1}_+})$ satisfying $u_t+(-1)^m \fL_0 u+\lambda u=f$ in $Q_{\kappa r}^+(X_0)$ and \eqref{eq1.36} on $Q_{\kappa r}(X_0)\cap \{x_1=0\}$,
where $f \in L_{q,\operatorname{loc}}(\overline{\bR^{d+1}_+})$,
we have
\begin{multline*}
\left( |D_{1}^{2m} u - \left(D_{1}^{2m} u\right)_{Q^+_r(X_0)} |\right)_{Q_r(X_0)}
\\
\le N \kappa^{-1} \sum_{|\alpha| \le 2m} \lambda^{1-\frac{|\alpha|}{2m}}\left( |D^\alpha u|^q \right)_{Q^+_{\kappa r}(X_0)}^{\frac 1 q} +  N \kappa^{\frac{d+2m}{q}} \left(
|f|^q\right)_{Q^+_{\kappa r}(X_0)}^{\frac 1 q},
\end{multline*}
where $N = N(d,m,\ell,\delta,q)$.
\end{lemma}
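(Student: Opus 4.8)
The plan is to run the by-now standard ``freeze-and-compare'' scheme. On the cylinder $Q_{\kappa r}^+(X_0)$ I would split $u=v+w$, where $w$ solves the Dirichlet problem for the model operator with the given right-hand side and zero parabolic boundary data and $v:=u-w$ solves the homogeneous problem, and then estimate the mean oscillation of $D_1^{2m}u$ over the much smaller cylinder $Q_r^+(X_0)$ by using the pointwise Lipschitz bound of Lemma \ref{lem4.12} for $v$ and an $L_q$ a priori bound for $w$. As in the interior case, I would first reduce to the two extreme configurations of $X_0$: if $Q_{\kappa r}(X_0)\subset\bR^{d+1}_+$, the conclusion is the interior mean oscillation estimate Lemma \ref{lem0528_01} applied to $\fL_0$, which applies verbatim since $\fL_0$ has no lower-order terms and coefficients depending only on $t$; otherwise I may translate $X_0$ to the boundary $\{x_1=0\}$ at the cost of harmless constants, and so assume from now on $X_0\in\partial\bR^{d+1}_+$. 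By the mollification argument of Remark \ref{rem0713_1} I may also assume that $A(t)$ is smooth, so that $v$ is smooth in $x$ up to the boundary and Lemma \ref{lem4.12} is applicable; the final constant will not depend on the smoothing.

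\emph{The function $w$.} The half-space Dirichlet problem $w_t+(-1)^m\fL_0 w+\lambda w=f$ in $Q_{\kappa r}^+(X_0)$, with $w=Dw=\cdots=D^{m-1}w=0$ on the lateral boundary and vanishing initial data, is uniquely solvable in the natural $W^{1,2m}_q$ space with the $\lambda$-weighted a priori bound
\[
\sum_{|\alpha|\le 2m}\lambda^{1-\frac{|\alpha|}{2m}}\|D^\alpha w\|_{L_q(Q_{\kappa r}^+(X_0))}\le N\|f\|_{L_q(Q_{\kappa r}^+(X_0))},
\]
because $\fL_0$ has coefficients independent of $x$ and merely measurable in $t$; this is contained in the half-space results of \cite{MR2771670}. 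In particular $\|D_1^{2m}w\|_{L_q(Q_{\kappa r}^+(X_0))}\le N\|f\|_{L_q(Q_{\kappa r}^+(X_0))}$, and passing from the small cylinder to the big one,
\[
\big(|D_1^{2m}w|\big)_{Q_r^+(X_0)}\le\big(|D_1^{2m}w|^q\big)_{Q_r^+(X_0)}^{1/q}\le N\kappa^{\frac{d+2m}{q}}\big(|f|^q\big)_{Q_{\kappa r}^+(X_0)}^{1/q},
\]
which absorbs the contribution of $w$ to the oscillation of $D_1^{2m}u=D_1^{2m}v+D_1^{2m}w$.

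\emph{The function $v$.} It satisfies \eqref{eq2.12} in $Q_{\kappa r}^+(X_0)$ together with \eqref{eq1.36} on $Q_{\kappa r}(X_0)\cap\{x_1=0\}$. A parabolic dilation by the factor $\kappa r/4$ turns $Q_{\kappa r}^+(X_0)$ into $Q_4^+$ and $\lambda$ into $\tilde\lambda=(\kappa r/4)^{2m}\lambda$, so Lemma \ref{lem4.12} applied to the rescaled function, followed by scaling back and converting the $L_q$-norm on $Q_{\kappa r}^+$ into the corresponding average (the powers of $\kappa r$ and the volume factors cancel), yields
\[
[D_1^{2m}v]_{\cC^1(Q_{\kappa r/4}^+(X_0))}\le N(\kappa r)^{-1}\sum_{|\alpha|\le 2m}\lambda^{1-\frac{|\alpha|}{2m}}\big(|D^\alpha v|^q\big)_{Q_{\kappa r}^+(X_0)}^{1/q}.
\]
Since $\kappa\ge 64$ we have $Q_r^+(X_0)\subset Q_{\kappa r/4}^+(X_0)$, and the oscillation of a function over the parabolic cylinder $Q_r$ is at most $Nr$ times its $\cC^1$-seminorm, whence
\[
\big(|D_1^{2m}v-(D_1^{2m}v)_{Q_r^+(X_0)}|\big)_{Q_r^+(X_0)}\le N\kappa^{-1}\sum_{|\alpha|\le 2m}\lambda^{1-\frac{|\alpha|}{2m}}\big(|D^\alpha v|^q\big)_{Q_{\kappa r}^+(X_0)}^{1/q}.
\]
Writing $v=u-w$, bounding $\big(|D^\alpha v|^q\big)_{Q_{\kappa r}^+}^{1/q}\le\big(|D^\alpha u|^q\big)_{Q_{\kappa r}^+}^{1/q}+\big(|D^\alpha w|^q\big)_{Q_{\kappa r}^+}^{1/q}$, using the $w$-estimate for the last term, and combining with the $w$-contribution to the left-hand side, I obtain the asserted inequality, the outer average being over $Q_r^+(X_0)=Q_r(X_0)\cap\bR^{d+1}_+$.

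I do not expect a genuinely hard point here: the two substantive inputs, namely the pointwise Lipschitz bound of Lemma \ref{lem4.12} for the model operator $\fL_0$ and the $W^{1,2m}_q$-solvability of its half-space Dirichlet problem, are already available. The real work is the bookkeeping: tracking the powers of $\kappa r$ under the parabolic rescaling so that the $\lambda$-weights and the volume normalizations match, and carrying out the dichotomy between $X_0$ in the interior and $X_0$ near the boundary cleanly. This is entirely parallel to the corresponding step for \cite[Corollary 6]{MR2771670} and to Lemma \ref{lem0604_01} in the interior, so I would present it compactly.
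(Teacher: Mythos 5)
Your proposal is correct and is essentially the paper's own proof, which is stated in one line: split off the solution $w$ of the model Dirichlet problem using the half-space $W^{1,2m}_q$ estimates of \cite{MR2771670}, and apply the rescaled Lipschitz bound of Lemma \ref{lem4.12} to $v=u-w$, exactly as Lemma \ref{lem0528_01} and Lemma \ref{lem4.11} are derived. The only point to tighten is the reduction you already flag as bookkeeping: the dichotomy should be on $\operatorname{dist}(x_0,\partial\bR^d_+)\gtrless \kappa r/16$ (using the interior estimate Lemma \ref{lem0528_01} with ratio $\kappa/16$ in the first case), and after shifting $X_0$ to the boundary one should apply the scaled Lemma \ref{lem4.12} with inner radius $\kappa r/8$ and outer radius $\kappa r/2$ about the shifted center, since the cylinder $Q^+_{\kappa r}$ centered at the shifted point need not lie inside $Q^+_{\kappa r}(X_0)$ where the equation holds; this only changes harmless constants and is why $\kappa\ge 64$ is assumed.
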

\begin{proof}
Similar to Lemmas \ref{lem0528_01} and \ref{lem4.11}, the lemma is derived from Lemma \ref{lem4.12} and the $W^{1,2m}_q$ estimate in the half space.
\end{proof}

We are now ready to give the proof of Theorem \ref{thm4.4}.

\begin{proof}[Proof of Theorem \ref{thm4.4}]
Due to the arguments in the proofs of Proposition \ref{prop0609_1} and Theorem \ref{thm0607_1},
it suffices to prove
\begin{equation}
							\label{eq0604_03b}
\sum_{|\alpha| \le 2m} \lambda^{1-\frac{|\alpha|}{2m}} \| D^\alpha u \|_{L_{p, q, w}}
\le N \|f\|_{L_{p, q, w}},
\end{equation}
provided that the lower order coefficients are all zero, and $u$ vanishes outside $(t_1-(R_0R_1)^{2m}, t_1)\times \bR^d_+$
for some $t_1\in \bR$ and a small $R_1=R_1(d,m,\ell,\delta,p,q,d_1,d_2,K_0)\in (0,1)$.
As in the proof of Lemma \ref{lem0607_1}, from Lemma \ref{lem4.11},
for any $\kappa_1\ge 64$, we have
\begin{multline}
                                    \label{eq2.40}
\sum_{|\alpha| \le 2m, \alpha_1 < 2m} \lambda^{1-\frac{|\alpha|}{2m}}
\|D^\alpha u\|_{L_{p,q,w}}
\\
\le N \left(\kappa_1^{2m(1-\frac 1 {q_0})}R_1^{2m(1-\frac 1 {q_0})}+\kappa_1^{-\frac 1 2}\right) \sum_{|\alpha| \le 2m}
\lambda^{1-\frac{|\alpha|}{2m}}\|D^\alpha u \|_{L_{p,q,w}}\\
+  N \kappa_1^{\frac{d+2m}{q_0}} \|f\|_{L_{p,q,w}}
+N \kappa_1^{\frac{d+2m}{q_0}}\gamma^{\frac 1 {q_0\nu}} \|D^{2m}u\|_{L_{p,q,w}},
\end{multline}
where $L_{p,q,w}=L_{p,q,w}(\bR^{d+1}_+)$.
Next we move all the spatial derivatives except $A^{\check\alpha\check\alpha}D_1^{2m} u$ to the right-hand side of the system, and add $(-1)^m\sum_{j=2}^d D^{2m}_j u$ to both sides.
Similarly to \eqref{eq2.40}, from Lemma \ref{lem4.13}, we derive for any $\kappa_2\ge 64$,
\begin{multline}
                                    \label{eq2.43}
\|D_{1}^{2m} u\|_{L_{p,q,w}}
\le N \left(\kappa_2^{2m(1-\frac 1 {q_0})}R_1^{2m(1-\frac 1 {q_0})}+\kappa_2^{-1}\right) \sum_{|\alpha| \le 2m}
\lambda^{1-\frac{|\alpha|}{2m}}\|D^\alpha u\|_{L_{p,q,w}}\\
+  N \kappa_2^{\frac{d+2m}{q_0}}\Big( \|f\|_{L_{p,q,w}}
+\sum_{|\alpha|=2m,\alpha_1<2m}
\|D^\alpha u\|_{L_{p,q,w}}
+\gamma^{\frac 1 {q_0\nu}} \|D_1^{2m}u\|_{L_{p,q,w}}\Big).
\end{multline}
Combining \eqref{eq2.40} and \eqref{eq2.43}, we reach
\begin{align}
                                    \label{eq2.46}
&\sum_{|\alpha| \le 2m} \lambda^{1-\frac{|\alpha|}{2m}}
\|D^\alpha u\|_{L_{p,q,w}}\nonumber\\
&\le N \left(\kappa_2^{2m(1-\frac 1 {q_0})}R_1^{2m(1-\frac 1 {q_0})}+\kappa_2^{-1}\right)\sum_{|\alpha| \le 2m}
\lambda^{1-\frac{|\alpha|}{2m}}\|D^\alpha u\|_{L_{p,q,w}}\nonumber\\
&\quad +N\kappa_2^{\frac{d+2m}{q_0}}\left(\kappa_1^{2m(1-\frac 1 {q_0})}R_1^{2m(1-\frac 1 {q_0})}+\kappa_1^{-\frac 1 2}\right)\sum_{|\alpha| \le 2m}
\lambda^{1-\frac{|\alpha|}{2m}}\|D^\alpha u\|_{L_{p,q,w}}\nonumber\\
&+  N \kappa_2^{\frac{d+2m}{q_0}}(1+\kappa_1^{\frac{d+2m}{q_0}}) \|f\|_{L_{p,q,w}}
+N \kappa_2^{\frac{d+2m}{q_0}}(1+ \kappa_1^{\frac{d+2m}{q_0}})\gamma^{\frac 1 {q_0\nu}} \| D^{2m}u\|_{L_{p,q,w}}.
\end{align}
To see \eqref{eq0604_03b}, it suffices to first take $\kappa_2$ sufficiently large, then $\kappa_1$ sufficiently large, and finally $R_1$ and $\gamma$ sufficiently small in \eqref{eq2.46}, such that
\begin{multline*}
N \left(\kappa_2^{2m(1-\frac 1 q_0)}R_1^{2m(1-\frac 1 q_0)}+\kappa_2^{-1}\right)
+N\kappa_2^{\frac{d+2m}{q_0}}\left(\kappa_1^{2m(1-\frac 1 q_0)}R_1^{2m(1-\frac 1 q_0)}+\kappa_1^{-\frac 1 2}\right)\\
+N \kappa_2^{\frac{d+2m}{q_0}}(1+ \kappa_1^{\frac{d+2m}{q_0}})\gamma^{\frac 1 {q_0\nu}}\le 1/2.
\end{multline*}
The theorem is proved.
\end{proof}

\section{Second order parabolic equations in non-divergence form with measurable coefficients}
							\label{sec06}

In this section, we consider second order equations with partially BMO coefficients. Thus, all the involved coefficients and functions are real scalar-valued functions.
Throughout the section we use the notation in Section \ref{sec3} by setting $m=1$.
In particular,
$$
Q_r(t,x) = (t-r^2, t) \times B_r(x),
\quad
Q'_r(t,\hat x) = (t-r^2, t) \times B_r'(\hat x).
$$
Note that, for a multi-index $\alpha = (\alpha_1, \ldots, \alpha_d)$, if we write
$$
D^\alpha u, \quad 0\le|\alpha| \le 2, \quad \alpha_1 \le 1,
$$
it means one of the elements or the whole elements of the set
$$
\{u, D_1u, \ldots, D_du, D_{ij}u : (i,j) \ne (1,1)\}.
$$
Set
\begin{equation}
							\label{eq0609_01}
Lu = \sum_{i,j=1}^d a^{ij} D_{ij} u + b^i D_i u + c u.
\end{equation}

Throughout the section we impose the following assumptions on the coefficients.
\begin{enumerate}

\item[(i)] There exists $\delta \in (0,1)$ such that
$$
a^{ij} \xi_i \xi_j \ge \delta |\xi|^2,
\quad
|a^{ij}| \le \delta^{-1}
$$
for all $\xi \in \bR^d$.

\item[(ii)] $b^i$ and $c$ are measurable and bounded. In particular, there exists $K \in (0, \infty)$ such that
$$
|b^i| \le K,
\quad
|c| \le K.
$$

\end{enumerate}

We also assume one of the following regularity assumptions on the leading coefficients $a^{ij}$.

\begin{assumption}[$\gamma$]
							\label{assum0522_1}
Let $\gamma \in (0,1)$. There exists $R_0 \in (0,\infty)$ such that
$$
(a^{11})^{\#,1}_{R_0} + \sum_{(i,j) \ne (1,1)} (a^{ij})^{\#}_{R_0} \le \gamma.
$$
\end{assumption}					

\begin{assumption}[$\gamma$]
							\label{assum0522_2}
Let $\gamma \in (0,1)$. There exists $R_0 \in (0,\infty)$ such that
$$
(a^{11})^{\#,2}_{R_0} + \sum_{(i,j) \ne (1,1)} (a^{ij})^{\#}_{R_0} \le \gamma.
$$
\end{assumption}		

The following theorems are the main results of this section.
As in Section \ref{sec4}, note that in Theorems \ref{thm0522_1} and \ref{thm5.4} below the domains $\bR^{d+1}$ and $\bR \times \bR^d_+$, as product spaces, satisfy the conditions (a) and (b) before Corollary \ref{cor1}.

\begin{theorem}[The whole space case]
							\label{thm0522_1}
Let $p,q \in (1,\infty)$ and $K_0 \ge 1$ be constants, $w = w_1(x') w_2(t,x'')$, where
$$
w_1(x') \in A_p(\bR^{d_1}, dx'),
\quad
w_2(t,x'') \in A_q(\bR \times \bR^{d_2}, dx'' \, dt),
$$
$$
d_1 + d_2 = d,\quad
[w_1]_{A_p}\le K_0,\quad [w_2]_{A_q}\le K_0,
$$
and let $L$ be the operator in \eqref{eq0609_01}.
Then there exist
$$
\gamma = \gamma(d,\delta,p,q,d_1,d_2, K_0) \in (0,1),
$$
$$
\lambda_0 = \lambda_0(d,\delta,p,q,d_1,d_2,K_0, K, R_0) \ge 1,
$$
such that, under Assumption \ref{assum0522_1} ($\gamma$) or Assumption \ref{assum0522_2} ($\gamma$), the following holds true.
For $u \in W_{p,q,w}^{1,2}(\bR^{d+1})$ satisfying
\begin{equation}
							\label{eq0525_01}
- u_t + L u - \lambda u = f
\end{equation}
in $\bR^{d+1}$, where $f \in L_{p,q,w}(\bR^{d+1})$, we have
$$
\|u_t\|_{L_{p,w}}+ \lambda \|u\|_{L_{p,q,w}} + \sqrt{\lambda}\|Du\|_{L_{p,q,w}} + \|D^2 u\|_{L_{p,q,w}} \le N \|f\|_{L_{p,q,w}},
$$
provided that $\lambda \ge \lambda_0$, where $L_{p,w} = L_{p,w}(\bR^{d+1})$ and
$$
N = N(d,\delta,p,q,d_1,d_2, K_0, K, R_0).
$$
\end{theorem}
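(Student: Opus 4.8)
The plan is to mirror the argument of Section \ref{sec4}, with the higher-order interior estimates there replaced by interior estimates for second-order equations whose leading coefficients are merely measurable in the single spatial variable $x_1$ and depend on no other variable. As a first reduction I would move the lower-order terms $b^iD_iu+cu$ to the right-hand side, so that it suffices to treat $L_0u=a^{ij}D_{ij}u$; the contributions $\|D^\alpha u\|_{L_{p,q,w}}$, $|\alpha|\le 1$, produced this way are absorbed into the left-hand side once $\lambda\ge\lambda_0$ is large, exactly as in the proof of Theorem \ref{thm0607_1}, and the bound for $u_t$ then follows from $u_t=L_0u-\lambda u-f$. Thus the core task is the a priori estimate $\|D^2u\|_{L_{p,q,w}}+\sum_{|\alpha|\le 1}\lambda^{1-|\alpha|/2}\|D^\alpha u\|_{L_{p,q,w}}\le N\|f\|_{L_{p,q,w}}$ for the model operator, which I would first establish for $u$ vanishing outside a short time strip $(t_1-(R_0R_1)^2,t_1)\times\bR^d$ and then upgrade by a partition of unity in $t$ (as in Proposition \ref{prop0609_1}), the extra term $N\|u\|_{L_{p,q,w}}$ again being absorbed for $\lambda\ge\lambda_0$. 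Since $\mu(\bR^{d+1})=\infty$, the Fefferman--Stein step (Corollary \ref{cor2}) needs no support restriction; the thin-strip hypothesis enters only to control large cylinders, below.

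For the short-strip estimate I would, as in Lemma \ref{lem0607_1}, use a reverse H\"older inequality for $w_1$ and $w_2$ to fix $q_0,\mu,\nu\in(1,\infty)$ with $1/\mu+1/\nu=1$, $q_0\mu$ as in \eqref{eq0610_07} just above $1$, $w_1\in A_{p/q_0}(\bR^{d_1},dx')$, and $w_2\in A_{q/q_0}(\bR\times\bR^{d_2},dx''\,dt)$, so that (via Lemma \ref{lem0320_1} and Corollary \ref{cor1}) $u$, $D^\alpha u$, $f$ are locally in $L_{q_0\mu}$ and the relevant maximal functions $\cM(|D^\alpha u|^{q_0\mu})^{1/(q_0\mu)}$, $\cM(|f|^{q_0})^{1/q_0}$ are bounded on the mixed-norm space by $\|D^\alpha u\|_{L_{p,q,w}}$, $\|f\|_{L_{p,q,w}}$. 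On a parabolic cylinder $Q_r(X_0)$ with $r\le R_0/\kappa$ I would establish two mean oscillation estimates of the Lemma \ref{lem0604_01} type: one for the family $\{D^\alpha u:0\le|\alpha|\le 2,\ \alpha_1\le 1\}$, obtained by freezing the coefficients in the $(t,\hat x)$-variables and using Assumption \ref{assum0522_1} (resp.\ \ref{assum0522_2}) to replace $a^{ij}$ by a function of $x_1$ alone up to an error $\lesssim\gamma^{1/(q_0\nu)}(|D^2u|^{q_0\mu})^{1/(q_0\mu)}_{Q_{\kappa r}}$; and a separate one for $D_{11}u$, recovered algebraically from $a^{11}D_{11}u=f+u_t+\lambda u-\sum_{(i,j)\ne(1,1)}a^{ij}D_{ij}u$ together with the smallness of the oscillation of $a^{11}$ in $x_1$, just as in Lemma \ref{lem4.13}. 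Summing over the dyadic filtration via the analog of \eqref{eq0528_03}, passing to the dyadic sharp functions $(D^\alpha u)^{\#}_{\text{dy}}$ and using \eqref{eq8.52}, splitting into the cases $r>R_0/\kappa$ (where, because $u$ is supported in a strip of temporal width $(R_0R_1)^2$, the cylinder average of $D^\alpha u$ carries a gain $R_1^{2(1-1/q_0)}$, cf.\ \eqref{eq0605_10}) and $r\le R_0/\kappa$, then taking $L_{p,q,w}$-norms and applying Corollaries \ref{cor1} and \ref{cor2}, I would arrive at an inequality in which, after fixing $\kappa$ large and then $\gamma,R_1$ small, every term containing $D^\alpha u$ on the right is absorbed.

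The main obstacle is the $D_{11}u$ estimate: since $a^{11}$ has no regularity in $x_1$ it cannot be frozen, so the requisite interior Lipschitz/H\"older-type estimates for $D^\alpha u$ ($\alpha_1\le 1$) and for $D_{11}u$, together with the unweighted $W^{1,2}_q$-solvability for equations with coefficients measurable in one variable, must be imported from \cite{MR2300337, MR2833589} (see also the power-weight treatment in \cite{MR3318165}) rather than reproved. A secondary subtlety is the distinction between Assumptions \ref{assum0522_1} and \ref{assum0522_2}: under \ref{assum0522_2} the oscillation of $a^{11}$ is measured only in $(t,\hat x)$, so the comparison equation must keep the full $x_1$-dependence of the $a^{ij}$ while the remaining ingredients are frozen in $(t,\hat x)$, which forces the parabolically scaled partition $\bC_n$ and the corresponding version of \eqref{eq0528_03}. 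Finally, the half-space result (Theorem \ref{thm5.4}) would follow by the same scheme with the boundary pointwise estimates of \cite{MR2833589} in place of the interior ones and the boundary conditions propagated to $D_{\hat x}u$ and $D_{11}u$, in complete parallel with the way Theorem \ref{thm4.4} is derived from Theorem \ref{thm0607_1}.
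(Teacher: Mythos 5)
Your overall scheme (dropping the lower-order terms, proving the estimate first for $u$ supported in a thin time strip and then using a partition of unity in $t$, choosing $q_0,\mu$ via reverse H\"older, and combining two mean oscillation estimates through Corollaries \ref{cor1} and \ref{cor2} with separately tuned parameters) is the paper's scheme, and you correctly identify that the frozen-coefficient interior estimates and the unweighted $W^{1,2}_q$ theory must be imported from \cite{MR2300337, MR2833589}. Your first estimate, for the family $\{D^\alpha u:\alpha_1\le 1\}$, is indeed what the paper proves (Lemma \ref{lem0610_2} and Lemma \ref{lem0610_5}), with the minor correction that under Assumption \ref{assum0522_1} the coefficient $a^{11}$ is frozen to a function of $t$, and the off-diagonal $a^{ij}$ only to functions of $(t,x_1)$, not of $x_1$ alone.

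The genuine gap is your treatment of $D_{11}u$ under Assumption \ref{assum0522_2}. You propose a separate mean oscillation estimate for $D_{11}u$, ``recovered algebraically \dots together with the smallness of the oscillation of $a^{11}$ in $x_1$, just as in Lemma \ref{lem4.13}.'' Under Assumption \ref{assum0522_2} the oscillation of $a^{11}$ in $x_1$ is not small ($a^{11}$ is merely measurable in $x_1$), and no analog of Lemma \ref{lem4.13} exists there: for homogeneous solutions of the frozen equation $-v_t+\bar a^{11}(x_1)D_1^2 v+\Delta_{d-1}v-\lambda v=0$ one controls $v_t$ and $DD_{\hat x}v$, but $D_1^2 v=(\bar a^{11}(x_1))^{-1}(v_t+\lambda v-\Delta_{d-1}v)$ inherits the roughness of $(\bar a^{11})^{-1}$ in $x_1$, so its oscillation over $Q_r$ does not decay and the small factor $\kappa^{-1}$ needed for absorption is unavailable. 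Moreover your algebraic identity involves $u_t$, and you never explain how $u_t$ is controlled; in the general case the coefficients depend on $t$, so $u_t$ admits no good oscillation estimate either. The paper's mechanism is different: when the frozen coefficients depend only on $x_1$, the oscillation estimate is proved for $u_t$ together with $D^\alpha u$, $\alpha_1\le1$ (Lemmas \ref{lem0610_1} and \ref{lem0610_3}), and $\|D_1^2u\|_{L_{p,q,w}}$ is recovered algebraically at the norm level (Lemma \ref{lem0610_4}); in the general case one rewrites the equation as $-u_t+a^{11}D_1^2u+\Delta_{d-1}u=\bar f$ with $\bar f=f+\Delta_{d-1}u-\sum_{(i,j)\ne(1,1)}a^{ij}D_{ij}u$, observes that $a^{11}D_1^2+\Delta_{d-1}$ satisfies Assumption \ref{assum0528_2} or \ref{assum0528_1}, applies the already established weighted estimates (Lemma \ref{lem0607_1} with $m=\ell=1$, or Lemma \ref{lem0610_4}) to this model equation, and then absorbs the mixed second derivatives using the first estimate (Lemma \ref{lem0610_5}). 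Under Assumption \ref{assum0522_1} your route does work, since there the model operator has small BMO in all spatial variables and a Lemma \ref{lem4.13}-type interior estimate for $D_{11}u$ is available; but as written the proposal does not yield the theorem in the Assumption \ref{assum0522_2} case.
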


\begin{theorem}[The half space case]
                    \label{thm5.4}
The result in Theorem \ref{thm0522_1} still holds if we replace $\bR^{d}$ by $\bR^{d}_+$ (i.e., replace $\bR^{d_1}$ by $\bR^{d_1}_+$ when $d_1 \ge 1$ or replace $\bR^{d_2}$ by $\bR^{d_2}_+$ when $d_1 = 0$) and impose the Dirichlet boundary condition $u=0$ or the Neumann boundary condition $D_1u=0$
on the lateral boundary of the cylindrical domain.
\end{theorem}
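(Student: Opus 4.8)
The plan is to adapt the scheme of the proof of Theorem \ref{thm4.4} to the second order setting, replacing the boundary estimates for higher order systems by the corresponding ones for the Dirichlet and Neumann problems. As in the proofs of Proposition \ref{prop0609_1} and Theorem \ref{thm0607_1}, a partition of unity in the time variable together with the absorption of the lower order terms into the left-hand side (the latter forcing $\lambda \ge \lambda_0$) reduces the theorem to the a priori bound
\begin{equation*}
\lambda\|u\|_{L_{p,q,w}} + \sqrt\lambda\,\|Du\|_{L_{p,q,w}} + \|D^2 u\|_{L_{p,q,w}} \le N\|f\|_{L_{p,q,w}}
\end{equation*}
for solutions with zero lower order coefficients that vanish outside a short cylinder $(t_1-(R_0R_1)^2,t_1)\times\bR^d_+$ with $R_1$ small; the bound for $u_t$ is then read off from equation \eqref{eq0525_01}. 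Throughout, $\bR\times\bR^d_+$ is viewed, for the product of the weights and the mixed norm, as a product space in the sense described before Corollary \ref{cor1}, and we use the dyadic filtration from Section \ref{sec4} with $m=1$ restricted to $\bR\times\bR^d_+$.

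The core is a boundary mean oscillation estimate on half-cylinders $Q_r^+(X_0)$, $r\le R_0/\kappa$. Freezing the leading coefficients as in Lemma \ref{lem0604_01} (so that $a^{ij}=a^{ij}(t)$, or $a^{11}=a^{11}(t,x_1)$ under Assumption \ref{assum0522_2}), and invoking the unweighted $W^{1,2}_q$-estimates and solvability for the Dirichlet and Neumann problems from \cite{MR2300337, MR2833589} (and \cite{MR2771670} for the method) together with the corresponding boundary H\"older estimates, one obtains for dyadic cubes $Q^n$ near $\{x_1=0\}$, with $q_0\mu$ to be chosen later,
\begin{equation*}
\sum_{|\alpha|\le 2}\lambda^{1-\frac{|\alpha|}{2}}\dashint_{Q^n}\big|D^\alpha u-(D^\alpha u)_{|n}\big|\,dY \le N\kappa^{-1/2}\sum_{|\alpha|\le 2}\lambda^{1-\frac{|\alpha|}{2}}\big(|D^\alpha u|^{q_0}\big)^{1/q_0}_{Q^+_{\kappa r}}+N\kappa^{\sigma}\big(|f|^{q_0}\big)^{1/q_0}_{Q^+_{\kappa r}}+N\kappa^{\sigma}\gamma^{\frac{1}{q_0\nu}}\big(|D^2u|^{q_0\mu}\big)^{1/(q_0\mu)}_{Q^+_{\kappa r}},
\end{equation*}
and the analogous interior bound for cubes away from the boundary; here $\sigma=\sigma(d)$, $1/\mu+1/\nu=1$, and the perturbation $(\bar A-A)D^2u$ is absorbed by H\"older's inequality and Assumption \ref{assum0522_1} or \ref{assum0522_2}. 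The tangential second derivatives $D_{ij}u$, $(i,j)\ne(1,1)$, and the first derivatives are handled directly through the frozen operator, while the double normal derivative $D_{11}u$ is estimated through the model operator $\fL_0u=a^{11}D_{11}u+\sum_{j=2}^d D_{jj}u$, in the way Lemmas \ref{lem4.12} and \ref{lem4.13} are used in the higher order case.

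The passage of the boundary condition to the derivatives of $u$ is where the two cases differ. For the Dirichlet condition $u=0$ on $\{x_1=0\}$, each tangential derivative $D_ju$, $j\ge 2$, satisfies the same frozen equation with the same Dirichlet condition, which yields the estimates for all $D^\alpha u$ with $\alpha_1\le 1$; moreover, for the homogeneous model equation $D_{11}u$ vanishes on $\{x_1=0\}$ (there $u_t=\lambda u=\sum_{j\ge2}D_{jj}u=0$), so $D_{11}u$ itself solves a homogeneous Dirichlet model problem and inherits a boundary H\"older bound. For the Neumann condition $D_1u=0$ on $\{x_1=0\}$, the function $D_1u$ solves the frozen equation with a Dirichlet condition, so applying the boundary H\"older estimate to $D_1u$ in place of $u$ produces the estimates for $D_{1j}u$, $1\le j\le d$, including $D_{11}u$, while each $D_ju$, $j\ge2$, satisfies a Neumann condition and supplies the remaining derivatives. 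In both cases one ends, as in \eqref{eq2.40}--\eqref{eq2.46}, with two scale parameters $\kappa_1$ (for the tangential estimates) and $\kappa_2$ (for $D_{11}u$), coupled through the mean oscillation estimates.

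Finally, I would translate the mean oscillation estimates into pointwise bounds on the dyadic sharp functions $(D^\alpha u)^{\#}_{\operatorname{dy}}$ by Hardy--Littlewood maximal functions of $|D^\alpha u|^{q_0}$, $|f|^{q_0}$, and $|D^2u|^{q_0\mu}$, take $L_{p,q,w}$-norms, and apply Corollary \ref{cor2} together with Corollary \ref{cor1}; as in Lemma \ref{lem0607_1}, $q_0$ and $\mu$ are fixed via the reverse H\"older inequality for $A_p$ weights so that $w_1\in A_{p/(q_0\mu)}(\bR^{d_1}_+,dx')$ and $w_2\in A_{q/(q_0\mu)}(\bR\times\bR^{d_2},dx''\,dt)$, and the smallness hypothesis of Corollary \ref{cor2} is met since $u$ is supported in a short time interval with $R_1$ small. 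The resulting inequality takes the form
\begin{equation*}
\sum_{|\alpha|\le 2}\lambda^{1-\frac{|\alpha|}{2}}\|D^\alpha u\|_{L_{p,q,w}}\le N\big(\kappa_1^{c}R_1^{c}+\kappa_1^{-1/2}+\kappa_2^{-1}+\kappa_2^{c}\gamma^{\frac{1}{q_0\nu}}\big)\sum_{|\alpha|\le 2}\lambda^{1-\frac{|\alpha|}{2}}\|D^\alpha u\|_{L_{p,q,w}}+N\|f\|_{L_{p,q,w}},
\end{equation*}
so one closes the argument by choosing $\kappa_2$, then $\kappa_1$, large and $R_1,\gamma$ small, and then estimates $u_t$ from \eqref{eq0525_01}. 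The main obstacle, as in \cite{MR2771670}, is the boundary estimate for the double normal derivative $D_{11}u$; the additional point here is to carry the Neumann condition correctly through the reduction --- verifying that $D_1u$ genuinely solves a Dirichlet problem for the frozen and model operators --- while keeping track of the coupling between $\kappa_1$ and $\kappa_2$.
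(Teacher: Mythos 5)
Your proposal misses the paper's (much shorter) argument and, as written, contains a genuine gap in the boundary reductions. The paper proves Theorem \ref{thm5.4} by reflection: take even extensions across $\{x_1=0\}$ of $w_1$ (or $w_2$), $a^{11}$, $a^{ij}$, $b^i$, $c$ for $i,j\ge 2$, odd extensions of $a^{1j}$, $a^{j1}$ ($j\ge 2$) and $b^1$, and then the odd extension of $u,f$ for the Dirichlet condition (even for Neumann). Because the assumptions impose no regularity on the coefficients in $x_1$, the extended coefficients still satisfy Assumption \ref{assum0522_1} or \ref{assum0522_2} in the whole space, the extended $u$ solves \eqref{eq0525_01} in $\bR^{d+1}$, and Theorem \ref{thm0522_1} applies directly; the half-space norms are comparable to the whole-space ones. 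This is exactly why the half space must be $\{x_1>0\}$ (see the remark after Theorem \ref{thm5.4}) and why no new boundary mean oscillation estimates are needed at all.

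The concrete problem with your direct approach is in the passage of the boundary condition through the frozen/model operators. After freezing, the coefficients are $a^{ij}=a^{ij}(t,x_1)$ for $(i,j)\ne(1,1)$ and $a^{11}=a^{11}(t)$ or $a^{11}(x_1)$, i.e., they remain merely measurable in the normal variable $x_1$. Consequently your Neumann reduction ``$D_1u$ solves the frozen equation with a Dirichlet condition'' is false: one cannot differentiate the equation in $x_1$ when the coefficients depend (non-smoothly) on $x_1$, and the same objection applies to the Dirichlet-case claim that $D_{11}u$ solves the homogeneous model problem when $a^{11}=a^{11}(x_1)$. Moreover, the boundary $W^{1,2}_q$ and boundary H\"older estimates you invoke for the Dirichlet and, especially, the Neumann half-space problems with coefficients measurable in the normal direction are not what \cite{MR2300337, MR2833589} supply (those give whole-space/interior estimates, and the Dirichlet half-space case in the literature is itself obtained by odd reflection); so the key boundary mean oscillation estimate underlying your scheme is unproved. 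The natural repair of these steps is precisely the even/odd extension, at which point the whole elaborate half-space machinery (the $\kappa_1,\kappa_2$ coupling, the model operator $\fL_0$, the boundary sharp-function estimates) becomes unnecessary: one simply quotes Theorem \ref{thm0522_1}. By contrast, in Theorem \ref{thm4.4} the reflection is unavailable because there the coefficients must have small mean oscillation in all spatial variables, which is why the direct boundary analysis is carried out there but not here.
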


\begin{remark}
The domain in Theorem \ref{thm5.4} is fixed as $\bR \times \bR^d_+ = \{(t,x): x_1 > 0\}$, whereas the domain  in Theorem \ref{thm4.4} can be a half space $\{(t,x) \in \bR^{d+1}: x_k > 0\}$ for any $k = 1, \ldots, d$.
This is because the coefficients in Theorem \ref{thm4.4} have regularity in all the spatial variables, but the coefficients $a^{ij}$ in Theorem \ref{thm5.4} are only measurable (no regularity assumptions) with respect to $x_1$.
\end{remark}

\begin{remark}
							\label{rem0811}
It is worth noting that Theorem \ref{thm5.4} generalizes a recent result in \cite{DKZ14} on weighted $L_p$ estimates for second order parabolic equations in the half space $\bR \times \bR^d_+$ with the Neumann boundary condition, where the leading coefficients are assumed to satisfy the condition imposed in Section \ref{sec4} (i.e., they have small mean oscillations in all the spatial variables) and the weight $w(t,x)=x_1^{\theta-d}$ for some $\theta\in (d-1,d-1+p)$. It is easily seen that $w$ is an $A_p$ weight in the half space. Therefore, one can apply  Theorem \ref{thm5.4} to get the same result for equations with more general coefficients.
\end{remark}

To prove Theorem \ref{thm0522_1}, we start with the following mean oscillation estimate for equations with leading coefficients depending only on $x_1$.

\begin{lemma}
							\label{lem0610_1}
Let $\lambda \ge 0$, $q \in (1,\infty)$, $\kappa \ge 8$, and $L$ be the operator in \eqref{eq0609_01}.
Suppose that
$$
a^{ij} = a^{ij}(x_1),
\quad b^i = c = 0.
$$
Then, for any $r \in (0,\infty)$, $X_0 \in \bR^{d+1}$, and $u \in W_{q,\operatorname{loc}}^{1,2}(\bR^{d+1})$ satisfying \eqref{eq0525_01} in $Q_{\kappa r}(X_0)$, where $f \in L_{q,\operatorname{loc}}(\bR^{d+1})$, we have
\begin{multline*}
				%			\label{eq0610_01}
\left( |u_t - \left(u_t\right)_{Q_r(X_0)} |\right)_{Q_r(X_0)}
+ \sum_{|\alpha| \le 2, \alpha_1 \le 1} \lambda^{1-\frac{|\alpha|}{2}} \left( |D^\alpha u - \left(D^\alpha u\right)_{Q_r(X_0)} |\right)_{Q_r(X_0)}
\\
\le N \kappa^{-1} \sum_{|\alpha| \le 2} \lambda^{1-\frac{|\alpha|}{2}}\left( |D^\alpha u|^q \right)_{Q_{\kappa r}(X_0)}^{\frac 1 q} +  N \kappa^{\frac{d+2}{q}} \left( |f|^q\right)_{Q_{\kappa r}(X_0)}^{\frac 1 q},
\end{multline*}
where $N = N(d,\delta,q)$.
\end{lemma}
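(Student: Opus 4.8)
The plan is to follow the two–step scheme already used for Lemma \ref{lem0528_01} and for \cite[Corollary 2]{MR2771670}: first peel off the inhomogeneous term $f$ by the unweighted $W^{1,2}_q$–solvability theory for operators whose leading coefficients depend only on $x_1$, and then apply an interior estimate for the homogeneous equation, which here plays the role of Lemma \ref{lem4.4}.

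\emph{Removing $f$.} Fix $X_0$, $r>0$, and $\kappa\ge 8$. By the solvability and a priori $W^{1,2}_q$–estimate for $-\,\partial_t+L-\lambda$ with $a^{ij}=a^{ij}(x_1)$ and $b^i=c=0$ (see \cite{MR2300337, MR2833589}), there is $w\in W^{1,2}_q(\bR^{d+1})$ with $-w_t+Lw-\lambda w=f\,I_{Q_{\kappa r/2}(X_0)}$ in $\bR^{d+1}$ and
\[
\|w_t\|_{L_q(\bR^{d+1})}+\sum_{|\alpha|\le 2}\lambda^{1-\frac{|\alpha|}2}\|D^\alpha w\|_{L_q(\bR^{d+1})}\le N\,\|f\|_{L_q(Q_{\kappa r/2}(X_0))},
\]
with $N=N(d,\delta,q)$ independent of $\lambda\ge 0$. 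Since $Q_r(X_0)\subset Q_{\kappa r/2}(X_0)\subset Q_{\kappa r}(X_0)$ and $|Q_{\kappa r/2}(X_0)|/|Q_r(X_0)|\le N(d)\kappa^{d+2}$, dividing by $|Q_r(X_0)|^{1/q}$ gives
\[
\bigl(|w_t|^q\bigr)_{Q_r(X_0)}^{1/q}+\sum_{|\alpha|\le 2}\lambda^{1-\frac{|\alpha|}2}\bigl(|D^\alpha w|^q\bigr)_{Q_r(X_0)}^{1/q}\le N\kappa^{\frac{d+2}q}\bigl(|f|^q\bigr)_{Q_{\kappa r}(X_0)}^{1/q},
\]
and, since the mean oscillation of a function on $Q_r(X_0)$ is at most twice its $L_q$–average there, the whole contribution of $w$ to the left–hand side of the asserted inequality is dominated by the last displayed quantity.

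\emph{The homogeneous part.} Put $v:=u-w$; then $-v_t+Lv-\lambda v=0$ in $Q_{\kappa r/2}(X_0)$, so it remains to prove the asserted inequality with $f=0$ and $u$ replaced by $v$, i.e.\ an interior estimate for solutions of the homogeneous equation. By mollifying the coefficients in $x_1$ (cf.\ Remark \ref{rem0713_1}) we may assume $v$ is smooth, and by S.\ Agmon's device (passing to $v(t,x)\cos(\sqrt\lambda\,x_{d+1})$, which solves an equation of the same type with one more space variable and $\lambda=0$) we may assume $\lambda=0$. The point is then that, since $a^{ij}=a^{ij}(x_1)$ is independent of $t$ and of $\hat x=(x_2,\dots,x_d)$, the functions $v_t$ and $D_{\hat x}v$ solve the same homogeneous equation; iterating this and using the local $W^{1,2}_q$–estimate controls all $(t,\hat x)$–derivatives of $v$ on interior cylinders, while $D_{11}v$ is recovered algebraically from the equation,
\[
a^{11}(x_1)\,D_{11}v=v_t-\sum_{(i,j)\ne(1,1)}a^{ij}(x_1)\,D_{ij}v ,
\]
and is therefore controlled in $L_q$ by $v_t$ and the second derivatives other than $D_{11}v$. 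Combining interior gradient bounds for the \emph{good} derivatives (which are solutions, hence Lipschitz in $(t,\hat x)$) with a Poincar\'e inequality in the $x_1$–variable applied to $D^\alpha v$ ($\alpha_1\le 1$) and to $v_t$, and rescaling $Q_r(X_0)$ against $Q_{\kappa r/2}(X_0)$, one obtains
\[
\bigl(|v_t-(v_t)_{Q_r(X_0)}|\bigr)_{Q_r(X_0)}+\sum_{|\alpha|\le 2,\ \alpha_1\le 1}\lambda^{1-\frac{|\alpha|}2}\bigl(|D^\alpha v-(D^\alpha v)_{Q_r(X_0)}|\bigr)_{Q_r(X_0)}\le N\kappa^{-1}\sum_{|\alpha|\le 2}\lambda^{1-\frac{|\alpha|}2}\bigl(|D^\alpha v|^q\bigr)_{Q_{\kappa r/2}(X_0)}^{1/q},
\]
with $N=N(d,\delta,q)$; this is precisely the analogue of Lemma \ref{lem4.4} for coefficients measurable in $x_1$, as established (for $q=2$, and for general $q$ via the $W^{1,2}_q$–theory) in \cite{MR2300337, MR2833589}.

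\emph{Conclusion and the main difficulty.} On $Q_{\kappa r/2}(X_0)\subset Q_{\kappa r}(X_0)$ we bound $\bigl(|D^\alpha v|^q\bigr)_{Q_{\kappa r/2}(X_0)}^{1/q}\le N\bigl(|D^\alpha u|^q\bigr)_{Q_{\kappa r}(X_0)}^{1/q}+N\bigl(|D^\alpha w|^q\bigr)_{Q_{\kappa r}(X_0)}^{1/q}$, and, applying the $W^{1,2}_q$–estimate of the first step once more (now comparing against $Q_{\kappa r}(X_0)$), the $w$–terms are absorbed into $N\kappa^{\frac{d+2}q}\bigl(|f|^q\bigr)_{Q_{\kappa r}(X_0)}^{1/q}$. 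Adding the bounds for $v$ and $w$ yields the lemma. The only delicate point is the interior estimate for $v$: because $a^{ij}$ is merely measurable in $x_1$, no Schauder– or Lipschitz–type bound is available in that direction, so one must exploit that $v_t$ and $D_{\hat x}v$ solve the same equation and that $D_{11}v$ is recovered from the equation, and keep careful track of the powers of $\kappa$ and of the $\lambda$–weights (the latter forcing the use of Agmon's trick rather than a bare Poincar\'e argument); this is exactly the content of the estimates we quote from \cite{MR2300337, MR2833589}.
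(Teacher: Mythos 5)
Your proposal is correct and follows essentially the same route as the paper: reduce to an interior estimate for the homogeneous equation (splitting off $f$ via the $W^{1,2}_q$ theory for coefficients measurable in $x_1$ as in Lemma \ref{lem0528_01}, and handling $\lambda$ by Agmon's idea), then exploit that $u_t$ and $D_{\hat x}u$ satisfy the same equation, recover $D_{11}u$ algebraically from the equation, and conclude with a Poincar\'e-type inequality and scaling, which is exactly the paper's proof of \eqref{eq9.58}. The only cosmetic difference is that you solve the auxiliary problem globally in $\bR^{d+1}$ with $\lambda\ge 0$ (for $\lambda=0$ one should solve on a cylinder $(t_0-(\kappa r)^2,t_0)\times\bR^d$ or with $\lambda>0$, a standard adjustment), and you attribute the interior Lipschitz estimate to \cite{MR2300337, MR2833589} whereas the paper derives it from the $W^{1,2}_q$ estimates by the bootstrap you sketch.
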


\begin{proof}
As in the proof of Lemma \ref{lem0528_01},
we only need to prove the following interior H\"older estimate
\begin{equation}
                            \label{eq9.58}
[u_t]_{\cC^1(Q_1)}+[DD_{\hat{x}} u]_{\cC^1(Q_1)}\le N \|D^2 u\|_{L_q(Q_4)},
\end{equation}
when a sufficiently smooth $u$ satisfies \eqref{eq0525_01} in $Q_4$ with $\lambda=0$ and $f=0$. To this end, we first recall the $W^{1,2}_q$-estimates established in \cite{MR2300337, MR2833589} for parabolic equations with partially BMO coefficients, which, in particular, can be applied to the equation considered in this lemma. Using the $W_q^{1,2}$-estimates together with a localization and bootstrap argument yields
\begin{equation}
                                \label{eq10.12}
\|u\|_{W^{1,2}_p(Q_1)}\le N\|u\|_{L_q(Q_2)}
\end{equation}
for any $p\in (1,\infty)$. By the parabolic Sobolev imbedding theorem, by choosing a sufficiently large $p>d+2$, we see that
\begin{equation}
                        \label{eq10.13}
\|u\|_{L_\infty(Q_1)}+\|Du\|_{L_\infty(Q_1)}\le N\|u\|_{L_q(Q_2)}
\end{equation}
Since $u_t$ and $D_{\hat{x}}u$ satisfy the same equation as $u$, by \eqref{eq10.13} with $u_t$ and $D_{\hat{x}}u$ in place of $u$ and using \eqref{eq10.12} with a scaling, we reach
\begin{equation}
                        \label{eq10.14}
\|u_t\|_{L_\infty(Q_1)}+\|DD_{\hat{x}}u\|_{L_\infty(Q_1)}\le N\|u_t\|_{L_q(Q_2)}+N\|D_{\hat{x}}u\|_{L_q(Q_2)}\le N\|u\|_{L_q(Q_4)}.
\end{equation}
It follows from the equation that $a^{11}D_{11}u=u_t-\sum_{(i,j)\neq (1,1)}a^{ij}D_{ij}u$. Therefore, from \eqref{eq10.14} as well as \eqref{eq10.13},
\begin{equation}
                        \label{eq10.17}
\|u_t\|_{L_\infty(Q_1)}+\|Du\|_{L_\infty(Q_1)}+\|D^2u\|_{L_\infty(Q_1)}\le N\|u\|_{L_q(Q_4)}.
\end{equation}
Again with $u_t$ and $D_{\hat{x}}u$ in place of $u$, it follows easily from \eqref{eq10.17} that
\begin{equation}
                            \label{eq9.58b}
[u_t]_{\cC^1(Q_1)}+[DD_{\hat{x}} u]_{\cC^1(Q_1)}\le \|u\|_{L_q(Q_4)}.
\end{equation}
To conclude \eqref{eq9.58}, it suffices to replace $u$ by $u-(u)_{Q_4}-x_i(D_iu)_{Q_4}$ in \eqref{eq9.58b} and then apply a parabolic Poincar\'e type inequality \cite[Lemma 4.2.2]{MR2435520}. The lemma is proved.
\end{proof}

The following assumption reads that the coefficients are merely measurable in $x_1$ and have small mean oscillations with respect to $(t,x')$ in small cylinders.

\begin{assumption}[$\gamma$]
							\label{assum0528_1}
Let $\gamma \in (0,1)$. There exists $R_0 \in (0,\infty)$ such that
$$
\sum_{i,j=1}^n (a^{ij})^{\#,2}_{R_0} \le \gamma.
$$
\end{assumption}				

\begin{lemma}
							\label{lem0610_3}
Let $\lambda \ge 0$, $q \in (1,\infty)$, $\kappa \ge 8$, $\mu, \nu \in (1,\infty)$, $1/\mu+1/\nu = 1$, and $L$ be the operator in \eqref{eq0609_01}.
Suppose that $b^i = c = 0$.
Then, under Assumption \ref{assum0528_1} ($\gamma$), for any $r \in (0,R_0/\kappa]$,  $X_0 \in \bR^{d+1}$, and $u \in W_{{q\mu},\operatorname{loc}}^{1,2}(\bR^{d+1})$ satisfying \eqref{eq0525_01} in $Q_{\kappa r}(X_0)$, where $f \in L_{q,\operatorname{loc}}(\bR^{d+1})$, we have
\begin{multline*}
				%			\label{eq0610_05}
\left( |u_t - \left(u_t\right)_{Q_r(X_0)} |\right)_{Q_r(X_0)}
+ \sum_{|\alpha| \le 2, \alpha_1 \le 1} \lambda^{1-\frac{|\alpha|}{2}} \left( |D^\alpha u - \left(D^\alpha u\right)_{Q_r(X_0)} |\right)_{Q_r(X_0)}
\\
\le N \kappa^{-1} \sum_{|\alpha| \le 2} \lambda^{1-\frac{|\alpha|}{2}}\left( |D^\alpha u|^q \right)_{Q_{\kappa r}(X_0)}^{\frac 1 q} +  N \kappa^{\frac{d+2}{q}} \left( |f|^q\right)_{Q_{\kappa r}(X_0)}^{\frac 1 q}
\\
+ N \kappa^{\frac{d+2}{q}} \gamma^{\frac{1}{q\nu}} \left( |D^2 u|^{q\mu}\right)_{Q_{\kappa r}(X_0)}^{\frac{1}{q\mu}},
\end{multline*}
where $N = N(d,\delta,q,\mu)$.
\end{lemma}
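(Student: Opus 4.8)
The plan is to mimic the proof of Lemma~\ref{lem0604_01}, the only new feature being that we freeze the leading coefficients only in the variables $(t,\hat x)$ and keep their dependence on $x_1$, so that the resulting operator still falls within the scope of Lemma~\ref{lem0610_1}. Concretely, write $X_0=(t_0,x_{0,1},\hat x_0)$, and for $r\in(0,R_0/\kappa]$ set
\begin{equation*}
\bar a^{ij}(x_1)=\dashint_{Q'_{\kappa r}(t_0,\hat x_0)} a^{ij}(s,x_1,\hat y)\,ds\,d\hat y,
\qquad
L_0 u=\sum_{i,j=1}^d \bar a^{ij}(x_1)\,D_{ij}u.
\end{equation*}
Since $|a^{ij}|\le\delta^{-1}$ and $a^{ij}\xi_i\xi_j\ge\delta|\xi|^2$ pointwise, the same bounds hold for $\bar a^{ij}$, so $L_0$ has coefficients depending only on $x_1$ and satisfies the hypotheses of Lemma~\ref{lem0610_1}. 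Moreover $u$ solves $-u_t+L_0u-\lambda u=\bar f$ in $Q_{\kappa r}(X_0)$ with $\bar f:=f+(L_0-L)u$, and $\bar f\in L_{q,\operatorname{loc}}(\bR^{d+1})$ because $u\in W^{1,2}_{q\mu,\operatorname{loc}}\subset W^{1,2}_{q,\operatorname{loc}}$. Applying Lemma~\ref{lem0610_1} to $u$ with $\bar f$ in place of $f$, and bounding $(|\bar f|^q)^{1/q}_{Q_{\kappa r}(X_0)}\le(|f|^q)^{1/q}_{Q_{\kappa r}(X_0)}+(|(L_0-L)u|^q)^{1/q}_{Q_{\kappa r}(X_0)}$, reduces the proof to the commutator estimate $(|(L_0-L)u|^q)^{1/q}_{Q_{\kappa r}(X_0)}\le N\gamma^{1/(q\nu)}(|D^2u|^{q\mu})^{1/(q\mu)}_{Q_{\kappa r}(X_0)}$ with $N=N(d,\delta,q,\mu)$.

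For that estimate, Hölder's inequality with the conjugate exponents $\mu$ and $\nu$ gives
\begin{equation*}
\left(|(L_0-L)u|^q\right)^{1/q}_{Q_{\kappa r}(X_0)}
\le N\sum_{i,j}\left(|\bar a^{ij}-a^{ij}|^{q\nu}\right)^{1/(q\nu)}_{Q_{\kappa r}(X_0)}
\left(|D^2u|^{q\mu}\right)^{1/(q\mu)}_{Q_{\kappa r}(X_0)}.
\end{equation*}
Since $|\bar a^{ij}-a^{ij}|\le 2\delta^{-1}$ and $\kappa r\le R_0$, interpolating the $L^{q\nu}$ average against the $L^1$ average, together with the definition of $(\cdot)^{\#,2}_{R_0}$ (using Fubini's theorem and the standard comparison between the cylinder $Q_{\kappa r}(X_0)$ and a product cylinder of comparable size) and Assumption~\ref{assum0528_1}~$(\gamma)$, yields
\begin{equation*}
\left(|\bar a^{ij}-a^{ij}|^{q\nu}\right)_{Q_{\kappa r}(X_0)}
\le (2\delta^{-1})^{q\nu-1}\,N\,\operatorname{osc}_2\!\left(a^{ij},Q_{\kappa r}(X_0)\right)
\le N\gamma,
\end{equation*}
hence $(|\bar a^{ij}-a^{ij}|^{q\nu})^{1/(q\nu)}_{Q_{\kappa r}(X_0)}\le N(d,\delta,q,\mu)\gamma^{1/(q\nu)}$. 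Inserting this into the previous display and then into the output of Lemma~\ref{lem0610_1} produces the claimed inequality.

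I do not expect a genuine obstacle: the argument is a routine adaptation of the proof of Lemma~\ref{lem0604_01}, and the only point requiring care is to freeze the coefficients in $(t,\hat x)$ rather than in all spatial variables, so that the frozen operator $L_0$ retains its $x_1$-dependence and the interior estimate of Lemma~\ref{lem0610_1} applies directly. The one minor technical check is that averaging $a^{ij}$ also over $x_1$ in $\dashint_{Q_{\kappa r}(X_0)}|\bar a^{ij}-a^{ij}|$ is still dominated by $\operatorname{osc}_2(a^{ij},Q_{\kappa r}(X_0))$, which is immediate from Fubini's theorem and the definition of the partial mean oscillation $[\,\cdot\,]_{Q'_{\kappa r}}$.
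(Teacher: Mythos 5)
Your proposal is correct and follows essentially the same route as the paper: the paper's proof simply says to apply Lemma \ref{lem0610_1} after repeating the derivation of Lemma \ref{lem0604_01} from Lemma \ref{lem0528_01}, which is exactly your argument of freezing the coefficients in $(t,\hat x)$ (keeping the $x_1$-dependence) and controlling the commutator $(L_0-L)u$ by H\"older's inequality, the boundedness of $a^{ij}$, and Assumption \ref{assum0528_1} ($\gamma$) via $\operatorname{osc}_2$. The technical points you flag (ellipticity of the averaged coefficients, $\bar f\in L_{q,\operatorname{loc}}$, and domination of the averaged oscillation by $\operatorname{osc}_2$ using Fubini) are all handled as in the paper.
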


\begin{proof}
We use Lemma \ref{lem0610_1} and follow the same steps as we derive Lemma \ref{lem0604_01} from Lemma \ref{lem0528_01}.
\end{proof}

\begin{lemma}
							\label{lem0610_4}
Let $\lambda \ge 0$, $K_0 \ge 1$, $p, q \in (1,\infty)$, $t_1 \in \bR$, $w = w_1(x') w_2(t,x'')$, where
$$
w_1(x') \in A_p(\bR^{d_1}, dx'),
\quad
w_2(t,x'') \in A_q(\bR \times \bR^{d_2}, dx'' \, dt),
$$
$$
d_1 + d_2 = d,\quad [w_1]_{A_p}\le K_0,\quad [w_2]_{A_q}\le K_0,
$$
and let $L$ be the operator in \eqref{eq0609_01}.
Suppose that $b^i = c = 0$.
Then there exist constants
$$
\begin{aligned}
&\gamma = \gamma(d, \delta, p, q, d_1, d_2, K_0) \in (0,1),
\\
&R_1 = R_1(d, \delta, p, q, d_1, d_2, K_0) \in (0,1),
\end{aligned}
$$
such that, under Assumption \ref{assum0528_1} ($\gamma$),
for $u \in W_{p,q,w}^{1,2}(\bR^{d+1})$ vanishing outside $\left(t_1 - (R_0R_1)^2, t_1\right) \times \bR^d$ and satisfying \eqref{eq0525_01} in $\bR^{d+1}$, where $f \in L_{p,q,w}(\bR^{d+1})$,
we have
\begin{equation}
							\label{eq0610_12}
\sum_{|\alpha| \le 2} \lambda^{1-\frac{|\alpha|}{2}} \| D^\alpha u \|_{L_{p, q, w}}
\le N \|f\|_{L_{p, q, w}},
\end{equation}
where $L_{p,q,w} = L_{p,q,w}(\bR^{d+1})$ and
$N = N (d,\delta, p, q, d_1, d_2, K_0)$.
\end{lemma}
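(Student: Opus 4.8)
The plan is to mirror the proof of Lemma \ref{lem0607_1} with $m=1$, using Lemma \ref{lem0610_3} (and, through it, Lemma \ref{lem0610_1}) as the mean oscillation input in place of Lemma \ref{lem0604_01}. First, since $b^i=c=0$ is already assumed, I would not need to move lower-order terms around. For the given weights $w_1\in A_p(\bR^{d_1},dx')$ and $w_2\in A_q(\bR\times\bR^{d_2},dx''\,dt)$ with Muckenhoupt constants bounded by $K_0$, apply the reverse H\"older inequality \cite[Theorem 3.2]{MS1981} to obtain $\sigma_1=\sigma_1(d_1,p,K_0)$ and $\sigma_2=\sigma_2(d_2,q,K_0)$ with $p-\sigma_1>1$, $q-\sigma_2>1$, $w_1\in A_{p-\sigma_1}$, $w_2\in A_{q-\sigma_2}$. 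Then choose $q_0,\mu\in(1,\infty)$ with $q_0\mu=\min\{p/(p-\sigma_1),\,q/(q-\sigma_2)\}>1$, so that $w_1\in A_{p/(q_0\mu)}\subset A_{p/q_0}$ and $w_2\in A_{q/(q_0\mu)}\subset A_{q/q_0}$; in particular $q_0,\mu$ depend only on $d_1,d_2,p,q,K_0$. Exactly as in Lemma \ref{lem0607_1}, a two-fold application of Lemma \ref{lem0320_1} upgrades the local integrability so that $u\in W^{1,2}_{q_0\mu,\mathrm{loc}}(\bR^{d+1})$ and $f\in L_{q_0\mu,\mathrm{loc}}(\bR^{d+1})$, which is what is needed to invoke Lemma \ref{lem0610_3} with $q=q_0$.

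Next I would set up the filtration of partitions $\bC_n$ (with $m=1$) as in Section \ref{sec4}, and for each $X\in\bR^{d+1}$ and $Q^n\in\bC_n$ containing $X$ pick $X_0$ and the smallest $r$ with $Q^n\subset Q_r(X_0)$ and \eqref{eq0528_03}. Fix a parameter $\kappa\ge 8$. When $r>R_0/\kappa$, use that $u$ vanishes outside $(t_1-(R_0R_1)^2,t_1)\times\bR^d$ together with H\"older's inequality to bound each $\dashint_{Q^n}|D^\alpha u-(D^\alpha u)_{|n}(X)|$ by $N\kappa^{2(1-1/q_0)}R_1^{2(1-1/q_0)}[\cM(|D^\alpha u|^{q_0})(X)]^{1/q_0}$, exactly as in \eqref{eq0605_10}. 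When $r\in(0,R_0/\kappa]$, apply Lemma \ref{lem0610_3} with $q=q_0$ and the pointwise maximal-function bound \eqref{eq0607_02} to control, for the derivatives $D^\alpha u$ with $\alpha_1\le 1$ and for $u_t$, the corresponding oscillations by $N\kappa^{-1}\sum[\cM(|D^\alpha u|^{q_0})]^{1/q_0}+N\kappa^{(d+2)/q_0}[\cM(|f|^{q_0})]^{1/q_0}+N\kappa^{(d+2)/q_0}\gamma^{1/(q_0\nu)}[\cM(|D^2u|^{q_0\mu})]^{1/(q_0\mu)}$. Taking the supremum over $Q^n\ni X$ gives a pointwise bound on the dyadic sharp functions of $u_t$ and of $D^\alpha u$ with $\alpha_1\le1$. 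I would then take $L_{p,q,w}(\bR^{d+1})$-norms; since $\mu(\bR^{d+1})=\infty$, Corollary \ref{cor2} applies with no support restriction to bound those norms by the norms of the dyadic sharp functions, and Corollary \ref{cor1} (valid thanks to $w_1\in A_{p/(q_0\mu)}$, $w_2\in A_{q/(q_0\mu)}$) controls the maximal-function terms, yielding
\begin{align*}
&\|u_t\|_{L_{p,q,w}}+\sum_{|\alpha|\le2,\ \alpha_1\le1}\lambda^{1-\frac{|\alpha|}{2}}\|D^\alpha u\|_{L_{p,q,w}}\\
&\le N\big(\kappa^{2(1-\frac1{q_0})}R_1^{2(1-\frac1{q_0})}+\kappa^{-1}\big)\sum_{|\alpha|\le2}\lambda^{1-\frac{|\alpha|}{2}}\|D^\alpha u\|_{L_{p,q,w}}
+N\kappa^{\frac{d+2}{q_0}}\|f\|_{L_{p,q,w}}+N\kappa^{\frac{d+2}{q_0}}\gamma^{\frac1{q_0\nu}}\|D^2u\|_{L_{p,q,w}},
\end{align*}
with $N=N(d,\delta,p,q,d_1,d_2,K_0)$.

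Finally I would recover the missing derivative $D_{11}u$ from the equation: since $a^{11}\ge\delta$ and $|a^{ij}|\le\delta^{-1}$, \eqref{eq0525_01} gives $\|D_{11}u\|_{L_{p,q,w}}\le\delta^{-1}\|u_t\|_{L_{p,q,w}}+\delta^{-1}\lambda\|u\|_{L_{p,q,w}}+\delta^{-2}\sum_{(i,j)\ne(1,1)}\|D_{ij}u\|_{L_{p,q,w}}+\delta^{-1}\|f\|_{L_{p,q,w}}$, so that $\|D^2u\|_{L_{p,q,w}}\le N\big(\|u_t\|_{L_{p,q,w}}+\sum_{|\alpha|\le2,\ \alpha_1\le1}\lambda^{1-|\alpha|/2}\|D^\alpha u\|_{L_{p,q,w}}\big)+N\|f\|_{L_{p,q,w}}$. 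Substituting this back and using that the full weighted sum $\sum_{|\alpha|\le2}\lambda^{1-|\alpha|/2}\|D^\alpha u\|_{L_{p,q,w}}$ is comparable to $\|u_t\|_{L_{p,q,w}}+\sum_{|\alpha|\le2,\ \alpha_1\le1}\lambda^{1-|\alpha|/2}\|D^\alpha u\|_{L_{p,q,w}}+\|D^2u\|_{L_{p,q,w}}$, one closes the estimate by first fixing $\kappa$ large so that $N\kappa^{-1}$ is small, then choosing $\gamma\in(0,1)$ and $R_1\in(0,1)$ small so that $N\kappa^{(d+2)/q_0}\gamma^{1/(q_0\nu)}$ and $N\kappa^{2(1-1/q_0)}R_1^{2(1-1/q_0)}$ are small; absorbing the resulting terms into the left-hand side produces \eqref{eq0610_12}. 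I expect the main obstacle to be bookkeeping the absorption: the $\gamma$-error carries the \emph{full} Hessian $\|D^2u\|_{L_{p,q,w}}$ (not just an off-diagonal piece), so one must route it through the equation-based recovery of $D_{11}u$ before it can be absorbed, while keeping careful track that $q_0,\mu,\sigma_1,\sigma_2$—hence all constants and the thresholds $\gamma,R_1$—depend only on $d,\delta,p,q,d_1,d_2,K_0$ and not on $\lambda$ or $K,R_0$.
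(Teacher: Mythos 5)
Your proposal is correct and follows essentially the same route as the paper: the paper's proof simply cites the argument of Lemma \ref{lem0607_1} (choice of $q_0,\mu$ via reverse H\"older, the dyadic sharp-function bound split according to $r\lessgtr R_0/\kappa$, and Corollaries \ref{cor1}--\ref{cor2}) to get the analogue of \eqref{eq0610_08} for $u_t$ and $D^\alpha u$ with $\alpha_1\le 1$, then recovers $D_1^2u$ from the equation using $1/a^{11}\le\delta^{-1}$ and absorbs by fixing $\kappa$ large and then $\gamma,R_1$ small, exactly as you describe, including your observation that the $\gamma$-term carrying the full Hessian must be routed through this equation-based recovery before absorption.
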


\begin{proof}
Let $\kappa \ge 8$.
Choose $q_0, \mu \in (1,\infty)$ depending only on $p$, $q$, $d_1$, $d_2$, and $K_0$ and  satisfying \eqref{eq0610_07} as in the proof of Lemma \ref{lem0607_1}.
Then from Lemma \ref{lem0610_3},  by repeating the steps for obtaining \eqref{eq0610_08} in the proof of Lemma \ref{lem0607_1}, we get
\begin{multline}
							\label{eq0610_06}
\|u_t\|_{L_{p,q,w}} + \sum_{|\alpha|\le 2, \alpha_1 \le 1} \lambda^{1-\frac{|\alpha|}{2}} \|D^\alpha u\|_{L_{p,q,w}}
\\
\le N \left( \kappa^{2(1-\frac{1}{q_0})} R_1^{2(1-\frac{1}{q_0})}  + \kappa^{-1}\right) \sum_{|\alpha|\le 2} \lambda^{1-\frac{|\alpha|}{2}} \|D^\alpha u\|_{L_{p,q,w}}
\\
+ N \kappa^{\frac{d+2}{q_0}} \|f\|_{L_{p,q,w}} + N \kappa^{\frac{d+2}{q_0}} \gamma^{\frac{1}{q_0\nu}} \|D^2u\|_{L_{p,q,w}},
\end{multline}
where $N = N(d,\delta,p,q,d_1, d_2, K_0)$ and $1/\mu+1/\nu=1$.
On the other hand, the equation \eqref{eq0525_01} along the fact that $1/a^{11} \le \delta^{-1}$ shows that
$$
\|D_1^2 u\|_{L_{p,q,w}} \le N \|u_t\|_{L_{p,q,w}} + N \sum_{|\alpha| \le 2, \alpha_1 \le 1} \|D^\alpha u\|_{L_{p,q,w}} + N \|f\|_{L_{p,q,w}},
$$
where $N=N(\delta)$.
Upon combining this and \eqref{eq0610_06}, we see that the left-hand side of \eqref{eq0610_12} is bounded by the right-hand side of \eqref{eq0610_06}.
Fix $\kappa \ge 8$ so that $N \kappa^{-1} \le 1/6$. Then choose $\gamma \in (0,1)$ and $R_1 \in (0,1)$ so that
$$
N \kappa^{2(1-\frac{1}{q_0})} R_1^{2(1-\frac{1}{q_0})} \le 1/6,
\quad
N \kappa^{\frac{d+2}{q_0}} \gamma^{\frac{1}{q_0\nu}} \le 1/6.
$$
Then the inequality \eqref{eq0610_12} follows.
\end{proof}

Next we consider equations with leading coefficients merely measurable in $(t,x_1)$ except for $a^{11}$, which is a measurable function either in $t$ or in $x_1$.

\begin{lemma}
							\label{lem0610_2}
Let $\lambda \ge 0$, $q \in (1,\infty)$, $\kappa \ge 8$, and $L$ be the operator in \eqref{eq0609_01}.
Suppose that
$$
a^{11} = a^{11}(t) \quad
\text{or}
\quad
a^{11} = a^{11}(x_1),
$$
$$
a^{ij} = a^{ij}(t,x_1),
\quad
(i,j) \ne (1,1),
$$
and $b^i = c = 0$.
Then, for any $r \in (0,\infty)$, $X_0 \in \bR^{d+1}$, and $u \in W_{q,\operatorname{loc}}^{1,2}(\bR^{d+1})$ satisfying \eqref{eq0525_01} in $Q_{\kappa r}(X_0)$, where $f \in L_{q,\operatorname{loc}}(\bR^{d+1})$, we have
\begin{multline*}
				%			\label{eq0610_03}
\sum_{|\alpha| \le 2, \alpha_1 \le 1} \lambda^{1-\frac{|\alpha|}{2}} \left( |D^\alpha u - \left(D^\alpha u\right)_{Q_r(X_0)} |\right)_{Q_r(X_0)}
\\
\le N \kappa^{-\frac{1}{2}} \sum_{|\alpha| \le 2} \lambda^{1-\frac{|\alpha|}{2}}\left( |D^\alpha u|^q \right)_{Q_{\kappa r}(X_0)}^{\frac 1 q} +  N \kappa^{\frac{d+2}{q}} \left( |f|^q\right)_{Q_{\kappa r}(X_0)}^{\frac 1 q},
\end{multline*}
where $N = N(d,\delta,q)$.
\end{lemma}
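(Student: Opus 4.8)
This lemma is the counterpart of Lemma \ref{lem0610_1} in which, besides the dependence on $x_1$, the coefficients $a^{ij}$ with $(i,j)\ne(1,1)$ are also allowed to depend measurably on $t$, at the cost of requiring $a^{11}$ to be a function of a single variable; accordingly $u_t$ and $D_{11}u$ are no longer controllable and are absent from the left-hand side, and the oscillation decay degrades to $\kappa^{-1/2}$ instead of $\kappa^{-1}$. The plan is to follow the scheme of Lemmas \ref{lem0528_01}, \ref{lem0604_01} and \ref{lem0610_1}. By Remark \ref{rem0713_1} we may assume the coefficients are smooth, with all constants independent of the regularization; by S. Agmon's device (adjoining an extra spatial variable, in which the new leading coefficient is the constant $1$, so that the structural hypothesis is preserved) we may take $\lambda=0$. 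Splitting $u=v+w$ on $Q_{\kappa r}(X_0)$, where $w$ solves \eqref{eq0525_01} with right-hand side $f\,I_{Q_{\kappa r}(X_0)}$ and $v$ solves the homogeneous equation $-v_t+\sum_{i,j}a^{ij}(t,x_1)D_{ij}v=0$ in $Q_{\kappa r}(X_0)$, the unweighted $W^{1,2}_q$-estimates for parabolic equations with partially BMO coefficients from \cite{MR2300337, MR2833589} give $\|w\|_{W^{1,2}_q(Q_{\kappa r}(X_0))}\le N\|f\|_{L_q(Q_{\kappa r}(X_0))}$, which, after H\"older's inequality, produces the term $N\kappa^{(d+2)/q}\left(|f|^q\right)^{1/q}_{Q_{\kappa r}(X_0)}$ and the contribution of $w$ to the first term on the right-hand side. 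Everything then reduces, by scaling, to the interior estimate
\begin{equation*}
\sum_{|\alpha|\le 2,\ \alpha_1\le 1}[D^\alpha v]_{\cC^{1/2}(Q_1)}\le N\,\|D^2 v\|_{L_q(Q_4)}
\end{equation*}
for a solution $v$ of the homogeneous equation in $Q_4$; the weighted lower-order terms $\lambda^{1/2}D_iv$ and $\lambda\,v$ are recovered, exactly as at the end of the proof of Lemma \ref{lem0610_1}, by replacing $v$ with $v-(v)_{Q_4}-x_i(D_iv)_{Q_4}$ and applying the parabolic Poincar\'e-type inequality \cite[Lemma 4.2.2]{MR2435520}.

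For the interior estimate the key point --- as in \cite{MR2300337, MR2833589} --- is that, because $a^{ij}=a^{ij}(t,x_1)$ is independent of $\hat x=(x_2,\dots,x_d)$, every tangential derivative $D_{\hat x}^\gamma v$ again solves the homogeneous equation. Applying the $W^{1,2}_q$-estimate to $D_{\hat x}^\gamma v$ and bootstrapping both in the number of tangential derivatives and, via the parabolic Sobolev embedding, in the integrability exponent, one obtains that each $D_{\hat x}^\gamma v$ with $|\gamma|\ge 1$ is locally bounded together with its spatial derivatives and is parabolically Lipschitz inside $Q_2$, all norms being controlled by $\|D^2 v\|_{L_q(Q_4)}$. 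Since every second-order derivative $D_{ij}v$ with $(i,j)\ne(1,1)$ contains at least one $\hat x$-derivative, this gives the $\cC^1$ --- hence in particular the $\cC^{1/2}$ --- bound for those $D^\alpha v$. The terms not reached this way are $D_1 v$ and $v$. Here one uses the equation $a^{11}D_{11}v=v_t-\sum_{(i,j)\ne(1,1)}a^{ij}D_{ij}v$ together with the one-variable structure of $a^{11}$: when $a^{11}=a^{11}(t)$ one differentiates the equation in $x_1$ --- the term $(D_1 a^{11})D_{11}v$ vanishes --- so that $D_1 v$ solves a homogeneous-type equation modulo already-controlled right-hand sides and the tangential bootstrap applies to it as well; when $a^{11}=a^{11}(x_1)$ one moves all the off-diagonal terms $a^{ij}(t,x_1)D_{ij}v$, $(i,j)\ne(1,1)$, which are already controlled, to the right-hand side and invokes the $x_1$-only-coefficient estimate of Lemma \ref{lem0610_1}. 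In either case $D_1 v$ is locally bounded with a parabolic modulus of continuity of order $1/2$, and then $v$ follows by Poincar\'e, which completes the interior estimate.

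The main obstacle is precisely this last step. Because the coefficients are merely measurable in $x_1$ (and in $(t,x_1)$ off the diagonal), neither $u_t$ nor $D_{11}v$ can be bounded pointwise, so $D_1 v$ enjoys full spatial but only limited parabolic regularity; extracting a quantitative mean-oscillation decay for it, with the honest rate $\kappa^{-1/2}$ rather than $\kappa^{-1}$, is the delicate point, and it is exactly here that the hypothesis that $a^{11}$ depend on a single variable is indispensable. Once the interior estimate is available, the return to the averaged-oscillation statement of the lemma --- inserting the bound for $w$, passing from the Hölder seminorms on $Q_1$ to the oscillations of $D^\alpha u$ on $Q_r(X_0)$ measured against $Q_{\kappa r}(X_0)$, and restoring the $\lambda$-weights --- is routine and proceeds exactly as in Lemmas \ref{lem0528_01} and \ref{lem0610_1}.
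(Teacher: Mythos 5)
Your overall scheme coincides with the paper's: reduce, via smooth coefficients, Agmon's device, and the splitting $u=v+w$ with $w$ handled by the $W^{1,2}_q$ theory of \cite{MR2300337, MR2833589}, to an interior H\"older estimate for solutions of the homogeneous equation with $\lambda=0$, exploit that tangential derivatives $D_{\hat x}v$ solve the same equation, and finish with the affine subtraction and a parabolic Poincar\'e inequality. However, two steps in your interior estimate are not right as written. First, the assertion that $D_{\hat x}^\gamma v$ together with its spatial derivatives is parabolically Lipschitz, hence that $D_{ij}v$ with $(i,j)\ne(1,1)$ admits a $\cC^1$ bound, is not justified by the bootstrap-plus-embedding argument you give: the embedding of $W^{1,2}_p$ only yields parabolic $\cC^{\alpha}$ with $\alpha=1-(d+2)/p<1$, and in this setting one should not expect $\cC^1$, since that would require pointwise control of $D_{11}D_jv$, hence of $(D_jv)_t$, which is unavailable when the coefficients are merely measurable in $t$ (this loss is precisely why the decay is $\kappa^{-1/2}$ here versus $\kappa^{-1}$ in Lemma \ref{lem0610_1}). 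Since you only use the $\cC^{1/2}$ bound afterwards, this is an overclaim rather than a fatal error. Second, and more seriously, your treatment of $D_1v$ is flawed: when $a^{11}=a^{11}(t)$, differentiating the equation in $x_1$ kills $(D_1a^{11})D_{11}v$ but produces the terms $(D_1a^{ij})D_{ij}v$ for $(i,j)\ne(1,1)$, which are not controlled, because those coefficients do depend on $x_1$ and their derivatives blow up under mollification; and when $a^{11}=a^{11}(x_1)$, invoking Lemma \ref{lem0610_1} after moving the off-diagonal terms to the right-hand side gives a mean-oscillation inequality with an uncontrolled right-hand side, not the pointwise H\"older bound for $D_1v$ you need.

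Fortunately this whole case analysis is unnecessary, and this is exactly how the paper argues: the interior estimate \eqref{eq10.12}, which is valid for this class of coefficients by \cite{MR2833589}, applied with $p=2(d+2)$ together with the parabolic Sobolev embedding gives directly $[Dv]_{\cC^{1/2}(Q_1)}\le N\|v\|_{L_q(Q_2)}$, which already covers $D_1v$ as well as $D_{\hat x}v$; applying the same estimate to $D_{\hat x}v$, which solves the same equation, gives $[DD_{\hat x}v]_{\cC^{1/2}(Q_1)}\le N\|v\|_{L_q(Q_4)}$, and the subtraction of $v-(v)_{Q_4}-x_i(D_iv)_{Q_4}$ with the Poincar\'e inequality then yields the mean oscillation estimate, as you indicate. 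Note also that the single-variable hypothesis on $a^{11}$ enters only to make the $W^{1,2}_q$ estimates of \cite{MR2300337, MR2833589} applicable, not through any special argument for $D_1v$.
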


\begin{proof}
As before, it suffices to prove the following interior H\"older estimate
\begin{equation}
                            \label{eq9.58c}
[DD_{\hat{x}} u]_{\cC^{1/2}(Q_1)}\le N\|D^2 u\|_{L_q(Q_4)},
\end{equation}
when a sufficiently smooth $u$ satisfies \eqref{eq0525_01} in $Q_4$ with $\lambda=0$ and $f=0$.
By \eqref{eq10.12} with $p=2(d+2)$, which is also applicable to the equation considered in this lemma (cf. \cite{MR2833589}), we get
\begin{equation}
                            \label{eq10.47}
[Du]_{\cC^{1/2}(Q_1)}\le N\|u\|_{L_q(Q_2)}.
\end{equation}
Noting that $D_{\hat{x}}u$ satisfies the same equation as $u$, from \eqref{eq10.47} and \eqref{eq10.12}, we have
\begin{equation*}
[DD_{\hat{x}}u]_{\cC^{1/2}(Q_1)}\le N\|D_{\hat{x}}u\|_{L_q(Q_2)}\le N\|u\|_{L_q(Q_4)}.
\end{equation*}
To conclude \eqref{eq9.58c}, it suffices to replace $u$ by $u-(u)_{Q_4}-x_i(D_iu)_{Q_4}$ as in the proof of Lemma \ref{lem0610_1}.
\end{proof}

Now we deal with the case with either Assumption \ref{assum0522_1} or \ref{assum0522_2}.
In doing so, we use the results (Lemmas \ref{lem0607_1} and \ref{lem0610_4}) under stronger assumptions (Assumptions \ref{assum0528_2} and \ref{assum0528_1}).

\begin{lemma}
							\label{lem0610_5}
Let $\lambda \ge 0$, $K_0 \ge 1$, $p, q \in (1,\infty)$, $t_1 \in \bR$,
$w = w_1(x') w_2(t,x'')$, where
$$
w_1(x') \in A_p(\bR^{d_1}, dx'),
\quad
w_2(t,x'') \in A_q(\bR \times \bR^{d_2}, dx'' \, dt),
$$
$$
d_1+d_2=d,\quad [w_1]_{A_p}\le K_0,\quad [w_2]_{A_q}\le K_0,
$$
and let $L$ be the operator in \eqref{eq0609_01}.
Suppose that $b^i = c = 0$.
Then there exist constants
$$
\begin{aligned}
&\gamma = \gamma(d, \delta, p, q, d_1, d_2, K_0) \in (0,1),
\\
&R_1 = R_1(d, \delta, p, q, d_1, d_2, K_0) \in (0,1),
\end{aligned}
$$
such that, under Assumption \ref{assum0522_1} ($\gamma$) or Assumption \ref{assum0522_2} ($\gamma$),
for $u \in W_{p,q,w}^{1,2}(\bR^{d+1})$ vanishing outside $\left(t_1 - (R_0R_1)^2, t_1\right) \times \bR^d$ and satisfying \eqref{eq0525_01} in $\bR^{d+1}$, where $f \in L_{p,q,w}(\bR^{d+1})$,
we have
\begin{equation}
							\label{eq0610_13}
\sum_{|\alpha| \le 2} \lambda^{1-\frac{|\alpha|}{2}} \| D^\alpha u \|_{L_{p, q, w}}
\le N \|f\|_{L_{p, q, w}},
\end{equation}
where $L_{p,q,w} = L_{p,q,w}(\bR^{d+1})$ and
$N = N (d,\delta, p, q, d_1, d_2,K_0)$.
\end{lemma}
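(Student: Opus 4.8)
The plan is to handle separately the ``tangential'' derivatives $D^\alpha u$ with $\alpha_1\le 1$ and the ``normal'' second derivative $D_1^2u$, and then close the estimate by absorption. For the tangential part I would repeat the mean oscillation argument of Lemma \ref{lem0610_4}, but based on Lemma \ref{lem0610_2} instead of Lemma \ref{lem0610_3}. Concretely, as in the proof of Lemma \ref{lem0607_1} I first pick $q_0,\mu\in(1,\infty)$ satisfying \eqref{eq0610_07} and use the reverse H\"older inequality to get $u\in W^{1,2}_{q_0\mu,\operatorname{loc}}(\bR^{d+1})$ and $f\in L_{q_0\mu,\operatorname{loc}}(\bR^{d+1})$. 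Fix $\kappa\ge 8$. For each dyadic cube $Q^n\ni X$ I pick $X_0$ and the smallest $r$ with $Q^n\subset Q_r(X_0)$ and \eqref{eq0528_03}; the case $r>R_0/\kappa$ is treated exactly as in \eqref{eq0605_10} using that $u$ vanishes outside $(t_1-(R_0R_1)^2,t_1)\times\bR^d$, while for $r\le R_0/\kappa$ I freeze the coefficients on $Q_{\kappa r}(X_0)$: replace each $a^{ij}$, $(i,j)\ne(1,1)$, by its $\hat x$-average $\bar a^{ij}(t,x_1)$ over $B'_{\kappa r}$, and replace $a^{11}$ by its average over $B_{\kappa r}(x_0)$ under Assumption \ref{assum0522_1} (so $\bar a^{11}=\bar a^{11}(t)$) or over $Q'_{\kappa r}(X_0)$ under Assumption \ref{assum0522_2} (so $\bar a^{11}=\bar a^{11}(x_1)$). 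The frozen operator $\bar L$ then falls exactly under Lemma \ref{lem0610_2}; $u$ solves $-u_t+\bar Lu-\lambda u=f+(\bar L-L)u$ on $Q_{\kappa r}(X_0)$, and since $|a^{ij}|\le\delta^{-1}$ and the relevant mean oscillations of $\bar a^{ij}-a^{ij}$ on $Q_{\kappa r}(X_0)$ are at most $\gamma$ (by Assumptions \ref{assum0522_1}, \ref{assum0522_2}), H\"older's inequality controls $(\bar L-L)u$ exactly as in the proof of Lemma \ref{lem0604_01}. This gives a pointwise bound on $\sum_{|\alpha|\le 2,\,\alpha_1\le1}\lambda^{1-|\alpha|/2}(D^\alpha u)^{\#}_{\operatorname{dy}}(X)$ by maximal functions of $|D^\alpha u|^{q_0}$, $|f|^{q_0}$, and $|D^2u|^{q_0\mu}$, with gains $\kappa^{-1/2}$, $\kappa^{(d+2)/q_0}R_1^{\cdots}$, and $\kappa^{(d+2)/q_0}\gamma^{1/(q_0\nu)}$ respectively. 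Taking $L_{p,q,w}(\bR^{d+1})$-norms and applying Corollaries \ref{cor1} and \ref{cor2} (legitimate since $w_1\in A_{p/(q_0\mu)}$, $w_2\in A_{q/(q_0\mu)}$), and using $\|D^2u\|_{L_{p,q,w}}\le B$, I obtain
\[
A:=\sum_{|\alpha|\le 2,\,\alpha_1\le1}\lambda^{1-\frac{|\alpha|}{2}}\|D^\alpha u\|_{L_{p,q,w}}\le \varepsilon_0\,B+N_\kappa\|f\|_{L_{p,q,w}},\qquad B:=\sum_{|\alpha|\le 2}\lambda^{1-\frac{|\alpha|}{2}}\|D^\alpha u\|_{L_{p,q,w}},
\]
where $\varepsilon_0=N\bigl(\kappa^{-1/2}+\kappa^{\cdots}R_1^{\cdots}+\kappa^{\cdots}\gamma^{1/(q_0\nu)}\bigr)$ and $N_\kappa$ depends on $\kappa$ and on $d,\delta,p,q,d_1,d_2,K_0$.

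Next I would recover $D_1^2u$ from a reference operator. Set $\cL u:=a^{11}D_1^2u+\sum_{j=2}^d D_j^2u$, whose leading coefficient matrix is $\operatorname{diag}(a^{11},1,\dots,1)$. Under Assumption \ref{assum0522_1} this matrix has small mean oscillation in $x$, so $\cL$ satisfies Assumption \ref{assum0528_2} ($\gamma$); under Assumption \ref{assum0522_2} it has small mean oscillation in $(t,\hat x)$, so $\cL$ satisfies Assumption \ref{assum0528_1} ($\gamma$). Rewriting \eqref{eq0525_01} (recall $b^i=c=0$) as
\[
-u_t+\cL u-\lambda u=F,\qquad F:=f-\sum_{(i,j)\ne(1,1)}a^{ij}D_{ij}u+\sum_{j=2}^d D_j^2u,
\]
I observe that every derivative in $F$ is a second derivative with $\alpha_1\le1$ and that $F$ has no $u$- or $Du$-term, so $\|F\|_{L_{p,q,w}}\le\|f\|_{L_{p,q,w}}+NA$. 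Applying Lemma \ref{lem0607_1} (under Assumption \ref{assum0522_1}) or Lemma \ref{lem0610_4} (under Assumption \ref{assum0522_2}) to this equation — both apply since $u$ vanishes outside a short time interval of the required smallness — gives $B\le N_1\|F\|_{L_{p,q,w}}\le N_1\|f\|_{L_{p,q,w}}+N_1NA$, with $N_1=N_1(d,\delta,p,q,d_1,d_2,K_0)$ independent of $\kappa$, $R_1$, $\gamma$, and $\lambda$.

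Combining the two inequalities, $B\le N_1\|f\|_{L_{p,q,w}}+N_1N\bigl(\varepsilon_0B+N_\kappa\|f\|_{L_{p,q,w}}\bigr)$. I would then fix $\kappa$ large, and afterwards $\gamma$ and $R_1$ small — in that order, all depending only on $d,\delta,p,q,d_1,d_2,K_0$, and each no larger than what Lemma \ref{lem0610_2}, Lemma \ref{lem0607_1}, and Lemma \ref{lem0610_4} require — so that $N_1N\varepsilon_0\le 1/2$. Absorbing $\tfrac12 B$ into the left-hand side yields $B\le N\|f\|_{L_{p,q,w}}$, which is \eqref{eq0610_13}.

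The hard part is the second step. The mean oscillation machinery based on Lemma \ref{lem0610_2} genuinely \emph{cannot} control $D_1^2u$, since the off-diagonal coefficients are only measurable in $(t,x_1)$ and the normal second derivative therefore has no usable mean oscillation; the decisive point is that once all off-diagonal terms are discarded, the surviving operator $\cL$ regains the stronger oscillation property (Assumption \ref{assum0528_2} under \ref{assum0522_1}, Assumption \ref{assum0528_1} under \ref{assum0522_2}), so the already proved weighted estimates apply to it, while the discarded terms feed back only the tangential second derivatives that the first step controls with a small constant. Keeping the $\lambda$-weights honest (so that $F$ carries no lower-order terms and the small factors multiply exactly $B$) and choosing $\kappa$, $\gamma$, $R_1$ in the right order are the only remaining points of care.
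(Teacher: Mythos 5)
Your proposal is correct and follows essentially the same route as the paper's proof: a mean oscillation estimate for the derivatives with $\alpha_1\le 1$ obtained by freezing coefficients so that Lemma \ref{lem0610_2} applies, followed by rewriting the equation with the reference operator $a^{11}D_1^2+\Delta_{d-1}$ (whose coefficient $a^{11}$ satisfies Assumption \ref{assum0528_2} or \ref{assum0528_1}) so that Lemma \ref{lem0607_1} with $m=\ell=1$ or Lemma \ref{lem0610_4} controls the full sum including $D_1^2u$, and finally absorption after choosing $\kappa$ large and then $\gamma$, $R_1$ small. The only difference is presentational: you spell out the coefficient-freezing step that the paper compresses into ``as in the proof of Lemma \ref{lem0610_4}.''
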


\begin{proof}
Let $\kappa \ge 8$.
As in the proof of Lemma \ref{lem0610_4}, we choose $q_0, \mu \in (1,\infty)$ and derive from Lemma \ref{lem0610_2} the following inequality
\begin{multline}
							\label{eq0610_09}
\sum_{|\alpha|\le 2, \alpha_1 \le 1} \lambda^{1-\frac{|\alpha|}{2}} \|D^\alpha u\|_{L_{p,q,w}}
\\
\le N \left( \kappa^{2(1-\frac{1}{q_0})} R_1^{2(1-\frac{1}{q_0})}  + \kappa^{-\frac{1}{2}}\right) \sum_{|\alpha|\le 2} \lambda^{1-\frac{|\alpha|}{2}} \|D^\alpha u\|_{L_{p,q,w}}
\\
+ N \kappa^{\frac{d+2}{q_0}} \|f\|_{L_{p,q,w}} + N \kappa^{\frac{d+2}{q_0}} \gamma^{\frac{1}{q_0\nu}} \|D^2u\|_{L_{p,q,w}},
\end{multline}
where $N = N(d,\delta,p,q,d_1, d_2, K_0)$ and $1/\mu+1/\nu=1$.

Now we write the equation \eqref{eq0525_01} as
$$
-u_t + a^{11} D_1^2 u + \Delta_{d-1}u = \bar{f},
$$
where
$$
\bar{f} = f + \Delta_{d-1} u - \sum_{i,j, (i,j) \ne (1,1)}^d a^{ij} D_{ij} u.
$$
The coefficients of the operator $a^{11} D_1^2  + \Delta_{d-1}$, in particular, the coefficient $a^{11}$ satisfies either Assumption \ref{assum0528_2} or Assumption \ref{assum0528_1}.
Then as long as $\gamma$ and $R_1$ are smaller than those in Lemma \ref{lem0607_1} with $m=\ell=1$ or those in Lemma \ref{lem0610_4}, we have
$$
\sum_{|\alpha|\le 2} \lambda^{1-\frac{|\alpha|}{2}} \|D^\alpha u\|_{L_{p,q,w}} \le N \|\bar{f}\|_{L_{p,q,w}}
\le N \|f\|_{L_{p,q,w}}  + N \sum_{|\alpha|=2, \alpha_1 \le 1} \|D^\alpha u\|_{L_{p,q,w}},
$$
where $N = N(d,\delta,p,q,d_1,d_2, K_0)$.
This combined with \eqref{eq0610_09} shows that
$$
\sum_{|\alpha|\le 2} \lambda^{1-\frac{|\alpha|}{2}} \|D^\alpha u\|_{L_{p,q,w}} \le N \|f\|_{L_{p,q,w}}
$$
$$
+ N \left( \kappa^{2(1-\frac{1}{q_0})} R_1^{2(1-\frac{1}{q_0})}  + \kappa^{-\frac{1}{2}}\right) \sum_{|\alpha|\le 2} \lambda^{1-\frac{|\alpha|}{2}} \|D^\alpha u\|_{L_{p,q,w}}
$$
$$
+ N \kappa^{\frac{d+2}{q_0}} \|f\|_{L_{p,q,w}} + N \kappa^{\frac{d+2}{q_0}} \gamma^{\frac{1}{q_0\nu}} \|D^2u\|_{L_{p,q,w}},
$$
where $N = N(d,\delta,p,q,d_1,d_2,K_0)$.
Now we fix $\kappa \ge 8$ so that $N \kappa^{-\frac{1}{2}} \le 1/6$.
Then we choose $\gamma$ and $R_1$ so that they are less than those in Lemma \ref{lem0607_1} with $m=\ell=1$ or in Lemma \ref{lem0610_4}, and satisfy
$$
N \kappa^{2(1-\frac{1}{q_0})} R_1^{2(1-\frac{1}{q_0})} \le 1/6,
\quad
N \kappa^{\frac{d+2}{q_0}} \gamma^{\frac{1}{q_0\nu}} \le 1/6.
$$
Then the inequality \eqref{eq0610_13} follows.
\end{proof}

Now we are ready prove the main theorems of this section.

\begin{proof}[Proof of Theorem \ref{thm0522_1}]
We use the partition of unity argument in the proof of Proposition \ref{prop0609_1} and proceed as in the proof of Theorem \ref{thm0607_1}.
\end{proof}

Note that the extension argument in the proof below is possible because the coefficients are allowed to have no regularity assumptions with respect to one spatial variable. Thus the argument is not applicable if coefficients are continuous or have vanishing (or small) mean oscillations in all the spatial variables.

\begin{proof}[Proof of Theorem \ref{thm5.4}]
We take the even extensions of $w_1$ (or $w_2$ if $d_1=0$), $a^{11}$, $a^{ij}$, $b^i$, $c$ for $i,j\ge 2$  with respect to $x_1=0$, and the odd extensions of $a^{1j}$, $a^{j1}$ for $j\ge 2$ and $b^1$ with respect to $x_1=0$. It is easily seen that $w_1$ (or $w_2$) is an $A_p$ (or $A_q$) weight in the whole space, and $a^{ij}$ satisfy Assumption \ref{assum0522_1} or \ref{assum0522_2} in the whole space.

In the case of the Dirichlet boundary condition, we take the odd extensions of $u$ and $f$ with respect to $x_1=0$, while in the case of the Neumann boundary condition, we take the even extensions of $u$ and $f$. Then $u$ satisfies \eqref{eq0525_01} in $\bR^{d+1}$. By applying Theorem \ref{thm0522_1} and noting that the norms in the half space are comparable to those in the whole space, we immediately get Theorem \ref{thm5.4}.
\end{proof}

\section{Higher order systems in divergence form in Reifenberg domains with partially BMO coefficients}
							\label{sec07}

Set
\begin{equation}
							\label{eq0625_01}
\cL u = \sum_{|\alpha| \le m, |\beta| \le m} D^\alpha \left( A^{\alpha\beta} D^\beta u \right),
\end{equation}
where the $\ell \times \ell$ matrices $A^{\alpha\beta}$ are complex valued functions on $\bR^{d+1}$ and $u$ is a complex vector-valued function.
The coefficients $A^{\alpha\beta}$ are bounded and satisfy the strong ellipticity condition as follows.

\begin{enumerate}

\item[(i)] There exists a constant $\delta \in (0,1)$ such that $|A^{\alpha\beta}| \le \delta^{-1}$, $|\alpha|=|\beta|=m$, and
$$
\sum_{|\alpha|=|\beta|=m} \Re \left( A^{\alpha\beta} \xi_\beta, \xi_\alpha \right) \ge \delta |\xi|^2
$$
for any $\xi = (\xi_\alpha)_{|\alpha|=m}$, $\xi_\alpha \in \bC^\ell$.

\item[(ii)] There exists a constant $K > 0$ such that $|A^{\alpha\beta}| \le K$ if $|\alpha| < m$ or $|\beta| < m$.

\end{enumerate}

In this section we consider a domain of the form $\bR \times \Omega$, where $\Omega \subset \bR^d$ is a Reifenberg flat domain.
We impose the following regularity assumption on the coefficients $A^{\alpha\beta}$, $|\alpha| = |\beta| = m$, and the boundary of the domain $\Omega$.

\begin{assumption}[$\gamma$]
							\label{assum0625_1}
Let $\gamma \in (0,1/4)$.
There exists $R_0 \in (0,1]$ satisfying the following.

\begin{enumerate}

\item[(i)] For any $X = (t ,x) \in \bR \times \Omega$ and $r \in (0, \min\{ R_0, \text{dist}(x, \partial \Omega)/2 \}]$ (so that $B_r(x) \subset \Omega$), there is a spatial coordinate system depending on $x$ and $r$ such that in this new coordinate system, we have
\begin{equation}
							\label{eq0625_04}
\dashint_{Q_r(t,x)} \left| A^{\alpha\beta}(s,y_1,\hat y) - \dashint_{Q'_r(t,\hat x)} A^{\alpha\beta}(\tau,y_1,\hat z) \, d\hat z \, d\tau \right| \, dy \, ds \le \gamma.
\end{equation}

\item[(ii)] For any $x \in \partial \Omega$, $t \in \bR$, and any $r \in (0,R_0]$, there is a spatial coordinate system depending on $X = (t,x)$ and $r$ such that in this new coordinate system, we have \eqref{eq0625_04} and
$$
\{ (y_1, \hat y) : x_1 + \gamma r < y_1 \} \cap B_r(x) \subset \Omega \cap B_r(x)
\subset \{(y_1,\hat y) : x_1 - \gamma r < y_1\} \cap B_r(x).
$$

\end{enumerate}					
\end{assumption}

For the mixed-norm case, we view a Reifenberg flat domain $\Omega$ as a subset of $\Omega_1 \times \Omega_2$, where $\Omega_1 \subset \bR^{d_1}$, $\Omega_2 \subset \bR^{d_2}$, and $d_1 + d_2 = d$.
We assume that $\Omega_1$ and $\bR \times \Omega_2$ are spaces of homogeneous type with the usual Lebesgue measures. The metrics are the Euclidean distance in $\Omega_1$ and the parabolic distance in $\bR \times \Omega_2$; thus the constant $K_1$ in \eqref{eq0709_1} is $1$.
For example, $\Omega_1$ can be $\bR^{d_1}$.

\begin{theorem}
							\label{thm0625_1}
Let $p, q \in (1,\infty)$, $K_0\ge 1$, $\Omega \subset \Omega_1 \times \Omega_2$, $\Omega_1 \subset \bR^{d_1}$, $\Omega_2 \subset \bR^{d_2}$, $d_1+d_2 = d$,
and $\bR \times \Omega$, as a subset of $\Omega_1 \times ( \bR \times \Omega_2 )$, satisfy the conditions (a) and (b) before Corollary \ref{cor1} with a doubling constant $K_2$ (see \eqref{eq0709_2}).
Also let $w = w_1(x') w_2(t,x'')$, where
$$
w_1(x') \in A_p(\Omega_1, dx'),
\quad
w_2(t,x'') \in A_q(\bR \times \Omega_2, dx'' \, dt),
$$
$$
[w_1]_{A_p}\le K_0,\quad [w_2]_{A_q}\le K_0,
$$
and $\cL$ be the operator in \eqref{eq0625_01}.
Then there exist
$$
\gamma = \gamma(d, m, \ell, \delta, p, q, d_1, d_2, K_0,K_2) \in (0,1/4),
$$
$$
\lambda_0 = \lambda_0(d, m, \ell, \delta, p, q, d_1, d_2, K_0,K_2, K, R_0) \ge 1,
$$
such that,
under Assumption \ref{assum0625_1} ($\gamma$),
for $u \in \mathring{\cH}_{p,q,w}^m(\bR \times \Omega)$ satisfying
\begin{equation}
							\label{eq0625_02}
u_t + (-1)^m \cL u + \lambda u = \sum_{|\alpha| \le m} D^\alpha f_\alpha
\end{equation}
in $\bR \times \Omega$, where $f_\alpha \in L_{p,q,w}(\bR \times \Omega)$,
we have
\begin{equation}
							\label{eq0625_03}
\sum_{|\alpha| \le m} \lambda^{1-\frac{|\alpha|}{2m}} \| D^\alpha u \|_{L_{p,q,w}} \le
N \sum_{|\alpha| \le m} \lambda^{\frac{|\alpha|}{2m}} \|f_\alpha\|_{L_{p,q,w}},
\end{equation}
provided that $\lambda \ge \lambda_0$,
where $L_{p,q,w} = L_{p,q,w}(\bR \times \Omega)$ and
$$
N = N(d,m,\ell,\delta, p, q, d_1, d_2, K_0, ,K_2, K, R_0).
$$
\end{theorem}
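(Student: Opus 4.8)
The plan is to follow the scheme already used for Theorems \ref{thm0607_1} and \ref{thm0522_1}: by a partition of unity in the time variable (as in Proposition \ref{prop0609_1}) and a large-$\lambda$ absorption of the lower order terms, it suffices to prove \eqref{eq0625_03} for $u$ vanishing outside a slab $(t_1-(R_0R_1)^{2m},t_1)\times\Omega$ of small height, assuming in addition that the lower order coefficients of $\cL$ vanish (the terms $D^\alpha(A^{\alpha\beta}D^\beta u)$ with $|\alpha|<m$ or $|\beta|<m$ being moved to the right side and incorporated into the $f_\alpha$ after the final step). Since the equation is in divergence form and $u_t$ lies only in $\bH^{-m}_{p,q,w}$, no sharp function of $u_t$ is available and the estimate involves only $D^\alpha u$, $|\alpha|\le m$. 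As in the proof of Lemma \ref{lem0607_1}, reverse H\"older's inequality for $A_p$ weights \cite{MS1981} supplies exponents $q_0,\mu\in(1,\infty)$, depending only on $d_1,d_2,p,q,K_0$, with $w_1\in A_{p/(q_0\mu)}(\Omega_1,dx')$, $w_2\in A_{q/(q_0\mu)}(\bR\times\Omega_2,dx''\,dt)$, and one checks that $u$ and $f_\alpha$ have locally $q_0\mu$- (resp. $q_0$-) integrable derivatives; the mean oscillation argument is then carried out with the unweighted exponent $q_0$.

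\emph{The heart of the matter} is a mean oscillation estimate. Let $X\in\bR\times\Omega$ and $Q^n\in\bC_n$ with $X\in Q^n$; using Christ's construction one replaces $Q^n$ by a comparison parabolic cylinder $Q_r(X_0)$ of comparable size (intersected with $\bR\times\Omega$) as in \eqref{eq0528_03}, and distinguishes the interior case $r\le c\,\operatorname{dist}(x_0,\partial\Omega)$ from the boundary case. In the boundary case one uses the Reifenberg flatness in Assumption \ref{assum0625_1} to pass, in the associated coordinate system, to a domain trapped between the hyperplanes $\{y_1=\pm\gamma r\}$. In either case one writes $u=v+(u-v)$ on an enlarged cylinder, where $v\in\mathring{\cH}^m_{q_0}$ solves the homogeneous system with coefficients replaced by their averages in $\hat x$ (with the boundary straightened), using the unweighted $\cH^m_{q_0}$-solvability and a priori estimates for higher order divergence systems in Reifenberg domains from \cite{MR2835999}; the auxiliary lemma of Appendix \ref{appendix01} is used here to cope with the $\bH^{-m}$ datum. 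The difference $u-v$ is estimated in energy norm by $N\gamma^{1/(q_0\nu)}(|D^m u|^{q_0\mu})^{1/(q_0\mu)}$ on the enlarged cylinder (the factor $\gamma$ absorbing both the coefficient oscillation and the geometric flatness, via H\"older with $1/\mu+1/\nu=1$), while $v$ enjoys an interior/boundary Caccioppoli and higher regularity bound providing oscillation decay in $r$. Combining these and inequalities of the type \eqref{eq0607_02}, one obtains, for every $X$,
\begin{multline*}
\sum_{|\alpha|\le m}\lambda^{1-\frac{|\alpha|}{2m}}(D^\alpha u)^{\#}_{\text{dy}}(X)
\le N\bigl(\kappa^{-\theta}+\kappa^{c}R_1^{c}\bigr)\sum_{|\alpha|\le m}\lambda^{1-\frac{|\alpha|}{2m}}\bigl[\cM(|D^\alpha u|^{q_0})(X)\bigr]^{1/q_0}\\
+N\kappa^{c}\sum_{|\alpha|\le m}\bigl[\cM(|f_\alpha|^{q_0})(X)\bigr]^{1/q_0}
+N\kappa^{c}\gamma^{\frac{1}{q_0\nu}}\bigl[\cM(|D^m u|^{q_0\mu})(X)\bigr]^{1/(q_0\mu)},
\end{multline*}
valid for any $\kappa\ge 8$, with exponents $\theta,c>0$ depending only on $d,m,q_0$.

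Taking $L_{p,q,w}(\bR\times\Omega)$ norms, applying Corollary \ref{cor2} on the left and Corollary \ref{cor1} to the maximal functions on the right (legitimate since, by the choice of $q_0,\mu$, $w_1\in A_{p/q_0}\cap A_{p/(q_0\mu)}$ and $w_2\in A_{q/q_0}\cap A_{q/(q_0\mu)}$), one finds that $\sum_{|\alpha|\le m}\lambda^{1-|\alpha|/(2m)}\|D^\alpha u\|_{L_{p,q,w}}$ is bounded by $N\bigl(\kappa^{-\theta}+\kappa^{c}R_1^{c}+\kappa^{c}\gamma^{1/(q_0\nu)}\bigr)$ times the same quantity plus $N\kappa^{c}\sum_{|\alpha|\le m}\|f_\alpha\|_{L_{p,q,w}}$. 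Fixing $\kappa$ large, then $R_1$ and $\gamma$ small, absorbs the first term and yields the short-slab estimate; the partition of unity in $t$ removes the support restriction at the cost of lower order terms, and a large choice of $\lambda_0$ absorbs those, giving \eqref{eq0625_03} (the powers $\lambda^{|\alpha|/(2m)}$ attached to $f_\alpha$ arise from rescaling, a derivative of order $|\alpha|$ scaling like $\lambda^{|\alpha|/(2m)}$). Throughout, smoothness of the coefficients may be assumed when constructing $v$ and then recovered by mollification, exactly as in Remark \ref{rem0713_1}.

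I expect the main obstacle to be the boundary mean oscillation estimate: constructing the comparison function $v$ with the correct (Dirichlet-type) boundary values after straightening the Reifenberg-flat boundary, and bounding $u-v$ in the $\cH^m_{q_0}$ energy simultaneously by the BMO smallness of the coefficients and by the flatness constant $\gamma$, uniformly over the point- and scale-dependent coordinate systems, all while the time derivative lives only in $\bH^{-m}$ --- which is precisely where the auxiliary lemma of Appendix \ref{appendix01} enters.
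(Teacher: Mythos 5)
There is a genuine gap at the heart of your argument: the displayed mean oscillation estimate, which bounds $\sum_{|\alpha|\le m}\lambda^{1-\frac{|\alpha|}{2m}}(D^\alpha u)^{\#}_{\text{dy}}(X)$ and is then fed into Corollary \ref{cor2}, is not obtainable under Assumption \ref{assum0625_1}. The coefficients here are only \emph{partially} BMO: in each comparison cylinder (in a coordinate system that varies with the point and the scale, and which near $\partial\Omega$ is forced to align with the Reifenberg-flat boundary) they are merely measurable in the distinguished direction $y_1$. Consequently the pure normal derivative $D_1^m u$ has no oscillation decay at all; the Hölder estimates you invoke (the analogue of Lemma \ref{lem6.4}, proved via \cite{MR2835999}) control only $U'=(\lambda^{\frac12-\frac{|\alpha|}{2m}}D^\alpha u)_{\alpha_1<m}$ and the flux $\Theta=\sum_{|\beta|=m}A^{\check\alpha\beta}D^\beta u$ of \eqref{eq0625_05}, not $D_1^m v$ for the comparison solution $v$. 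Ellipticity lets you recover $|D_1^m u|$ from $|U'|+|\Theta|$ pointwise in size, but not in mean oscillation, so your sharp-function inequality for all $D^\alpha u$, $|\alpha|\le m$, does not follow from the ingredients you cite, and Corollary \ref{cor2} is the wrong tool.

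This is precisely why the paper proves Lemma \ref{lem6.5} in a different form: for each cylinder $\cC=\cC_{\kappa r}(X_0)$ it produces a \emph{cylinder-dependent} function $U^{\cC}=(U',\Theta)$ (with $\Theta$ built from the coefficients averaged in $(t,\hat x)$ on that cylinder, and corrected by the auxiliary solution $\hat w$), satisfying $N^{-1}|U|\le|U^{\cC}|\le N|U|$ and an oscillation decay estimate \eqref{eq3.33}; the weighted mixed-norm estimate is then obtained from the generalized Fefferman--Stein result Theorem \ref{thm3} through Corollary \ref{cor3}, with $f^Q=U^{\cC}$ and majorant comparable to $|U|$ — the very situation that theorem was designed for. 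Your treatment of the boundary case is otherwise in the right spirit (cutoff at $x_1=\gamma R$ rather than literal boundary straightening, the thin-strip errors and coefficient oscillation both absorbed into $\gamma^{1/(q_0\nu)}$ via Hölder, and the Appendix lemma handling the $\bH^{-m}$ datum by duality and Hardy's inequality), and the reduction via a time partition of unity and large-$\lambda$ absorption matches the paper. To repair the proof, replace the claimed sharp-function bound for all $D^\alpha u$ by the oscillation estimate for the pair $(U',\Theta)$ (equivalently $U^{\cC}$), and run the final step through Corollary \ref{cor3} instead of Corollary \ref{cor2}.
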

\begin{remark}
One can check that the conditions (a) and (b) before Corollary \ref{cor1} are satisfied if, for instance, $\Omega$ is a bounded Reifenberg flat domain in $\bR^d$.
The doubling constant $K_2$ is then determined only by $d$, $m$, $R_0$, and $|\Omega|$ as long as $\gamma$ is sufficiently small, for instance, $\gamma \in (0,1/4)$.
Indeed, the doubling inequality \eqref{eq0709_2} follows from
\begin{equation}
							\label{eq0709_3}
N_1 |Q_r(X_0)| \le \left| \cB_r(X_0) \right| \le 2|Q_r(X_0)|
\end{equation}
for $r/4 \in (0, R_0]$, and
\begin{equation}
							\label{eq0709_4}
N_2 r^{2m}|\Omega| \le \left| \cB_r(X_0) \right| \le 2 r^{2m}|\Omega|
\end{equation}
for $r/4 \in (R_0,\infty)$, where
$$
X_0 = (t_0,x_0) \in \bR \times \Omega,
\quad N_1 = N_1(d,m),\quad
N_2 = N_2(d, m, R_0, |\Omega|),
$$
and $\cB_r(X_0)$ is a ball in $\bR \times \Omega$ using the parabolic distance, i.e.,
$$
\cB_r(X_0) = \{ (t,x) \in \bR \times \Omega: |x-x_0| + |t-t_0|^{\frac{1}{2m}} < r \}.
$$
To verify the above inequalities (we show only the lower bounds), note that
$$
Q_{r/2}(X_0) \cap (\bR \times \Omega) \subset \cB_r(X_0),
$$
$$
Q_{r/2}(X_0) \cap (\bR \times \Omega) \supset
\left\{
\begin{aligned}
&Q_{r/4}(X_0)
\,\quad\quad\quad\quad\quad\quad \text{if} \quad \text{dist}(x_0, \partial\Omega) > r/4,
\\
&Q_{r/4}(t_0,\tilde{x}) \cap (\bR \times \Omega)
\quad \text{if} \quad \text{dist}(x_0, \partial\Omega) \le r/4,
\end{aligned}
\right.
$$
where $\tilde{x} \in \partial \Omega$ such that $|x_0 - \tilde{x}| = \text{dist}(x_0, \partial\Omega)$.

We assume that $\text{dist}(x_0, \partial\Omega) \le r/4$ as the other case is simpler.
If $r/4 \le R_0$, then, since $\gamma < 1/4$, by the property of Reifenberg flat domains, we have
$$
|Q_{r/4}(X_0)| \ge |Q_{r/4}(t_0,\tilde{x}) \cap (\bR \times \Omega)| \ge N(d,m)|Q_r(X_0)|.
$$
Hence the first equality in \eqref{eq0709_3} follows.
If $r/4 > R_0$, then again by the property of Reifenberg flat domains
$$
|Q_{r/4}(X_0)| \ge |Q_{r/4}(t_0,\tilde{x}) \cap (\bR \times \Omega)| \ge (r/4)^{2m} |B_{R_0}(\tilde{x}) \cap \Omega|
$$
$$
\ge N(d,m) r^{2m} |B_{R_0}| = N(d, m, R_0, |\Omega|) r^{2m} |\Omega|.
$$
This verifies the first equality in \eqref{eq0709_4}.
\end{remark}

For given constant $\lambda \ge 0$ and functions  $u$ and $f_\alpha$, $|\alpha| \le m$,
we write
\begin{equation}
							\label{eq0625_05}
\begin{aligned}
U &= \big(\lambda^{\frac{1}{2}-\frac{|\alpha|}{2m}} D^\alpha u\big)_{|\alpha| \le m}, \quad U'=\big(\lambda^{\frac{1}{2}-\frac{|\alpha|}{2m}} D^\alpha u\big)_{|\alpha| \le m, \alpha_1 < m},
\\
\Theta &= \sum_{|\beta|=m} A^{\check{\alpha}\beta} D^\beta u,
\quad\quad\quad\,\,
F=\big(\lambda^{\frac{|\alpha|}{2m}-\frac{1}{2}} f_\alpha \big)_{|\alpha| \le m},
\end{aligned}
\end{equation}
where $\check{\alpha} = m e_1 = (m, 0, \ldots, 0)$. In what follows, we assume that $f_\alpha \equiv 0$ for $|\alpha|<m$ whenever $\lambda = 0$.
By using the strong ellipticity condition, we have
$$
N^{-1} |U| \le |U'| + |\Theta| \le N |U|,
$$
where $N = N(d,m,\ell,\delta)$.

We start with the following interior and boundary H\"older estimates.

\begin{lemma}
                                    \label{lem6.4}
Let $\lambda \ge 0$, $q \in (1,\infty)$, and $\cL$ be the operator in \eqref{eq0625_01}.
Suppose that the coefficients $A^{\alpha\beta}$, $|\alpha|=|\beta|=m$, are measurable functions of only $x_1 \in \bR$, i.e., $A^{\alpha\beta} = A^{\alpha\beta}(x_1)$ and the lower-order coefficients of $\cL$ are all zero.

\begin{enumerate}
\item[(i)] For any $u \in C_{\operatorname{loc}}^\infty(\bR^{d+1})$ satisfying \eqref{eq0625_02} in $Q_2$ with $f_\alpha=0$,
we have
$$
\|U'\|_{C^{1-1/q}(Q_1)}+\|\Theta\|_{C^{1-1/q}(Q_1)}
\le N \|U\|_{L_q(Q_2)},
$$
where $N = N(d,m,\ell,\delta,q)$.

\item[(ii)] For any $u \in C_{\operatorname{loc}}^\infty(\overline{\bR^{d+1}_+})$ satisfying \eqref{eq0625_02} in $Q_2^+$ with $f_\alpha=0$ and $u=D_{\hat x}u=\cdots=D_{\hat x}^{m-1}u=0$ on $Q_2\cap\{x_1=0\}$,
we have
$$
\|U'\|_{C^{1-1/q}(Q_1^+)}+\|\Theta\|_{C^{1-1/q}(Q_1^+)}
\le N \|U\|_{L_q(Q_2^+)}
$$
where $N = N(d,m,\ell,\delta,q)$.
\end{enumerate}
\end{lemma}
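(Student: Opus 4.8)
The plan is to mimic the treatment of Lemmas~\ref{lem4.4} and \ref{lem4.10}: reduce to $\lambda=0$, invoke the known $q=2$ interior/boundary H\"older estimates for divergence-form systems whose leading coefficients are measurable in one spatial variable (these are in \cite{MR2835999}), and then bootstrap to general $q\in(1,\infty)$ with the help of the $\cH^m_q$ a priori estimates from the same paper.

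\textbf{Reduction and the $q=2$ case.} First I would reduce to $\lambda=0$ by S.~Agmon's device, exactly as in the proofs of Lemmas~\ref{lem4.4} and \ref{lem4.10}; then, recalling \eqref{eq0625_05}, the vectors $U$ and $U'$ collapse to $(D^\alpha u)_{|\alpha|=m}$ and $(D^\alpha u)_{|\alpha|=m,\,\alpha_1<m}$, and the target becomes $\|U'\|_{C^{1-1/q}}+\|\Theta\|_{C^{1-1/q}}\le N\|D^mu\|_{L_q}$ on $Q_1$ (resp.\ $Q_1^+$). For $q=2$ this is contained in \cite{MR2835999}, so only the upgrade in the exponent of integrability remains.

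\textbf{Bootstrap in $q$.} The mechanism rests on two structural facts. First, since $A^{\alpha\beta}=A^{\alpha\beta}(x_1)$, every purely tangential derivative $D_{\hat x}^{N}u$ and every time derivative $\partial_t^{l}u$ (and their combinations) again solves the homogeneous equation \eqref{eq0625_02} with $f_\alpha=0$, and in the boundary case the flat-boundary condition $u=D_{\hat x}u=\cdots=D_{\hat x}^{m-1}u=0$ on $\{x_1=0\}$ is inherited by $D_{\hat x}$. Applying the interior (resp.\ boundary) $\cH^m_q$ estimate to $D_{\hat x}^N\partial_t^l u$ on a chain of shrinking cylinders between $Q_1$ and $Q_2$ — and, in the interior case, removing polynomials of degree $<m$ via Poincar\'e's inequality, in the boundary case using the boundary Poincar\'e inequality directly — I would obtain $\|D^\gamma D_{\hat x}^{N}\partial_t^{l} u\|_{L_q(Q_{3/2})}\le N\|D^mu\|_{L_q(Q_2)}$ for all $|\gamma|=m$ and all $N,l$. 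Since any spatial derivative $D^{\gamma'}u$ with $\gamma'_1\le m$ is of the form $D^\gamma D_{\hat x}^{\hat\nu}u$ with $|\gamma|=m$, this controls in $L_q$ all derivatives of $u$ of normal order $\le m$, together with arbitrarily many extra tangential and time derivatives; in particular $D_1 v$, $D_{\hat x}^N v$ and $\partial_t^l v$ for every component $v$ of $U'$. Second, rewriting \eqref{eq0625_02} (with $\lambda=0$, $f_\alpha=0$) as $(-1)^m D_1^m\Theta=-\partial_t u-(-1)^m\sum_{|\alpha|=m,\,\alpha_1<m}D^\alpha(A^{\alpha\beta}D^\beta u)$ and integrating $m$ times in $x_1$ expresses $\Theta$, $D_1\Theta$, and $D_{\hat x}^N\partial_t^l\Theta$ through quantities with at most $m-1$ normal derivatives, hence through functions already bounded in $L_q$ by the first step. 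Finally, the one-dimensional Morrey embedding $W^1_q\hookrightarrow C^{1-1/q}$ in the $x_1$ variable, combined with Sobolev embedding (using enough tangential and time derivatives) in the $(t,\hat x)$ variables, converts these $L_q$ bounds into the parabolic $C^{1-1/q}$ bound for the components of $U'$ and for $\Theta$. Case~(ii) is handled identically, with $Q_r^+$ in place of $Q_r$ and the boundary $\cH^m_q$ estimate in place of the interior one.

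\textbf{Main obstacle.} The delicate point is the recovery of $\Theta$ from $D_1^m\Theta$: the term $\partial_t u$ on the right-hand side of the displayed identity only lies in $\bH^{-m}_q$ in the normal direction, so one must show that its $m$-fold $x_1$-antiderivative is a genuine $L_q$ function — most cleanly by keeping $\Theta$ in its divergence/weak formulation and exploiting the $L_q$-control of tangential and time derivatives already secured — and one must ensure the resulting H\"older exponent is exactly $1-1/q$ rather than an unspecified positive number, which is precisely why the one-dimensional embedding in the normal variable, and not a full parabolic embedding, must be the concluding step.
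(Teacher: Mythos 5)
Your overall plan (bootstrap regularity in the directions in which the coefficients are constant, then recover $\Theta$ from the equation, then embed) is the same family of argument as the paper's, which simply instructs the reader to rerun the proofs of Corollary 4.2, Lemma 4.6, Corollary 7.6 and Lemma 7.7 of \cite{MR2835999} with the interior and boundary $\cH^m_q$ estimates in place of the $\cH^m_2$ ones. However, your bootstrap step has a genuine gap. You assert that applying the interior (resp.\ boundary) $\cH^m_q$ estimate to $D_{\hat x}^N\partial_t^l u$ on shrinking cylinders gives $\|D^\gamma D_{\hat x}^{N}\partial_t^{l} u\|_{L_q(Q_{3/2})}\le N\|D^mu\|_{L_q(Q_2)}$ for all $l$. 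For $l\ge 1$ this is circular: the $\cH^m_q$ estimate bounds the $m$-th order spatial derivatives of a solution by the $L_q$ norm of the solution itself on a larger cylinder, so you would first need $\|\partial_t^l u\|_{L_q}\le N\|D^m u\|_{L_q}$, and no such bound is available for divergence-form systems with coefficients merely measurable in $x_1$ — as you yourself point out in your last paragraph, $u_t$ is only controlled as an element of $\bH^{-m}_q$ (it equals $(-1)^{m+1}D_1^m\Theta$ plus terms carrying up to $2m-1$ normal derivatives of $u$ weighted by the rough $A^{\alpha\beta}(x_1)$). Since your concluding ``Sobolev embedding in the $(t,\hat x)$ variables'' leans on precisely these $L_q$ bounds for time derivatives, your argument does not establish the Hölder continuity of $U'$ and $\Theta$ in the time direction (exponent $(1-1/q)/(2m)$ in $t$), which is part of the claimed $C^{1-1/q}$ norm.

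Relatedly, the ``main obstacle'' you flag — recovering $\Theta$ from $D_1^m\Theta$, including the polynomial-in-$x_1$ ambiguity with $(t,\hat x)$-dependent coefficients — is acknowledged but not actually resolved; ``keeping $\Theta$ in its weak formulation'' is a direction, not a proof. The proofs the paper cites avoid both difficulties: only tangential spatial derivatives (which preserve the boundary condition in case (ii)) are bootstrapped via the $\cH^m_q$ estimates and Poincaré-type inequalities, the continuity in $t$ is extracted from the equation itself (the time derivative of the relevant quantities is a sum of spatial derivatives of fluxes already bounded by $\|U\|_{L_q}$, which after mollification in $x$ yields the required modulus of continuity in $t$), and $\Theta$ is estimated through its defining expression rather than by $m$-fold antidifferentiation of $D_1^m\Theta$. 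To make your write-up correct you would either have to supply these missing arguments or, as the paper does, follow the proofs in \cite{MR2835999} verbatim with $q$ in place of $2$.
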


\begin{proof}
The proof follows those of Corollary 4.2, Lemma 4.6, Corollary 7.6, and Lemma 7.7 of \cite{MR2835999} by using the interior and boundary $\cH^m_q$ estimates, i.e., Theorems 2.2 and 2.4 in the same paper, instead of the $\cH^m_2$ estimates.
\end{proof}

\begin{comment}
The following mean oscillation estimate can be derived from Lemma \ref{lem6.4} in the same way as Lemma \ref{lem0604_01} is derived from Lemma \ref{lem0528_01}.}
\begin{lemma}
                                    \label{lem6.5}
Let $\lambda \ge 0$, $q \in (1,\infty)$, $\mu, \nu \in (1,\infty)$, $1/\mu + 1/\nu = 1$, and $\cL$ be the operator in \eqref{eq0625_01}.
Suppose that the lower-order coefficients of $\cL$ are all zero.
Then, under Assumption \ref{assum0625_1} ($\gamma$) with $\Omega = \bR^d$,
for $\kappa \ge 8$, $r \in (0, R_0/\kappa]$, $X_0 \in \bR^{d+1}$, and
$u \in \cH_{q\mu, \text{loc}}^m(\bR^{d+1})$ satisfying \eqref{eq0625_02} in $Q_{\kappa r}(X_0)$, where $f_\alpha \in L_{q, \text{loc}}(\bR^{d+1})$,
there exists $U^Q$ on $Q := Q_{\kappa r}(X_0)$ such that
$N^{-1} |U| \le |U^Q| \le N |U|$ on $Q$ and
$$
\left( |U^Q - (U^Q)_{Q_r(X_0)}|\right)_{Q_r(X_0)}
\le N \kappa^{-\frac{1}{2}} \left( |U|^q \right)_{Q_{\kappa r}(X_0)}^{\frac{1}{q}}
$$
$$
+ N \kappa^{\frac{d+2m}{q}} \left(|F|^q\right)_{Q_{\kappa r}(X_0)}^{\frac{1}{q}}
+ N \kappa^{\frac{d+2m}{q}} \gamma^{\frac{1}{q\nu}} \left( |U|^{q\mu} \right)_{Q_{\kappa r}(X_0)}^{\frac{1}{q\mu}},
$$
where $N=N(d,m,\ell, \delta, q, \mu)$.
\end{lemma}
\end{comment}

Next we derive the following mean oscillation estimate.
Denote
$$
\cC_r(X_0) = (\bR \times \Omega) \cap Q_r(X_0).
$$

\begin{lemma}
                                        \label{lem6.5}
Let $\lambda \ge 0$, $q \in (1,\infty)$, $\kappa \ge 64$, $\mu, \nu \in (1,\infty)$, $1/\mu + 1/\nu = 1$, and $\cL$ be the operator in \eqref{eq0625_01}.
Suppose that the lower-order coefficients of $\cL$ are all zero.
Then, under Assumption \ref{assum0625_1} ($\gamma$) with $\gamma < 1/(4\kappa)$,
for $r \in (0, R_0/\kappa]$, $X_0 = (t_0,x_0) \in \bR^{d+1}$ with $x_0 \in \overline{\Omega}$, and
$u \in \mathring{\cH}_{q\mu, \operatorname{loc}}^m(\bR \times \Omega)$ satisfying \eqref{eq0625_02} in $\cC_{\kappa r}(X_0)$, where $f_\alpha \in L_{q, \operatorname{loc}}(\cC_{\kappa r}(X_0))$,
there exists $U^{\cC}$ on $\cC := \cC_{\kappa r}(X_0)$ such that, on $\cC$,
$$
N^{-1} |U| \le |U^{\cC}| \le N |U|
$$
and
\begin{align}
&\left( |U^{\cC} - (U^{\cC})_{\cC_r(X_0)}|\right)_{\cC_r(X_0)}
\le N (\kappa^{\frac 1 q-1} + \kappa \gamma) \left( |U|^q \right)_{\cC_{\kappa r}(X_0)}^{\frac{1}{q}}\nonumber\\
                                    \label{eq3.33}
&\,\,+ N \kappa^{\frac{d+2m}{q}} \left(|F|^q\right)_{\cC_{\kappa r}(X_0)}^{\frac{1}{q}}
+ N \kappa^{\frac{d+2m}{q}} \gamma^{\frac{1}{q\nu}} \left( |U|^{q\mu} \right)_{\cC_{\kappa r}(X_0)}^{\frac{1}{q\mu}},
\end{align}
where $N=N(d,m,\ell, \delta, q, \mu)$.
\end{lemma}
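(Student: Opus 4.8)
The strategy is the usual freezing-of-coefficients argument combined with the interior/boundary Hölder estimates of Lemma~\ref{lem6.4}, carried out in the new coordinate system provided by Assumption~\ref{assum0625_1}. First I would fix $X_0=(t_0,x_0)$ with $x_0\in\overline\Omega$ and distinguish the interior case $\operatorname{dist}(x_0,\partial\Omega)>\kappa r/8$ from the boundary case. In the interior case $\cC_{\kappa r}(X_0)$ contains a full cylinder $Q_{c\kappa r}(X_0)$ and is comparable to it, so one can work with $Q$'s directly; in the boundary case one uses the coordinate system from Assumption~\ref{assum0625_1}(ii), in which $\bR\times\Omega$ is squeezed between two half-spaces $\{y_1>x_1\pm\gamma r\}$ inside $B_r$, and the Reifenberg flatness (with $\gamma<1/(4\kappa)$) guarantees that $\cC_{\kappa r}(X_0)$ is sandwiched between a half-cylinder and a slightly fattened half-cylinder, both of measure comparable to $(\kappa r)^{d+2m}$.

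The core construction is as follows. Set
$$
\bar A^{\alpha\beta}(x_1)=\dashint_{Q'_{\kappa r}(t_0,\hat x_0)}A^{\alpha\beta}(\tau,x_1,\hat z)\,d\hat z\,d\tau,
$$
the average over the transversal directions, which depends only on $x_1$, and let $\cL_0$ be the operator with these frozen coefficients. Split $u=v+(u-v)$ on $\cC_{\kappa r}(X_0)$, where $v\in\mathring{\cH}^{q}_{\,\cdot}$ solves $v_t+(-1)^m\cL_0 v+\lambda v=0$ on the relevant (half-)cylinder with the appropriate homogeneous Dirichlet-type data $v=D_{\hat x}v=\cdots=D_{\hat x}^{m-1}v=0$ on the flattened boundary piece, and $w:=u-v$ solves an equation with right-hand side $\sum D^\alpha f_\alpha+(-1)^m(\cL_0-\cL)u$. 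For $v$, Lemma~\ref{lem6.4} (interior part (i) or boundary part (ii), after rescaling $\kappa r\to 2$) gives the $C^{1-1/q}$ bound on $V'$ and $\Theta_v$, which upon integrating the oscillation over $Q_r$ yields the $\kappa^{1/q-1}(\,|V|^q\,)^{1/q}$ term; then $(\,|V|^q\,)^{1/q}\lesssim(\,|U|^q\,)^{1/q}+(\,|W|^q\,)^{1/q}$. For $w$, the $\cH^m_q$-estimate of \cite{MR2835999} (its interior/boundary versions, Theorems 2.2 and 2.4 there) bounds $\|W\|_{L_q}$ by $\kappa^{(d+2m)/q}\big(\,(\,|F|^q\,)^{1/q}+(\,|(\cL_0-\cL)u|^q\,)^{1/q}\,\big)$ over $\cC_{\kappa r}(X_0)$, and Hölder's inequality with exponents $\mu,\nu$ together with the small-oscillation bound \eqref{eq0625_04} converts the coefficient difference into $\gamma^{1/(q\nu)}\big(\,|U|^{q\mu}\,\big)^{1/(q\mu)}$. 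Finally one takes $U^{\cC}$ to be $V$ (the "$v$-part" of $(U',\Theta)$, i.e.\ $V'$ on the $\alpha_1<m$ slots and $\Theta_v$ on the normal slot, re-expressed through the ellipticity equivalence $N^{-1}|U|\le|U'|+|\Theta|\le N|U|$ so that $N^{-1}|U|\le|U^{\cC}|\le N|U|$ holds modulo absorbing the $W$-contribution); collecting the two pieces gives \eqref{eq3.33}, with the extra $\kappa\gamma$ term in the first coefficient coming from the measure distortion $|\cC_{\kappa r}(X_0)|$ vs.\ the fattened half-cylinder in the boundary case (this is where Reifenberg flatness with $\gamma<1/(4\kappa)$ enters).

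I expect the main obstacle to be the boundary case bookkeeping: one must carefully pass from the given cylinder $\cC_{\kappa r}(X_0)$ to the half-space model, noting that $x_0$ need not lie on $\partial\Omega$ (only $x_0\in\overline\Omega$), so a preliminary reduction locating a nearby boundary point $\tilde x$ and comparing $\cC_{\kappa r}(X_0)$ with $\cC_{c\kappa r}(t_0,\tilde x)$ is needed; and one must track how the Reifenberg sandwiching produces the $\kappa\gamma$ loss in \eqref{eq3.33} without disturbing the $\gamma^{1/(q\nu)}$ power in the last term. The construction of $v$ and the solvability of its (half-)Dirichlet problem in $\cH^m_q$, as well as the fact that $v$ is smooth enough inside to apply Lemma~\ref{lem6.4}, are handled exactly as in Remark~\ref{rem0713_1} by first mollifying the frozen coefficients $\bar A^{\alpha\beta}(x_1)$ and passing to the limit. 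All remaining manipulations — rescaling from $\kappa r$ to radius $2$, the Poincaré-type reductions, and assembling the three terms — are routine and parallel the derivation of Lemma~\ref{lem0604_01} from Lemma~\ref{lem0528_01} and the arguments in \cite[Sections 4, 7]{MR2835999}.
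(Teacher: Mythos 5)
Your overall plan (interior/boundary dichotomy around a nearby boundary point, freezing the coefficients in $(t,\hat x)$ so that $\bar A^{\alpha\beta}=\bar A^{\alpha\beta}(x_1)$, Lemma \ref{lem6.4} for the homogeneous part, and H\"older plus the small-oscillation hypothesis for the coefficient-difference term) is the same as the paper's, but your boundary case has a genuine gap: you treat the Reifenberg boundary as if it were the flat plane $\{x_1=\gamma R\}$. The zero Dirichlet data of $u$ live on $\partial\Omega$, which lies somewhere inside the strip $\{|x_1|<\gamma R\}$, not on that plane. If you decree that $v$ solves the homogeneous frozen-coefficient system with $v=D_{\hat x}v=\cdots=D_{\hat x}^{m-1}v=0$ on the flattened boundary, then $w=u-v$ inherits the nonzero trace of $u$ there, so it is not in $\mathring{\cH}^m_q$ of the half-space and the global $\cH^m_q$ estimates of \cite{MR2835999} cannot be invoked to bound $\|W\|_{L_q}$; conversely, no solvability result is available to define $v$ directly with lateral data $u$. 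The paper's resolution is the cutoff $\chi$ (vanishing for $x_1\le\gamma R$, equal to $1$ for $x_1\ge2\gamma R$): one works with $\hat u=\chi u$, solves for $\hat w$ on $\bR\times\{x_1>\gamma R\}$ with right-hand side localized by $\varphi=I_{Q_R}$ which contains, besides $\varphi(\bar A^{\alpha\beta}-A^{\alpha\beta})D^\beta u$ and $\varphi f_\alpha$, the commutator terms $\hat g,\hat h$ generated by $\chi$, and then sets $w=\hat w+(1-\chi)u$, $v=\chi u-\hat w$; only then does $v$ vanish to order $m-1$ on the genuinely flat plane $\{x_1=\gamma R\}$ so that Lemma \ref{lem6.4}(ii) applies. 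Controlling $\hat g,\hat h$, whose coefficients blow up like $(\gamma R)^{-k}$, requires Hardy's inequality, and for general $q\neq2$ this is done by the duality argument of Lemma \ref{lem0714_1} leading to \eqref{eq21.52h} (the $q=2$ integration-by-parts of \cite{MR2835999} does not carry over). None of this appears in your proposal, and without it the bound on $W$, hence the $\gamma^{1/(q\nu)}$ term in \eqref{eq3.33}, is not obtained.

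A second, related defect is the choice of $U^{\cC}$: taking it to be ``the $v$-part'' cannot give the pointwise two-sided comparability $N^{-1}|U|\le|U^{\cC}|\le N|U|$ on $\cC$, since $W$ need not be pointwise small relative to $U$, and a pointwise inequality cannot be rescued by ``absorbing the $W$-contribution''. The paper keeps $U'$ unchanged and replaces only the flux slot, setting $U^{\cC}=(U',\Theta)$ with $\Theta=\hat\Theta+\sum_{|\beta|=m}\bar A^{\check\alpha\beta}(x_1)D^\beta w$, so comparability follows from ellipticity of the frozen coefficients, while the mean oscillation of $U^{\cC}$ splits into the oscillations of $V'$ and $\hat\Theta$ plus the average of $|W|$. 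Finally, the $\kappa\gamma$ term does not come from a global ``measure distortion'' but from the thin strip $\cC_r(X_0)\cap\{x_1<\gamma R\}$, of relative measure $\lesssim\kappa\gamma$, on which $v\equiv0$ and one must use $\|V\|_{L_\infty}$ rather than the H\"older seminorm.
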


\begin{proof}
We follow the lines in the proof of Proposition 7.10 in \cite{MR2835999}, where $\Omega=\bR^d_+$ and $q=2$.
As mentioned in Remark \ref{rem0713_1}, we assume that the coefficients are infinitely differentiable.
We further assume $\lambda > 0$. Otherwise, we add the term $\varepsilon u$, $\varepsilon > 0$, to both sides of \eqref{eq0625_02} and obtain \eqref{eq3.33} for the modified system.
Then we let $\varepsilon \searrow 0$.

Let $\tilde x\in \partial\Omega$ be such that $|x_0 -\tilde x|=\rho:=\text{dist}(x_0,\partial\Omega)$.
We consider two cases.

{\em Case 1: $\rho\ge \kappa r/16$.} In this case, we have
$$
\cC_r(X_0)=Q_r(X_0)\subset Q_{\kappa r/16}(X_0)\subset \bR\times\Omega.
$$
Since $\kappa/16\ge 4$, \eqref{eq3.33} follows from Lemma \ref{lem6.4} (i) by using a scaling and rotation of coordinates. See, for instance, the proof of Theorem 6.1 in \cite{MR2835999}.

{\em Case 2: $\rho< \kappa r/16$.} Without loss of generality, one may assume $(t_0,\tilde x)=(0,0)$. Note that
\begin{equation}
                                \label{eq22.15}
\cC_r(X_0)\subset \cC_{\kappa r/8} \subset \cC_{\kappa r/2}
\subset \cC_{\kappa r}(X_0).
\end{equation}
Denote $R=\kappa r/2(< R_0)$. Due to Assumption \ref{assum0625_1}, by taking an orthogonal transformation if necessary, we have
$$
 \{(x_1, \hat{x}):\gamma R< x_1\}\cap B_R
 \subset\Omega\cap B_{R}
 \subset \{(x_1, \hat{x}):-\gamma R<x_1\}\cap B_{R},
$$
and
\begin{equation}
                                \label{eq17_50}
\sup_{|\alpha|=|\beta|=m}\dashint_{Q_R} \left| A^{\alpha\beta}(t,x_1,\hat x) - \bar A^{\alpha\beta}(x_1)\right| \, dx \, dt \le \gamma,
\end{equation}
where
\begin{equation}
							\label{eq0715_01}
\bar A^{\alpha\beta}(x_1)=\dashint_{Q'_R} A^{\alpha\beta}(\tau,x_1,\hat y)\,d\hat y\,d\tau.
\end{equation}
Take a smooth function $\chi$ on $\bR$ such that
$$
\chi(x_1)\equiv 0\quad\text{for}\,\,x_1\le \gamma R,
\quad \chi(x_1)\equiv 1\quad\text{for}\,\,x_1\ge  2\gamma R,
$$
$$
|D^k \chi|\le N(\gamma R)^{-k}\quad\text{for}\,\,k=1,2,...,m.
$$
Let $\hat u=\chi u$, which vanishes on $Q_R\cap \{x_1\le \gamma R\}$. As in \cite[Lemma 7.11]{MR2835999}, $\hat u$ satisfies in $Q_{R}^{\gamma+}:=Q_{R}\cap \{x \in \bR^d : x_1> \gamma R\}$,
$$
\hat u_t+(-1)^m \cL_0 \hat u+\lambda \hat u=(-1)^m \sum_{|\alpha|=|\beta|=m}D^\alpha\left(
(\bar A^{\alpha\beta}- A^{\alpha\beta})D^\beta  u\right)
$$
\begin{equation}
                                    \label{eq17.23b}
+ \sum_{|\alpha|\le m}\chi D^\alpha  f_\alpha+(-1)^m g+(-1)^m h,
\end{equation}
where $\cL_0$ is the differential operator with the coefficients $\bar A^{\alpha\beta}$ from \eqref{eq17_50}, and
\begin{align*}
 g&=\sum_{|\alpha|=|\beta|=m}
D^\alpha\big(\bar A^{\alpha\beta} D^{\beta}((\chi-1) u)\big),\\
 h&=(1-\chi)\sum_{|\alpha|=|\beta|=m}
D^\alpha( A^{\alpha\beta} D^{\beta} u).
\end{align*}
Now let $\hat w$ be the unique $\mathring\cH^{m}_q(\bR\times \{ x: x_1 >\gamma R\})$ solution of
\begin{multline}
							\label{eq0714_03}
\hat w_t+(-1)^m \cL_0 \hat w+\lambda \hat w
=(-1)^m \sum_{|\alpha|=|\beta|=m}D^\alpha\left(
 \varphi(\bar A^{\alpha\beta}- A^{\alpha\beta})D^\beta  u\right)
\\
+ \sum_{|\alpha|\le m}\chi D^\alpha (\varphi f_\alpha) +(-1)^m\hat  g + (-1)^m \hat h
\end{multline}
in $\bR\times \{ x:x_1>\gamma R\}$, where
$\varphi := I_{Q_R}$ and
\begin{align*}
\hat  g&=\sum_{|\alpha|=|\beta|=m}
D^\alpha\big(\bar A^{\alpha\beta} \varphi D^{\beta}((\chi-1) u)\big),\\
\hat h&=(1-\chi)\sum_{|\alpha|=|\beta|=m}
D^\alpha( A^{\alpha\beta} \varphi D^{\beta} u).
\end{align*}
By using Hardy's inequality, the $\cH^m_q$ estimate, and a duality argument (see Lemma \ref{lem0714_1} in Appendix \ref{appendix01} for details), we have
\begin{equation}
            \label{eq21.52h}
\sum_{k=0}^m\lambda^{\frac 1 2-\frac k {2m}}(I_{Q_{R}^{\gamma +}}|D^k \hat w|^q)_{\cC_{R}}^{\frac 1 q}\le N\gamma^{\frac 1 {q\nu}} (|U|^{q\mu})_{\cC_R}^{\frac 1 {q\mu}}+
N(|F|^q)_{\cC_{R}}^{\frac 1 q}.
\end{equation}
We extend $\hat w$ to be zero in $\cC_R\setminus Q_R^{\gamma +}$, so that $\hat w\in \cH^{m}_2(\cC_R)$, and let $w=\hat w+(1-\chi)u$.
Similar to (7.20) of \cite{MR2835999}, we deduce from \eqref{eq21.52h} that
\begin{equation}
            \label{eq18.34h}
(|W|^q)_{\cC_{R}}^{\frac 1 q}
\le N\gamma^{\frac 1 {q\nu}} (|U|^{q\mu})_{\cC_{R}}^{\frac 1  {q\mu}}+
N(|F|^q)_{\cC_{R}}^{\frac 1 q},
\end{equation}
where $W$ is defined in the same way as $U$ with $w$ in place of $u$.
Noting that $\cC_r(X_0) \subset \cC_R$ and $|\cC_R|/|\cC_r(X_0)|
\le N(d) \kappa^{d+2m}$, from \eqref{eq18.34h} we obtain
\begin{equation}
                                \label{eq28_01}
(|W|^q)_{\cC_r(X_0)}^{\frac 1 q}
\le N\kappa^{\frac {d+2m} q}\gamma^{\frac 1 {q\nu}} (|U|^{q\mu})_{\cC_{R}}^{\frac 1 {q\mu}}+N \kappa^{\frac {d+2m} q}
(|F|^q)_{\cC_{R}}^{\frac 1 q}.
\end{equation}

Next we define $v= u- w$ $(=\chi u-\hat w)$ in $\cC_R$.
From \eqref{eq17.23b} and \eqref{eq0714_03}, it is easily seen that $ v=0$ in $\cC_R\setminus Q_R^{\gamma +}$ and $ v$ satisfies
\begin{equation}
                                \label{eq0902}
 v_t+(-1)^m \cL_0  v+\lambda  v=0
\end{equation}
in $Q_{R/2} \cap \{x: x_1> \gamma R\}$ and vanishes along with its derivatives up to $(m-1)$-th order on $Q_R \cap \{ x: x_1=\gamma R\}$.
Note that since the coefficients of $\cL_0$ are infinitely differentiable and $v$ satisfies \eqref{eq0902} in $Q_{R/2} \cap \{x : x_1 > \gamma R\}$ with the Dirichlet boundary condition, by the classical results, $v$ is infinitely differentiable in $Q_{\tau} \cap \{x: x_1 > \gamma R\}$ for any $\tau < R/2$.
Denote
$$
\cD_1=\cC_{r}(X_0)\cap \{x_1<\gamma R\},
\quad
\cD_2=\cC_{r}(X_0)\setminus \cD_1,
\quad
\cD_3=Q_{R/4}\cap\{x_1 > \gamma R\}.
$$
Because of \eqref{eq22.15},
$|\cD_1|\le N\kappa\gamma|\cC_{r}(X_0)|$.
As in \eqref{eq0625_05}, we set
$$
V' = ( \lambda^{\frac{1}{2} - \frac{|\alpha|}{2m}} D^\alpha v )_{|\alpha|\le m, \alpha_1 < m},
\quad
\hat{\Theta} = \sum_{|\beta|=m} \bar{A}^{\check{\alpha}\beta} (x_1) D^\beta v,
$$
where $\bar{A}^{\check{\alpha}\beta}(x_1)$ are from \eqref{eq0715_01}.
Then applying Lemma \ref{lem6.4} (ii) with a scaling argument ($Q_r^+$, $r = 1, 2$, in Lemma \ref{lem6.4} (ii) can be replaced by  $Q_r^+(X)$ if, for instance, $X = (0,x_1, 0)$ and $-1/2<x_1<1/2$),
we get
\begin{align*}
&\big(|V'-(V')_{\cC_r(X_0)}|\big)_{\cC_r(X_0)}
+\big(|\hat \Theta-(\hat \Theta)_{\cC_r(X_0)}|\big)_{\cC_r(X_0)}\\
&\le N r^{1-\frac 1 q}\big([V']_{C^{1-\frac 1 q}(\cD_2)}
+[\hat \Theta]_{C^{1-\frac 1 q}( \cD_2)}\big)
+N\kappa\gamma \|V\|_{L_\infty(\cD_2)}\\
&\le N r^{1-\frac 1 q}\big([V']_{C^{1-\frac 1 q}(\cD_3)}
+[\hat \Theta]_{C^{1-\frac 1 q}( \cD_3)}\big)
+N\kappa\gamma \|V\|_{L_\infty(\cD_3)}\\
&\le N(\kappa^{-1+\frac 1 q}+\kappa \gamma)(|V|^q)_{\cC_{R/2}}^{1/q},
\end{align*}
which together with \eqref{eq18.34h} and \eqref{eq28_01} yields \eqref{eq3.33}.
Indeed,
we set $U^{\cC}=(U', \Theta)$, where
$$
\Theta := \hat{\Theta} + \sum_{|\beta|=m} \bar{A}^{\check{\alpha}\beta}(x_1) D^\beta w.
$$
Then
$$
\big(|U^\cC-(U^\cC)_{\cC_r(X_0)}|\big)_{\cC_r(X_0)}
\le N\big(|V'-(V')_{\cC_r(X_0)}|\big)_{\cC_r(X_0)}
$$
$$
+ N \big(|\hat \Theta-(\hat \Theta)_{\cC_r(X_0)}|\big)_{\cC_r(X_0)}
+ N \big(|W|\big)_{\cC_r(X_0)}.
$$
Here the last term is estimated by \eqref{eq28_01}  and, as shown above, the first two terms on the right-hand side are estimated by
$N(\kappa^{-1+1/q}+\kappa \gamma)(|V|^q)_{\cC_{R/2}}^{1/q}$, which is taken care of by \eqref{eq18.34h} and the fact that $ u =  v+ w$ in $\cC_R$.
This completes the proof of Lemma \ref{lem6.5}.
\end{proof}

Finally, we complete the proof of Theorem \ref{thm0625_1}.
\begin{proof}[Proof of Theorem \ref{thm0625_1}]
With Lemma \ref{lem6.5} in hand, by using Corollaries \ref{cor1} and \ref{cor3} we prove Theorem \ref{thm0625_1} in the same way as Theorems \ref{thm0607_1}, \ref{thm4.4}, and \ref{thm0522_1} were proved.

In particular, to apply Corollaries \ref{cor1} and \ref{cor3}, we first find a filtration of partitions by using
Theorem \ref{thm0326}.
By the properties of the partitions, for each $Q^n$ in the partitions,
there exist $r \in (0,\infty)$ and $X_0 \in \bR \times \Omega$ such that
$$
Q^n \subset \cC_r(X_0),
\quad |\cC_r(X_0)| \le N |Q^n|,
$$
where $N$ depends on $K_2$ (recall that $K_1 = 1$).
Then we use $U^{\cC}$, $\cC = \cC_{\kappa r}(X_0)$, in place of $f^Q$ in Corollary \ref{cor3}, where $\kappa$ is to be chosen appropriately.
For the right-hand side of the inequality in  Corollary \ref{cor3}, we set
$$
g(Y) = (\kappa^{\frac{1}{q_0} -1} + \kappa \gamma) \left[ \cM(|U|^{q_0})(Y)\right]^{\frac{1}{q_0}}
+ \kappa^{\frac{d+2m}{q_0}} \left[ \cM(|F|^{q_0})(Y)\right]^{\frac{1}{q_0}}
$$
$$
+ \kappa^{\frac{d+2m}{q_0}} \gamma^{\frac{1}{q_0\nu}} \left[ \cM (|U|^{q_0\mu})(Y) \right]^{\frac{1}{q_0\mu}},
$$
where $q_0$ and $\mu$ are chosen as in the proof of Lemma \ref{lem0607_1}.
The rest of the details are omitted.
\end{proof}

\section{Existence of solutions}
                                    \label{sec8}

The a priori estimates proved in the previous sections can be used to derive the existence of solutions to the corresponding equations/systems. In this section, as an example we show the solvability of \eqref{eq0625_02} in the mixed-norm weighted Sobolev spaces.

Throughout this section, we assume that the conditions in Theorem \ref{thm0625_1} are satisfied. Then by reverse H\"older's inequality and the doubling property of $A_p$ weights, we can find a sufficiently large constant $p_1$ and small constants $\varepsilon_1,\varepsilon_2\in (0,1)$ depending only on $d$, $p$, $q$, $K_2$, $[w_1]_{A_p}$, and $[w_2]_{A_q}$ such that
$$
1-\frac p {p_1}=\frac 1 {1+\varepsilon_1},\quad 1-\frac q {p_1}=\frac 1 {1+\varepsilon_2},
$$
and both $w_1^{1+\varepsilon_1}$ and $w_2^{1+\varepsilon_1}$ are locally integrable and satisfy the doubling property. That is, for any $r>0$, $x'_0\in \Omega_1$ and $(t_0,x''_0)\in \bR\times \Omega_2$, we have
\begin{align}
                                \label{eq2.41}
\int_{B_{2r}(x_0')\cap \Omega_1}w_1^{1+\varepsilon_1}\,dx'&\le N_0\int_{B_{r}(x_0')\cap \Omega_1}w_1^{1+\varepsilon_1}\,dx',\\
                                    \label{eq2.42}
\int_{\tilde Q_{2r}(t_0,x''_0)\cap (\bR\times \Omega_2)}w_2^{1+\varepsilon_2}\,dx''\,dt&\le N_0\int_{\tilde Q_{r}(t_0,x''_0)\cap (\bR\times \Omega_2)}w_2^{1+\varepsilon_2}\,dx''\,dt,
\end{align}
where $N_0$ is independent of $r$, $x_0'$, and $(t_0,x_0'')$,
$$
\tilde Q_{r}(t_0,x''_0)=(t_0-r^{2m},t_0+r^{2m})\times B_r(x_0'')
$$
is a double parabolic cylinder in $\bR\times \bR^{d_2}$, and
$B_{r}(x_0')$ (and $B_r(x_0'')$) is a ball in $\bR^{d_1}$ (and $\bR^{d_2}$, respectively).
By H\"older's inequality, it is easily seen that any function $f\in L_{p_1}(\bR\times\Omega)$ is locally in $L_{p,q,w}(\bR\times \Omega)$, and for any $r>0$,
\begin{equation}
                                \label{eq3.39}
\|f\|_{L_{p,q,w}(\cQ_r \cap (\bR\times \Omega))}\le N\|f\|_{L_{p_1}(\cQ_r\cap (\bR\times \Omega))},
\end{equation}
where $\cQ_r := (-r^{2m}, r^{2m}) \times B_r \subset \bR \times \bR^d$ and $N$ also depends on $r$.

Now let $f_\alpha \in L_{p,q,w}(\bR \times \Omega)$ for $|\alpha|\le m$. By the denseness of $C_0^\infty(\bR\times \overline{\Omega})$ in $L_{p,q,w}(\bR\times \Omega)$, for each multi-index $\alpha$, we can find a sequence of smooth functions $\{f_{\alpha,k}\}$ with bounded supports such that
\begin{equation}
                                \label{eq3.16}
f_{\alpha,k}\to f_\alpha\quad\text{in}\,\,L_{p,q,w}(\bR\times \Omega)
\quad\text{as}\,\,k\to \infty.
\end{equation}
Since for each $k=0,1,\ldots$, $f_{\alpha,k}\in L_{p_1}(\bR\times\Omega)$, by the solvability in unweighted Sobolev spaces (see \cite[Theorem 8.2]{MR2835999}), there is a unique solution $u_k\in \mathring{\cH}_{p_1}^m(\bR \times \Omega)$ to
$$
(u_k)_t + (-1)^m \cL u_k + \lambda u_k = \sum_{|\alpha| \le m} D^\alpha f_{\alpha,k}
$$
in $\bR\times \Omega$ provided that
$$
\gamma\le \gamma_1(d,n,m,p_1,\delta)\quad\text{and}\quad \lambda\ge\lambda_1(d,n,m,p_1,\delta,R_0).
$$

We claim that if $\gamma$ is taken to be smaller than $\gamma_1$ as well as the constant $\gamma$ in Theorem \ref{thm0625_1}, and $\lambda\ge \max\{\lambda_0,\lambda_1\}$, then $u_k\in \mathring{\cH}_{p,q,w}^m(\bR \times \Omega)$. Assume for the moment that the claim is proved. Then it follows from the a priori estimate \eqref{eq0625_03} and \eqref{eq3.16} that $\{u_k\}$ is a Cauchy sequence in $\mathring{\cH}_{p,q,w}^m(\bR \times \Omega)$. Let $u$ be its limit. Then by taking the limit of the weak formulation for the equation of $u_k$, it is easily seen that $u$ is a solution to \eqref{eq0625_02}.

It remains to prove the claim. We fix a $k\in \mathbb N$ and assume that $f_\alpha,|\alpha|\le m,$ are supported in $ \cQ_R \cap (\bR\times \Omega)$ for some $R\ge 1$.
By \eqref{eq3.39}, we have
\begin{equation}
                                    \label{eq4.27}
\|D^\alpha u_k\|_{L_{p,q,w}(\cQ_{2R}\cap (\bR\times \Omega))}< \infty, \quad 0 \le |\alpha| \le m.
\end{equation}
For $j\ge 0$, we take a sequence of smooth functions $\eta_j$ such that $\eta_j\equiv 0$ in $\cQ_{2^jR}$, $\eta_j\equiv 1$ outside $\cQ_{2^{j+1}R}$, and
$$
|D^\alpha \eta_j|\le N 2^{-j|\alpha|},\quad |\alpha|\le m,
\quad |(\eta_j)_t|\le N2^{-2mj}.
$$
A simple calculation reveals that $u_k\eta_j\in\mathring{\cH}_{p_1}^m(\bR \times \Omega)$ satisfies
\begin{align*}
&(u_k\eta_j)_t + (-1)^m \cL (u_k\eta_j) + \lambda u_k\eta_j \\
&= u_k (\eta_j)_t+
\sum_{|\alpha| \le m,|\beta|\le m}\sum_{1\le |\tilde \beta|\le |\beta|} D^\alpha \big(A^{\alpha\beta}c_{\beta,\tilde \beta}D^{\tilde \beta}\eta_j D^{\beta-\tilde\beta}u_k\big),
\end{align*}
where $c_{\beta,\tilde \beta}$ are combinatorial numbers. Applying the a priori estimate in \cite[Theorem 8.2]{MR2835999} with $p_1$ instead of $p$ to $u_k\eta_j$, we get
\begin{align*}
&\sum_{|\alpha| \le m} \lambda^{1-\frac{|\alpha|}{2m}} \| D^\alpha (u_k\eta_j) \|_{L_{p_1}(\bR \times \Omega)} \le N\|u_k(\eta_j)_t\|_{L_{p_1}(\bR \times \Omega)} \\
&+\quad N \sum_{|\alpha| \le m, |\beta| \le m}\sum_{1\le |\tilde \beta| \le |\beta|} \lambda^{\frac{|\alpha|}{2m}}
\|D^{\tilde \beta}\eta_j D^{\beta-\tilde\beta}u_k\|_{L_{p_1}(\bR \times \Omega)},
\end{align*}
which implies that
\begin{align*}
&\sum_{|\alpha| \le m} \lambda^{1-\frac{|\alpha|}{2m}} \| D^\alpha u_k \|_{L_{p_1}(\bR \times \Omega \setminus \cQ_{2^{j+1}R})} \le N2^{-j}\|u_k\|_{L_{p_1}((\cQ_{2^{j+1}R}\setminus \cQ_{2^{j}R})\cap (\bR\times \Omega))} \\
&+\quad N 2^{-j}\sum_{|\alpha| \le m, |\beta| \le m-1} \lambda^{\frac{|\alpha|}{2m}}
\|D^{\beta}u_k\|_{L_{p_1}((\cQ_{2^{j+1}R}\setminus \cQ_{2^{j}R})\cap (\bR\times \Omega))}.
\end{align*}
Thus we have
\begin{align*}
&\sum_{|\alpha| \le m} \lambda^{1-\frac{|\alpha|}{2m}} \| D^\alpha u_k \|_{L_{p_1}((\cQ_{2^{j+2}R}\setminus \cQ_{2^{j+1}R})\cap (\bR\times \Omega))} \\
&\le N2^{-j}
\sum_{|\alpha| \le m} \lambda^{1-\frac{|\alpha|}{2m}} \| D^\alpha u_k \|_{L_{p_1}((\cQ_{2^{j+1}R}\setminus \cQ_{2^{j}R})\cap (\bR\times \Omega))},
\end{align*}
where we also used the fact that $\lambda \ge 1$.
By induction, we obtain for each $j\ge 1$,
\begin{align}
                                    \label{eq4.19}
&\sum_{|\alpha| \le m} \lambda^{1-\frac{|\alpha|}{2m}} \| D^\alpha u_k \|_{L_{p_1}((\cQ_{2^{j+1}R}\setminus \cQ_{2^{j}R})
\cap (\bR\times \Omega))} \nonumber\\
&\le N2^{-\frac{j(j-1)} 2}
\sum_{|\alpha| \le m} \lambda^{1-\frac{|\alpha|}{2m}} \| D^\alpha u_k \|_{L_{p_1}(\cQ_{2R} \cap (\bR\times\Omega))}.
\end{align}
Finally, by H\"older's inequality, \eqref{eq2.41}, \eqref{eq2.42}, and \eqref{eq4.19}, we get for each $j\ge 1$,
\begin{align*}
&\sum_{|\alpha| \le m} \lambda^{1-\frac{|\alpha|}{2m}} \| D^\alpha u_k \|_{L_{p,q,w}((\cQ_{2^{j+1}R}\setminus \cQ_{2^{j}R})
\cap (\bR\times \Omega))} \\
&\le \|w_1\|_{L_{1+\varepsilon_1}(B_{2^{j+1}R} \cap \Omega_1)}^{\frac 1 p}\|w_2\|_{L_{1+\varepsilon_2}(\tilde Q_{2^{j+1}R} \cap (\bR\times \Omega_2))}^{\frac 1 q}\\
&\quad \cdot\sum_{|\alpha| \le m} \lambda^{1-\frac{|\alpha|}{2m}} \| D^\alpha u_k \|_{L_{p_1}(( \cQ_{2^{j+1}R}\setminus \cQ_{2^{j}R})
\cap (\bR\times \Omega))} \\
&\le NN_0^{j(\frac 1 p+\frac 1 q)}2^{-\frac{j(j-1)} 2}
\sum_{|\alpha| \le m} \lambda^{1-\frac{|\alpha|}{2m}} \| D^\alpha u_k \|_{L_{p_1}(\cQ_{2R} \cap (\bR\times\Omega))},
\end{align*}
where $B_{2^{j+1}R}$ is a ball in $\bR^{d_1}$ as in \eqref{eq2.41}.
The above inequality together with \eqref{eq4.27} implies that
\begin{equation}
							\label{eq1220_01}
D^\alpha u_k \in L_{p,q,w}(\bR \times \Omega), \quad 0 \le |\alpha| \le m.
\end{equation}
Then the claim,  i.e., $u_k \in \mathring{\cH}_{p,q,w}^m(\bR \times \Omega)$, follows from \eqref{eq1220_01}, the equation for $u_k$, and  the fact that $u_k \in \mathring{\cH}_{p_1}^m(\bR \times \Omega)$.

\appendix
\section{}							\label{appendix01}

\begin{proof}[Proof of Theorem \ref{thm0623}]
We follow the proof of Theorem 1.4 in \cite{MR2797562} in the setting of spaces of homogeneous type.

For the given $w \in A_p$, define
$$
\cR h(x) = \sum_{k=0}^\infty \frac{\cM^k h(x)}{2^k N_1^k}, \quad
h \in L_p(w \, d\mu),
$$
where, for $k \ge 1$, $\cM^k = \cM \circ \cdots \circ \cM$ is the $k$-th iteration of the maximal operator, $\cM^0 h = |h|$, and $N_1$ is the constant $N$ from Theorem \ref{thm1} determined by $K_1$, $K_2$, $p$, and $[w]_{A_p}$.
The operator $\cR$ has the following properties:
\begin{enumerate}
\item for all $x \in \cX$, $|h(x)| \le \cR h (x)$;

\item $\| \cR h\|_{L_p(w \, d\mu)} \le 2 \| h \|_{L_p(w \, d\mu)}$;

\item $\cR h \in A_1$, i.e., there exists a constant $C$ such that $\cM \left(\cR h\right)(x) \le C \cR h(x)$ for almost every $x \in \cX$.
The infimum of all such $C$, denoted by $[\cR h]_{A_1}$, satisfies
$$
[\cR h]_{A_1} \le 2 N_1.
$$
\end{enumerate}

Now we define the operator $\cM' h = \cM( h w)/w$.
Since $w^{1-p'} \in A_{p'}$, where $1/p + 1/p' = 1$, $\cM$ is bounded on $L_{p'}(w^{1-p'} \, d\mu)$,
thus $\cM'$ is bounded on $L_{p'}(w \, d\mu)$.
Note that
$$
[w^{1-p'}]_{A_{p'}} = \left( [w]_{A_p} \right)^{1/(p-1)},
$$
and, for $h \in L_{p'}(w \, d\mu)$,
\begin{align*}
&\|\cM' h\|_{L_{p'}( w \, d\mu)}
= \| \cM (hw) \|_{L_{p'}( w^{1-p'} \, d\mu)}\\
&\le N_2 \| h w \|_{L_{p'}( w^{1-p'} \, d\mu)}
= N_2 \|h\|_{L_{p'}( w \, d\mu)},
\end{align*}
where $N_2$ is the constant $N$ from Theorem \ref{thm1} determined by $K_1$, $K_2$, $p'$, and $[w^{1-p'}]_{A_{p'}}$.
Define
$$
\cR' h(x) = \sum_{k=0}^\infty \frac{(\cM')^k h(x)}{2^k N_2^k},
\quad
h \in L_{p'}(w \, d\mu),
$$
where, again, $(\cM')^0 h = |h|$.
As above, the operator $\cR'$ has the following properties:
\begin{enumerate}
\item for all $x \in \cX$, $|h(x)| \le \cR' h (x)$;

\item $\| \cR' h\|_{L_{p'}(w \, d\mu)} \le 2 \| h \|_{L_{p'}(w \, d\mu)}$;

\item $\cM' (\cR'h)(x) \le 2 N_2 \cR' h(x)$.
Thus $(\cR' h) w \in A_1$ with $[(\cR' h) w]_{A_1} \le 2 N_2$.
\end{enumerate}

Set
$$
\Lambda_0 = 2^{p_0} N_1^{p_0-1}N_2.
$$
Using this $\Lambda_0$ and the assumption that \eqref{eq0623_01} holds for all $\tilde{w} \in A_{p_0}$ satisfying $[\tilde{w}]_{A_{p_0}} \le \Lambda_0$,
we derive \eqref{eq0623_02}.
Assume that $\|g\|_{L_p(w \, d\mu)} < \infty$.
Otherwise, there is nothing to prove.
Let $h \in L_{p'}(w \, d\mu)$.
Since $g \in L_p(w \, d\mu)$ and $h \in L_{p'}(w \, d\mu)$, by the properties of $\cR$ and $\cR'$ above, we have
$$
\cR g \in A_1, \quad [\cR g]_{A_1} \le 2 N_1,
\quad
(\cR' h) w \in A_1, \quad [(\cR' h) w]_{A_1} \le 2 N_2.
$$
Denote
$$
\tilde{w} = (\cR g)^{1-p_0} (\cR' h)w.
$$
Then, for instance, by Proposition 1.2 in \cite{MR0740173}, we have $\tilde{w} \in A_{p_0}$ and
$$
[\tilde{w}]_{A_{p_0}} \le [\cR g]_{A_1}^{p_0-1} [(\cR' h) w]_{A_1} \le 2^{p_0} N_1^{p_0-1} N_2 = \Lambda_0.
$$
Note that, from the properties of $\cR$ and $\cR'$,
\begin{align}
							\label{eq0624_01}
\int_\cX (\cR g) (\cR' h) w \, d\mu
&\le \left(\int_\cX |\cR g|^p w \, d\mu\right)^{1/p} \left( \int_\cX |\cR'h|^{p'} w \, d \mu \right)^{1/p'}\nonumber
\\
&\le 4 \|g\|_{L_p(w \, d\mu)} \|h\|_{L_{p'}(w \, d\mu)},
\end{align}
and by \eqref{eq0623_01}
\begin{align*}
&\left(\int_\cX |f|^{p_0} (\cR g)^{1-p_0} (\cR'h) w \, d\mu \right)^{1/p_0}
= \left(\int_\cX |f|^{p_0} \tilde{w} \, d\mu \right)^{1/p_0}\\
&\le N_0 \left(\int_\cX |g|^{p_0} \tilde{w} \, d\mu \right)^{1/p_0}
= N_0 \left(\int_\cX |g|^{p_0} (\cR g)^{1-p_0} (\cR'h) w \, d\mu \right)^{1/p_0}.
\end{align*}
From this, the inequalities $|h| \le \cR' h$, $|g| \le \cR g$, H\"older's inequality, and \eqref{eq0624_01}, we have
\begin{align*}
&\int_\cX |f| h w \, d \mu
\le \int_\cX |f| (\cR g)^{-1/p_0'} (\cR g)^{1/p_0'} (\cR' h) w \, d \mu\\
&\le \left( \int_\cX |f|^{p_0} (\cR g)^{1-p_0} (\cR'h) w \, d\mu \right)^{1/p_0}
\left(\int_\cX (\cR g) (\cR' h) w \, d\mu \right)^{1/p_0'}\\
&\le N_0 \left(\int_\cX |g|^{p_0} (\cR g)^{1-p_0} (\cR'h) w \, d\mu \right)^{1/p_0} \left(\int_\cX (\cR g) (\cR' h) w \, d\mu \right)^{1/p_0'}\\
&\le N_0 \left(\int_\cX (\cR g) (\cR'h) w \, d\mu \right)^{1/p_0} \left(\int_\cX (\cR g) (\cR' h) w \, d\mu \right)^{1/p_0'}\\
&= N_0 \int_\cX (\cR g) (\cR'h) w \, d\mu
\le 4 N_0 \|g\|_{L_p(w \, d\mu)} \|h\|_{L_{p'}(w \, d\mu)}.
\end{align*}
Since $h \in L_{p'}(w \, d\mu)$ is arbitrary and, by the definition of $A_p$ weights, $w \, d\mu$ is a $\sigma$-finite measure of $\cX$, from the above inequality we obtain $\|f\|_{L_p(w \, d\mu)} < \infty$ and the inequality \eqref{eq0623_02}.
The second statement is clear because the constants $N_1$ and $N_2$ from Theorem \ref{thm1} can be chosen depending only on the upper bound of $[w]_{A_p}$.
\end{proof}

Since the proof of Lemma 6.5 follows that of \cite[Proposition 7.10]{MR2835999}, the following lemma is proved by combining $L_q$-versions of the claims in Lemmas 7.12 and 7.13 of \cite{MR2835999}.
However, the integration by parts argument in the proof of  \cite[Lemma 7.12]{MR2835999} for $q=2$ does not work for general $q \in (1,\infty)$, so instead we use a duality argument (also used in \cite{MR3403998}), the details of which are given below.

\begin{lemma}
							\label{lem0714_1}
Let $\hat w$ be the unique $\mathring\cH^{m}_q(\bR\times \{x \in \bR^d : x_1>\gamma R\})$ solution of \eqref{eq0714_03} in $\bR\times \{x:x_1>\gamma R\}$,
Then we have \eqref{eq21.52h}.
\end{lemma}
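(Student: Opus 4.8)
The plan is to reduce \eqref{eq21.52h} to the $\mathring{\cH}^m_q$ a priori estimate for the divergence-form operator $\partial_t+(-1)^m\cL_0+\lambda$ on the half-space $\bR\times\{x:x_1>\gamma R\}$ --- that is, to the $L_q$-analogue ($p=q$) of the interior and boundary estimates in \cite[Theorems 2.2 and 2.4]{MR2835999}, which are valid for every $q\in(1,\infty)$. Denoting by $T$ the right-hand side of \eqref{eq0714_03}, this estimate gives
$$
\sum_{k=0}^m\lambda^{\frac12-\frac k{2m}}\|D^k\hat w\|_{L_q(\bR\times\{x_1>\gamma R\})}\le N\|T\|_{\bH^{-m}_q,\lambda},
$$
where the weighted negative-order norm on the right can, by duality, be written as the supremum of $|\langle T,\psi\rangle|$ over $\psi\in C_0^\infty(\bR\times\{x_1>\gamma R\})$ with $\sum_{k=0}^m\lambda^{\frac k{2m}-\frac12}\|D^k\psi\|_{L_{q'}}\le1$, $1/q+1/q'=1$. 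Since $Q_R^{\gamma+}\subset\bR\times\{x_1>\gamma R\}$ and $|\cC_R|\simeq|Q_R|$ (because $\gamma<1/4$), it then suffices to bound $|\langle T,\psi\rangle|$, for every such $\psi$, by $N\gamma^{1/(q\nu)}|\cC_R|^{1/q}(|U|^{q\mu})_{\cC_R}^{1/(q\mu)}+N|\cC_R|^{1/q}(|F|^q)_{\cC_R}^{1/q}$, and then divide by $|\cC_R|^{1/q}$.

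I would estimate $\langle T,\psi\rangle$ by treating the four groups of terms in $T$ (see \eqref{eq17.23b}--\eqref{eq0714_03}) separately. For the term $(-1)^m\sum_{|\alpha|=|\beta|=m}D^\alpha(\varphi(\bar A^{\alpha\beta}-A^{\alpha\beta})D^\beta u)$, one integration by parts, H\"older's inequality with exponents $\mu$ and $\nu$, and the bound $(\dashint_{Q_R}|\bar A^{\alpha\beta}-A^{\alpha\beta}|^{q\nu})^{1/(q\nu)}\le N\gamma^{1/(q\nu)}$ (which follows from \eqref{eq17_50} and the boundedness of the coefficients) yield the $\gamma^{1/(q\nu)}(|U|^{q\mu})^{1/(q\mu)}$ contribution. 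For the data term $\sum_{|\alpha|\le m}\chi D^\alpha(\varphi f_\alpha)$, write $\langle\chi D^\alpha(\varphi f_\alpha),\psi\rangle=(-1)^{|\alpha|}\int\varphi f_\alpha\,D^\alpha(\chi\psi)$ and expand $D^\alpha(\chi\psi)$ by the Leibniz rule: the term with all derivatives on $\psi$ contributes $N\|f_\alpha\|_{L_q(Q_R)}\|D^\alpha\psi\|_{L_{q'}}$, hence an $(|F|^q)^{1/q}$ term, while each term in which $|\tilde\alpha|\ge1$ derivatives fall on $\chi$ carries $|D^{\tilde\alpha}\chi|\le N(\gamma R)^{-|\tilde\alpha|}$ supported in the strip $\{\gamma R<x_1<2\gamma R\}$; there one uses $(\gamma R)^{-|\tilde\alpha|}\le(x_1-\gamma R)^{-|\tilde\alpha|}$ together with the Hardy inequality $\|(x_1-\gamma R)^{-|\tilde\alpha|}D^{\alpha-\tilde\alpha}\psi\|_{L_{q'}(\{x_1>\gamma R\})}\le N\|D^m\psi\|_{L_{q'}}$, valid because $\psi$ and its $x_1$-derivatives up to order $m-1$ vanish on $\{x_1=\gamma R\}$, to absorb the negative powers of $\gamma R$ into derivatives of $\psi$ of order at most $m$, again leaving an $\|f_\alpha\|_{L_q(Q_R)}$ factor. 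The two cutoff-correction terms $\hat g=\sum D^\alpha(\bar A^{\alpha\beta}\varphi D^\beta((\chi-1)u))$ and $\hat h=(1-\chi)\sum D^\alpha(A^{\alpha\beta}D^\beta u)$ are handled in the same way: after pairing with $\psi$ and integrating by parts, derivatives of $\chi-1$ or $1-\chi$ are controlled by the same Hardy bound, every resulting term is supported in the thin strip of measure $\le N\gamma|Q_R|$, and H\"older's inequality with exponents $\mu,\nu$ extracts an extra $\gamma^{1/(q\nu)}$ and a $(|U|^{q\mu})^{1/(q\mu)}$ factor. Collecting the four contributions and dividing by $|\cC_R|^{1/q}$ gives \eqref{eq21.52h}.

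The main obstacle is the appearance of the negative powers $(\gamma R)^{-k}$ produced when derivatives of the cutoff $\chi$ land on $\varphi f_\alpha$ --- which is merely in $L_q$, so its derivatives are not available --- or on the factors involving $u$; rewriting such terms in divergence form only trades this for large $L_q$ data. The proof of \cite[Lemma 7.12]{MR2835999} dispenses with these terms for $q=2$ by testing the equation against $\hat w$ itself, which is illegitimate for general $q$ since the energy structure is $L_2$-based. Replacing that step by the duality pairing against a test function $\psi$ (as in \cite{MR3403998}) and invoking the Hardy inequality on $\{x_1>\gamma R\}$ --- where, by construction of $\mathring{\cH}^m_q$, the admissible $\psi$ vanish to order $m-1$ at the artificial boundary $\{x_1=\gamma R\}$ --- is precisely the device that makes the bound uniform in $q\in(1,\infty)$; keeping the $\lambda$-weights consistent throughout, so that the exponents match those in \eqref{eq21.52h}, is routine bookkeeping.
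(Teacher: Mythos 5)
Your proposal is, in substance, the paper's own argument: both are duality arguments that combine the $\cH^m_q$ theory of \cite{MR2835999} for the operator with coefficients $\bar A^{\alpha\beta}(x_1)$ on the half space $\{x_1>\gamma R\}$ with H\"older's inequality against the oscillation bound \eqref{eq17_50}, Hardy's inequality at $\{x_1=\gamma R\}$ (Lemma 7.9 of \cite{MR2835999}), and the thin-strip bound $|\{-\gamma R<x_1<2\gamma R\}\cap Q_R|\le N\gamma |Q_R|$; the term-by-term estimates are identical. The only organizational difference is that the paper tests $\hat w$ against the solution $v$ of the adjoint problem \eqref{eq0715_02} with arbitrary smooth data $h_\alpha$ and uses \eqref{eq0715_04}, which avoids any discussion of negative-order norms, whereas you apply the direct a priori estimate to $\hat w$ and then dualize the $\bH^{-m}_q$ norm of the right-hand side; the latter additionally needs the (standard, Hahn--Banach) identification of the infimum-over-representations norm with the supremum over test functions, which you should at least record.

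Two concrete $\lambda$-bookkeeping slips must be fixed, since as written they spoil exactly the weights you dismiss as routine. First, the dual test class has the exponent inverted: the supremum should be over $\psi$ normalized by $\sum_{k\le m}\lambda^{\frac12-\frac{k}{2m}}\|D^k\psi\|_{L_{q'}}\le 1$ (solution-side weights), not $\lambda^{\frac{k}{2m}-\frac12}$; with your normalization the pairing of $\varphi f_\alpha$ against $D^\alpha\psi$ for $|\alpha|<m$ only produces $\|f_\alpha\|_{L_q}$ instead of $\lambda^{\frac{|\alpha|}{2m}-\frac12}\|f_\alpha\|_{L_q}$, i.e.\ you lose a factor $\lambda^{1-\frac{|\alpha|}{m}}$ and \eqref{eq21.52h} does not come out with $F$. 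Second, in the Leibniz/Hardy step you should preserve the order of differentiation, bounding $\|D^\alpha(\chi\psi)\|_{L_{q'}}\le N\|D^\alpha\psi\|_{L_{q'}}$ (this is precisely how the paper applies Lemma 7.9 of \cite{MR2835999} to $\chi v$), rather than passing to $\|D^m\psi\|_{L_{q'}}$, again so that lower-order data keep their correct $\lambda$-weights. Finally, note that for the $\hat g$-term the Hardy inequality is applied to $u$, not to $\psi$: there the derivatives of $\chi-1$ multiply lower-order derivatives of $u$, and the bound $(\gamma R)^{-k}|D^{m-k}u|\lesssim$ (quantities controlled by $D^mu$) uses that $u$ vanishes to order $m-1$ on $\partial\Omega$, which lies within distance $N\gamma R$ of the strip; this hypothesis should be stated. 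With these adjustments your proof coincides with the one in the paper.
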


\begin{proof}
Set $\tilde\Omega = \{x \in \bR^d: x_1 > \gamma R\}$ and
$$
\cL_1 v = \sum_{|\alpha|=|\beta|=m} D^\beta \left(\bar{A}^{\alpha\beta} D^\alpha v\right),
$$
where $\bar{A}^{\alpha\beta}$ are from \eqref{eq0715_01}.
By using the results in \cite{MR2835999}, in particular, by replacing $t$ by $-t$ in Theorem 2.4 (iii) there,
for $h_\alpha \in C_0^\infty(\bR \times\tilde\Omega)$, $0 \le |\alpha| \le m$, find a unique solution $v \in \mathring{\cH}_{q'} ( \bR \times \tilde\Omega )$, $1/q + 1/q' = 1$, to the system
\begin{equation}
							\label{eq0715_02}
- v_t + (-1)^m \cL_1 v + \lambda v = (-1)^{|\alpha|} D^\alpha h_\alpha
\end{equation}
in $\bR \times \tilde\Omega$ satisfying
\begin{equation}
							\label{eq0715_04}
\sum_{|\alpha| \le m}\lambda^{1-\frac{|\alpha|}{2m}} \| D^\alpha v\|_{L_{q'}(\bR \times \tilde\Omega)} \le N \sum_{|\alpha| \le m} \lambda^{\frac{|\alpha|}{2m}} \|h_\alpha\|_{L_{q'}(\bR \times \tilde\Omega)}.
\end{equation}
Since an approximation is available, if it is convenient, we may assume that $v$ is infinitely differentiable and has a compact support.
From \eqref{eq0715_02} and \eqref{eq0714_03} (apply $v$ as a test function to \eqref{eq0714_03}),
we see that
\begin{align*}
&\sum_{|\alpha| \le m} \int_{\bR \times \tilde\Omega} h_\alpha D^\alpha \hat{w} \, dx \, dt =
\int_{\bR \times \tilde\Omega} \left(- \hat{w} v_t + \bar{A}^{\alpha\beta} D^\beta \hat{w} D^\alpha v + \lambda \hat{w} v\right) \, dx \, dt\\
&= \sum_{|\alpha|=|\beta|=m}\int_{\bR \times \tilde\Omega}
\varphi ( \bar{A}^{\alpha\beta} - A^{\alpha\beta}) D^\beta u D^\alpha v \, dx \, dt\\
&\quad + \sum_{|\alpha| \le m} (-1)^{|\alpha|}\int_{\bR \times \tilde\Omega} \varphi f_\alpha D^\alpha(\chi v) \, dx \, dt\\
&\quad + \sum_{|\alpha|=|\beta|=m} \int_{\bR \times \tilde\Omega}
 \bar{A}^{\alpha\beta} \varphi D^\beta\left((\chi-1)u\right) D^\alpha v \, dx \, dt\\
&\quad + \sum_{|\alpha|=|\beta|=m}\int_{\bR \times \tilde\Omega} I_{x_1<2\gamma R}A^{\alpha\beta} \varphi D^\beta u D^\alpha \left( (1-\chi) v \right) \, dx \, dt,
\end{align*}
where in the last integral we used the fact that $1-\chi = 0$ if $x_1 \ge 2\gamma R$.
The above equalities with H\"{o}lder's inequality show that
\begin{multline}
							\label{eq0715_03}
\sum_{|\alpha| \le m} \int_{\bR \times \tilde\Omega} h_\alpha D^\alpha \hat{w} \, dx \, dt \le N \| \varphi(\bar{A}^{\alpha\beta} - A^{\alpha\beta}) D^m u\|_{L_q} \|D^m v\|_{L_{q'}}
\\
+ \sum_{|\alpha| \le m} \| \varphi f_\alpha  \|_{L_q} \| D^\alpha (\chi v) \|_{L_{q'}}
+ N \| \varphi D^m \left( (\chi-1) u \right) \|_{L_q} \|D^m v\|_{L_{q'}}
\\
+ N \|I_{x_1<2\gamma R} \, \varphi D^m u\|_{L_q} \| D^m \left( (1 - \chi) v \right) \|_{L_{q'}},
\end{multline}
where $L_q = L_q(\bR \times \tilde\Omega)$ and $L_{q'} = L_{q'}(\bR \times \tilde\Omega)$.
Notice that
$$
|D^k \chi| \le N (x_1 - \gamma R)^{-k}
$$
for $k = 1, 2, \ldots, m$.
Thus by Lemma 7.9 in \cite{MR2835999} we have
$$
\|D^\alpha (\chi v) \|_{L_{q'}} \le N \| D^\alpha v\|_{L_{q'}},
\quad
|\alpha| \le m.
$$
Since $h_\alpha \in C_0^\infty (\bR \times\tilde\Omega)$ are arbitrary, this inequality together with \eqref{eq0715_03} and \eqref{eq0715_04} implies that
\begin{multline}
							\label{eq0715_05}
\sum_{|\alpha| \le m}\lambda^{\frac{1}{2} - \frac{|\alpha|}{2m}} \| D^\alpha \hat{w}\|_{L_q}
\le N \| \varphi(\bar{A}^{\alpha\beta} - A^{\alpha\beta}) D^m u \|_{L_q}
+ N \sum_{|\alpha| \le m} \lambda^{\frac{|\alpha|}{2m} - \frac{1}{2}} \| \varphi f_\alpha \|_{L_q}
\\
+ N \|\varphi D^m\left( (\chi - 1) u \right) \|_{L_q}
+ N \|I_{x_1<2\gamma R} \, \varphi D^m u\|_{L_q},
\end{multline}
where $N=N(d,m,\ell,\delta,q)$.

Now we observe that by H\"{o}lder's inequality and Assumption \ref{assum0625_1} (recall that $\varphi = I_{Q_R}$)
\begin{align*}
\| \varphi(\bar{A}^{\alpha\beta} - A^{\alpha\beta}) D^m u \|_{L_q}
&\le \left(\int_{\cC_R} |\bar{A}^{\alpha\beta} - A^{\alpha\beta}|^q |D^m u|^q \, dx \, dt\right)^{1/q}\\
&\le N R^{\frac{d+2m}{q}} \gamma^{\frac{1}{q\nu}} \left( |U|^{q\nu} \right)_{\cC_R}^{\frac{1}{q\mu}}.
\end{align*}
By Lemma 7.9 in \cite{MR2835999} and H\"{o}lder's inequality (see the proof of \cite[Lemma 7.13]{MR2835999})
\begin{align*}
&\|\varphi D^m ((\chi-1) u) \|_{L_q}\\
&\le \left( \int_{\cC_R} I_{-\gamma R < x_1 < 2 \gamma R} |D^m u|^q \, dx \, dt \right)^{1/q}
\le N  R^{\frac{d+2m}{q}} \gamma^{\frac{1}{q\nu}} \left( |U|^{q\mu} \right)_{\cC_R}^{\frac{1}{q\mu}}.
\end{align*}
Upon treating the other terms in the right-hand side of \eqref{eq0715_05} similarly and bounding the right-hand side of \eqref{eq21.52h} by the left-hand side of \eqref{eq0715_05}, we finally obtain \eqref{eq21.52h}.
\end{proof}

\section*{Acknowledgments}
The authors would like to thank the referees for their careful review as well as many valuable comments and suggestions, and for bringing our attention to the references \cite{MR2867756, MR2901199, MR2561181, MR3231530}.

\bibliographystyle{plain}

\def\cprime{$'$}

\end{document}